\theoremstyle{plain}
\newtheorem{thm}{Theorem}[section]
\newtheorem{lem}[thm]{Lemma}
\newtheorem{ex}[thm]{Example}
\newtheorem{fact}[thm]{Fact}
\theoremstyle{definition}
\newtheorem{defn}[thm]{Definition}
\newcommand{\mathsym}[1]{{}}
\begin{document}
 
\date{}
\title{Introduction to Potential Theory via Applications}
\author{Christian Kuehn}

\maketitle
\thispagestyle{empty}

\begin{abstract}

We introduce the basic concepts related to subharmonic functions and potentials, mainly for the case of the complex plane and prove the Riesz decomposition theorem. Beyond the elementary facts of the theory we deviate slightly from the usual path of exposition and introduce further concepts alongside with applications.\\

We cover the Dirichlet problem in detail and illustrate the relations between potential theory and probability by considering harmonic measure and its relation to Brownian motion. Furthermore Green's function is introduced and an application to growth of polynomials is given. Equilibrium measures are motivated by their original development in physics and we end with a brief discussion of capacity and its relation to Hausdorff measure.\\

We hope that the reader, who is familiar with the main elements of real analysis, complex analysis, measure theory and some probability theory benefits from these notes. No new results are presented but we hope that the style of presentation enables the reader to understand quickly the basic ideas of potential theory and how it can be used in different contexts.\\

The notes can also be used for a short course on potential theory. Therefore the required prerequisites are described in the Appendix. References are given where expositions and details can be found; roughly speaking, familiarity with the basic foundations of real and complex analysis should suffice to proceed without any background reading.  

\end{abstract}

\tableofcontents

\newpage

\setcounter{page}{1}

\section{Introduction}

The main goal of this paper is twofold, the first part is an attempt to present the basic results of potential theory focusing on the case for two dimensions. The main objects in this section are harmonic and subharmonic functions as well as potentials. The Riesz decomposition theorem is the central result in this context relating the three classes of functions and justifying that studying subharmonic functions is closely related to studying potentials. The treatment of the material is fairly standard and can be found in any major textbook in one way or the other, but it turns out to be essential.\\

The second part deviates in many ways from classical paths usually persued in the presentation of the subject. One is either confronted with a purely theoretical introduction barely mentioning applications to other areas of mathematics at all or just as a seperated chapter in the end of a book. Even if one encounters applications such as the solution of the Dirichlet problem, the presentation tends to be obscured with technicalities to prove very general results, although a slightly restricted viewpoint would improve the presentation considerably. On the other hand many texts deal e.g. with subharmonic functions only from the viewpoint to use them as tools for a specialized field of application. The only case, where both aspects are persued, are monumental monographs, which have the big disadvantage that it becomes hard to seperate the important results and concepts from digressions.\\

We attempt in the second part of this paper to introduce the fundamentals of potential theory alongside simple applications. The idea is to convey the relevant points together with an outlook, where they are really powerful, namely in different areas of mathematical analysis apart from potential theory. We have to give up full generality, which - considering space limitations and the goal of these notes - seems to be adequate bearing in mind that we mainly deal with potential theory in the complex plane anyway.   

\section{Part I - Basic Theory}

Our discussion of harmonic and subharmonic functions as well as potentials is mainly based on \cite{4} with parts taken from \cite{3}, \cite{18} and \cite{5}. Although we are going to develop most of the theory in the complex plane it seems to be appropiate to start with a more general setup, thereby illustrating that most concepts can be generalized from two to more dimensions.

\subsection{Harmonic functions}
Before we can begin to introduce 'subharmonic' functions it is helpful to recall some basic notions and results for harmonic functions. Let $B(x,r)\subset \mathbb{R}^{n}$ denote the ball centered at $x$ of radius $r$. Let $m^{(n)}$ denote the $n$-dimensional Lebesque measure and $\sigma^{(n-1)}$ denote the induced surface measure. 

\begin{defn} 
A function $h:\mathbb{R}^n\rightarrow \mathbb{R}$ is called \textbf{\index{harmonic}harmonic} at $x$ if there exists an $r>0$ such that it satisfies 
\begin{eqnarray}
h(x)=\frac{1}{m^{(n)}(B(x,r))}\int_{B(x,r)}h(y)dm^{(n)}(y)
\end{eqnarray}
\end{defn}

In particular, the definition means that $h$ is harmonic if its value at a point $x$ can be locally expressed as an average. We note in passing the following important formula

\begin{fact}[\index{integration in polar coordinates}Integration in Polar Coordinates, \cite{3}] Let $f:\mathbb{R}^n\rightarrow\mathbb{R}$ be a suitably integrable function, then
\begin{eqnarray}
\int_{B(x,R)}f(y)dm^{(n)}(y)=\int_0^R \int_{\partial B(0,1)}f(x+rt)r^{n-1}d\sigma^{(n-1)}(t) dr
\end{eqnarray}
\end{fact}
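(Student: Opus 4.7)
My plan is to establish the formula by the standard change-of-variables argument, reducing it to a Jacobian computation for the ``polar coordinate map'' $\Phi(r,t)=rt$ from $(0,R)\times\partial B(0,1)$ onto $B(0,R)\setminus\{0\}$.

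First I would reduce to the case $x=0$. By translation invariance of Lebesgue measure, the substitution $z=y-x$ gives $\int_{B(x,R)} f(y)\,dm^{(n)}(y)=\int_{B(0,R)} g(z)\,dm^{(n)}(z)$ with $g(z):=f(x+z)$, so it suffices to prove
\begin{equation*}
\int_{B(0,R)} g(z)\,dm^{(n)}(z)=\int_0^R\!\int_{\partial B(0,1)} g(rt)\,r^{n-1}\,d\sigma^{(n-1)}(t)\,dr.
\end{equation*}
Since the single point $\{0\}$ has Lebesgue measure zero, I can replace $B(0,R)$ by $B(0,R)\setminus\{0\}$ on the left-hand side without changing the integral. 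This is the domain on which the polar map is well-behaved.

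Next I would verify that $\Phi:(0,R)\times \partial B(0,1)\to B(0,R)\setminus\{0\}$, $\Phi(r,t)=rt$, is a bijection and, locally, a diffeomorphism: any nonzero $z\in\mathbb R^n$ can be written uniquely as $rt$ with $r=|z|>0$ and $t=z/|z|\in\partial B(0,1)$. Working in a local smooth parametrisation $t=\phi(u)$ of the sphere, the Jacobian matrix of $(r,u)\mapsto r\phi(u)$ has first column $\phi(u)$ and remaining columns $r\,\partial_{u_i}\phi(u)$. Pulling out the scalar factors, the absolute value of the Jacobian determinant factors as $r^{n-1}\cdot J_\phi(u)$, where $J_\phi$ is precisely the surface-area element of $\partial B(0,1)$ induced on the chart — this is exactly how the induced surface measure $\sigma^{(n-1)}$ is defined. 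The change-of-variables formula then yields the identity for $g\geq 0$ continuous with compact support in $B(0,R)\setminus\{0\}$, and a standard monotone class / Fubini argument extends it to arbitrary nonnegative measurable $g$ and hence to any ``suitably integrable'' function.

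The only subtlety I anticipate is the Jacobian computation itself, and in particular making precise the claim that $J_\phi$ coincides with the chosen definition of the $(n-1)$-dimensional surface measure on $\partial B(0,1)$. Different textbooks set this up differently (Hausdorff measure, Minkowski content, pull-back of a Riemannian volume form, or simply as the measure that makes the polar formula hold by fiat). In any version, the identity $d m^{(n)}=r^{n-1}\,d\sigma^{(n-1)}\,dr$ is either a theorem following from the Jacobian computation or essentially the definition of $\sigma^{(n-1)}$; in either case, the remaining work is the routine extension from smooth compactly supported functions to the integrable class stated in the fact.
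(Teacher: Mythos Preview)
Your argument is correct and is the standard route to this formula: reduce to $x=0$ by translation invariance, remove the origin (a null set), write the polar map $\Phi(r,t)=rt$ in local charts on the sphere, compute the Jacobian as $r^{n-1}$ times the surface element, and extend from nice functions to the integrable class by monotone-class or Fubini reasoning. Your remark about the potential circularity---that in some treatments $\sigma^{(n-1)}$ is \emph{defined} so that the polar formula holds---is well taken and is the only genuinely delicate point.

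As for comparison with the paper: there is nothing to compare. The paper states this result as a \emph{Fact} with a citation and gives no proof at all; it is simply imported as background. So your proposal is not an alternative to the paper's argument but rather a supplement that fills in what the paper deliberately omits.
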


We are going to use this change to polar coordinates for integration from now on without further notice. If $h$ is harmonic at $x$, we observe that

\begin{eqnarray}
h(x)=\frac{1}{m^{(n)}(B(x,r))}\int_{B(x,r)}h(y)dm^{(n)}(y)=\frac{n}{r^n \sigma^{(n-1)}(\partial B(x,r))}\int_{0}^{r}\int_{\partial B(x,s)}h(y)s^{n-1}d\sigma^{(n-1)}(y)ds \notag\\
\Rightarrow \quad r^{n}h(x)=n\int_0^r\frac{1}{\sigma^{(n-1)}(\partial B(x,r))}\int_{\partial B(x,s)}s^{n-1}h(y)d\sigma^{(n-1)}(y)ds\notag
\end{eqnarray}   

and upon taking derivatives with respect to $r$ on both sides it follows that 

\begin{eqnarray}
nr^{n-1}h(x)&=&\frac{r^{n-1}n}{\sigma^{(n-1)}(\partial B(x,r))}\int_{\partial B(x,r)}h(y)d\sigma^{(n-1)}(y)\notag \\
h(x)&=&\frac{1}{\sigma^{(n-1)}(\partial B(x,r))}\int_{\partial B(x,r)}h(y)d\sigma^{(n-1)}(y)\notag	
\end{eqnarray}

where the last expression gives $h$ as an average over the boundary of the ball. Since the reverse calculation is equally easy we see that we can define $h$ to be harmonic either via a so-called 'space-mean' (i.e. average over $B(x,r)$) or a 'surface-mean' (i.e. average over $\partial B(x,r)$) and that both definitions are equivalent. For convenience we introduce the following notation 

\begin{defn} We define the \textbf{\index{space-mean}space-mean} $B^{r}_{f}(x)$ and the \textbf{\index{surface-mean}surface-mean} $S^{r}_{f}(x)$ of $f$ by
\begin{eqnarray}
B^{r}_{f}(x)&=&\frac{1}{m^{(n)}(B(x,r))}\int_{B(x,r)}f(y)dm^{(n)}(y)	\\
S^{r}_{f}(x)&=&\frac{1}{\sigma^{(n-1)}(\partial B(x,r))}\int_{\partial B(x,r)}f(y)d\sigma^{(n-1)}(y)	
\end{eqnarray}
where $f:\mathbb{R}^{n}\rightarrow \mathbb{R}$ is a suitably integrable function. 
\end{defn}

As another convention for notation we should from now on drop the superscripts for the Lebesque measure and the associated surface measure if there is no possibility of confusion; also we write $dm^{(1)}(x)=dx$. Having defined harmonic functions we have a few trivial examples

\begin{ex}
\label{ex:haex} 
Let $p:\mathbb{R}^2\rightarrow\mathbb{R}$ be defined by $p(x,y)=x$, then $p$ is obviously harmonic. For example if we check harmonicity at $(x,y)=(0,0)$ we get

\begin{eqnarray}
B^{1}_{p}((0,0))=\frac{1}{\pi^2}\int_{B((0,0),1)}pdm=\frac{1}{\pi^2}\int_0^{2\pi}\int_0^1 r^2 \cos(\theta)d\theta dr=0=p(0,0)\notag
\end{eqnarray}

All others values are similarly easy to check. As a second example consider $f:\mathbb{R}^2\rightarrow\mathbb{R}$ defined by $f(x,y)=x^2-y^2$. Again checking at $(x,y)=(0,0)$ we have

\begin{eqnarray}
B^{1}_{f}((0,0))&=&\frac{1}{\pi^2}\int_{B((0,0),1)}f(x,y)dm=\frac{1}{\pi^2}\int_0^{2\pi}\int_0^1 \left(r^2 \cos^2(\theta)-r^2 \sin^2(\theta)\right) r d\theta dr\notag\\
&=&\frac{1}{3\pi^2}\int_0^{2\pi} \left(\cos^2(\theta)-\sin^2(\theta)\right)d\theta=\frac{1}{3\pi^2}\left[\cos(\theta)\sin(\theta)\right]_0^{2\pi}=0=f(0,0)\notag
\end{eqnarray}
With a little bit more work it is easy to see that $f$ is also harmonic in the whole plane. That it is no coincidence that $f,p\in C^{\infty}(\mathbb{R}^2)$ is confirmed by the following result
\end{ex}

\begin{thm}[Harmonic $\Rightarrow$ Smooth]  
\label{thm:hasmooth}
Let $U$ be an open set in $\mathbb{R}^n$ and let $h$ be harmonic on $U$ then $h\in C^{\infty}(U)$.  
\end{thm}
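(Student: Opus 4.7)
The plan is to show that a harmonic function agrees locally with a convolution of itself against a smooth mollifier, from which smoothness is immediate. Concretely, I would fix a radial bump function $\phi \in C_c^\infty(\mathbb{R}^n)$ of the form $\phi(y) = \psi(|y|)$, supported in $B(0,1)$, with $\int \phi\,dm = 1$. For $\epsilon > 0$, set $\phi_\epsilon(y) = \epsilon^{-n}\phi(y/\epsilon)$ and let $U_\epsilon = \{x \in U : \text{dist}(x,\partial U) > \epsilon\}$. The goal is then to prove
\begin{eqnarray}
h(x) = (h * \phi_\epsilon)(x) \quad \text{for every } x \in U_\epsilon. \notag
\end{eqnarray}
Once this identity is in hand, the right-hand side is smooth in $x$ (differentiating under the integral moves derivatives onto $\phi_\epsilon$), so $h \in C^\infty(U_\epsilon)$. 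Since $\epsilon$ may be taken arbitrarily small, $h \in C^\infty(U)$.

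The identity is a polar-coordinate computation using the surface-mean characterization derived in the excerpt. Writing $\phi_\epsilon(y) = \epsilon^{-n}\psi(|y|/\epsilon)$ and applying the polar integration formula:
\begin{eqnarray}
(h * \phi_\epsilon)(x) = \int_{B(0,\epsilon)} h(x-y)\,\phi_\epsilon(y)\,dm(y) = \int_0^\epsilon \epsilon^{-n}\psi(s/\epsilon)\int_{\partial B(0,s)} h(x-y)\,d\sigma(y)\,ds. \notag
\end{eqnarray}
By the surface-mean property $S_h^s(x) = h(x)$, the inner integral equals $h(x)\,\sigma(\partial B(0,s))$, and factoring $h(x)$ out leaves $\int_{B(0,\epsilon)}\phi_\epsilon\,dm = 1$.

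The main obstacle is a subtlety in the definition: harmonicity at $x$ is only stipulated to hold for \emph{some} $r > 0$, whereas the convolution argument requires the surface mean value $S_h^s(x) = h(x)$ for \emph{every} sufficiently small $s$. I would therefore need a short auxiliary step arguing that if $h$ is harmonic at every point of the open set $U$, then the mean value property self-propagates to all admissible radii. A standard route is to fix $x \in U$, consider $g(s) = S_h^s(x)$ on the range where $\overline{B(x,s)} \subset U$, differentiate $s \mapsto s^n B_h^s(x)$ as in the derivation already carried out in the excerpt but in reverse, and use the pointwise mean value at neighboring points to show $g$ is constant in $s$. The remaining ingredients — local boundedness and integrability of $h$ needed to justify Fubini and to differentiate under the integral — follow from the same mean value representation, so they present no essential difficulty.
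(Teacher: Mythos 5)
Your proposal reproduces the paper's proof essentially verbatim: convolve $h$ against a radial mollifier, pass to polar coordinates, use the surface-mean identity $S_h^s(x)=h(x)$ to factor $h(x)$ out of the integral, and invoke the $C^\infty$ regularity of the convolution (the paper's Fact~\ref{fact:conv}).

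The one place you go beyond the paper is your ``auxiliary step.'' The issue you flag is real as a matter of wording --- the definition of harmonicity only asserts the space-mean identity for \emph{some} $r>0$, while the convolution computation needs $S_h^s(x)=h(x)$ for every $s\in(0,\epsilon)$ --- but the paper never confronts it either: in the lines immediately following the definition it already differentiates the space-mean identity with respect to $r$, which tacitly treats the identity as holding on a full range of radii; under that reading the required surface-mean identity follows at once and no auxiliary lemma is needed. Be warned, though, that the route you sketch would not close the gap if you insisted on the strictly literal (single-radius) reading: differentiating $s\mapsto s^n B_h^s(x)$ merely recovers the polar-coordinate relation $\tfrac{d}{ds}\bigl[s^n B_h^s(x)\bigr]=n s^{n-1} S_h^s(x)$, which holds for any locally integrable $h$ and says nothing about whether $S_h^s(x)$ is constant in $s$; propagating a one-radius mean-value hypothesis to all radii is a genuinely delicate problem, and ``use the pointwise mean value at neighboring points'' is not yet an argument. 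The clean resolution is the one the paper implicitly adopts: read the definition as giving the mean-value identity for all sufficiently small $r$, and then your proof and the paper's coincide.
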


Before proving the result it is helpful to remind oneself of the following

\begin{defn}
\label{defn:ai}
Let $\chi:\mathbb{R}^n\rightarrow\mathbb{R}$ be a function having the following properties\\

\begin{tabular}{ll}
(smoothness) & $\chi \in C^{\infty}(\mathbb{R}^n)$\\
(positivity) & $\chi \geq 0$\\
(support) & $supp(\chi)\subset B(0,1)$\\
(radial function) & $\chi(x)=\chi(|x|)$\\
(unit integral) & $\int_{\mathbb{R}^n}\chi dm=1$\\		
\end{tabular} 
\newline

then for $\delta>0$ we define a new family of functions by $\chi_{\delta}(x)=\frac{1}{\delta^n}\chi\left(\frac{x}{\delta}\right)$. Clearly $\chi_{\delta}$ satisfies four properties as given and we have $supp(\chi_{\delta})\subseteq B(0,\delta)$. A family of functions $\{\chi_{\delta}:\delta>0\}$ is called an \textbf{approximation to the identity} or briefly \textbf{\index{approximate identity}approximate identity}.
\end{defn} 

Note that sometimes one also refers to the function $\chi$ as the aproximate identity as it completely determines the relevant family of functions. That it is indeed possible to find a suitable $\chi(x)$ can be immediately seen by considering 
\begin{eqnarray}
\chi(x)=\begin {cases} 
C\cdot exp\left(-\frac{1}{1-|x|^2}\right) & \text{if $x\in B(0,1)$},\\
0 & \text{if $x\not\in B(0,1)$}.
\end {cases}
\end{eqnarray}

where $|.|$ denotes the usual Euclidean norm and the constant $C>0$ is chosen such that the 'unit integral' property is satisfied. One of the main uses of approximate identities is to convolve them with non-smooth functions as we shall see later in more detail and therefore we note the following result in passing

\begin{fact}[\cite{1}]
\label{fact:conv}
Let $U$ be an open subset of $\mathbb{R}^n$. Then if $f\in L^1_{loc}(U)$, i.e. $f$ is measurable and integrable on any compact set $K\subset U$, and $g\in C^k(\mathbb{R}^n)$ with $supp(g)\subset B(0,r)$ then the \textbf{\index{convolution}convolution} 
\begin{eqnarray}
(f\ast g)(x)=\int_{\mathbb{R}^n} f(x-y)g(y)dm(y)
\end{eqnarray}
is in $C^k(U_{r})$ where $U_{r}=\{x\in U : d(x,\partial U)>r\}$.
\end{fact}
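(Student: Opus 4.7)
The plan is to rewrite the convolution by a change of variables and then differentiate under the integral sign, with dominated convergence providing the domination needed to justify the interchange. First, substituting $z=x-y$ gives
\begin{eqnarray}
(f\ast g)(x)=\int_{\mathbb{R}^n} f(z)g(x-z)\,dm(z),\notag
\end{eqnarray}
so that the dependence on $x$ now sits entirely in the smooth factor $g$. For $x\in U_r$ the integrand is supported in $\{z:x-z\in B(0,r)\}=B(x,r)\subset U$, which is a compact subset of $U$ since $d(x,\partial U)>r$. Because $g$ is continuous and compactly supported, hence bounded, and $f\in L^1_{loc}(U)$, this shows the integral converges absolutely and $(f\ast g)(x)$ is well-defined on $U_r$.

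Next I would establish continuity of $f\ast g$ on $U_r$. Fix $x_0\in U_r$ and choose $\varepsilon>0$ small enough that $\overline{B(x_0,\varepsilon)}\subset U_r$; then for $x\in B(x_0,\varepsilon)$ one has $x-z\in B(0,r)$ only when $z\in B(x_0,r+\varepsilon)=:K$, a fixed compact subset of $U$. The integrand is therefore dominated by $\|g\|_\infty\cdot|f(z)|\chi_K(z)$, which is integrable by the local integrability of $f$. Continuity of $g$ together with the dominated convergence theorem then give $f\ast g$ continuous at $x_0$.

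For the $C^k$ regularity I would proceed by induction on $|\alpha|\le k$, showing the formula
\begin{eqnarray}
\partial^\alpha(f\ast g)(x)=\int_{\mathbb{R}^n}f(z)\,(\partial^\alpha g)(x-z)\,dm(z)=(f\ast\partial^\alpha g)(x).\notag
\end{eqnarray}
For a first-order partial derivative along $e_i$, the difference quotients $h^{-1}[g(x+he_i-z)-g(x-z)]$ converge pointwise to $(\partial_i g)(x-z)$; by the mean value theorem they are uniformly bounded by $\|\partial_i g\|_\infty$ on the fixed compact support $K$ above (for $|h|$ small), giving the integrable majorant $\|\partial_i g\|_\infty\,|f(z)|\chi_K(z)$. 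Dominated convergence then pushes the limit under the integral, yielding $\partial_i(f\ast g)=f\ast\partial_i g$. Since $\partial_i g\in C^{k-1}(\mathbb{R}^n)$ with the same support property, iterating this argument $k$ times produces all partial derivatives of order up to $k$, and the continuity argument of the previous paragraph applied to $f\ast\partial^\alpha g$ shows these derivatives are continuous on $U_r$.

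The main technical obstacle is the domination step: one must choose the neighborhood of the base point $x_0$ small enough to guarantee a single compact set $K\subset U$ containing the $z$-support of all translates $g(x-\cdot)$ for $x$ near $x_0$, so that $|f|\chi_K$ serves as an integrable majorant independent of $x$. The condition $x_0\in U_r$, i.e.\ $d(x_0,\partial U)>r$, is precisely what provides the slack needed to enlarge $B(x_0,r)$ to such a $K$ while staying inside $U$; everything else is standard dominated convergence.
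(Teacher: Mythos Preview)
Your argument is correct and is precisely the standard proof: rewrite the convolution so that $x$ appears only in the smooth factor, use the condition $d(x_0,\partial U)>r$ to trap the support of all nearby translates in a fixed compact $K\subset U$, and then apply dominated convergence with majorant $\|\partial^\alpha g\|_\infty\,|f|\chi_K$ to push differentiation under the integral. One small point to tighten: from $\overline{B(x_0,\varepsilon)}\subset U_r$ alone it is not quite immediate that $K=\overline{B(x_0,r+\varepsilon)}\subset U$; it is cleaner to choose $\varepsilon>0$ directly so that $r+\varepsilon<d(x_0,\partial U)$, which guarantees both $\overline{B(x_0,\varepsilon)}\subset U_r$ and $K\subset U$ in one stroke.

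As for comparison with the paper: there is nothing to compare. The paper does not prove this statement at all; it is recorded as a \emph{Fact} with a reference to Folland's \textit{Real Analysis} and is used as a black box in the proof of Theorem~\ref{thm:hasmooth} and later in the smoothing theorem. Your write-up is exactly the argument one finds in such a reference.
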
 

Having the definition of an approximate identity the proof of Theorem \ref{thm:hasmooth} is a direct computation.

\begin{proof}
(of Theorem \ref{thm:hasmooth}) So assume that $h$ is harmonic on $U$ and let $\chi_{\delta}$ be an approximate identity. We use the notation $\chi_{\delta |.|}(r)=\chi(r)=\chi(|x|)$. Therefore it follows

\begin{eqnarray}
(\chi_{\delta}\ast h)(x)&=& \int_{B(0,\delta)}h(x-y)\chi_\delta(y)dm(y)=\int_0^\delta \chi_{\delta |.|}(r)\int_{\partial B(0,1)}h(x+rt)d\sigma(t) r^{n-1}dr \notag\\
&=& \sigma(\partial B(0,1))\int_0^\delta S^r_h(x)\chi_{\delta |.|}(r)r^{n-1}dr\notag\\
&=&h(x) \sigma(\partial B(0,1))\int_0^\delta \chi_{\delta |.|}(r)r^{n-1}dr\notag\\
&=&h(x)\int_{B(0,\delta)} \chi_{\delta}(y)dm(y)=h(x)\notag
\end{eqnarray}

But by the preceeding Fact \ref{fact:conv} we have that the left hand side of the equation is in $C^{\infty}$ as required.
\end{proof}

The differentiability property justifies the statement of the following fact we believe the reader is certainly aware of

\begin{fact}[\cite{2}] Let $U\subseteq \mathbb{R}^n$, where U is open. Then $h:U\rightarrow\mathbb{R}$ is harmonic (at $y\in U$) if and only if $\Delta h=\sum_{k=1}^{n}\frac{\partial^2 h}{\partial x_k^2}=0$ (at $y\in U$).
\end{fact}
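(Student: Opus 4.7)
The plan is to relate $\Delta h(y)$ to the $r$-derivative of the surface-mean
\begin{eqnarray}
\phi(r) := S^r_h(y) = \frac{1}{\sigma(\partial B(y,r))}\int_{\partial B(y,r)} h(z)\, d\sigma(z),\notag
\end{eqnarray}
and then read off both implications from a single identity. Theorem \ref{thm:hasmooth} is indispensable here: since $h$ is assumed harmonic on an open set we already know $h \in C^{\infty}(U)$, so $\Delta h$ is continuous and differentiation under the integral sign is legal.

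First I would reduce to the unit sphere by the substitution $z = y + rt$,
\begin{eqnarray}
\phi(r) = \frac{1}{\sigma(\partial B(0,1))}\int_{\partial B(0,1)} h(y+rt)\, d\sigma(t),\notag
\end{eqnarray}
differentiate in $r$,
\begin{eqnarray}
\phi'(r) = \frac{1}{\sigma(\partial B(0,1))}\int_{\partial B(0,1)} \nabla h(y+rt)\cdot t\, d\sigma(t),\notag
\end{eqnarray}
and change back. Recognising $t$ as the outward unit normal on $\partial B(y,r)$ makes the right-hand side the surface mean of the normal derivative $\partial h/\partial\nu$, so the divergence theorem applied to $\nabla h$ on $B(y,r)$ yields the key identity
\begin{eqnarray}
\phi'(r) = \frac{1}{\sigma(\partial B(y,r))}\int_{B(y,r)} \Delta h(z)\, dm(z).\notag
\end{eqnarray}

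With this identity in hand both directions are immediate. If $\Delta h \equiv 0$ on $U$, then $\phi'(r) = 0$ for all sufficiently small $r$, so $\phi$ is constant on a small interval; combined with $\lim_{r\to 0^+}\phi(r) = h(y)$ (by continuity of $h$), this gives the surface-mean property, which by the equivalence established earlier is the harmonicity of $h$ at $y$. Conversely, if $h$ is harmonic in a neighbourhood of $y$, then $\phi(r) = h(y)$ for all small $r$, so $\phi'(r)\equiv 0$; continuity of $\Delta h$ then forces $\Delta h(y) = 0$, for otherwise $\Delta h$ would retain a fixed sign on some ball $B(y,\rho)$ and the identity above would make $\phi'(r)$ nonzero for $0<r\le\rho$.

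The only place where any care is needed is justifying differentiation under the integral sign and invoking the divergence theorem on a ball; both are routine given $h\in C^{\infty}(U)$ and a smooth boundary, so no serious obstacle arises. The advantage of this approach is economy: the single identity for $\phi'(r)$ simultaneously encodes both implications of the equivalence, turning the theorem into a one-line consequence.
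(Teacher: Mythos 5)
The paper does not prove this statement; it is stated as a Fact with a citation to Evans~\cite{2}, so there is no in-text argument to compare against. Your proof is correct and is in fact the standard one that the cited reference uses: introduce the spherical mean $\phi(r)=S^r_h(y)$, differentiate in $r$, recognise $\phi'(r)$ as the surface average of the normal derivative of $h$, and convert via the divergence theorem to the normalised integral of $\Delta h$ over $B(y,r)$. The single identity
\begin{eqnarray}
\phi'(r)=\frac{1}{\sigma(\partial B(y,r))}\int_{B(y,r)}\Delta h\,dm\notag
\end{eqnarray}
then delivers both directions of the equivalence, exactly as you argue.

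Two small remarks for completeness. First, your opening sentence appeals to Theorem~\ref{thm:hasmooth} to secure $h\in C^\infty$, but that theorem is only available in the direction where $h$ is assumed harmonic; in the other direction ($\Delta h=0\Rightarrow$ mean value property) you cannot invoke it, and instead the regularity $h\in C^2$ must be taken as part of the hypothesis — which it implicitly is, since $\Delta h$ would otherwise be undefined. It is worth saying this explicitly so the logic does not look circular. Second, the paper's Definition~2.1 of ``harmonic at $x$'' literally requires the mean value identity for a single radius $r$, whereas your argument (and every subsequent computation in the paper) uses it for all sufficiently small $r$; the ``if and only if'' is genuinely a statement about $h$ being harmonic on a neighbourhood of $y$, and reading the parenthetical ``at $y\in U$'' as a pointwise claim at one isolated radius would make the forward implication false. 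Your proof targets the correct (local) statement, so this is a matter of phrasing rather than substance.
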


In addition to smoothness, harmonic functions also have other 'restrictions' they have to satisfy. The following result is of fundamental importance

\begin{thm}[\index{Identity principle}Identity principle - harmonic version] 
\label{thm:idha}
Let $g,h$ be harmonic on an open, connected set $U\subseteq \mathbb{R}^n$. Suppose $h=g$ on an open, non-empty set $V\subset U$ then $h=g$ identically on $U$.
\end{thm}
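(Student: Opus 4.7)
The plan is to reduce to the case $f := h - g$; by linearity of harmonicity $f$ is harmonic on $U$, and by Theorem \ref{thm:hasmooth} we have $f \in C^\infty(U)$. I want to show $f \equiv 0$ on $U$, and since $U$ is connected, the natural strategy is to exhibit a non-empty subset $A \subseteq U$ which is both open and closed, and on which $f$ (and all its derivatives) vanish.

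Define
\[
A = \{x \in U : D^\alpha f(x) = 0 \text{ for every multi-index } \alpha\}.
\]
Non-emptiness is immediate: $f$ vanishes identically on the open set $V$, so all its partial derivatives vanish there too, giving $V \subseteq A$. Closedness is also easy: each $D^\alpha f$ is continuous thanks to smoothness, so the zero set of each $D^\alpha f$ is closed in $U$, and $A$ is the intersection of these closed sets.

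The hard step, and the main obstacle, is the openness of $A$. Fix $x_0 \in A$ and pick $r > 0$ with $\overline{B(x_0, r)} \subset U$. The key input I would use is that harmonic functions are in fact real analytic: from the mean value property one obtains Cauchy-type derivative estimates of the shape
\[
|D^\alpha f(x_0)| \leq \frac{C_n\, |\alpha|!}{r^{|\alpha|}} \sup_{B(x_0,r)} |f|,
\]
and these can be bootstrapped to show that the Taylor series of $f$ at $x_0$ converges to $f$ on some ball $B(x_0, \rho) \subseteq U$. Since every Taylor coefficient at $x_0$ vanishes by assumption, this forces $f \equiv 0$ on $B(x_0, \rho)$, whence $B(x_0, \rho) \subseteq A$ and $A$ is open.

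Connectedness of $U$ then yields $A = U$, so $f \equiv 0$ on $U$ and $h \equiv g$ as claimed. The genuine content lies in the real-analyticity input; the rest is bookkeeping with an open/closed decomposition. One could alternatively try the candidate $A' = \{x \in U : f \text{ vanishes on a neighborhood of } x\}$, which is open by definition, but then the difficulty migrates to proving closedness of $A'$, and one is led back to essentially the same derivative-estimate ingredient.
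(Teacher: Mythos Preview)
Your argument is correct and follows essentially the same route as the paper: reduce to $f=h-g$, consider the set $A$ where all derivatives vanish, and show $A$ is non-empty, open, and closed in $U$. The paper phrases the closedness of $A$ as openness of $B=U\setminus A$, and for the openness of $A$ simply invokes ``a Taylor expansion of $h$ around $x_0$''; you are more explicit in identifying the real-analyticity of harmonic functions (via Cauchy-type derivative estimates) as the substantive input behind that step.
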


\begin{proof}
By considering $h-g$ we can without loss of generality reduce to the case when $g=0$ on $V$. Let $x\in V$ then $D^{\alpha}h(x)=0$ for all multi-indices $\alpha$ (for notation see \cite{1} or \cite{2}), i.e. all derivatives vanish identically as well on $V$. Then we define
\begin{eqnarray}
A=\{x\in U: D^{\alpha}h(x)=0 \text{ for all } \alpha\} \qquad \text{and} \qquad B=U\backslash A \notag
\end{eqnarray} 
If $x_0\in B$ there exists some multi-index $\alpha$ such that $D^\alpha h(x_0)\neq 0$ and by continuity of all derivatives of $h$ - see Theorem \ref{thm:hasmooth} - this implies that there is also an open neighborhood of $x_0$ where $D^\alpha h(x)\neq 0$, hence $B$ is open. If $x_0\in A$ then we observe that by using a Taylor expansion of $h$ around $x_0$ that $h=0$ in some neighborhood of $x_0$ therefore A is open. But $A\cup B=U$, $A\cap B=\emptyset$ and $U$ is connected and therefore either $B=\emptyset$ or $A=\emptyset$. Since $h=0$ on $V$ we obtain that $A$ is non-empty and therefore $h=0$ on $U$.
\end{proof}    

The identity principle places a very strong requirement on the class of harmonic functions since the result works for any open, connected set $V$. The situation is obviously reminiscent of holomorphic functions for which an even stronger result holds. Therefore we can give easier proof of the identity principle for the case $\mathbb{R}^2$. Note that we are going to identify henceforth $\mathbb{R}^2$ with the complex plane $\mathbb{C}$. 

\begin{thm}[\index{Identity Principle}Identity Principle - harmonic version, $n=2$]
\label{thm:idc}
Let $h$ be harmonic on a domain $D\subseteq \mathbb{C}$. Suppose $h=0$ on an open, non-empty set $V\subset D$, then $h=0$ identically on $D$.
\end{thm}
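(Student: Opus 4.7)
The plan is to exploit the correspondence between real harmonic functions on $\mathbb{C}$ and real parts of holomorphic functions, which lets us invoke the identity principle for holomorphic functions, a strictly stronger tool than what is available in general $\mathbb{R}^n$.

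First I would introduce the set
\[
A = \{x \in D : h \equiv 0 \text{ on some open neighborhood of } x\}.
\]
By definition $A$ is open, and the hypothesis $h=0$ on $V$ gives $V\subseteq A$, so $A$ is nonempty. The strategy is to show that $A$ is also closed in $D$; connectedness of $D$ will then force $A=D$, so that $h\equiv 0$ on $D$.

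To show $A$ is closed in $D$, take $x_0\in D\cap \overline{A}$ and choose a disk $B(x_0,r)\subset D$. Since this disk is simply connected, $h$ admits a harmonic conjugate $\tilde h$ on $B(x_0,r)$, yielding a holomorphic function $f=h+i\tilde h$ on the disk. Because $x_0\in \overline{A}$, the disk meets $A$ at some point $y$, so $h\equiv 0$ on an open neighborhood $W\subset B(x_0,r)$ of $y$. The Cauchy--Riemann equations then force the partials of $\tilde h$ to vanish on $W$, so $\tilde h$ is constant there, and $f$ is a purely imaginary constant on $W$. Applying the identity principle for holomorphic functions on the connected set $B(x_0,r)$, $f$ is constant throughout $B(x_0,r)$. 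Since $h(y)=0$, this constant has zero real part, so $h\equiv 0$ on $B(x_0,r)$ and $x_0\in A$.

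The main obstacle is that $D$ need not be simply connected, so a global harmonic conjugate may not exist; this is circumvented by working on small disks, where simple connectivity is automatic and the local-to-global step is supplied by the openness/closedness argument together with connectedness of $D$. An alternative route would be to invoke the real-analyticity of harmonic functions (a consequence of Theorem~\ref{thm:hasmooth} together with the Laplace equation) and apply the identity theorem for real-analytic functions, but the complex-analytic approach above is more direct and more in the spirit of the $n=2$ case that the theorem specifically singles out.
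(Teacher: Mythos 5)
Your proof is correct, but it takes a genuinely different route from the paper. The paper's key observation is that one can form the holomorphic function $g = h_x - ih_y$ \emph{globally} on $D$: harmonicity of $h$ plus equality of mixed partials is exactly the Cauchy--Riemann system for $g$, with no topological restriction on $D$ whatsoever. The hypothesis $h\equiv 0$ on $V$ forces $g\equiv 0$ on $V$, so the identity principle for holomorphic functions applies in one stroke on all of $D$; then $h_x=h_y=0$ on $D$ gives that $h$ is constant, hence zero. You instead pass through the harmonic conjugate $\tilde h$ and the primitive $f=h+i\tilde h$, which only exists locally on simply connected subsets; this is why you need the supplementary open-and-closed argument in $D$ to patch together local conclusions, and why you must identify $f$ up to an imaginary constant rather than simply observing it vanishes. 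Both arguments are rigorous, and your approach has the pedagogical virtue of making the connectedness of $D$ visibly do work. But the paper's construction is cleaner precisely because $h_x - ih_y$ requires no primitive and no topological hypothesis, so the identity principle for holomorphic functions can be applied once, globally, instead of disk-by-disk inside a clopen argument. Your remark about the alternative via real-analyticity of harmonic functions is also valid and would generalize to $\mathbb{R}^n$, though it leans on heavier machinery than either complex-analytic route.
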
 

\begin{proof}
Define $g=h_x-ih_y$ where subscripts denote partial derivatives. Since $h$ is harmonic it follows that $h_{xx}+h_{yy}=0$. Also as $h\in C^{\infty}$ partial derivatives certainly commute, so $h_{xy}=h_{yx}$ and we obtain
\begin{eqnarray}
Re(g)_x=h_{xx}=-h_{yy}=Im(g)_y \qquad and \qquad Im(g)_x=-h_{yx}=-h_{xy}=-Re(g)_y\notag
\end{eqnarray}
So $g$ satisfies the Cauchy-Riemann equations and is therefore holomorphic. Since $h=0$ on $V$ it follows that $g=0$ on $V$ as well and the identity principle for holomorphic functions (!) gives that $g=0$ on $D$. Therefore $h_x=0=h_y$ and we conclude that $h$ is constant and therefore identically $0$ as it vanishes on $V$.  
\end{proof}

The result shows that the machinery of complex analysis turns out to be useful for analyzing the case of planar harmonic functions. The same turns out to be true for subharmonic functions as well and we are going to focus from now one on this case, pointing out differences to higher dimensions when appropiate. We need some more relations between harmonic and holomorphic functions.

\begin{lem}
\label{lem:ref} 
If $h$ is harmonic on a simply-connected domain $D$ in $\mathbb{C}$, then $h=Re(f)$ for some holomorphic function $f$, where $f$ is unique up to a constant.
\end{lem}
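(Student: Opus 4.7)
The plan is to reuse the construction from the proof of Theorem \ref{thm:idc}: define $g = h_x - i h_y$ and recall that we have already verified (using $h \in C^\infty$ from Theorem \ref{thm:hasmooth} together with $\Delta h = 0$) that $g$ satisfies the Cauchy--Riemann equations, hence is holomorphic on $D$. So I have a holomorphic function on $D$ whose real and imaginary parts encode the partial derivatives of the target $h$, and the task reduces to integrating $g$ to produce $f$.

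Next I would invoke simple connectedness: on a simply-connected domain every holomorphic function has a holomorphic primitive (this is a standard consequence of Cauchy's theorem, guaranteeing path-independence of $\int g\,dz$). Let $f$ be such a primitive, write $f = u + iv$. Then $f'(z) = u_x + i v_x = u_x - i u_y$ by the Cauchy--Riemann equations for $f$. Comparing with $f'(z) = g(z) = h_x - i h_y$ gives $u_x = h_x$ and $u_y = h_y$ throughout $D$. Since $D$ is connected, $u - h$ is constant, say $u - h = c \in \mathbb{R}$; replacing $f$ by $f - c$ yields $\operatorname{Re}(f) = h$ as desired.

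For uniqueness, suppose $f_1, f_2$ are two holomorphic functions on $D$ with $\operatorname{Re}(f_1) = \operatorname{Re}(f_2) = h$. Then $F := f_1 - f_2$ is holomorphic with $\operatorname{Re}(F) = 0$; the Cauchy--Riemann equations applied to $F$ force $\operatorname{Im}(F)$ to have zero gradient, so $F$ is a (purely imaginary) constant. Thus $f$ is unique up to an additive constant.

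The main obstacle is really only the use of simple connectedness: without it, $g$ might fail to admit a global primitive (its periods around nontrivial loops may be nonzero), which is precisely the obstruction to obtaining a global holomorphic $f$. Everything else is a direct verification using Cauchy--Riemann; the smoothness needed to differentiate $h$ twice and interchange partials is already supplied by Theorem \ref{thm:hasmooth}.
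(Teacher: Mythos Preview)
Your proof is correct and follows essentially the same route as the paper: define $g=h_x-ih_y$, use simple connectedness (Cauchy's theorem) to obtain a holomorphic primitive $f$, and compare partial derivatives to conclude $\operatorname{Re}(f)-h$ is constant. The only cosmetic difference is that the paper normalizes the constant up front by setting $f(z)=h(z_0)+\int_{z_0}^z g(w)\,dw$, whereas you subtract the constant $c$ at the end; the uniqueness arguments are likewise equivalent.
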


\begin{proof}
Uniqueness is easy since if we have $h=Re(f)$ for $f=h+ik$ then by using the Cauchy Riemann equations $f'=h_x+ik_x=h_x-ih_y$ and so the derivative is uniquely determined by $h$ and therefore $f$ is unique up to a constant. Then we again consider $g=h_x-ih_y$ (see \ref{thm:idc}), which is holomorphic on $D$. Now fix $z_0$ in $D$ and define
\begin{eqnarray}
f(z)=h(z_0)+\int_{z_{0}}^z g(w)dw\notag
\end{eqnarray}  
where the obvious identification for $\mathbb{R}^2$ and $\mathbb{C}$ has been used. Note that the definition of $f$ as an integral is a priori path-dependent, but since $D$ is simply-connected, Cauchy's theorem ensures that we actually have path-independence. The remaining part of the proof amounts to checking that the construction of $f$ was correct. So set $\tilde{h}=Re(f)$. Since we have $f'=g=h_x-ih_y$ it follows that
\begin{eqnarray}
\tilde{h}_x-i\tilde{h}_y=f'=h_x-ih_y\notag
\end{eqnarray} 
Therefore the partial derivatives of $h$ and $\tilde{h}$ coincide and this implies $h-\tilde{h}=constant$. But for $z=z_0$ we have $h(z_0)=\tilde{h}(z_0)$ and therefore $h=\tilde{h}=Re(f)$.
\end{proof}

Basically this shows a characterization of harmonic functions in the plane, namely they are locally real parts of holomorphic functions. We are now ready to prove another strong 'constraint' on harmonic functions. Note that in the case of a planar domain we are going to agree on the convention to take the closure of a set with respect to the Riemann sphere $\mathbb{C}_\infty=\mathbb{C}\cup \{\infty\}$ for the rest of this paper.  

\begin{thm}[\index{Maximum Principle - harmonic version}Maximum Principle - harmonic version, n=2]
\label{thm:maxpri2}
Let $h$ be harmonic on a domain $D$ in $\mathbb{C}$, then
\begin{enumerate}
	\item if $h$ attains a local maximum on $D$ then $h$ is constant.
	\item if $h$ extends continuously to $\overline{D}$ and $h\leq 0$ on $\partial D$ then $h\leq 0$ on $D$.
\end{enumerate}
\end{thm}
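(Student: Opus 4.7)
The plan is to prove part (1) using the mean value property and the identity principle, then deduce part (2) as a compactness argument.

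For part (1), suppose $h$ attains a local maximum $M$ at some $z_0 \in D$, so $h \leq M$ on some disk $B(z_0, R) \subset D$. Fix any $r \in (0, R)$. The surface-mean formulation of harmonicity (established in the discussion preceding the definition of $S^r_h$) gives
\begin{eqnarray}
M \;=\; h(z_0) \;=\; S^{r}_{h}(z_0) \;=\; \frac{1}{2\pi r}\int_{\partial B(z_0, r)} h \, d\sigma. \notag
\end{eqnarray}
Since $M - h \geq 0$ on $\partial B(z_0, r)$ and is continuous there (Theorem \ref{thm:hasmooth}), while its average over the circle vanishes, we conclude $h \equiv M$ on $\partial B(z_0, r)$. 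As $r$ ranges over $(0, R)$, this shows $h \equiv M$ on the whole disk $B(z_0, R)$. Now the function $h - M$ is harmonic on $D$ and vanishes on the non-empty open set $B(z_0, R)$, so the identity principle (Theorem \ref{thm:idc}) yields $h \equiv M$ on all of $D$.

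For part (2), recall the convention that closures are taken in the Riemann sphere $\mathbb{C}_\infty$, so $\overline{D}$ is compact. The continuously extended $h$ therefore attains its supremum $M$ somewhere in $\overline{D}$. If the supremum is attained only on $\partial D$, then $M \leq 0$ by hypothesis and we are done. Otherwise there is some $z_0 \in D$ with $h(z_0) = M$; part (1) gives $h \equiv M$ on $D$, and continuity up to the boundary plus the hypothesis $h \leq 0$ on $\partial D$ again force $M \leq 0$. In both cases $h \leq M \leq 0$ on $D$.

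The main subtlety is the argument that $h \equiv M$ on $\partial B(z_0, r)$: it relies on the fact that a continuous, non-negative function on a circle with zero mean must vanish identically, which in turn uses the $C^\infty$-regularity of $h$ on $D$ provided by Theorem \ref{thm:hasmooth}. A slicker alternative would be to invoke Lemma \ref{lem:ref} on a simply-connected neighborhood of $z_0$, write $h = \mathrm{Re}(f)$, and apply the classical maximum modulus principle to the holomorphic function $e^f$ (whose modulus equals $e^h$); this replaces the direct mean-value computation with a reduction to complex analysis, which is very much in the spirit of the preceding lemma.
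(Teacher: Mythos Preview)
Your proof is correct. Part (2) matches the paper's argument essentially verbatim: compactness of $\overline{D}$ in $\mathbb{C}_\infty$, a maximum is attained, and a case split between boundary and interior.

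For part (1) you take a genuinely different route from the paper. The paper argues exactly along the lines of the ``slicker alternative'' you sketch at the end: it invokes Lemma~\ref{lem:ref} to write $h=\mathrm{Re}(f)$ on a disc about the local maximum, applies the maximum modulus principle to the holomorphic function $e^{f}$ (since $|e^{f}|=e^{h}$ peaks there), concludes $h$ is constant on the disc, and then appeals to the identity principle (Theorem~\ref{thm:idc}). Your primary argument instead goes through the surface-mean identity directly: $M=S^{r}_{h}(z_0)$ forces $h\equiv M$ on each circle $\partial B(z_0,r)$, hence on the whole disc, and then the identity principle finishes. Your approach is more self-contained in that it uses only the mean-value characterisation and regularity of harmonic functions, avoiding the detour through holomorphic maximum modulus; the paper's approach, by contrast, is in keeping with its programme of exploiting complex-analytic tools whenever $n=2$. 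Both land on Theorem~\ref{thm:idc} to propagate from the disc to all of $D$.
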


\begin{proof}
For the first part, assumme that $h(w)$ is locally a maximum for $h$. Hence there exists $r>0$ such that $h(z)\leq h(w)$ for $z$ in some disc $\Delta(w,r)=\{z:|z-w|<r\}$. By Lemma \ref{lem:ref} there exists $f$ holomorphic on $\Delta(w,r)$ such that $h=Re(f)$. Therefore $|e^{f}|$ attains a local maximum at $w$ since $h$ does. Now the maximum principle for holomorphic functions (!) implies that $e^f$ is constant on $\Delta(w,r)$, therefore $h$ is constant on $\Delta(w,r)$. Finally the identity principle (Theorem \ref{thm:idc}) yields that $h$ is constant on $D$. This concludes the first part of the maximum principle.\\

For the second part we have that $h\leq 0$ on $\partial D$ and $h$ extends continuously to the boundary, then by compactness of $\overline{D}$, $h$ attains a maximum at some $w\in\overline{D}$. Now either $w\in\partial D$ in which case $h(z)\leq 0$ for $z\in D$ follows immediately, so suppose $w\not\in\partial D$. Then by the first part (see \textit{1.}) $h$ is constant on $D$. By continuous extension to the boundary and $h(z)\leq 0$ on $\partial D$ we obtain $h\leq 0$ on $D$.    
\end{proof}

Note that the first part of the maximum principle can be extended to higher dimensions.

\begin{fact}[\cite{3}] 
\label{thm:maxpri3}
Let $h$ be harmonic on an open connected set $U\subseteq \mathbb{R}^n$; if $h$ attains a local maximum on $U$, then $h$ is constant. 
\end{fact}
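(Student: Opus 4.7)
The plan is to use the space-mean characterization to show $h$ equals its maximum value on a whole neighborhood of any local maximizer, and then appeal to the identity principle (Theorem \ref{thm:idha}) to propagate constancy throughout $U$.

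Suppose $h(w) = M$ is a local maximum, so there is some $r > 0$ with $B(w,r) \subset U$ and $h(z) \leq M$ for all $z \in B(w,r)$. Fix any $s \in (0, r)$. Then, using the space-mean property (which one checks holds for every sufficiently small radius, not only the one supplied by the definition, via the Laplacian characterization and the polar-coordinate computation already carried out for harmonic functions),
$$M = h(w) = B^s_h(w) = \frac{1}{m(B(w,s))}\int_{B(w,s)} h(y)\, dm(y).$$

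Rearranging, $\int_{B(w,s)} (M - h(y))\, dm(y) = 0$ with a non-negative integrand. By Theorem \ref{thm:hasmooth}, $h$ is continuous, so if some $z_0 \in B(w,s)$ satisfied $h(z_0) < M$, then $M - h$ would be strictly positive on an open neighborhood of $z_0$, forcing the integral to be strictly positive, a contradiction. Hence $h \equiv M$ on $B(w,s)$, and since $s \in (0,r)$ was arbitrary, $h \equiv M$ on the open non-empty set $B(w,r) \subset U$. Applying Theorem \ref{thm:idha} with $g \equiv M$ (which is trivially harmonic) yields $h \equiv M$ on all of $U$.

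The main obstacle is really just bookkeeping in the first displayed equation: the paper's definition of harmonicity only guarantees the mean value equality at some single radius per point, whereas the argument needs it for an arbitrarily small radius at the local maximizer. This gap is closed by the quoted equivalence $\Delta h = 0 \Leftrightarrow$ harmonicity, which together with the polar-coordinate identity shows $h(x) = B^s_h(x)$ for every $s$ with $\overline{B(x,s)} \subset U$; once that is granted, the non-negativity plus continuity argument above is immediate and the identity principle finishes the job.
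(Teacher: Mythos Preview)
Your argument is correct, but note that the paper does not actually supply a proof of this statement: it is recorded as a \emph{Fact} with a bare citation to \cite{3}, so there is no ``paper's own proof'' to compare against. Your route---mean value equality plus continuity to force $h\equiv M$ on a ball around the local maximizer, then the identity principle (Theorem~\ref{thm:idha}) to propagate constancy to all of $U$---is the standard one, and your explicit handling of the single-radius definition via the equivalence with $\Delta h=0$ is the right way to close that bookkeeping gap.
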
 

We conclude the discussion with a trivial result, which generates even more examples of harmonic functions in the two-dimensional case and is a converse to Lemma \ref{lem:ref}. 

\begin{thm}
\label{thm:reh} 
Let $f$ be holomorphic on a domain $D$ in $\mathbb{C}$, where $h:=Re(f)$; then $h$ is harmonic on $D$.
\end{thm}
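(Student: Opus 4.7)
The cleanest route uses the Cauchy--Riemann equations together with the Laplacian characterization of harmonicity stated earlier in the excerpt (the fact attributed to \cite{2}). Since the statement is local (harmonicity at a point only requires the mean-value property to hold in some small ball), it suffices to show $\Delta h = 0$ at every point of $D$.

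First I would write $f = h + ik$ with $h, k : D \to \mathbb{R}$. Because $f$ is holomorphic on $D$, it is in fact $C^\infty$ (indeed analytic), so $h$ and $k$ are both $C^\infty$ and all mixed partials commute. The Cauchy--Riemann equations give
\begin{eqnarray}
h_x = k_y, \qquad h_y = -k_x. \notag
\end{eqnarray}
Differentiating the first with respect to $x$ and the second with respect to $y$ yields $h_{xx} = k_{yx}$ and $h_{yy} = -k_{xy}$, so adding gives $\Delta h = h_{xx} + h_{yy} = k_{yx} - k_{xy} = 0$ by equality of mixed partials. By the Laplacian criterion cited just before Theorem \ref{thm:idha}, this is equivalent to $h$ being harmonic at every point of $D$.

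There is no real obstacle here: the only subtlety is that $h$ need only be declared harmonic in the sense of Definition~2.1 (a local mean-value identity), but this is already reconciled in the paper by the Laplacian fact. Alternatively, one could bypass Cauchy--Riemann entirely and deduce the surface-mean property directly from Cauchy's integral formula: writing $f(z_0) = \frac{1}{2\pi}\int_0^{2\pi} f(z_0 + re^{i\theta})\, d\theta$ for any disc $\Delta(z_0,r) \subset D$ and taking real parts would give $h(z_0) = S^r_h(z_0)$, whence $h$ is harmonic by the equivalence of space-mean and surface-mean definitions established earlier. Either path is essentially a one-line computation, so the only task is to pick whichever tool the author prefers to foreground.
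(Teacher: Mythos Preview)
Your argument is correct and matches the paper's own proof essentially line for line: write $f=h+ik$, invoke the Cauchy--Riemann equations, and compute $\Delta h = k_{yx}-k_{xy}=0$. The only addition you make is the explicit remark that holomorphy gives enough smoothness for the mixed partials to commute, which the paper leaves implicit.
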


\begin{proof}
Let $f=h+ik$ then using the Cauchy-Riemann equations $h_x=k_y$ and $k_x=-h_y$ we have 
\begin{eqnarray}
\Delta h=h_{xx}+h_{yy}=k_{yx}-k_{xy}=k_{xy}-k_{xy}=0\notag
\end{eqnarray}
\end{proof}

Quickly looking at Example \ref{ex:haex} shows that the two functions given there, $p(x,y)=x$ and $f(x,y)=x^2-y^2$, are the real parts of the holomorphic maps $z\mapsto z$ and $z\mapsto z^2$. It is now relatively simple to create a large number of examples espcially bearing in mind that '$\Delta(.)$' is a linear differential operator, so that harmonic functions indeed form a vector space.

\subsection{Subharmonic Functions}

Before we can define subharmonic functions we need to recall semi-continuity and some of its basic implications.

\begin{defn} Let $(X,d)$ be a metric space, then $u:X\rightarrow[-\infty,\infty)$ is called \textbf{\index{upper-semicontinuous}upper-semicontinuous} (at $x$) if $\limsup_{y\rightarrow x} u(y)\leq u(x)$ for all $x\in X$. Similarly $v:X\rightarrow (-\infty,\infty]$ is called \textbf{\index{lower-semicontinuous}lower-semicontinuous} if $-v$ is upper-semicontinuous.  
\end{defn}

Note that we agree on the usual extended real number arithmetic, allowing functions to take one possible infinity and making sense of expressions like $0+\infty=\infty$ (see also \cite{1}).\\ 

The previous definition clearly implies that $u$ is upper-semicontinuos if and only if the set $\{x\in X:u(x)<\alpha\}$ is open for all $\alpha\in \mathbb{R}$. Using this fact we obtain an immediate, but very useful result about general upper-semicontinuous functions.

\begin{lem} If $u$ is upper-semicontinuous on a metric space $(X,d)$ and $K$ is a compact subset of $X$, then $u$ is bounded above on $K$ and attains its bound.
\label{lem:usc} 
\end{lem}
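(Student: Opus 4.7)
The plan is to prove both claims using the topological characterization noted just before the lemma, namely that $u$ is upper-semicontinuous if and only if $\{x\in X : u(x)<\alpha\}$ is open for every $\alpha\in\mathbb{R}$. This converts semi-continuity into an open-cover statement that meshes directly with compactness of $K$.

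First I would establish boundedness from above. Since $u$ takes values in $[-\infty,\infty)$, for every $x\in K$ there is some integer $n$ with $u(x)<n$. Thus the family $\{V_n\}_{n\in\mathbb{N}}$ with $V_n=\{x\in X : u(x)<n\}$ is an open cover of $K$. Compactness of $K$ yields a finite subcover, and since the $V_n$ are nested this collapses to a single $V_N$ containing $K$. Therefore $u(x)<N$ for all $x\in K$, so $M:=\sup_{x\in K}u(x)$ is finite (or possibly $-\infty$, in which case the lemma is vacuous).

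Next I would show the supremum is attained. The cleanest route in a metric space is via sequences: choose $x_n\in K$ with $u(x_n)\to M$. By sequential compactness of $K$ in the metric space $(X,d)$, a subsequence $x_{n_k}$ converges to some $x^*\in K$. Upper-semicontinuity at $x^*$ gives
\begin{eqnarray}
u(x^*)\;\geq\;\limsup_{k\to\infty}u(x_{n_k})\;=\;M,\notag
\end{eqnarray}
while $u(x^*)\leq M$ by definition of $M$, so $u(x^*)=M$.

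There is no real obstacle here: the only point to be a little careful about is allowing the value $-\infty$, which is harmless because in that case $u\equiv -\infty$ on $K$ and the bound is attained trivially. If one wanted to avoid sequential arguments entirely, one could instead assume for contradiction that $u(x)<M$ for every $x\in K$, apply the open-cover characterization to $\{x : u(x)<M-1/n\}$, extract a finite (hence single, by nesting) subcover, and conclude $\sup_K u\leq M-1/n_0<M$, a contradiction; this version works in any compact topological space without appealing to metrizability.
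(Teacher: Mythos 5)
Your proof is correct and follows essentially the same route as the paper: an open cover by the sets $\{u<n\}$ gives boundedness via compactness, and attainment of the supremum follows by extracting a convergent subsequence from a maximizing sequence and applying the $\limsup$ characterization of upper-semicontinuity. The alternative open-cover argument you sketch at the end for attainment (valid in arbitrary compact topological spaces) is a nice addendum, and indeed the paper remarks without proof that the lemma extends to general topological spaces.
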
 

\begin{proof}
The family of sets $\{x\in X:u(x)<n\}$ for $n\geq 1$ is an open cover of $K$ and by compactness we obatin a finite subcover, which implies that $u$ is bounded above. We can clearly set $M=\sup_{x\in K}u(x)$. Now take a sequence $\{x_{k}\}$ in $K$ such that $u(x_{k})\rightarrow M$. Since we can replace $\{x_{k}\}$ by a suitable subsequence if necessary, we may assume that $\{x_{k}\}\rightarrow x$ for some $x\in K$. Then using the definition of upper-semicontinuity we have
\begin{eqnarray}
M\geq u(x)\geq \limsup_{y\rightarrow x} u(y) \geq \lim_{k\rightarrow \infty} u(x_{k})=M
\end{eqnarray} 
\end{proof}

It should be noted that the definition and result above also hold for general topological spaces. Also there is an obvious analog of Lemma \ref{lem:usc} for lower-semicontinuous functions.

\begin{fact} If $v$ is lower-semicontinuous on a metric space $(X,d)$ and $K$ is a compact subset of $X$, then $u$ is bounded below on $K$ and attains its bound. 
\end{fact}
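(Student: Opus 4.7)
The plan is to deduce the statement directly from Lemma \ref{lem:usc} by a sign flip, since lower-semicontinuity was defined precisely so that $v$ is lower-semicontinuous if and only if $-v$ is upper-semicontinuous. So first I would set $u := -v$; by definition $u$ is upper-semicontinuous on $(X,d)$, and $K$ is still a compact subset of $X$. Applying Lemma \ref{lem:usc} to $u$ on $K$ yields some $x^{\ast}\in K$ with $u(x^{\ast}) = \sup_{x\in K} u(x) < \infty$. Translating back via $u=-v$ gives $v(x^{\ast}) = \inf_{x\in K} v(x) > -\infty$, which is exactly the claim that $v$ is bounded below on $K$ and attains its infimum.

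If one preferred a self-contained argument that does not invoke the lemma, I would mimic its proof with signs reversed. The collection $\{x\in X : v(x) > -n\}$ for $n\geq 1$ is an open cover of $K$, since lower-semicontinuity of $v$ is equivalent to openness of each set $\{v > \alpha\}$ for $\alpha\in\mathbb{R}$; compactness then gives a finite subcover, showing $v$ is bounded below. Setting $m = \inf_{x\in K} v(x) \in \mathbb{R}$ and picking a sequence $\{x_k\}\subset K$ with $v(x_k)\to m$, one extracts a subsequence (not relabelled) converging to some $x\in K$, and invokes lower-semicontinuity in the form
\begin{eqnarray}
m \leq v(x) \leq \liminf_{y\to x} v(y) \leq \lim_{k\to\infty} v(x_k) = m, \notag
\end{eqnarray}
so $v(x) = m$.

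Honestly there is no real obstacle here; the whole content is the observation that the definition of lower-semicontinuity was arranged so that the two notions are swapped under $v\mapsto -v$. The only thing to be slightly careful about is that the statement as written uses the letter $u$ where it ought to read $v$, which is plainly a typo. I would therefore present the reduction approach as the proof, since it is a one-line argument and makes the parallelism between the two facts completely transparent.
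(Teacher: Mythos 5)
Your reduction via $u:=-v$ is exactly the argument the paper has in mind: the fact is stated right after the remark that it is the ``obvious analog'' of Lemma~\ref{lem:usc}, so the intended proof is precisely the sign flip you carry out, and your self-contained version correctly mirrors the lemma's proof with inequalities reversed. You are also right that the ``$u$'' in the statement is a typo for ``$v$''; nothing else needs fixing.
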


Furthermore it is a very important technical tool that we can in many cases approximate upper-semicontinuous functions by continuous functions, more precisely we have

\begin{lem}
\label{lem:uscapx}
Let $(X,d)$ be a metric space and let $u$ be upper-semicontinuous and bounded from above on X. Then there exist continuous functions $\phi_n:X\rightarrow\mathbb{R}$ which converge from above to $u$, i.e. $u\leq \phi_1\leq \phi_2\leq \ldots $ and $\lim_{n \rightarrow \infty} \phi_n=u$.
\end{lem}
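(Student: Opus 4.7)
My plan is to use the classical inf‐convolution (Moreau--Yosida style) regularisation. Let $M$ denote a finite upper bound for $u$, and for $n \geq 1$ define
\begin{equation*}
\phi_n(x) = \sup_{y \in X}\bigl(u(y) - n\,d(x,y)\bigr).
\end{equation*}
Taking $y = x$ yields $\phi_n(x) \geq u(x)$, and the sup is bounded above by $M$, so $\phi_n$ maps into $\mathbb{R}$ (assuming $u \not\equiv -\infty$, which is the only interesting case). Monotonicity $\phi_1 \geq \phi_2 \geq \cdots$ is immediate from the fact that $-n\,d(x,y)$ decreases in $n$.

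Next I would check that each $\phi_n$ is continuous. In fact I expect to get $n$-Lipschitz: for any $x_1, x_2 \in X$ and any $y$,
\begin{equation*}
u(y) - n\,d(x_1,y) \leq u(y) - n\,d(x_2,y) + n\,d(x_1,x_2),
\end{equation*}
by the triangle inequality, and taking the sup over $y$ yields $\phi_n(x_1) \leq \phi_n(x_2) + n\,d(x_1,x_2)$. Symmetry gives the Lipschitz estimate, hence continuity.

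The main step is pointwise convergence $\phi_n(x) \to u(x)$, which is where upper semi-continuity is used. Fix $x$ and $\varepsilon > 0$, and first suppose $u(x) > -\infty$. By upper-semicontinuity, choose $\delta > 0$ with $u(y) \leq u(x) + \varepsilon$ for all $y \in B(x,\delta)$. Split the sup defining $\phi_n(x)$ into the contributions from $B(x,\delta)$ and its complement: on the ball we get at most $u(x) + \varepsilon$, while on the complement we get at most $M - n\delta$. Choosing $n$ large enough that $M - n\delta \leq u(x)$, we conclude $\phi_n(x) \leq u(x) + \varepsilon$. Combined with $\phi_n(x) \geq u(x)$, this gives the limit. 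The case $u(x) = -\infty$ is handled similarly: for any large $N$, upper semi-continuity produces a $\delta$ so that $u \leq -N$ on $B(x,\delta)$, and then the same splitting shows $\phi_n(x) \leq -N$ for $n$ large.

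The only real obstacle is this last pointwise-limit argument; the splitting into a small ball (where USC controls $u$) and its complement (where the penalty $-n\,d(x,y)$ dominates for large $n$) is the standard trick and goes through cleanly in any metric space.
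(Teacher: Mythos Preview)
Your proposal is correct and is essentially the same proof as in the paper: the paper also defines $\phi_n(x)=\sup_{y\in X}\bigl(u(y)-n\,d(x,y)\bigr)$, notes the $n$-Lipschitz bound for continuity, and proves $\lim_n\phi_n(x)\le u(x)$ via the same splitting estimate $\phi_n(x)\le\max\bigl(\sup_{B(x,p)}u,\ \sup_X u-np\bigr)$ followed by $p\to 0$ and upper-semicontinuity. The only cosmetic difference is that the paper disposes of the degenerate case $u\equiv-\infty$ at the outset by taking $\phi_n=-n$, whereas you fold $u(x)=-\infty$ into the pointwise-limit argument.
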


\begin{proof} If $u\equiv -\infty$ take $\phi_n=-n$ and the result follows. If $u\not\equiv -\infty$ define $\phi_{n}(x)=\sup_{y\in X}\left(u(y)-nd(x,y)\right)$. The rest of the proof amounts to checking that this definition works; continuity holds since for any $n$ and $s,t\in X$ we have
\begin{eqnarray}
|\phi_n(s)-\phi_n(t)|\leq nd(s,t)\notag
\end{eqnarray} 
That the $\phi_n$ decrease and that $\lim_{n\rightarrow\infty}\phi_n \geq u$ follows from the construction. Furthermore for all $x\in X$ and $p>0$ we conclude that
\begin{eqnarray}
\phi_n\leq \max\left(\sup_{B(x,p)}u,\sup_{X} u-np\right)\notag\\
\Rightarrow \quad \lim_{n\rightarrow\infty} \phi_{n}(x)\leq \sup_{B(x,p)}u\notag
\end{eqnarray} 
Using upper-semicontinuity and taking the limit as $p\rightarrow 0$ gives the inequality $\lim_{n\rightarrow\infty}\phi_n\leq u$.
\end{proof}

Now we can proceed to define subharmonic functions, where we again focus on the two-dimensional case as already explained in the last chapter.

\begin{defn} Let $D$ be an open set in $\mathbb{C}$, then a function $u:D\rightarrow [-\infty,\infty)$ is called \textbf{\index{subharmonic}subharmonic} (at $w$) if it is upper-semicontinuous and there exists $p>0$ such that 
\begin{eqnarray}
u(w)\leq S^r_u(w)=\frac{1}{2\pi}\int_0^{2\pi} u(w+re^{it})dt \qquad \text{for $0\leq r < p$}
\end{eqnarray}
Furthermore $v:U\rightarrow (-\infty,\infty]$ is called \textbf{\index{superharmonic}superharmonic} if $-v$ is subharmonic. 
\end{defn}    

Notice the analogy to the definition of harmonic functions. Subharmonic functions satisfy the 'surface-mean' condition with an inequality - sometimes called 'submean'-inequality; therefore the definition can directly be generalized to $\mathbb{R}^n$. The definition also reveals that there is no preferred way to use subharmonic or superharmonic functions and there also seems to be no agreement in the literature, which terminology to use. We are going to work exclusively with subharmonic functions.

\begin{fact}If $h$ is subharmonic and superharmonic, then $h$ is harmonic. 
\end{fact}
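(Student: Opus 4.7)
The plan is to simply unwind the definitions. If $h$ is subharmonic at $w$, then $h$ is upper-semicontinuous and there is some $p_1>0$ such that $h(w)\leq S^r_h(w)$ for $0\leq r < p_1$. If $h$ is superharmonic at $w$, then by definition $-h$ is subharmonic, so $-h$ is upper-semicontinuous (equivalently $h$ is lower-semicontinuous) and there is some $p_2>0$ with $-h(w)\leq S^r_{-h}(w)=-S^r_h(w)$, i.e.\ $h(w)\geq S^r_h(w)$, for $0\leq r < p_2$. Setting $p=\min(p_1,p_2)$, the two submean/supermean inequalities combine to give exact equality
\begin{eqnarray}
h(w)=S^r_h(w)\qquad\text{for }0\leq r<p.\notag
\end{eqnarray}

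Next I would convert this surface-mean equality into a space-mean equality, which is exactly the form required by the definition of harmonicity. Using integration in polar coordinates in the same way as in the derivation that preceded the definition of $B^r_f(x)$ and $S^r_f(x)$, one has, for every $r<p$,
\begin{eqnarray}
B^r_h(w)=\frac{1}{m(B(w,r))}\int_0^r S^s_h(w)\,\sigma(\partial B(w,s))\,ds
=\frac{h(w)}{m(B(w,r))}\int_0^r \sigma(\partial B(w,s))\,ds=h(w),\notag
\end{eqnarray}
since the last integral is precisely $m(B(w,r))$. Thus $h$ satisfies the defining mean-value identity and is harmonic at $w$.

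The only mild subtlety is checking that $h$ really is a bona fide real-valued function rather than taking the value $\pm\infty$ on a large set, but this is automatic: upper-semicontinuity forces $h<\infty$ and lower-semicontinuity forces $h>-\infty$ pointwise, so $h:D\to\mathbb{R}$ is finite everywhere and in fact continuous (being both upper- and lower-semicontinuous). Consequently all the integrals above are well-defined and finite, and the argument goes through without further adjustment. I do not anticipate any real obstacle — once one notices that the two one-sided inequalities at the same radii collapse to an equality, the polar-coordinate identity between $S^r_h$ and $B^r_h$ finishes the proof immediately.
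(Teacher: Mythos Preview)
Your argument is correct. The paper does not actually give a proof of this fact; it merely remarks that it is obvious and to be understood locally, which is precisely what you have done by combining the two one-sided mean inequalities into an equality and then passing from the surface mean to the space mean via polar coordinates.
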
 

Of course, this obvious fact has to be understood locally as all definitions and statements about harmonicity. Clearly harmonic functions are subharmonic and the following theorem gives even more examples of subharmonic functions.

\begin{thm}
\label{thm:logsub} 
Let $f$ be holomorphic on an open set $U\subset \mathbb{C}$, then $\log|f(z)|=u(z)$ is subharmonic.
\end{thm}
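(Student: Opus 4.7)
The plan is to verify the two requirements in the definition of subharmonicity: upper-semicontinuity on $U$, and the sub-mean inequality on some disc around every point. I would split the argument according to whether $f(w)=0$ or not, since the zero set of $f$ is exactly where $u=\log|f|$ takes the value $-\infty$.

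First I would handle upper-semicontinuity in one stroke. Since $|f|$ is continuous on $U$ and $\log\colon[0,\infty)\to[-\infty,\infty)$ is continuous in the extended sense, the composition $u=\log|f|$ is actually continuous as an extended-real-valued function. Equivalently, for every $\alpha\in\mathbb{R}$ the set $\{z\in U:u(z)<\alpha\}=\{z\in U:|f(z)|<e^{\alpha}\}$ is open by continuity of $|f|$. This gives upper-semicontinuity on all of $U$.

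For the sub-mean condition, fix $w\in U$. If $f(w)=0$, then $u(w)=-\infty$, and the inequality $u(w)\leq S_u^r(w)$ holds trivially (in the extended-real sense) for every $r$ such that $\overline{\Delta(w,r)}\subset U$; no estimate on the integral is even needed. If $f(w)\neq 0$, then by continuity of $f$ there is some $p>0$ with $f$ nowhere zero on the disc $\Delta(w,p)$. Since a disc is simply connected, a holomorphic branch of the logarithm $F=\log f$ exists on $\Delta(w,p)$, with $\mathrm{Re}(F)=\log|f|=u$. By Theorem \ref{thm:reh}, $u$ is harmonic on $\Delta(w,p)$, hence satisfies the mean-value identity; in particular $u(w)=S_u^r(w)$ for all $0\leq r<p$, which is the sub-mean inequality with equality.

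The only subtlety worth flagging is the existence of a holomorphic logarithm $\log f$ on the disc $\Delta(w,p)$; this rests on the standard fact that a nonvanishing holomorphic function on a simply connected domain admits a holomorphic logarithm (equivalently, $f'/f$ has a primitive there by Cauchy's theorem). Everything else is routine: the continuity argument for upper-semicontinuity, and the reduction of the sub-mean condition at zeros of $f$ to the triviality $-\infty\leq S_u^r(w)$. No global integrability issue for $\log|f|$ arises because the sub-mean condition is required only at points where either $u(w)=-\infty$ (trivial) or $u$ is harmonic in a neighborhood (identity).
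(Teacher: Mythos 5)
Your proof is correct and follows essentially the same route as the paper: verify upper-semicontinuity via continuity of $\log|f|$ as an extended-real-valued function, dispose of the sub-mean inequality trivially at zeros of $f$ where $u(w)=-\infty$, and at nonzero points take a local holomorphic branch of $\log f$ on a small disc so that $u=\mathrm{Re}(\log f)$ is harmonic by Theorem \ref{thm:reh}, giving the mean-value equality. You are merely more explicit than the paper about why the branch of logarithm exists (nonvanishing $f$ on a simply connected disc), which is a welcome clarification rather than a deviation.
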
 

\begin{proof}
Since $\limsup_{w\rightarrow z}\log|f(w)|=\log|f(z)|$, upper-semicontinuity holds; it remains to check the second condition. But observe that if $w\in U$ and $u(w)=-\infty$ the 'sub-mean inequality' immediately holds, so assume that $-\infty<u(w)$. Now since in this case locally $\log|f|=Re(\log f)$ it follows from Theorem \ref{thm:reh} that $\log|f|$ is harmonic, where $\log f$ is an analytic branch of the logarithm. Therefore $\log|f|$ is subharmonic.   
\end{proof}

The following example is far more important than it might seem at first and illustrates the fact that subharmonic functions obey weaker conditions than harmonic functions, which is going to provide the necessary flexibility for several arguments we are going to develop later on.

\begin{ex} The first goal is to show that the following function is subharmonic on $\mathbb{C}$
\begin{eqnarray}
u(z)=-\sum_{n=1}^\infty 2^{-n}\log |z-2^{-n}|
\end{eqnarray}
It is easy to see that $u$ is upper-semicontinuous. We want to check the sub-mean inequality directly and therefore need to work out the following integral first
\begin{eqnarray}
\frac{1}{2\pi}\int_0^{2\pi} \log|re^{it}-w|dt=\begin {cases}\log|w| & \text{if $r\leq|w|$}\\ \log r & \text{if $r>|w|$} \end {cases}
\end{eqnarray}
Notice that the formula is immediate for $|w|>r$ since $\log|\mu - w|$ is a harmonic function of $\mu$ so by applying the '(surface)-mean' value property at $\mu=0$ the first part follows. The second part for $|w|<r$ is similar
\begin{eqnarray}
\frac{1}{2\pi}\int_0^{2\pi} \log|re^{it}-w|dt&=& \frac{1}{2\pi}\int_0^{2\pi} \log|r-we^{-it}|dt\notag\\
&=& \frac{1}{2\pi}\int_0^{2\pi} \log|r-\overline{w}e^{it}|dt=\log r\notag
\end{eqnarray}
It remains to consider the case $|w|=r$; using the previous results we get
\begin{eqnarray}
\frac{1}{2\pi}\int_0^{2\pi} \log|re^{it}-w|dt&=& \lim_{p\rightarrow r^-}\frac{1}{2\pi}\int_0^{2\pi}\notag \log|pe^{it}-w|dt=\log r
\end{eqnarray}
where the interchange of limits and integration is jusified by the dominated convergence theorem. We conclude the check of subharmonicity at $w$ by choosing $|w-2^{-n}|>r$ and $r\geq p>0$ 
\begin{eqnarray}
\frac{1}{2\pi}\int_0^{2\pi} u(w+pe^{it})dt&=& \frac{1}{2\pi}\int_0^{2\pi} \left(-\sum_{n=1}^\infty 2^{-n}\log|w+pe^{it}-2^{-n}|\right)dt\notag\\
&=& -\frac{1}{2\pi} \sum_{n=1}^\infty 2^{-n} \int_0^{2\pi} \log|w+pe^{it}-2^{-n}|dt=-\frac{1}{2\pi} \sum_{n=1}^\infty 2^{-n} \log|w-2^{-n}| \notag \\
&\geq& -\sum_{n=1}^\infty 2^{-n} \log|w-2^{-n}|=u(w)\notag
\end{eqnarray}
Now we can show that $u$ is not even continuous at $z=0$. Indeed 
\begin{eqnarray}
u(0)&=&- \sum_{n=1}^\infty 2^{-n} \log (2^{-n})= \sum_{n=1}^\infty n 2^{-n} \log 2\notag
\end{eqnarray} 
which converges by the root test as $\lim_{n\rightarrow \infty}1/2 \sqrt[n]{n}=1/2$. Now pick $z_n=2^{-n}$, then obviously $z_{n}\rightarrow 0$ as $n\rightarrow\infty$, but $\liminf u(z_n)=-\infty$ since for some $n$ we encounter $log|2^{-n}-2^{-n}|=log(0)=\infty$.
\end{ex}

This shows that subharmonic functions do not even need to be continuous in contrast to harmonic functions, which are automatically $C^{\infty}$. Despite the previous example, not all properties of harmonic functions are lost - some can partly (!) be recovered.

\begin{thm}[\index{Maximum Principle - subharmonic version}Maximum Principle - subharmonic version] 
\label{thm:maxsub}
Let $u$ be a subharmonic function on a domain $D$ in $\mathbb{C}$, then 
\begin{enumerate}
	\item if $u$ attains a global maximum on $D$ then $u$ is constant.
	\item if $\limsup_{z\rightarrow w}u(z)\leq 0$ for all $w\in\partial D$, then $u\leq 0$ on $D$.
\end{enumerate}
\end{thm}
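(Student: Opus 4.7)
For part 1, the plan is to mimic the usual connectedness argument for maximum principles. Let $M=\sup_D u$ and set $A=\{z\in D:u(z)=M\}$; I want to show $A$ is both open and closed in $D$, so that connectedness forces $A=D$ once we know $A\neq\emptyset$. Closedness is free: by upper-semicontinuity the set $\{u<M\}$ is open, and since $u\leq M$ on $D$ its complement in $D$ is exactly $A$. For openness, take $w\in A$ and choose $p>0$ small enough that the submean inequality holds for $0\leq r<p$ and $\Delta(w,p)\subset D$. Then for each such $r$,
\begin{eqnarray}
M=u(w)\leq \frac{1}{2\pi}\int_0^{2\pi}u(w+re^{it})\,dt\leq M,\notag
\end{eqnarray}
so equality forces $u(w+re^{it})=M$ for almost every $t$. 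The subtle point is upgrading "a.e." to "everywhere" on the circle: if $u(w+re^{it_0})<M$ then upper-semicontinuity (of the composition with $t\mapsto w+re^{it}$) gives $u<M$ on a whole arc around $t_0$, which has positive measure and contradicts the a.e. equality. Hence $u\equiv M$ on $\Delta(w,p)$, so $A$ is open.

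For part 2, my idea is to reduce to part 1 by showing that if the conclusion fails then the supremum is actually attained in $D$. Suppose, for contradiction, that $u(z_0)>0$ for some $z_0\in D$, fix $\varepsilon$ with $0<\varepsilon<u(z_0)$, and consider the sublevel set $K_\varepsilon=\{z\in D:u(z)\geq \varepsilon\}$. By upper-semicontinuity $K_\varepsilon$ is closed in $D$, and the boundary hypothesis (taking closures in $\mathbb{C}_\infty$, as agreed upon) prevents $K_\varepsilon$ from having any limit point on $\partial D$: such a point $w$ would give a sequence $z_n\to w$ with $u(z_n)\geq\varepsilon$, forcing $\limsup_{z\to w}u(z)\geq\varepsilon>0$. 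Since $\mathbb{C}_\infty$ is compact and the closure of $K_\varepsilon$ lies inside $D$, $K_\varepsilon$ is in fact compact in $\mathbb{C}$.

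Now Lemma \ref{lem:usc} applies: $u$ attains its supremum on $K_\varepsilon$, and because $u<\varepsilon\leq u(z_0)$ off of $K_\varepsilon$, this is actually the global supremum of $u$ on $D$. So $u$ attains a global (hence local) maximum on $D$, and by part 1 it is constant, say $u\equiv c\geq \varepsilon>0$. But then $\limsup_{z\to w}u(z)=c>0$ for every $w\in\partial D$, contradicting the hypothesis. Therefore $u\leq 0$ on $D$.

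The main obstacle I anticipate is the openness step in part 1: one has to be careful that subharmonicity only delivers the submean inequality almost everywhere on each small circle in a distributional sense, and the passage to pointwise equality on the whole disc genuinely uses both $u\leq M$ and upper-semicontinuity — the argument would fail for merely measurable $u$. The boundary-in-$\mathbb{C}_\infty$ convention is also essential in part 2 so that the argument covers unbounded domains such as $D=\mathbb{C}$ without modification.
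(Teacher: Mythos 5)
Your proof is correct, and part 1 follows essentially the same connectedness argument as the paper (you are actually more careful: the paper asserts that $u=M$ on small circles around a maximizer without explicitly upgrading the almost-everywhere equality coming from the integral to equality everywhere, whereas your upper-semicontinuity argument closes that gap). Part 2, however, takes a genuinely different route. The paper extends $u$ to $\partial D$ via $u(w)=\limsup_{z\rightarrow w}u(z)$, observes that this extension is upper-semicontinuous on the compact set $\overline{D}$, applies Lemma \ref{lem:usc} there, and then splits into cases according to whether the maximizer lies in $D$ or on $\partial D$. You instead argue by contradiction through the superlevel set $K_\varepsilon=\{u\geq\varepsilon\}$, showing the boundary hypothesis keeps $K_\varepsilon$ away from $\partial D$ in $\mathbb{C}_\infty$ and hence compact, so the supremum is attained inside $D$ and part 1 forces $u$ to be a positive constant, contradicting the hypothesis. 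Both hinge on the same three ingredients (compactness with respect to $\mathbb{C}_\infty$, Lemma \ref{lem:usc}, and part 1), but the paper's version is a little more economical, while yours avoids having to verify upper-semicontinuity of the boundary extension, a point the paper glosses over. Two small slips in your write-up: $K_\varepsilon$ is a \emph{super}level set, not a sublevel set; and the submean inequality holds pointwise, not merely ``almost everywhere in a distributional sense'' --- the a.e.\ statement enters only when you extract equality of the integrand from equality of the integrals.
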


\begin{proof}
Beginning with the first part we have that $u$ attains a global maximum $M$ on $D$. Then define
\begin{eqnarray}
A=\{z\in D:u(z)<M\}\qquad \text{and}\qquad B=\{z\in D:u(z)=M\}=D\backslash A\notag
\end{eqnarray}
Since $u$ is subharmonic it is upper-semicontinuous and therefore $A$ is open. By the submean inequality $B$ is open as well since if $u(z)=M$ there exist small circles around $z$ on which $u(z)=M$. By the usual argument that $A\cup B=D$, $A\cap B=\emptyset$, $D$ connected and $B\neq \emptyset$ we obtain $B=D$ and therefore $u$ is constant on $D$. For the second part of the theorem extend $u$ to $\partial D$ by defining
\begin{eqnarray}
u(w)=\limsup_{z\rightarrow w}u(z) \qquad \text{for $w\in \partial D$}\notag
\end{eqnarray}    
By construction $u$ is upper-semicontinuous on $\overline{D}$. Since $\overline{D}$ is compact Lemma \ref{lem:usc} on maxima of upper-semicontinuous functions applies to give $w\in \overline{D}$ such that $u(w)=\sup_{z\in\overline{D}}u(z)$. If $w\in\partial D$ then we have $u(z)\leq 0$ immediately. If $w\in D$ then the first part of the theorem implies that $u$ is constant on $D$ and therefore constant on $\overline{D}$ and it follows that $u\leq 0$ on $D$.     
\end{proof}

It is important to notice that in the maximum principle for harmonic functions it suffices to attain a local maximum whereas the maximum principle for subharmonic functions requires a global maximum. 

\begin{ex} To see the difference between the two maximum principles consider $u(z)=\max(Re(z),0)$. One verifies directly from the definition that $u$ is subharmonic. Observe that $u$ attains a local maximum (in the left-half plane) and also a global minimum (i.e. 0). Nevertheless, $u$ is not constant.  
\end{ex}

For completeness we remark that the definition of subharmonicity via the local sub-mean inequality can be extended.

\begin{fact}[\index{Global Submean Inequality}Global Submean Inequality, \cite{4}]
Let $U\subset \mathbb{C}$ be open. Then $u:U\rightarrow\mathbb{C}$ is subharmonic on $U$ if and only if for any $\overline{\Delta}(w,r)\subset U$ we have that 
\begin{eqnarray}
u(w)\leq\frac{1}{2\pi}\int_{0}^{2\pi}u(w+re^{it})dt
\end{eqnarray}
\end{fact}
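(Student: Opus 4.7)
The plan is to treat the two directions asymmetrically; one is routine and the other carries the content. The backward direction is immediate: if the displayed inequality holds for \emph{every} closed disc $\overline{\Delta}(w,r)\subset U$, then in particular it holds for all sufficiently small $r$, so together with upper-semicontinuity this recovers exactly the local definition of subharmonic.

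For the forward direction, fix $w$ and $r$ with $\overline{\Delta}(w,r)\subset U$. The strategy is to dominate $u$ on $\Delta(w,r)$ by the harmonic function having the same boundary values, and then read off the submean inequality from the harmonic mean value property at the center. First I would restrict $u$ to the compact circle $\partial \Delta(w,r)$, where by Lemma \ref{lem:usc} it is bounded above, and invoke Lemma \ref{lem:uscapx} to produce continuous functions $\phi_n \colon \partial\Delta(w,r)\to\mathbb{R}$ with $\phi_n \downarrow u$ pointwise.

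Next, for each $n$ I would let $P_n$ be the Poisson integral of $\phi_n$, so that $P_n$ is harmonic on $\Delta(w,r)$, continuous on $\overline{\Delta}(w,r)$ with boundary values $\phi_n$, and satisfies $P_n(w)=\frac{1}{2\pi}\int_0^{2\pi}\phi_n(w+re^{it})\,dt$ by the harmonic mean value property at the center. The function $u-P_n$ is then subharmonic on $\Delta(w,r)$ (sum of subharmonic $u$ and harmonic, hence subharmonic, $-P_n$), and for every boundary point $w'\in\partial\Delta(w,r)$ upper-semicontinuity of $u$ combined with $\phi_n\geq u$ on the boundary gives
\begin{eqnarray}
\limsup_{z\to w'}\bigl(u(z)-P_n(z)\bigr)\leq u(w')-\phi_n(w')\leq 0. \notag
\end{eqnarray}
By the subharmonic maximum principle (Theorem \ref{thm:maxsub}, part 2), $u\leq P_n$ on $\Delta(w,r)$, and evaluating at the center yields $u(w)\leq \frac{1}{2\pi}\int_0^{2\pi}\phi_n(w+re^{it})\,dt$. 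Finally, letting $n\to\infty$ and applying the monotone convergence theorem to the decreasing sequence $\phi_n$ (which is dominated above by the continuous, and hence bounded, function $\phi_1$) gives the desired bound $u(w)\leq \frac{1}{2\pi}\int_0^{2\pi}u(w+re^{it})\,dt$, with the usual convention that the right-hand side equals $-\infty$ if the integral diverges.

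The main obstacle is the appeal to the Dirichlet problem on the disc: the harmonic majorant $P_n$ requires the Poisson integral representation, which has not yet been developed in these notes at the point where this fact is stated. A self-contained alternative would be to establish directly that $r\mapsto \frac{1}{2\pi}\int_0^{2\pi}u(w+re^{it})\,dt$ is non-decreasing on $\bigl(0,d(w,\partial U)\bigr)$ via an open-closed connectedness argument using the local submean at nearby centers, but carefully transferring a submean inequality centered at neighboring points into a statement about the radial average centered at $w$ is the delicate technical step there.
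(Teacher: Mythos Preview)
The paper does not actually prove this statement: it is recorded as a \emph{Fact} with a bare citation to \cite{4} and no argument given. So there is no ``paper's own proof'' to compare against, and your proposal is in effect supplying what the paper outsourced.

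Your argument is correct and is precisely the standard proof one finds in Ransford: approximate $u|_{\partial\Delta(w,r)}$ from above by continuous $\phi_n$, solve the Dirichlet problem on the disc via the Poisson integral to get harmonic majorants $P_n$, apply the subharmonic maximum principle to $u-P_n$, evaluate at the centre, and pass to the limit by monotone convergence. The backward direction is, as you say, immediate once upper-semicontinuity is taken as part of the hypothesis (the statement in the paper is slightly sloppy here, writing $u:U\to\mathbb{C}$ when $[-\infty,\infty)$ is meant and suppressing the standing u.s.c.\ assumption).

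Your self-diagnosed obstacle is exactly right and worth emphasising: in \emph{this} paper's ordering, the Poisson integral formula (Theorem~\ref{thm:posint}) appears only in Part~II, long after the Global Submean Inequality is stated and, more to the point, after it is \emph{used} in the proof of the Integrability Theorem~\ref{thm:int}. So inserting your proof verbatim at the position of the Fact would create a forward reference. Ransford avoids this by developing the Poisson integral for the disc earlier; the present paper simply sidesteps the issue by citing rather than proving. Your suggested alternative---a direct monotonicity argument for $r\mapsto S_u^r(w)$---is the natural self-contained route, though as you note the clean way to do it still ultimately rests on comparing $u$ with a harmonic function on the disc, which is hard to manufacture without the Poisson kernel.
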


Although we have seen many ways how to construct subharmonic functions, it is usually quite complicated to check whether a function is subharmonic. The following general criterion turns out to be useful in many cases.

\begin{thm}[\index{Criterion for Subharmonicity}Criterion for Subharmonicity]
\label{thm:subcrt}
Let $(X,M,\mu)$ be a measure space with $\mu$ being a finite measure. Consider an open set $U\subset \mathbb{C}$ and $v:U\times X\rightarrow [-\infty,\infty)$ such that
\begin{enumerate}
	\item $v$ is measurable on $U\times X$
	\item $z\mapsto v(z,w)$ is subharmonic on $U$ for all $w\in X$
	\item $z\mapsto \sup_{w\in X} v(z,w)$ is locally bounded above on $U$ 
\end{enumerate}
then it follows that $u(z)=\int_X v(z,w)d\mu(w)$ is subharmonic on U. 
\end{thm}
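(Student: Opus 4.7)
The plan is to verify the two defining conditions of subharmonicity for $u$ separately: upper-semicontinuity, and the submean inequality (in the global form given by the preceding Fact). The crucial technical input in both steps will be condition 3 together with the finiteness of $\mu$, which together supply a $\mu$-integrable dominating function on neighborhoods. In particular, observe first that since $v(z,w) \leq \sup_{w'} v(z,w')$ and the latter is locally bounded, we have $u(z) \leq \mu(X) \sup_{w} v(z,w) < \infty$ on every relatively compact subset of $U$, so $u$ takes values in $[-\infty, \infty)$ as required.

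For the submean inequality, I would fix $\overline{\Delta}(z_0, r) \subset U$, choose $M$ with $v \leq M$ on $\overline{\Delta}(z_0, r) \times X$, and apply Tonelli's theorem to the nonnegative function $M - v(z_0 + re^{it}, w)$ on $[0, 2\pi] \times X$. Joint measurability follows from condition 1 composed with the continuous parametrization $t \mapsto z_0 + re^{it}$. Since the pieces corresponding to the constant $M$ have finite integrals, the exchange is legitimate and yields
\begin{eqnarray}
\frac{1}{2\pi} \int_0^{2\pi} u(z_0 + re^{it}) dt = \int_X \left( \frac{1}{2\pi} \int_0^{2\pi} v(z_0 + re^{it}, w) dt \right) d\mu(w) \geq \int_X v(z_0, w) d\mu(w) = u(z_0), \notag
\end{eqnarray}
where the inequality applies the global submean inequality to each subharmonic slice $v(\cdot, w)$.

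For upper-semicontinuity at $z_0 \in U$, I would pick a neighborhood $V$ of $z_0$ and $M$ with $v \leq M$ on $V \times X$, then take an arbitrary sequence $z_n \to z_0$ in $V$. Each slice $v(\cdot, w)$ is upper-semicontinuous (this is built into being subharmonic), so $\limsup_n v(z_n, w) \leq v(z_0, w)$ for every $w$. Since $v(z_n, w) \leq M$ and the constant $M$ lies in $L^1(\mu)$ by finiteness of $\mu$, the reverse Fatou lemma applies and gives
\begin{eqnarray}
\limsup_{n \to \infty} u(z_n) \leq \int_X \limsup_{n \to \infty} v(z_n, w) d\mu(w) \leq \int_X v(z_0, w) d\mu(w) = u(z_0). \notag
\end{eqnarray}

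The main obstacle in both steps is justifying the interchange — of two integrals in the first case, of $\limsup$ with the integral in the second — and in each case it hinges on producing a $\mu$-integrable majorant. That majorant is precisely what condition 3 combined with the finiteness of $\mu$ supplies; without either ingredient the argument breaks down. The only other delicate point is the joint measurability needed to invoke Tonelli, which is immediate from condition 1 and continuity of the circle parametrization.
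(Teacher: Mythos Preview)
Your proof is correct and follows essentially the same approach as the paper: reverse Fatou (using the majorant from condition 3 and finiteness of $\mu$) for upper-semicontinuity, and Fubini/Tonelli for the submean inequality. If anything you are a bit more careful than the paper in spelling out why the interchanges are justified---explicitly applying Tonelli to the nonnegative function $M-v$ and explicitly invoking reverse Fatou with the integrable majorant---whereas the paper simply cites ``Fatou'' and ``Fubini'' and gestures at assumption~3 for the justification.
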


\begin{proof}
Note that if $u$ is subharmonic on each relatively compact subdomain $D$ of $U$ then $u$ is subharmonic on $U$ by considering a compact exhaustion. So let's consider only such a relatively compact subdomain $D\subset U$. By assumption \textit{3.} we have $\sup_{w\in D}v(z,w)<\infty$, so if $s_n\rightarrow s$ in $D$, then we can use Fatou's lemma and assumption \textit{2.} to get
\begin{eqnarray}
\limsup_{n\rightarrow \infty}u(s_n)\leq \int_X \limsup_{n\rightarrow \infty}v(s_n,w)d\mu(w)\leq \int_X v(s,w)d\mu(w)=u(s)\notag
\end{eqnarray}
Therefore $u$ is upper-semicontinuous. Then to prove subharmonicity we pick $\overline{\Delta(s,p)}\subset D$ and use Fubini's theorem (justified by the same reasoning as Fatou previously) to check the submean inequality
\begin{eqnarray}
\frac{1}{2\pi}\int_0^{2\pi} u(s+pe^{it})dt = \int_X \frac{1}{2\pi}\int_0^{2\pi} v(s+pe^{it},w)dtd\mu(w)\geq  \int_X  v(s,w)d\mu(w)=u(s)\notag
\end{eqnarray} 
\end{proof}

So far we have defined subharmonic functions via integral inequalities, which - although being well-defined - certainly might involve '$-\infty$' as a result of integration since upper-semicontinuity only imposes a bound from above and not from below. But the situation is better then one might guess at first

\begin{thm}[\index{Integrability}Integrability]
\label{thm:int} 
If $D$ is a domain in $\mathbb{C}$ and $u\not\equiv -\infty$ is subharmonic on $D$, then $u\in L^1_{loc}(D)$.
\end{thm}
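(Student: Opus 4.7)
The plan is to argue by connectedness, partitioning $D$ into the set where $u$ is locally integrable and the set where it is not, and showing both are open. Concretely, let $A = \{z \in D : u \in L^{1}(V) \text{ for some open neighborhood } V \text{ of } z\}$. Openness of $A$ is immediate from the definition. The hypothesis $u \not\equiv -\infty$ will eventually force $A = D$, which gives the conclusion after covering any compact $K \subset D$ by finitely many such neighborhoods.

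The main work is to show that $D \setminus A$ is open; this is the step I expect to be the real obstacle, since it requires extracting pointwise information about $u$ from a failure of integrability. The key tool is a disc (space-mean) version of the submean inequality: integrating the defining surface-mean inequality in polar coordinates yields
\begin{eqnarray}
u(w) \leq \frac{1}{\pi r^{2}} \int_{\Delta(w,r)} u \, dm
\end{eqnarray}
whenever $\overline{\Delta}(w,r) \subset D$, using the Global Submean Inequality stated above.

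Now fix $z_{0} \in D \setminus A$ and choose $R > 0$ with $\overline{\Delta}(z_{0}, 3R) \subset D$. By upper-semicontinuity and Lemma \ref{lem:usc}, $u$ is bounded above on the compact set $\overline{\Delta}(z_{0}, 3R)$, so $u^{+}$ is integrable on every subdisc. Since $u$ is not integrable on $\Delta(z_{0}, R)$, necessarily $\int_{\Delta(z_{0},R)} u \, dm = -\infty$. For any $w \in \Delta(z_{0}, R)$ the inclusion $\Delta(z_{0}, R) \subset \Delta(w, 2R) \subset D$ holds, and since $u^{+}$ is integrable on the complementary annular region, we get $\int_{\Delta(w, 2R)} u \, dm = -\infty$. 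The disc submean inequality above (with radius $2R$) then forces $u(w) = -\infty$. Hence $u \equiv -\infty$ on $\Delta(z_{0}, R)$, which in turn means every point of $\Delta(z_{0}, R)$ lies in $D \setminus A$; this gives openness.

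Finally, connectedness of $D$ forces $A = D$ or $A = \emptyset$. In the second case, the argument above applied to every point shows $u \equiv -\infty$ throughout $D$ (every $z \in D$ sits in a disc on which $u$ vanishes identically at $-\infty$), contradicting the hypothesis $u \not\equiv -\infty$. Thus $A = D$ and $u \in L^{1}_{\mathrm{loc}}(D)$.
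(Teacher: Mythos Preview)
Your proof is correct and follows essentially the same route as the paper's: a connectedness argument splitting $D$ into the locally integrable set $A$ and its complement, with the disc submean inequality (obtained by radially integrating the global surface submean inequality) used to show that failure of local integrability forces $u\equiv -\infty$ on a whole disc. The only cosmetic difference is that the paper uses a variable radius $p_0 = p + |w - w_0|$ for the intermediate disc while you use the fixed radius $2R$; both serve the same purpose of trapping $\Delta(z_0,R)$ inside a disc centered at $w$ and contained in $\overline{\Delta}(z_0,3R)$.
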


\begin{proof}
First notice that $u$ is measurable by upper-semicontinuity. Let $K\subset D$ be compact. Since $K$ is compact finitely many discs of the form $\Delta(w_i,p_i)$ will cover $K$ for any open cover of discs, so it suffices to show that if for any $w\in D$, $\exists p>0$ such that 
\begin{eqnarray}
\int_{\Delta(w,p)}|u|dm<\infty\notag
\end{eqnarray}
Define $A=\{w\in D:\left(\exists p>0 : \int_{\Delta(w,p)}|u|dm<\infty\right)\}$ and $B=D\backslash A$. Since $D$ is a domain it is in particular connected. If we can show that $A$ and $B$ are both open it follows that $A\cup B=D$ and by connectivity either $B$ or $A$ are empty. But $u\not\equiv -\infty$ hence $A\neq\emptyset$ so that $A=D$ and the result follows.\\

So it remains to show that $A$ and $B$ are open. Let $w\in A$, so $\exists p>0$ such that $\int_{\Delta(w,p)}|u|dm<\infty$. Now we construct for any $w_0$ a disc inside $\Delta(w,p)$ belonging to $A$. So define $p_0=p-|w-w_0|$ so that $\Delta(w_0,p_0)\subset \Delta(w,p)$. Then indeed
\begin{eqnarray}
\int_{\Delta(w_0,p_0)}|u|dm\leq \int_{\Delta(w,p)}|u|dm<\infty\notag
\end{eqnarray}    
For the set $B$ we proceed in a more subtle way; pick $w\in B$, then there exists $p>0$ so that $\overline{\Delta(w,3p)}\subset D$ and $\int_{\Delta(w,p)}|u|dm=\infty$ as $w\in B$. Now for $w_0\in\Delta(w,p)$ define $p_0=p+|w-w_0|$ so that we have constructed a disc $\Delta(w_0,p_0)$ 'between' the two previous ones, i.e. $\Delta(w,p)\subseteq \Delta(w_0,p_0) \subseteq \Delta(w,3p)$. Notice that the upper-semicontinuity of $u$ on $\Delta(w_0,p_0)$ implies that 
\begin{eqnarray}
\int_{\Delta(w_0,p_0)}u \quad dm=-\infty\notag
\end{eqnarray}
But $u$ is subharmonic and therefore satisfies the global submean inequality and this implies for $0< r\leq p_0$
\begin{eqnarray}
u(w_0)&\leq& \frac{1}{2\pi}\int_0^{2\pi} u(w_0+re^{it})dt\notag\\
\Leftrightarrow \quad 2\pi r u(w_0) &\leq& r \int_0^{2\pi} u(w_0+re^{it})dt\notag
\end{eqnarray}
Then by integrating with respect to $r$ between $0$ and $p_0$ on both sides of the equation we get that
\begin{eqnarray}
\int_0^{p_0} 2\pi r u(w_0)dr &\leq& \int_0^{p_0}r \int_0^{2\pi} u(w_0+re^{it})dtdr\notag\\
\Rightarrow \quad  (p_0)^2 \pi u(w_0)  &\leq& \int_{\Delta(w_0,p_0)} u(w_0+re^{it})dm=-\infty\notag\\
\Rightarrow \quad u=-\infty \quad \text{on $\Delta(w,p)$ } \Rightarrow \quad \text{B is open}\notag  
\end{eqnarray}
\end{proof}

This shows that in general the integrals on compact sets for subharmonic functions are as nice as one could hope for; in addition it is also sensible to worry what happens if the circle of the submean inequality shrinks to a point.

\begin{lem} 
\label{lem:tech}
Let $u$ be subharmonic on $\Delta(0,p)$ with $u\not\equiv -\infty$ then for $0<r<p$ we have
\begin{enumerate}
	\item $\sup_{|z|=r}u(z)\geq \frac{1}{2\pi}\int_0^{2\pi} u(re^{it})dt\geq u(0)$
	\item $S_u^r(0):=S_u(r)=\frac{1}{2\pi}\int_0^{2\pi}u(re^{it})dt\rightarrow u(0) \quad \text{as $r\rightarrow 0$}$
	\item $\frac{1}{2\pi}\int_0^{2\pi} u(re^{it})dt$ is an increasing function of $r>0$ (on $\Delta(0,p)$)
\end{enumerate}
\end{lem}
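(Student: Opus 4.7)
Parts 1 and 2 are routine. For Part 1, the first inequality holds because the pointwise supremum on a compact circle dominates the circular average, and the second is the Global Submean Inequality stated just above, applied on $\overline{\Delta(0,r)}\subset\Delta(0,p)$. For Part 2, if $u(0)>-\infty$ and $\varepsilon>0$, upper-semicontinuity yields $\delta>0$ with $u(z)<u(0)+\varepsilon$ on $\Delta(0,\delta)$; integrating around the circle gives $S_u(r)\leq u(0)+\varepsilon$ for $r<\delta$, so $\limsup_{r\to 0}S_u(r)\leq u(0)$, while Part 1 supplies the matching lower bound. The case $u(0)=-\infty$ is identical with $-N$ replacing $u(0)+\varepsilon$.

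Part 3 is the crux, and I plan to reduce to the smooth case via convolution with a radial approximate identity $\chi_\delta$ from Definition \ref{defn:ai}. Set $u_\delta=u\ast\chi_\delta$ on $\Delta(0,p-\delta)$. By Fact \ref{fact:conv}, $u_\delta\in C^{\infty}$, and a Fubini computation using the global submean inequality of $u$ shows that $u_\delta$ is again subharmonic. For smooth subharmonic $u_\delta$ a Taylor expansion at any point $w$ produces
$$S^{\rho}_{u_\delta}(w)-u_\delta(w)=\tfrac{\rho^{2}}{4}\Delta u_\delta(w)+o(\rho^{2})\qquad (\rho\to 0),$$
which combined with the submean inequality forces $\Delta u_\delta\geq 0$ pointwise. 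Green's theorem on $\Delta(0,r)$ then yields
$$2\pi r\,\frac{d}{dr}S_{u_\delta}(r)=\int_{\partial\Delta(0,r)}\frac{\partial u_\delta}{\partial n}\,d\sigma=\int_{\Delta(0,r)}\Delta u_\delta\,dm\geq 0,$$
so $r\mapsto S_{u_\delta}(r)$ is nondecreasing on $(0,p-\delta)$.

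It remains to send $\delta\to 0$. The submean inequality for $u$ gives $u_\delta(z)\geq u(z)$, and upper-semicontinuity together with a reverse-Fatou argument applied to $u(z-\cdot)$ yields $\limsup_{\delta\to 0}u_\delta(z)\leq u(z)$; hence $u_\delta\to u$ pointwise on $\Delta(0,p)$. Since $u$ is bounded above on compact subsets of $\Delta(0,p)$ by Lemma \ref{lem:usc}, the family $\{u_\delta\}$ is uniformly bounded above by some constant $M$ on each circle $\partial\Delta(0,r)$ with $r<p$ once $\delta$ is small; applying Fatou's lemma to the nonnegative integrands $M-u_\delta$ gives $\limsup_{\delta\to 0}S_{u_\delta}(r)\leq S_u(r)$, while $u_\delta\geq u$ furnishes the reverse inequality. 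Hence $S_{u_\delta}(r)\to S_u(r)$, and the monotonicity of each $S_{u_\delta}$ transfers to $S_u$. I expect the main difficulty to lie in this final limiting step when $u$ takes the value $-\infty$ on a large set: ordinary dominated convergence fails, but the one-sided bound from upper-semicontinuity is exactly what makes the reverse-Fatou trick go through.
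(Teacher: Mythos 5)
Your Parts 1 and 2 match the paper's proof essentially line for line. Part 3 is where you diverge, and your argument, while correct, takes a genuinely different and considerably longer route.

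The paper's proof of Part 3 is a two-line argument: it applies the Criterion for Subharmonicity (Theorem \ref{thm:subcrt}) to the function $z\mapsto \frac{1}{2\pi}\int_0^{2\pi}u(ze^{it})\,dt$, which is both subharmonic and radial, and then invokes the maximum principle on $\Delta(0,r_2)$ to conclude $S_u(r_1)\le S_u(r_2)$. Your approach instead goes the classical route of smoothing: mollify, show $\Delta u_\delta\ge 0$ via a Taylor expansion, integrate $\Delta u_\delta$ over $\Delta(0,r)$ and convert to a boundary term by Green's identity, and finally pass to the limit in $\delta$. This is a perfectly valid strategy, and it has the pedagogical virtue of making the characterization ``subharmonic $\Leftrightarrow$ $\Delta u\ge 0$'' explicit, which the paper only ever uses implicitly. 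But it costs you more work, and it forces you to navigate a potential circularity that the paper's argument sidesteps entirely: the Smoothing Theorem (\ref{thm:smoothing}) in this paper is proved \emph{using} the present Lemma, so you cannot cite it here. You handle this correctly — you rebuild the smoothness and subharmonicity of $u_\delta$ directly from Fact \ref{fact:conv} and a Fubini computation, and you replace the ``$u_\delta\downarrow u$'' step (which in the paper comes from this very Lemma plus monotone convergence) by a reverse-Fatou argument using the one-sided bound from upper-semicontinuity and local boundedness above. That is exactly the right fix, and it is worth emphasizing in any writeup that the monotone decrease of $u_\delta$ in $\delta$ must \emph{not} be assumed, since it is logically downstream of the statement being proved. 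In short: your proof is sound and non-circular, but the paper's argument — once one has the Criterion for Subharmonicity in hand — is the more economical one.
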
 

\begin{proof}
The first inequality follows from a direct ML-estimate
\begin{eqnarray}
S_u(r)=\frac{1}{2\pi}\int_0^{2\pi} u(re^{it})dt \leq \frac{1}{2\pi}\int_0^{2\pi} \sup_{|z|=r}u(z)dt=\sup_{|z|=r}u(z)\notag
\end{eqnarray}
and the second from the fact that $u$ is subharmonic using the definition. Continuing with \textit{2.} we note that by the upper-semicontinuity of $u$
\begin{eqnarray}
\limsup_{r\rightarrow 0} \left(\sup_{|z|=r}u(z)\right)\leq u(0)\notag
\end{eqnarray}
Now apply \textit{1.} to conclude the proof of the second part. For the third claim apply Theorem \ref{thm:subcrt} to see that $S_u(r)$ is subharmonic on $\Delta(0,p)$. Given $r_1,r_2\in [0,p)$ with $r_1<r_2$ apply the maximum principle for subharmonic functions to $S_u(r)$ on $\Delta(0,r_2)$ and the fact that $S_u(r)$ is a radial function to get the relation
\begin{eqnarray}
S_u(r_1)\leq \max_{\partial\Delta(0,r_2)}S_u(r)=S_u(r_2)\notag
\end{eqnarray}   
which exactly states that $S_u(r)$ is increasing.
\end{proof}

The previous technical lemma suffices to prove a standard result about the approximation of subharmonic functions via convolutions.

\begin{thm}[\index{Smoothing}Smoothing] 
\label{thm:smoothing}
Let $u$ be subharmonic on a domain $D$ in $\mathbb{C}$ such that $u\not\equiv -\infty$. Let $\chi:\mathbb{C}\rightarrow\mathbb{R}$ be an approximate identity.
For $r>0$ and $z\in\mathbb{C}$ define as usual (see also Definition \ref{defn:ai})
\begin{eqnarray}
\chi_{r}(z)&=&\frac{1}{r^2}\chi_{r}\left(\frac{z}{r}\right) \notag\\
D_{r}&=& \{ z \in D : d\left(z,\partial D\right)>r \} \notag
\end{eqnarray}
Then $u\ast \chi_{r}\in C^{\infty}(D_{r})$ and $u$ is subharmonic in $D_r$ for any $r>0$. Also $u\ast \chi_{r}\downarrow u$ on $D$ as $r\downarrow 0$. 
\end{thm}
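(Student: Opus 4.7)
The plan is to verify the three claims separately: $C^\infty$ smoothness of $u*\chi_r$, subharmonicity on $D_r$, and the monotone convergence $u*\chi_r\downarrow u$ as $r\downarrow 0$. Smoothness is essentially immediate from the machinery already developed: since $u\not\equiv -\infty$, Theorem \ref{thm:int} gives $u\in L^1_{\mathrm{loc}}(D)$; since $\chi_r\in C^\infty(\mathbb{C})$ has compact support in $B(0,r)$, Fact \ref{fact:conv} directly yields $u*\chi_r\in C^\infty(D_r)$.

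For subharmonicity of $u*\chi_r$ on $D_r$, I would invoke the Criterion for Subharmonicity (Theorem \ref{thm:subcrt}) applied to $v(z,w)=u(z-w)\chi_r(w)$ on $D_r\times B(0,r)$ with the finite measure $dm$ restricted to $B(0,r)$. For each fixed $w$, translation invariance of the submean inequality makes $z\mapsto u(z-w)$ subharmonic on $D_r$, and multiplication by the nonnegative scalar $\chi_r(w)$ preserves subharmonicity. The local upper bound in $z$ follows because for $z$ in a compact $K\subset D_r$ and $w\in \overline{B(0,r)}$, the point $z-w$ lies in the compact set $K-\overline{B(0,r)}\subset D$, on which $u$ is bounded above by Lemma \ref{lem:usc}. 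Alternatively one could verify the submean inequality for $u*\chi_r$ directly, swapping the averaging integral and the convolution integral via Fubini and applying the submean inequality of $u$ pointwise at each $z-w$.

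The substantive claim is the monotone convergence. Exploiting the radial nature of $\chi$, the substitution $w=r\zeta$ followed by polar coordinates $\zeta=\rho e^{i\phi}$ recasts the convolution as
\begin{eqnarray}
(u*\chi_r)(z)=2\pi\int_0^1 \chi(\rho)\,\rho\,S_u^{r\rho}(z)\,d\rho,\notag
\end{eqnarray}
since the inner angular integral equals $2\pi S_u^{r\rho}(z)$ after a shift $\phi\mapsto\phi+\pi$. By Lemma \ref{lem:tech}.3 the surface mean $S_u^{r\rho}(z)$ is increasing in $r\rho$, so as $r$ decreases the integrand decreases, yielding the monotonicity $(u*\chi_r)(z)\downarrow$; by Lemma \ref{lem:tech}.2 the integrand converges pointwise to $\chi(\rho)\rho\, u(z)$ for every fixed $\rho\in(0,1]$. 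The hard part will be justifying the limit interchange when $u(z)=-\infty$ is permitted, but the decreasing monotonicity of the integrand together with the fixed dominant $\chi(\rho)\rho\, S_u^{r_0\rho}(z)$ at any chosen $r_0>0$ (integrable in $\rho$ thanks to upper-semicontinuity bounds of $u$ on a compact neighborhood) allows the monotone convergence theorem to apply, producing
\begin{eqnarray}
\lim_{r\to 0^+}(u*\chi_r)(z)=u(z)\int_{\mathbb{C}}\chi\,dm=u(z),\notag
\end{eqnarray}
which completes the argument.
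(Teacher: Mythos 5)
Your proof is correct and follows the same route as the paper: smoothness via Theorem \ref{thm:int} and Fact \ref{fact:conv}, subharmonicity via the criterion of Theorem \ref{thm:subcrt}, and the monotone convergence via a polar-coordinates reduction that expresses $u\ast\chi_r$ as an average of the surface means $S^{r\rho}_u$, to which Lemma \ref{lem:tech} and the monotone convergence theorem apply. The only deviations — absorbing $\chi_r$ into the integrand of the criterion rather than into the measure, and spelling out the integrable dominant that licenses MCT for a decreasing family — are cosmetic refinements, not a different argument.
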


\begin{proof}
By the integrability theorem (see \ref{thm:int}) and Fact \ref{fact:conv} it follows that $u\ast\chi_{r}\in C^{\infty}(D_{r})$ as claimed. To see that $u\ast\chi_{r}$ is subharmonic apply the criterion for subharmonicity (Theorem \ref{thm:subcrt}) for $(X,M,\mu)=(\mathbb{C}, \text{Borel }\sigma\text{-algebra}, \chi dm)$ and $v(z,w)=u(z-w)$.\\

For convergence first fix $c\in D$ and consider $r$ such that  $0<r<d(c,\partial D)$. Then by the definition of convolution and the fact that $\chi$ is a radial function we obtain
\begin{eqnarray}
u\ast \chi_r(c)=\int_0^{2\pi} \int_0^r u(c-se^{it})\frac{1}{r^{2}}\chi\left(\frac{s}{r}\right) s ds dt\notag
\end{eqnarray}
Now one can substitute $\sigma=s/r$ and $v(z)=u(c-z)$ and use Fubini's theorem to deduce
\begin{eqnarray}
u\ast \chi_r(c)&=&\int_0^{2\pi} \int_0^1 v(se^{it})\chi\left(\sigma\right) \sigma d\sigma dt\notag\\
&=& \int_0^{1} 2\pi \underbrace{\frac{1}{2\pi} \int_0^{2\pi} v(se^{it}) dt}_{:=C_v(s)} \chi\left(\sigma\right) \sigma d\sigma \notag
\end{eqnarray}
Then $C_v(s)$ is exactly the setup we need for the previous lemma (i.e. \ref{lem:tech}) and since $C_v(s)$ is increasing, we can use the monotone convergence theorem to get
\begin{eqnarray}
\lim_{r\downarrow 0} u\ast \chi_{r}(c) &=&  \lim_{r\downarrow 0} \int_0^{1} 2\pi \frac{1}{2\pi} \int_0^{2\pi}v(se^{it}) dt \chi\left(\sigma\right) \sigma d\sigma  \notag\\
&=& 2\pi \int_0^1 v(0) \chi(\sigma)\sigma d\sigma=u(c)\int_{\mathbb{C}}\chi dm=u(c)\notag
\end{eqnarray}   
where we have used the unit integral property of the approximate identity in the last step. Hence since $c$ was arbitrary we have $u\ast \chi_{r}\downarrow u$ on D.
\end{proof}

We haven shown that subharmonic functions can be approximated by smooth, subharmonic functions. This turns out to be a very powerful method as we shall see later in proving the Riesz decomposition theorem, thereby justifying the amount of work and technical tools (monotone convergence, Fubini, maximum principle, approximate identities, etc.) we have invested to develop this technique.

\subsection{Potentials}

Having developed harmonic and subharmonic functions the next goal is to relate them. The next definition fixes the main object of study in this section.

\begin{defn} [Logarithmic Potential] 
\label{defn:logpot}
Let $\mu$ be a finite Borel measure on $\mathbb{C}$ with compact support. The \textbf{\index{logarithmic potential}logarithmic potential} of $\mu$, denoted $p_{\mu}$ is a function from $\mathbb{C}$ to $[-\infty,\infty)$ defined as
\begin{eqnarray}
p_{\mu}(z)=\int_{\mathbb{C}}\log|w-z|d\mu(w)
\end{eqnarray} 
\end{defn} 
The definition seems to be unmotivated, when one encounters it first. The following example gives a motivation, why one might want study the logarithm in two dimensions in the context of potential theory.

\begin{ex}
\label{ex:fundsol}
 Suppose for this example that we are working in $\mathbb{R}^{n}$ and let us try to find a harmonic function on the annulus $B(0,r_1,r_2)=\{x\in \mathbb{R}^n:r_1< \|x\| <r_2\}$. First we assume that $u$ is radial and write $u_\ast(r)=u(x)$ for $|x|=r$. We know that $u$ must be differentiable if it exists and its Laplcian is given by a simple calculation as 
\begin{eqnarray}
\Delta u(x)=\frac{d^2u_\ast}{dr^2}(|x|)+\frac{n-1}{r}\frac{du_\ast}{dr}(|x|)
\end{eqnarray}
Hence starting from a partial differential equation - find $u$ such that $\Delta u=0$ - we have obtained an ordinary second order diffenretial equation for $u_\ast$. Note that the geometry of the domain plays a crucial role in this setup. In any case, the ODE can easily be solved to give
\begin{eqnarray}
u(x)=\begin{cases} A\log \frac{1}{|x|}+B & \text{if $n=2$}\\ \frac{1}{|x|^{n-2}}+B & \text{if $n\leq 3$} \end{cases}
\end{eqnarray} 
\end{ex} 

From the example we observe the special role of the logarithm of the absolute value in dimension 2. It turns out that the analogue for $n\geq 3$ for the potential is not as straightforward as all results in the previous sections.

\begin{defn} [Newtonian Potential]
\label{defn:newton}
Let $n\geq 3$ and consider $\mu$ being a finite Borel measure on $\mathbb{R}^n$ with compact support. The \textbf{\index{Newtonian potential}Newtonian potential} of $\mu$, denoted $p_{\mu}$ is a function from $\mathbb{R}^n$ to $[-\infty,\infty)$ defined as
\begin{eqnarray}
p_{\mu}(x)=-\int_{\mathbb{R}^n}\frac{1}{|w-x|^{n-2}}d\mu(w)
\end{eqnarray}  
\end{defn} 

The two definitions merit a few comments. First of all we define the Newtonian potential here only for illustration purposes and to make the reader aware that some differences between 2 and more dimensions can possibly occur. One major technical difficulty when dealing with potentials is unique to the planar case, namely the logarithm is of no definite sign in contrast to the function $u(x)=\frac{1}{|x|^{n-2}}$. \\

On the other hand, complex analysis simplies several proofs, so that in essence the same amount of work has to be done and many results are esentially the same even though the definitions of the two potentials are different. We are going to stay with the logarithmic potential on $\mathbb{C}$.\\

The reader might have already wondered, why we have introduced an additional sign instead of considering $\int_{\mathbb{C}}1/\log|w-z|d\mu(w)$. The next result shows that there is no fundamental difference except that we already agreed to work with subharmonic functions instead of superharmonic ones.

\begin{thm}
The logarithmic potential $p_\mu$ is subharmonic on $\mathbb{C}$.
\end{thm}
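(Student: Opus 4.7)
The plan is to apply the Criterion for Subharmonicity (Theorem \ref{thm:subcrt}) directly, which is exactly the sort of result it was designed to handle: $p_\mu$ is defined as an integral of a family of subharmonic functions against a finite measure.

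First I would set up the measure space. Since $\mu$ is a finite Borel measure with compact support, I take $X = \mathrm{supp}(\mu)$ (a compact subset of $\mathbb{C}$), equipped with the Borel $\sigma$-algebra and the restriction of $\mu$; this is a finite measure space. Define
\begin{eqnarray}
v: \mathbb{C} \times X \to [-\infty, \infty), \qquad v(z,w) = \log|w-z|. \notag
\end{eqnarray}
Then $p_\mu(z) = \int_X v(z,w) \, d\mu(w)$, matching the form required by Theorem \ref{thm:subcrt}.

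Next I would verify the three hypotheses. For measurability, $v$ is continuous on the complement of the diagonal $\{z=w\}$ with values in $\mathbb{R}$, and $v \to -\infty$ on the diagonal, so $v$ is upper-semicontinuous on $\mathbb{C} \times X$, hence Borel measurable. For subharmonicity in $z$ with $w$ fixed, the map $z \mapsto w-z$ is holomorphic on $\mathbb{C}$, so Theorem \ref{thm:logsub} gives that $z \mapsto \log|w-z|$ is subharmonic on $\mathbb{C}$ for every fixed $w \in X$.

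The only step requiring a little care --- and really the reason the compact support hypothesis on $\mu$ is essential --- is the local boundedness condition (hypothesis 3). Given any compact $K \subset \mathbb{C}$, both $K$ and $X$ are compact, so
\begin{eqnarray}
M_K := \sup_{z \in K,\, w \in X} |w - z| \notag
\end{eqnarray}
is finite. Therefore $\sup_{w \in X} v(z,w) \leq \log M_K < \infty$ for all $z \in K$, which is precisely local boundedness above on $\mathbb{C}$. With all three hypotheses satisfied, Theorem \ref{thm:subcrt} concludes that $p_\mu$ is subharmonic on $\mathbb{C}$. The main (mild) obstacle is noticing that one must restrict the integration space to $\mathrm{supp}(\mu)$ --- if one tried to integrate over all of $\mathbb{C}$, the supremum in hypothesis 3 would be $+\infty$ and the criterion would not apply.
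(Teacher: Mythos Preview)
Your proof is correct and follows essentially the same approach as the paper: both apply the Criterion for Subharmonicity (Theorem \ref{thm:subcrt}) with $X=\mathrm{supp}(\mu)$ and $v(z,w)=\log|z-w|$, invoking Theorem \ref{thm:logsub} for the subharmonicity in $z$. Your version is in fact more thorough, as you explicitly verify the local-boundedness hypothesis using compactness of the support, a point the paper's proof leaves implicit.
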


\begin{proof}
Again we want use the criterion for subharmonicity (Theorem \ref{thm:subcrt}). So let $S=supp(\mu)$, then we can regard $(K,\mu)$ as a measure space with $\mu$ restricted to $K$. Now define $v(z,w)=\log|z-w|$ and observe that it is immediate that $z\mapsto v(z,w)$ is subharmonic for fixed $w$ as $z\mapsto z-w$ is holomorphic (see Theorem \ref{thm:logsub}). Now the criterion applies to yield the result.  
\end{proof}

Hence without the extra sign we would have obtained a superharmonic function instead.\\

The next goal is to 'differentiate' the potential. Note that $p_\mu$ is clearly not differentiable in general. Therefore it seems sensible to briefly recall here a canonical construction to define a 'derivative' in a weaker sense.

\begin{defn} 
Let $U$ be an open subset of $\mathbb{R}^2$ (or $\mathbb{C}$ under identification). Denote by $\cal{D}(U)$ the space of \textbf{\index{test functions}test functions} $\phi:U\rightarrow \mathbb{R}$ such that $\phi\in C^\infty (U)$ and $\phi$ has compcat support contained in $U$.  
\end{defn}

\begin{defn} If $\phi_n,\phi\in \cal{D}(U)$, then $\phi_n\rightarrow\phi$ in $\cal{D}(U)$ if $\exists K\subset U$ such that $supp(\phi_n)\subset K$ for all $n$ and $D^{\alpha}\phi_n\rightarrow D^\alpha \phi$ uniformly for all multiindices $\alpha$.
\end{defn}

\begin{defn}
A function $T:\cal{D}(U)\rightarrow\mathbb{R}$ is called a \textbf{\index{distribution}distribution} if $T$ is linear and $T(\phi_n)\rightarrow T(\phi)$ whenever $\phi_n \rightarrow \phi$ in $\cal{D}(U)$. 
\end{defn}

Now notice that if we have an open set $U\subset \mathbb{R}^{2}$ and $f:U\rightarrow\mathbb{R}$ such that $f\in L^1_{loc}(U)$ then $f$ defines a distribution $T_f$ via
\begin{eqnarray}
T_f(\phi)=\int_U \phi(x)f(x)dm(x)
\end{eqnarray}
Hence one can view $f$ as a function taking points or as a distribution taking test functions as arguments. Since test functions are infinitely differentiable we can make sense of the following.

\begin{defn} 
Let $T$ be a distribution on $\cal{D}(U)$ then define for $x=(x_1,x_2)$
\begin{eqnarray}
\frac{\partial T}{\partial x_{j}}(\phi)=-T\left(\frac{\partial \phi}{\partial x_{j}}\right) \quad \text{for all $\phi\in\cal{D}(U)$}
\end{eqnarray}
which is called the \textbf{j-th partial derivative}. If $T=T_f$ for some function $f\in L^1_{loc}(U)$ then it is also called the \textbf{j-th \index{distributional derivative}distributional derivative} of $f$. 
\end{defn}

\begin{fact}[\cite{3}] 
The j-th distributional derivative is identical with the ordinary j-th derivative if $f$ is differentiable.
\end{fact}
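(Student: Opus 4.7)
The statement asserts that for $f \in L^1_{loc}(U)$ which is differentiable in the classical sense, one has $\partial T_f/\partial x_j = T_{\partial f/\partial x_j}$ as distributions. The plan is to unwind both sides against an arbitrary test function $\phi \in \mathcal{D}(U)$ and invoke integration by parts, with the compact support of $\phi$ killing all boundary contributions.

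First I would fix $\phi \in \mathcal{D}(U)$ and let $K = \mathrm{supp}(\phi)$. Choose an open rectangle $R = (a_1,b_1) \times (a_2,b_2)$ with $K \subset R \subset \overline{R} \subset U$; then $\phi$ together with all its derivatives vanishes identically on an open neighborhood of $\partial R$, in particular on all four edges. By definition, $(\partial T_f/\partial x_j)(\phi) = -\int_U f \cdot (\partial \phi/\partial x_j) \, dm$, and since the integrand is supported in $K \subset R$, this integral equals $-\int_R f \cdot (\partial \phi/\partial x_j) \, dm$.

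Next I would take, say, $j=1$ and apply Fubini to obtain the iterated integral $-\int_{a_2}^{b_2} \bigl(\int_{a_1}^{b_1} f(x_1,x_2) \, \partial_1 \phi(x_1,x_2) \, dx_1\bigr) dx_2$. For each fixed $x_2$, the inner integrand is one-dimensional and $f\phi$ is $C^1$ in $x_1$ (since $f$ is differentiable and $\phi$ is smooth), so the fundamental theorem of calculus together with the product rule gives $\int_{a_1}^{b_1} f \, \partial_1 \phi \, dx_1 = [f\phi]_{x_1=a_1}^{x_1=b_1} - \int_{a_1}^{b_1} (\partial_1 f)\phi \, dx_1 = -\int_{a_1}^{b_1} (\partial_1 f)\phi \, dx_1$, the bracket vanishing because $\phi(a_1,x_2) = \phi(b_1,x_2) = 0$. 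Reassembling the double integral via Fubini a second time, and then extending the domain from $R$ back to $U$ (legal since $\phi$ vanishes outside $K$), yields exactly $T_{\partial_1 f}(\phi)$, which is the required identity.

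The only subtlety, and hence the main obstacle, is justifying Fubini and the integration-by-parts step under the bare hypothesis that "$f$ is differentiable": we need $\partial_j f$ at least in $L^1_{loc}(U)$ for $T_{\partial_j f}$ to be defined, and we need enough regularity on line sections for the fundamental theorem of calculus to apply. For $f \in C^1(U)$ everything is immediate, and I would implicitly read the hypothesis in that way, since the phrase "the ordinary $j$-th derivative" presupposes a genuine function $\partial_j f$. If one instead allowed differentiability only almost everywhere, one would have to invoke absolute continuity on lines (as in the ACL characterization of Sobolev functions) or first smooth $f$ by convolution with an approximate identity and pass to the limit; but under the $C^1$ reading the proof reduces to the single integration-by-parts computation sketched above.
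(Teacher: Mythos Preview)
Your argument is correct and is the standard integration-by-parts proof of this fact; your discussion of the regularity subtlety (reading ``differentiable'' as $C^1$) is apt. Note, however, that the paper does not actually prove this statement: it is listed as a \emph{Fact} with a citation to \cite{3}, and the surrounding text explicitly remarks that the result is not needed in the sequel, so there is no in-paper proof to compare against.
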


Although we shall not need the previous fact it is good to know that the theory turns out to be a coherent extension if the functions are sufficiently smooth. Far more important is the following result about distributions.

\begin{thm}
\label{thm:disex}
If $T$ is a non-negative distribution (i.e. $T(\phi)\geq 0$, $\forall\phi\in\cal{D}(U)$) in the open set $U\subseteq \mathbb{R}^{n} $ then there exists a Radon measure $d\mu$ (i.e. non-negative, finite on compacts) in $U$ such that $T(\phi)=\int_U \phi(x) d\mu(x)$. 
\end{thm}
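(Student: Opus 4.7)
The plan is to recognize this as essentially the Riesz--Markov representation theorem, with the nontrivial preliminary step being the extension of $T$ from $\mathcal{D}(U)$ to a positive linear functional on $C_c(U)$, the space of continuous compactly supported functions on $U$.

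First I would establish a crucial continuity estimate. For each compact $K \subset U$, choose a cutoff $\psi_K \in \mathcal{D}(U)$ with $0 \leq \psi_K \leq 1$ and $\psi_K \equiv 1$ on $K$ (such a function exists by a standard mollification argument using approximate identities, as in Definition \ref{defn:ai}). Then for any $\phi \in \mathcal{D}(U)$ with $\mathrm{supp}(\phi) \subset K$, the test functions $\|\phi\|_\infty \psi_K \pm \phi$ are non-negative, so by non-negativity of $T$,
\begin{equation}
|T(\phi)| \leq \|\phi\|_\infty \, T(\psi_K) =: C_K \|\phi\|_\infty. \notag
\end{equation}
Thus $T$ is continuous in the uniform norm on the subspace $\mathcal{D}_K(U) := \{\phi \in \mathcal{D}(U) : \mathrm{supp}(\phi) \subset K\}$.

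Next I would extend $T$ to $C_c(U)$. Given $f \in C_c(U)$ with $\mathrm{supp}(f) \subset K$, pick a slightly larger compact $K'$ with $K \subset \mathrm{int}(K') \subset K' \subset U$. For $\delta$ small, the mollifications $f \ast \chi_\delta$ lie in $\mathcal{D}_{K'}(U)$ and converge to $f$ uniformly (since $f$ is uniformly continuous). By the estimate above applied on $K'$, the sequence $T(f \ast \chi_\delta)$ is Cauchy, so we may define $\tilde{T}(f) := \lim_{\delta \downarrow 0} T(f \ast \chi_\delta)$. A routine check shows that $\tilde{T}$ is well-defined (independent of the choice of approximate identity), linear, non-negative on $C_c(U)$, and agrees with $T$ on $\mathcal{D}(U)$.

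Finally, I would invoke the Riesz--Markov representation theorem: every non-negative linear functional on $C_c(U)$ satisfying the local boundedness $|\tilde{T}(f)| \leq C_K \|f\|_\infty$ for $\mathrm{supp}(f) \subset K$ is represented by a unique Radon measure $\mu$ on $U$, giving $\tilde{T}(f) = \int_U f \, d\mu$; restricting to $\mathcal{D}(U)$ yields the claim. The main obstacle is the extension step, and in particular justifying the uniform approximation of $C_c$ functions by test functions via convolution on an open subset of $\mathbb{R}^n$ while keeping supports inside a fixed compact set; this is where Fact \ref{fact:conv} and the support control $\mathrm{supp}(f \ast \chi_\delta) \subset \mathrm{supp}(f) + \overline{B(0,\delta)}$ are essential.
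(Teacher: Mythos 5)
Your proof is correct, but it takes a genuinely different route than the paper. The crucial continuity estimate, obtained by comparing $\phi$ with $\|\phi\|_\infty$ times a cutoff and exploiting positivity, is the same in both arguments. The divergence is in the extension step: the paper fixes an open exhaustion $U_m$ of $U$, uses Hahn--Banach to extend $T$ from $\mathcal{D}(U_m)$ to a bounded functional on $C(\overline{U_m})$, applies the Riesz representation theorem on each compact piece, and then glues the resulting measures $\mu_m$ together; you instead extend $T$ to all of $C_c(U)$ by mollification, using that $f \ast \chi_\delta \to f$ uniformly with supports staying in a fixed compact neighborhood, and then invoke Riesz--Markov once. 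Your route buys two things. First, it sidesteps the gluing/well-definedness check of the paper's patched measure (you only apply the representation theorem once). Second, and more substantively, it preserves positivity automatically: mollifying a non-negative $f$ against a non-negative $\chi_\delta$ keeps $T(f\ast\chi_\delta) \geq 0$, so $\tilde T$ is manifestly positive and Riesz--Markov yields a genuine (non-negative) Radon measure. The paper's Hahn--Banach extension, by contrast, is not canonically positive, so strictly speaking the sketch leaves a small gap about why the extended functional is still non-negative before Riesz can produce a non-negative measure; your density-based extension resolves this cleanly. One small omission in your write-up worth flagging: you should note that $\tilde T$ agrees with $T$ on $\mathcal{D}(U)$ because $\phi \ast \chi_\delta \to \phi$ in the $\mathcal{D}(U)$ topology (all derivatives converge uniformly with supports in a fixed compact), so distributional continuity of $T$ gives $T(\phi\ast\chi_\delta) \to T(\phi)$; uniform convergence alone is not the topology in which $T$ is a priori continuous.
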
 

Therefore we can associate a Radon measure to any distribution. On the one hand the proof of the theorem conveys, which results are necessary for all the following constructions, but on the other hand the proof is supposed to be well-known, therefore we content ourselves with an outline.

\begin{proof}(Sketch)
Consider an open exhaustion $U_m$ of $U$; then there exists $\phi_0\in\cal{D}(U)$ such that $0\leq \phi_0\leq 1$ on $U$ and $\phi_0=1$ on $\overline{U_m}$. Since for all $\phi\in\cal{D}(U_\text{m})$ we have $|\phi|\leq \| \phi\|_\infty \phi_0$ in $U$ and $T\geq 0$ it follows that $|T(\phi)|\leq \| \phi \|_\infty |T(\phi_0)|$. Hence $T$ is a bounded linear functional on $\cal{D}(U_\text{m})$ and using the Hahn-Banach theorem it extends to a linear functional on $C(\overline{U_m})$. Then the Riesz representation theorem applies to give a unique Radon measure $d\mu_m$ in $\overline{U_m}$ such that 
\begin{eqnarray}
T(\phi)=\int_{\overline{U_m}} \phi(x) d\mu(x)\notag
\end{eqnarray}
Now if $A$ is a Borel subset of $U$ and $A\subset U_m$ we define $d\mu(A)=d\mu_m(A)$. It is easy to check that this is well-defined and yields the claimed measure.
\end{proof}

In particular if $U\subseteq \mathbb{C}$ is a domain and we have $u:U\rightarrow [-\infty,\infty)$ such that $u$ is subharmonic and $u\not\equiv -\infty$ we have a very useful notion in the context of potential theory.

\begin{thm}
There exists a unique Radon measure denoted by $\Delta u$ such that 
\begin{eqnarray}
\int_U \phi \Delta u=\int_U u\Delta \phi dm \qquad \text{ for all $\phi\in C_c^\infty(U)=\cal{D}(U)$}
\end{eqnarray}
\end{thm}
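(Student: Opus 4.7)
The plan is to view the right-hand side as defining a distribution on $\mathcal{D}(U)$ and show it is non-negative, so that Theorem \ref{thm:disex} supplies the desired Radon measure. Define
\begin{eqnarray}
T(\phi) = \int_U u \, \Delta \phi \, dm \quad \text{for } \phi \in \mathcal{D}(U). \notag
\end{eqnarray}
Since $u \not\equiv -\infty$ is subharmonic, Theorem \ref{thm:int} gives $u \in L^1_{loc}(U)$, so the integral is finite (the support of $\phi$ is compact and $\Delta\phi$ is bounded). Linearity is obvious, and if $\phi_n \to \phi$ in $\mathcal{D}(U)$ then $\Delta \phi_n \to \Delta \phi$ uniformly with a common compact support, so $T(\phi_n) \to T(\phi)$; hence $T$ is a distribution.

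The decisive step is to prove $T(\phi) \geq 0$ whenever $\phi \geq 0$. For this I would invoke the Smoothing Theorem \ref{thm:smoothing}: set $u_r = u \ast \chi_r$, so $u_r \in C^\infty(D_r)$, $u_r$ is subharmonic on $D_r$, and $u_r \downarrow u$ as $r \downarrow 0$. Given $\phi \geq 0$ with compact support $K \subset U$, choose $r_0 > 0$ so that $K \subset D_r$ for $0 < r \leq r_0$. Because $u_r$ is smooth and subharmonic, a short Taylor expansion around any point $w$ yields
\begin{eqnarray}
S^\rho_{u_r}(w) - u_r(w) = \tfrac{1}{4}\Delta u_r(w)\,\rho^2 + o(\rho^2), \notag
\end{eqnarray}
so the submean inequality forces $\Delta u_r \geq 0$ pointwise. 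Then classical Green's identity (valid since $\phi$ is compactly supported in $D_r$ and $u_r$ is smooth) gives
\begin{eqnarray}
\int_U u_r \,\Delta \phi \, dm = \int_U \phi \,\Delta u_r \, dm \geq 0. \notag
\end{eqnarray}
It remains to pass to the limit. Since $u_r$ is monotonically decreasing in $r$, the differences $u_{r_0} - u_r \geq 0$ increase to $u_{r_0} - u$ on $K$, and both endpoints lie in $L^1(K)$; monotone convergence gives $u_r \to u$ in $L^1(K)$, and since $\Delta \phi$ is bounded this yields $\int u_r \Delta\phi \, dm \to T(\phi) \geq 0$.

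Having $T$ non-negative, Theorem \ref{thm:disex} produces a Radon measure, which we denote $\Delta u$, such that $T(\phi) = \int_U \phi \, d(\Delta u)$; this is exactly the required identity. For uniqueness, any two Radon measures $\mu_1, \mu_2$ with $\int \phi \, d\mu_1 = \int \phi \, d\mu_2$ for every $\phi \in C_c^\infty(U)$ must agree on all of $C_c(U)$ by mollification, and therefore coincide by the uniqueness clause of the Riesz representation theorem.

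The main obstacle I anticipate is the non-negativity step, specifically the interchange of limit and integral after smoothing: one must justify passing $r \downarrow 0$ inside $\int u_r \Delta \phi \, dm$ despite the fact that $\Delta \phi$ changes sign, which is why one needs $L^1$ convergence of $u_r$ to $u$ on compact subsets (not merely monotone pointwise convergence of the integrand against $\Delta\phi$). The other subtle point is the elementary but unstated fact that a $C^2$ function is subharmonic if and only if its ordinary Laplacian is non-negative, which the Taylor expansion argument above supplies in half a line.
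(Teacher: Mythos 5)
Your proposal is correct and takes the same route the paper intends: reduce to Theorem \ref{thm:disex} by recognizing $\phi \mapsto \int_U u\,\Delta\phi\,dm$ as a non-negative distribution. The paper simply declares this "Obvious from Theorem \ref{thm:disex}" and stops, whereas you supply the one genuinely non-trivial ingredient — verifying non-negativity. Your argument for that step is sound: mollify via Theorem \ref{thm:smoothing}, observe that for a $C^2$ subharmonic function the Taylor expansion $S^\rho_{u_r}(w) - u_r(w) = \tfrac{1}{4}\Delta u_r(w)\rho^2 + o(\rho^2)$ forces $\Delta u_r \geq 0$ pointwise, integrate by parts against $\phi \geq 0$, and pass $r \downarrow 0$ using the monotonicity $u_r \downarrow u$ together with $L^1_{loc}$ integrability to upgrade pointwise to $L^1(K)$ convergence (needed precisely because $\Delta\phi$ is not of one sign). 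The uniqueness remark via mollification and the Riesz representation theorem is also fine. In short, you have written out what the paper waves at; the content and the key references (\ref{thm:disex}, \ref{thm:smoothing}, \ref{thm:int}) are exactly the ones the paper's one-line proof implicitly relies on.
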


\begin{proof} Obvious from Theorem \ref{thm:disex}.
\end{proof} 

The reason for developing the terminology is that given a potential $p_\mu$ we might hope to recover $\mu$ by considering the Laplacian of $p_\mu$ in some form as $-log|z|$ is the solution to the problem presented in Example \ref{ex:fundsol}.

\begin{thm}
\label{thm:prm}
Let $\mu$ be a finite Borel measure on $\mathbb{C}$ with compact support. Then $\Delta p_\mu=2\pi\mu$.  
\end{thm}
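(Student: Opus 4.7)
The plan is to unfold the definition of the distributional Laplacian, interchange the order of integration against $\mu$ and $dm$, and reduce the claim to the classical fact that $\log|z|$ is a fundamental solution of the Laplacian (up to the factor $2\pi$).

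Concretely, by the definition of $\Delta p_\mu$ as a distribution, I need to show that for every $\phi \in C_c^\infty(\mathbb{C})$ one has
\begin{eqnarray}
\int_{\mathbb{C}} p_\mu(z)\,\Delta\phi(z)\,dm(z) \;=\; 2\pi \int_{\mathbb{C}} \phi(w)\,d\mu(w).\notag
\end{eqnarray}
Substituting the definition of $p_\mu$ and applying Fubini's theorem, the left-hand side equals
\begin{eqnarray}
\int_{\mathbb{C}} \!\!\int_{\mathbb{C}} \log|w-z|\,\Delta\phi(z)\,dm(z)\,d\mu(w).\notag
\end{eqnarray}
Fubini is justified because $\mu$ is finite and compactly supported, $\Delta\phi$ is bounded with compact support, and $z\mapsto \log|w-z|$ is locally integrable uniformly in $w$ ranging over the compact set $\mathrm{supp}(\mu)\cup\mathrm{supp}(\phi)$. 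Thus it suffices to establish the pointwise identity
\begin{eqnarray}
\int_{\mathbb{C}} \log|w-z|\,\Delta\phi(z)\,dm(z) \;=\; 2\pi\,\phi(w) \qquad \text{for every } w\in\mathbb{C}. \notag
\end{eqnarray}

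This is the heart of the proof and the part I expect to be delicate, because $\log|w-z|$ is singular at $z=w$. After translating so that $w=0$, I would excise a small disc around the origin and apply Green's second identity on the annular region $\Omega_\epsilon = \{\epsilon < |z| < R\}$, where $R$ is chosen large enough that $\mathrm{supp}(\phi)\subset B(0,R)$. On $\Omega_\epsilon$ the function $\log|z|$ is smooth and harmonic, so Green's identity collapses to a boundary integral:
\begin{eqnarray}
\int_{\Omega_\epsilon} \log|z|\,\Delta\phi\,dm \;=\; \int_{\partial\Omega_\epsilon}\!\!\left(\log|z|\,\partial_n\phi - \phi\,\partial_n\log|z|\right)d\sigma.\notag
\end{eqnarray}
The contribution from $|z|=R$ vanishes since $\phi\equiv 0$ there. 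On $|z|=\epsilon$, the outward unit normal for $\Omega_\epsilon$ points toward the origin, giving $\partial_n\log|z| = -1/\epsilon$ and $\partial_n\phi = -\partial_r\phi$. The boundary term reduces to
\begin{eqnarray}
-\epsilon\log\epsilon\int_0^{2\pi}\partial_r\phi(\epsilon e^{it})\,dt \;+\; \int_0^{2\pi}\phi(\epsilon e^{it})\,dt.\notag
\end{eqnarray}
As $\epsilon\downarrow 0$, the first term vanishes because $\epsilon\log\epsilon\to 0$ while $\partial_r\phi$ is bounded, and the second term converges to $2\pi\phi(0)$ by continuity of $\phi$. On the other hand, dominated convergence using local integrability of $\log|z|$ shows that the left-hand side converges to $\int_{\mathbb{C}}\log|z|\,\Delta\phi(z)\,dm(z)$, yielding the identity.

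The main obstacle is thus the careful limiting argument in Green's identity: one must verify that the interior integral converges (an application of dominated convergence with the integrable dominator $|\log|z|\cdot\|\Delta\phi\|_\infty\cdot\mathbf{1}_{\mathrm{supp}\,\phi}|$), and that each of the two boundary terms on $|z|=\epsilon$ behaves correctly in the limit. Once this classical fundamental-solution identity is in hand, substituting back and invoking Fubini immediately gives $\Delta p_\mu = 2\pi\mu$ as Radon measures, and uniqueness of the measure associated to a non-negative distribution (Theorem \ref{thm:disex}) guarantees this is the unique such measure.
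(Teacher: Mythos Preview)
Your proof is correct and follows essentially the same approach as the paper: reduce via Fubini to the inner identity $\int_{\mathbb{C}}\log|z-w|\,\Delta\phi\,dm = 2\pi\phi(w)$, then establish this by excising a small disc around the singularity, applying Green's formula on the punctured region (using that $\log|z-w|$ is harmonic there and $\partial_n\log|z|=-1/r$), and letting the radius tend to zero. Your write-up is in fact more careful than the paper's about justifying Fubini and the dominated-convergence step for the interior integral.
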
 

This is a very helpful result allowing us to switch between potentials and their associated measures. For the proof we require one of the incarnations of Green's formula; we state the version we use here for convenience.

\begin{fact}[\index{Green's formula}Green's formula, \cite{5}]
\label{fact:gr1} 
Let $U$ be an open set with $C^{1}$ boundary in $\mathbb{R}^2$ and let $f$ and $g$ be in $C^{2}(U)$ then 
\begin{eqnarray}
\int_U f\Delta gdm^{(2)} = \int_U g\Delta fdm^{(2)}-\int_{\partial U} \left(f\frac{\partial g}{\partial n}- g\frac{\partial f}{\partial n}\right)d\sigma
\end{eqnarray}
where $\frac{\partial }{\partial n}$ denotes the derivative in the direction of the inner normal and $d\sigma$ denotes integration with respect to arc length.
\end{fact}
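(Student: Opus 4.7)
The plan is to derive the identity from the two-dimensional divergence theorem, applied to two cleverly chosen vector fields whose difference produces the symmetric structure on the right-hand side. I tacitly assume $f$ and $g$ are regular enough up to $\partial U$ for the boundary integrals to make sense (the standard hypothesis making both sides meaningful is $f,g\in C^2(\overline{U})$ with $\overline{U}$ compact; the Fact as stated is slightly informal on this point).

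The starting ingredient is the divergence theorem in $\mathbb{R}^2$: for any $F\in C^1(\overline{U};\mathbb{R}^2)$,
\begin{eqnarray}
\int_U \nabla\cdot F\, dm^{(2)} = \int_{\partial U} F\cdot \nu\, d\sigma, \notag
\end{eqnarray}
where $\nu$ is the outward unit normal. This is the black-box input.

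Next, apply it to $F=f\nabla g$. The product rule gives $\nabla\cdot(f\nabla g)=\nabla f\cdot\nabla g+f\Delta g$, while $F\cdot\nu=f\,\partial g/\partial \nu$, so one obtains Green's first identity,
\begin{eqnarray}
\int_U f\Delta g\, dm^{(2)} + \int_U \nabla f\cdot\nabla g\, dm^{(2)} = \int_{\partial U} f\,\frac{\partial g}{\partial \nu}\, d\sigma. \notag
\end{eqnarray}
Exchanging the roles of $f$ and $g$ gives the symmetric identity, and subtracting the two causes the $\int_U \nabla f\cdot\nabla g\, dm^{(2)}$ term to cancel:
\begin{eqnarray}
\int_U \bigl(f\Delta g-g\Delta f\bigr)\, dm^{(2)} = \int_{\partial U}\left(f\,\frac{\partial g}{\partial \nu}-g\,\frac{\partial f}{\partial \nu}\right)d\sigma. \notag
\end{eqnarray}
Finally, since the statement uses the \emph{inner} normal $n=-\nu$, we have $\partial/\partial n=-\partial/\partial \nu$; this flips the sign on the right-hand side, and transferring $\int_U g\Delta f\, dm^{(2)}$ to the other side yields the formula exactly as stated.

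The only real obstacle is invoking the divergence theorem itself for domains with merely $C^1$ boundary: a rigorous proof uses a partition of unity subordinate to a finite cover in which $\partial U$ is locally the graph of a $C^1$ function, then reduces the identity in each chart to the fundamental theorem of calculus. I would not reproduce that classical argument—this is precisely why the Fact is cited from \cite{5}—so the remaining work is the straightforward bookkeeping with the two product-rule expansions and the sign flip for the inner normal.
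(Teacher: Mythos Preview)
The paper does not prove this statement; it is stated as a Fact with a citation to \cite{5}, so there is no in-paper proof to compare against. Your derivation via the divergence theorem applied to $F=f\nabla g$ and $F=g\nabla f$, subtraction to cancel the $\nabla f\cdot\nabla g$ term, and the sign flip for the inner normal is the standard and correct argument, and your caveat about the regularity hypotheses (effectively needing $f,g\in C^2(\overline{U})$ with $\overline{U}$ compact for the boundary integrals to make sense) is well-placed.
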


\begin{proof} (of Theorem \ref{thm:prm}) Let $\phi\in\cal{D}(\mathbb{C})$; first note that $\log|z|\in L^1_{loc}(\mathbb{C})$ by Theorem \ref{thm:logsub} and Theorem \ref{thm:int}. We also have that $\Delta \phi$ is bounded and has compact support, so we can justify Fubini's theorem in the following calculation
\begin{eqnarray}
\label{eq:prm}
\int_\mathbb{C}p_\mu \Delta dm &=& \int_\mathbb{C}\int_\mathbb{C}\log|z-w| d\mu(w) \Delta\phi(z) dm(z)\notag\\
&=& \int_\mathbb{C}\underbrace{\int_\mathbb{C}\log|z-w| \Delta\phi(z) dm(z)}_{:=A}d\mu(w) 
\end{eqnarray}
Before we can apply Green's formula we point out the following auxillary calulation for the normal derivative on the circle for $z=x+iy=se^{it}$
\begin{eqnarray}
\label{eq:grn}
\frac{\partial}{\partial n}\log|z|=-\left(\frac{x}{\sqrt{x^2+y^2}},\frac{y}{\sqrt{x^2+y^2}}\right) \cdot \nabla \left(\log \sqrt{x^2+y^2}\right)=-\frac{1}{s}\left(\frac{x^2}{x^2+y^2}+\frac{y^2}{x^2+y^2}\right)=-\frac{1}{s}
\end{eqnarray}
Being well-prepared to apply Green's formula with equation (\ref{eq:grn}) and using that $\log(z)$ is harmonic outside of a disc around $0$ we obtain 
\begin{eqnarray}
A=\int_\mathbb{C}\log|z-w| \Delta\phi(z) dm(z)&=& \lim_{s\rightarrow 0} \int_{|z-w|>s} \log|z-w| \Delta\phi(z) dm(z) \notag\\
&=& \lim_{s\rightarrow 0} \int_0^{2\pi} \left(\phi(w+se^{it}) -s\log s \frac{\partial \phi}{\partial r}(w+re^{it})_{r=s} \right)dt=2\pi \phi(w)\notag
\end{eqnarray}
Hence we can use this in equation (\ref{eq:prm}) to conclude that
\begin{eqnarray}
\int_\mathbb{C} p_\mu \Delta \phi dm = \int_\mathbb{C} \phi 2\pi d\mu \notag
\end{eqnarray}
and since $\phi$ was arbitrary the result follows.
\end{proof}

Having developed the distributional Laplacian, the following result is essential.

\begin{lem}[\index{Weyl's Lemma}Weyl's Lemma]
If $u$ and $v$ are subharmonic on a domain $D\subset \mathbb{C}$ then if $u,v \not\equiv -\infty$ and $\Delta u=\Delta v$ then $u=v+h$ where $h$ is harmonic on $D$.  
\end{lem}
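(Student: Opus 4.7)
The plan is to convert the distributional equality $\Delta u = \Delta v$ into a pointwise PDE by convolving with an approximate identity, deduce that the smoothings are classically harmonic, take limits, and then upgrade the resulting almost-everywhere identity to pointwise equality using subharmonicity.

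First I put $w := u - v$. By Theorem \ref{thm:int} both $u$ and $v$ lie in $L^{1}_{\text{loc}}(D)$, so $w$ is a well-defined $L^{1}_{\text{loc}}$ function on $D$ (the set where either summand equals $-\infty$ has Lebesgue measure zero). The definition of the distributional Laplacian combined with $\Delta u = \Delta v$ yields $\int_D w\,\Delta\phi\,dm = 0$ for every $\phi \in \mathcal{D}(D)$. Next, let $\chi_r$ be an approximate identity and set $w_r := (u\ast\chi_r) - (v\ast\chi_r)$ on $D_r$. By Fact \ref{fact:conv}, $w_r \in C^{\infty}(D_r)$; and for any $\phi \in \mathcal{D}(D_r)$ the convolution $\chi_r \ast \phi$ lies in $\mathcal{D}(D)$ (its support sits inside $\operatorname{supp}(\phi) + B(0,r) \subset D$), so a Fubini argument pushing $\Delta$ across the convolution gives
\begin{eqnarray}
\int_{D_r} w_r \, \Delta\phi \, dm \;=\; \int_{D} w \, \Delta(\chi_r \ast \phi)\, dm \;=\; 0. \notag
\end{eqnarray}
Smoothness of $w_r$ then permits a classical integration by parts, whence $\Delta w_r \equiv 0$ on $D_r$, so each $w_r$ is harmonic there.

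The next step is to let $r \downarrow 0$. Standard approximate-identity theory gives $u\ast\chi_r \to u$ and $v\ast\chi_r \to v$ in $L^{1}_{\text{loc}}(D)$, hence $w_r \to w$ in $L^{1}_{\text{loc}}(D)$. Given a compact $K \subset D$, choose $\delta > 0$ with $K_\delta := \{z : d(z,K) \leq \delta\} \subset D$; for $r < \delta/2$ the $w_r$ are all harmonic on a neighborhood of $K_\delta$, and the space-mean representation $w_r(z) = B^{\delta/2}_{w_r}(z)$ turns $L^{1}$-Cauchyness on $K_\delta$ into uniform Cauchyness on $K$. Thus $w_r$ converges locally uniformly on $D$ to some $h$; this $h$ retains the surface-mean property and so is itself harmonic, and $h = w$ almost everywhere on $D$.

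Finally I upgrade a.e. equality to pointwise equality. Set $\tilde u := v + h$; this is subharmonic on $D$ (sum of a subharmonic function and a harmonic function) and satisfies $\tilde u = u$ almost everywhere. Therefore $S^r_u(z) = S^r_{\tilde u}(z)$ for every $z \in D$ and every admissible $r$, and Lemma \ref{lem:tech}(2) forces $u(z) = \lim_{r \downarrow 0} S^r_u(z) = \lim_{r \downarrow 0} S^r_{\tilde u}(z) = \tilde u(z)$ pointwise on $D$, giving the desired identity $u = v + h$. The hardest piece of the plan is the second paragraph's passage to the limit: converting $L^{1}_{\text{loc}}$ convergence of the smoothings into locally uniform convergence so that harmonicity of the limit $h$ becomes visible; this upgrade rests essentially on the mean-value representation of harmonic functions.
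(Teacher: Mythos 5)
Your proof is correct in outline, but it takes a genuinely different route from the paper's, and one step is stated a bit too strongly. The paper shows $\Delta(u\ast\chi_r)=\Delta(v\ast\chi_r)$ using the measure form of the Laplacian, writes $u\ast\chi_r = v\ast\chi_r + h_r$, and then applies the smoothing theorem to $h_r$ and $-h_r$ to deduce $h_r\ast\chi_s = h_r$, whence $h_r$ does not depend on $r$. Because the same fixed harmonic $h$ works for all $r$, the paper can finish by the \emph{pointwise monotone} convergence $u\ast\chi_r\downarrow u$ and $v\ast\chi_r\downarrow v$ from Theorem \ref{thm:smoothing}: the identity $u=v+h$ is obtained at every point directly, with no a.e.-to-pointwise upgrade needed. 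Your proof instead keeps the distributional viewpoint, shows directly that $w_r=(u-v)\ast\chi_r$ satisfies $\int w_r\Delta\phi\,dm=0$, reads off $\Delta w_r=0$ from smoothness of $w_r$, then converts $L^1_{\mathrm{loc}}$-convergence of the smoothings into locally uniform convergence by the mean-value representation. That is a well-known alternative proof of Weyl's lemma, and the idea is sound; it pays for the generality by requiring the final a.e.-to-pointwise step.

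That final step has a small gap as written. From $\tilde u = u$ a.e.\ on $D$ (Lebesgue measure on the plane) you cannot conclude $S^r_u(z)=S^r_{\tilde u}(z)$ \emph{for every} admissible $r$: a Lebesgue-null set in the plane can meet an individual circle $\partial\Delta(z,r)$ in a set of positive arc-length measure. By Fubini in polar coordinates, the equality of the surface means holds only for almost every $r$. This is harmless — take a sequence $r_n\downarrow 0$ along which it holds and then invoke Lemma \ref{lem:tech}(2) — but the claim should be weakened, or, more cleanly, replace surface means by the space means $B^r_u(z)$, which \emph{are} equal for every $r$ once the functions agree a.e., and which also tend to $u(z)$ as $r\downarrow 0$ by integrating the monotone statement in Lemma \ref{lem:tech}(3). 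With that correction your argument closes. On balance the paper's route is shorter because showing $h_r$ is constant in $r$ plus the monotone pointwise convergence $u\ast\chi_r\downarrow u$ bypasses both the locally-uniform-limit extraction and the pointwise upgrade; your route requires more machinery but avoids the clever ``$h_r$ is independent of $r$'' observation and is closer to the standard Weyl-lemma argument one would give in a general PDE context.
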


\begin{proof}
Let $\chi_r(z)=\frac{1}{r^2}\chi (\frac{z}{r})$ be an approximate identity and set $D_r=\{z\in D:d(z,\partial D)>r\}$ as used in Theorem \ref{thm:smoothing}. Then $u\ast \chi_r\in C^\infty(D_r)$ for any $z\in\ D_r$. Furthermore observe that
\begin{eqnarray}
\Delta(u\ast\chi_r)&=&\int_\mathbb{C} u(w)\Delta_z \chi_r (z-w) dm(w)=\int_\mathbb{C} u(w)\Delta_w \chi_r (z-w) dm(w)=\int_\mathbb{C} \chi_r (z-w) \Delta u\notag 
\end{eqnarray}
Repeating the same calculation with $v$ instead of $u$ we obtain $\Delta(u\ast\chi_r)=\Delta(v\ast\chi_r)$. Note that the smoothness of the convolution with an approximate identity is crucial as we can deduce the existence of a harmonic function $h_r$ such that $u\ast\chi_r=v\ast\chi_r+h_r$ for all $r>0$. Now apply Theorem \ref{thm:smoothing} to $h_r$ and $-h_r$ to conclude that $h_r\ast\chi_s=h_r$ on $D_{r+s}$ for $s>0$. Therefore
\begin{eqnarray}
h_r=h_r\ast \chi_r=(u-v)\ast\chi_r \ast\chi_s=h_s\ast\chi_r=h_s\notag
\end{eqnarray}
Hence the identity principle for harmonic functions (see Theorem \ref{thm:idha}) implies that there exists a unique harmonic function $h$ such that $u\ast\chi_r=v\ast\chi_r+h$. By letting $r$ tend to zero and using (again) Theorem \ref{thm:smoothing}, we obtain $u=v+h$ as claimed.  
\end{proof}

Now we are ready for the fundamental result, which establishes a link between the main objects of study we have considered so far.

\begin{thm}[\index{Riesz Decomposition}Riesz Decomposition]
Let $D$ be a domain in $\mathbb{C}$ and let $u:D\rightarrow \mathbb{R}$ be a subharmonic function, which is not identically equal to $-\infty$. If $K\subseteq D$ is an open and relatively compact set then there exists a harmonic function $h$ on $K$ such that
\begin{eqnarray}
u=p_{\mu}+h \qquad on \quad K
\end{eqnarray} 
where $2\pi\mu=\Delta u|_{K}$
\end{thm}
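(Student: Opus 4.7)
The plan is to identify the measure $\mu$, build the candidate potential, match Laplacians, and then invoke Weyl's Lemma.

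First I would define $\mu$ as the restriction $\frac{1}{2\pi}\Delta u\big|_K$ of the distributional Laplacian measure we constructed just before Weyl's Lemma. Since $u$ is subharmonic and $\not\equiv -\infty$, $\Delta u$ is a non-negative Radon measure on $D$, and because $K$ is relatively compact in $D$, the restriction $\mu$ is a finite Borel measure on $\mathbb{C}$ whose support is contained in $\overline{K}\subset D$. In particular $\mu$ meets the hypotheses of Definition \ref{defn:logpot}, so the logarithmic potential $p_\mu$ is well defined and subharmonic on all of $\mathbb{C}$. Moreover $p_\mu\not\equiv -\infty$: if $\mu=0$ then $p_\mu\equiv 0$ trivially, and otherwise $p_\mu$ is finite off the compact support of $\mu$, so this stronger statement is automatic.

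Next I would compute $\Delta p_\mu$. By Theorem \ref{thm:prm} applied to the finite compactly supported measure $\mu$, we have $\Delta p_\mu = 2\pi\mu$ as Radon measures on $\mathbb{C}$. Restricting to $K$ gives
\begin{equation}
\Delta p_\mu\big|_K \;=\; 2\pi\mu \;=\; \Delta u\big|_K,
\end{equation}
which is exactly the hypothesis of Weyl's Lemma for the two subharmonic functions $u$ and $p_\mu$ on $K$.

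To conclude, I would apply Weyl's Lemma to produce a harmonic function $h$ on $K$ with $u = p_\mu + h$. The only delicate point here is that Weyl's Lemma was stated on a \emph{domain}, whereas the hypothesis only says $K$ is open and relatively compact. I would handle this by decomposing $K$ into its (at most countably many) connected components $K_j$ and applying Weyl's Lemma on each $K_j$ separately; the output harmonic functions $h_j$ then patch into a single harmonic function $h$ on $K$. On each $K_j$ both $u$ and $p_\mu$ are $\not\equiv -\infty$: the integrability Theorem \ref{thm:int} implies $u\in L^1_{loc}(D)$, so $u$ cannot be $-\infty$ on the non-empty open set $K_j$, and the same argument (or the explicit remark above) applies to $p_\mu$.

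The main obstacle is really no obstacle once the preceding machinery is in place: the whole proof is bookkeeping that arranges $u$ and $p_\mu$ to have the same distributional Laplacian on $K$, so that Weyl's Lemma can absorb the difference into a harmonic term. The only item requiring a moment of care is checking that the relative compactness of $K$ is what guarantees $\mu$ is a \emph{finite} measure with compact support, which is what makes $p_\mu$ meaningful as a logarithmic potential in the sense of Definition \ref{defn:logpot} and lets Theorem \ref{thm:prm} apply.
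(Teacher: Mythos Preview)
Your proposal is correct and follows essentially the same route as the paper: define $\mu=\frac{1}{2\pi}\Delta u|_K$, use Theorem~\ref{thm:prm} to get $\Delta p_\mu=2\pi\mu=\Delta u$ on $K$, and apply Weyl's Lemma componentwise. In fact you are more careful than the paper in spelling out why relative compactness of $K$ makes $\mu$ finite with compact support and why $u,p_\mu\not\equiv-\infty$ on each component, but the underlying argument is identical.
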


\begin{proof}
The first step is to define $\mu=\frac{1}{2\pi}\Delta u|_{K}$ and to compute $\Delta p_\mu=2\pi\mu=\Delta u$ by Theorem \ref{thm:prm}. Now apply Weyl's Lemma to each component of $D$ to obtain $u=p_\mu+h$. 
\end{proof}

Note that the proof turns out to be deceptively simple, but we clearly have already completed the most difficult steps in the proof of the smoothing theorem and by constructing the generalized Laplacian and deriving Weyl's lemma. Furthermore it should be noted that the Riesz decomposition links subharmonic functions closely to potentials and harmonic functions and once we can find out more about each piece of the decomposition one expects to derive information about the other classes of functions involved.

\section{Part II - Applications}

The general theory as presented in the first part turns out to have myriad extensions and an excellent reason to study these is their value for applications in a diverse set of different branches of mathematics. Using several examples we try to develop the general theory further motivated by particular applications.\\

We do not attempt in this part - in contrast to the proof of the Riesz decomposition theorem - to develop the complete details of all proofs, but we will assume some technical propositions stating them as facts together with a reference, where the proof can be found.

\subsection{Perron's Method and the Dirichlet Problem}

No work claiming to treat applications involving subharmonic functions would be complete without the Dirichlet Problem. 

\begin{defn}[\index{Dirichlet Problem}Dirichlet Problem]
\label{defn:dirch}
Let $U$ be an open, connected subset of $\mathbb{R}^n$ and let $f:\partial U \rightarrow \mathbb{R}$ be a bounded, continuous function. The problem is to find $u:\overline{U}\rightarrow \mathbb{R}$ such that $u\in C^{2}(U)$ and $u$ is continuous on $\partial U$ satisfying
\begin{eqnarray}
\Delta u&=&0 \quad \text{on $U$}\notag\\
u&=&f \quad \text{on $\partial U$}
\end{eqnarray}
\end{defn}

The problem expresses the physical situation of prescribing a temperature distribution $f$ at the boundary of $U$ and the solution $u$ will give the equilibrium heat distribution in the interior. The main questions one must ask from a mathematical viewpoint are\\

\textbf{Does there exist a solution to the Dirichlet problem and if so, is it unique?}\\

We are now mainly going to follow \cite{5} for the case, when $U$ is a disc and \cite{8} for the question of existence in the plane using an elegant treatment of the main technique of the proof from \cite{4}. The part dealing with uniqueness is taken from \cite{7}.\\

To begin with the easy part, the question of uniqueness has a straightforward answer for bounded domains.

\begin{thm}
Let $U$ be a bounded domain of $\mathbb{R}^n$, if a solution to the Dirichlet Problem (see \ref{defn:dirch}) exists then it is unique.
\end{thm}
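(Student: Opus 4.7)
The plan is the standard ``difference of two solutions'' argument powered by the maximum principle. Suppose $u_1$ and $u_2$ are both solutions of the Dirichlet problem for the same boundary data $f$. Set $w = u_1 - u_2$. Since the Laplacian is linear, $w$ is harmonic on $U$; and since both $u_i$ extend continuously to $\overline{U}$ with $u_i = f$ on $\partial U$, the difference $w$ is continuous on $\overline{U}$ and vanishes identically on $\partial U$. The goal is then to show $w \equiv 0$ on $\overline{U}$.

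First I would show $w \leq 0$ on $U$. Because $U$ is bounded, $\overline{U}$ is compact, and so by continuity $w$ attains its supremum $M$ at some point $w_0 \in \overline{U}$. Either $w_0 \in \partial U$, in which case $M = w(w_0) = 0$ immediately, or $w_0$ lies in the open connected set $U$, in which case $w$ attains a local (indeed global) maximum on $U$. By the maximum principle for harmonic functions in $\mathbb{R}^n$ (Fact~\ref{thm:maxpri3}), $w$ must then be constant on $U$, with value $M$. By continuous extension to $\overline{U}$ and the boundary condition $w \equiv 0$ on $\partial U$, the constant $M$ must equal $0$. In either case $w \leq 0$ on $\overline{U}$.

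Applying exactly the same argument to $-w$ (which is also harmonic, continuous on $\overline{U}$, and vanishes on $\partial U$) gives $-w \leq 0$, i.e.\ $w \geq 0$ on $\overline{U}$. Combining the two inequalities yields $w \equiv 0$, so $u_1 = u_2$ throughout $\overline{U}$.

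There is no real obstacle here: the boundedness of $U$ is used purely to get compactness of $\overline{U}$ so that the supremum is attained, and connectedness of $U$ (built into the definition of a domain) is what lets the maximum principle conclude constancy from a single interior maximum. The only subtle point worth flagging is that Fact~\ref{thm:maxpri3} is stated for \emph{local} maxima on a connected open set, so one should note that an interior sup on $\overline{U}$ is in particular a local max in $U$, after which the proof is immediate.
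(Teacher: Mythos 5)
Your proof is correct and takes essentially the same route as the paper: form the difference $w = u_1 - u_2$, note it is harmonic, continuous on $\overline{U}$, and zero on $\partial U$, then invoke compactness of $\overline{U}$ together with the maximum principle to force $w \equiv 0$. If anything your write-up is slightly cleaner than the paper's, which argues by contradiction and at one point asserts $u_1 - u_2 > 0$ on all of $U$ when only positivity at a single point has been established; your symmetric $w$/$-w$ argument sidesteps that.
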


\begin{proof} 
Let $u_1$ and $u_2$ be two solutions of the Dirchlet Problem. Suppose there exists $z_0\in U$ such that $u_1(z_0)\neq u_2(z_0)$. Assume without loss of generality that $u_1(z_0)> u_2(z_0)$. Then $u_1-u_2$ extends continuously to $\partial U$ with $u_1-u_2(z)=0$ for $z\in\partial U$. Since $u_1-u_2$ is harmonic on $U$ and $u_1-u_2>0$ on $U$, compactness of $\overline{U}$ implies that $u_1-u_2$ attains its maximum on $U$. Now by the maximum principle (Theorem \ref{thm:maxpri2} respectively Fact \ref{thm:maxpri3}) $u_1-u_2=0$, but this contradicts $u_1>u_2$, so by reversing the roles of $u_1$ and $u_2$ we conclude that we must have $u_1=u_2$.   
\end{proof}

Note that in general for unbounded domains uniqueness fails if we do not prescribe a condition at infinity. It is a technical computation to verify this e.g. for a half-space (see \cite{7}).\\

Indeed, solutions can be forced to be unique by prescribing boundary conditions at infinity, where we remind the reader that we have agreed to take boundaries in the plane with respect to $\mathbb{C}_\infty$.

\begin{fact}[\cite{4}]
If $U$ is a proper subdomain of $\mathbb{C}$, then if a solution to the Dirichlet Problem exists, then it is unique.
\end{fact}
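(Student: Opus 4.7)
The plan is to reduce the uniqueness claim to the second part of the subharmonic maximum principle (Theorem \ref{thm:maxsub}), exploiting the paper's convention that closures and boundaries of planar sets are taken in the Riemann sphere $\mathbb{C}_\infty$.

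First I would let $u_1, u_2$ be two solutions sharing the same boundary data $f$ and set $v := u_1 - u_2$. Then $v$ is harmonic on $U$, and in particular is simultaneously subharmonic and superharmonic; by hypothesis it extends continuously to $\overline{U}$ and vanishes on $\partial U$, so $\limsup_{z \to w} v(z) = 0$ for every $w \in \partial U$. Applying Theorem \ref{thm:maxsub}(2) to the subharmonic function $v$ yields $v \leq 0$ on $U$, and applying the same theorem to the subharmonic function $-v$ yields $v \geq 0$ on $U$. Combining these inequalities gives $v \equiv 0$, hence $u_1 \equiv u_2$ on $U$ as desired.

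The main—and really only—obstacle is verifying that Theorem \ref{thm:maxsub}(2) genuinely applies when $U$ is unbounded. The proof of that theorem invoked compactness of $\overline{U}$ in order to extract, via Lemma \ref{lem:usc}, a global maximum of the upper-semicontinuous extension of $v$; under the convention that $\overline{U}$ denotes closure in $\mathbb{C}_\infty$, this compactness is automatic regardless of whether $U$ is bounded as a subset of $\mathbb{C}$. When $U$ is unbounded, the point $\infty$ lies in $\partial U$ and the boundary data $f$ must be prescribed (and matched by both $u_1$ and $u_2$) there as well; this is precisely the role played by the "condition at infinity" alluded to in the paragraph preceding the statement. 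The properness hypothesis $U \neq \mathbb{C}$ ensures that $\partial U$ is nonempty in $\mathbb{C}_\infty$, so the hypothesis of Theorem \ref{thm:maxsub}(2) is not vacuous and the maximum principle truly pins down $v$.
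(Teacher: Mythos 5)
The paper itself does not prove this statement; it is labelled a \textbf{Fact} with a citation to Ransford's book, and the only proof the paper actually carries out is the preceding theorem for \emph{bounded} domains of $\mathbb{R}^n$, via the maximum principle for harmonic functions and compactness of $\overline{U}$. Your argument is correct and is essentially the same mechanism transplanted to the general planar setting: you observe that the paper's running convention of taking closures in $\mathbb{C}_\infty$ makes $\overline{U}$ compact automatically (the only thing that made boundedness necessary before), and that for an unbounded proper subdomain the point $\infty$ is a genuine boundary point at which both solutions must agree with $f$, so the hypothesis $\limsup_{z\to w} v(z) = 0$ holds at every $w\in\partial U$, including $\infty$. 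You also correctly flag that properness ($U\neq\mathbb{C}$) is exactly what guarantees $\partial U\neq\emptyset$ so the maximum principle is not vacuous.

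The one cosmetic difference is that you invoke Theorem \ref{thm:maxsub}(2) (the subharmonic maximum principle) applied to $v$ and $-v$ to squeeze $v\equiv 0$, whereas the paper's bounded-case proof uses the harmonic maximum principle (Theorem \ref{thm:maxpri2}(2)) in a proof by contradiction. Since $v$ is harmonic, the harmonic version would also work directly and is marginally more parallel to the paper's own computation, but applying the subharmonic version to $\pm v$ is cleaner than arguing by contradiction and avoids the paper's slightly loose phrasing ``$u_1-u_2>0$ on $U$'' when in fact this was only assumed at a single point. Either way, the load-bearing steps --- harmonicity of $v$, continuous vanishing on $\partial U$ taken in $\mathbb{C}_\infty$, compactness of $\overline{U}$, and a maximum-principle squeeze --- are the same.
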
 

Making infinity an undistinguished point from $\mathbb{C}$ via the Riemann sphere $\mathbb{C}_\infty$ basically corresponds to boundary conditions at infinity for arbitrary $\mathbb{R}^n$. Although this point is quite subtle we want to emphasize it here as it marks another difference in the style of presentation in the literature between the planar and general case of $\mathbb{R}^n$ in potential theory, i.e. some authors work in two dimensions and agree taking closures with respect to $\mathbb{C}_\infty$ - like we did so far for $n=2$ - or work directly with $\mathbb{C}_\infty$, while general treatments for $\mathbb{R}^n$ usually distinguish between bounded and unbounded domains.\\

Having dealt with uniqueness, the question of existence requires far more thought as the following classical example shows.

\begin{ex}
Let $U=\Delta \backslash \{0\}\subset \mathbb{C}$ (respectively $\mathbb{R}^2$) be the punctured unit disc. Define $f(z)=1$ for $z=0$ and $f(z)=0$ for $|z|=1$. Suppose $u$ is a solution to Dirichlet problem on $U$. Observe that by the maximum principle $\limsup_{z\rightarrow \gamma} u(z)\leq f(\gamma)$ for $\gamma\in \partial U$. Hence the estimate $u(z)\leq \epsilon \log\left(\frac{1}{|z|}\right)$ holds for all $\epsilon>0$ for $|z|=1$ or $z$ close to $0$. But the maximum principle for harmonic functions implies that the estimate holds in $U$. Now let $\epsilon\rightarrow 0$ to obtain $u(z)\leq 0$ on $U$, but $u$ extends continuously to the boundary contradicting that $f(0)=1$.  
\end{ex}

Clearly the geometry of the domain and the point $0$ play a special role in preventing a solution to exist; this is confirmed by the fact that we can derive an explcit solution for the standard disc.

\begin{thm}
\label{thm:pifdisc}
If $u(z)$ is harmonic on $U=\Delta(0,R)=\Delta_R$ and continuous on $\partial \Delta_R$ then for all $z\in\Delta_R$
\begin{eqnarray}
u(z)= \frac{1}{2\pi} \int_0^{2\pi} \frac{R^2-|z|^2}{|Re^{is}-z|^2}u(Re^{is})ds
\end{eqnarray}
\end{thm}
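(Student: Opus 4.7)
The plan is to derive the Poisson integral formula by reducing to the surface-mean property of harmonic functions via a conformal change of variables on the disc. The case $z=0$ is just the ordinary surface-mean identity $u(0)=\frac{1}{2\pi}\int_0^{2\pi}u(Re^{is})\,ds$, established in Section 2.1. Because $u$ is only assumed harmonic on the open disc $\Delta_R$, I would first derive the formula on each $\Delta_{R'}$ with $R' < R$ (where $u$ is harmonic on a neighbourhood of $\overline{\Delta_{R'}}$), then let $R'\nearrow R$ and use continuity of $u$ on $\overline{\Delta_R}$ together with dominated convergence to pass to the limit.

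For general $z\in\Delta_R$, I would introduce the disc automorphism
\begin{eqnarray}
T_z(w)=\frac{R^{2}(w+z)}{R^{2}+\overline{z}w}\notag
\end{eqnarray}
which maps $\overline{\Delta_R}$ onto itself, sends $0$ to $z$, and is holomorphic on a neighbourhood of $\overline{\Delta_R}$. Since $T_z$ is holomorphic, the composition $v:=u\circ T_z$ is again harmonic on $\Delta_R$ and continuous on $\overline{\Delta_R}$; here I would invoke Lemma \ref{lem:ref} to write $u$ locally as $\mathrm{Re}(f)$ and Theorem \ref{thm:reh} to conclude that $\mathrm{Re}(f\circ T_z)$ is harmonic. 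Applying the surface-mean identity to $v$ at the origin gives
\begin{eqnarray}
u(z)=v(0)=\frac{1}{2\pi}\int_0^{2\pi}u\bigl(T_z(Re^{it})\bigr)\,dt.\notag
\end{eqnarray}

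The main obstacle — and really the entire content of the computation — is the change of variables. The map $t\mapsto s$ defined implicitly by $Re^{is}=T_z(Re^{it})$ is an orientation-preserving homeomorphism of $[0,2\pi)$ onto itself, and I would differentiate this relation to find $dt/ds$ explicitly. Writing $T_z(w)=R^{2}(w+z)/(R^{2}+\overline{z}w)$ and using $|Re^{it}|=R$, a direct manipulation (exploiting $|R^{2}+\overline{z}Re^{it}|=R\,|Re^{-it}+\overline{z}|=R\,|Re^{is}-z|\cdot\text{(phase)}$, after using the defining relation for $s$) yields
\begin{eqnarray}
\frac{dt}{ds}=\frac{R^{2}-|z|^{2}}{|Re^{is}-z|^{2}}.\notag
\end{eqnarray}
Substituting this Jacobian into the mean-value integral and renaming the variable produces precisely the Poisson kernel representation claimed in the theorem.

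I expect the Möbius-map bookkeeping (verifying that $T_z$ maps $\partial\Delta_R$ to itself bijectively, is smooth up to the boundary, and yields the kernel after differentiation) to be the most error-prone step; everything else is a straightforward appeal to the mean-value property and to the harmonicity of the composition with a conformal map.
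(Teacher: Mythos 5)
Your proposal is correct, but it is a genuinely different derivation from the one in the paper. The paper proves the formula by writing $u = \mathrm{Re}(g)$ for a holomorphic $g$ (Lemma \ref{lem:ref}), invoking Cauchy's integral formula
$g(z) = \tfrac{1}{2\pi i}\int_{|w|=R}\tfrac{g(w)}{w-z}\,dw$,
and then subtracting the vanishing Cauchy integral obtained by placing a fictitious pole at the reflected point $w_0 = R^2/\bar z$ outside the disc; the Poisson kernel falls out as $\tfrac{w}{w-z}-\tfrac{w}{w-R^2/\bar z}$ and one finishes by taking real parts. Your route instead exploits the disc automorphism $T_z(w)=R^2(w+z)/(R^2+\bar z w)$ sending $0\mapsto z$: since harmonicity is conformally invariant (via Lemma \ref{lem:ref} and Theorem \ref{thm:reh}, as you say), the elementary surface-mean identity at the origin for $u\circ T_z$ becomes the Poisson formula after the boundary change of variables $Re^{is}=T_z(Re^{it})$, whose Jacobian $dt/ds=(R^2-|z|^2)/|Re^{is}-z|^2$ you correctly identify. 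I verified this Jacobian: from $Re^{it}=T_{-z}(Re^{is})$ one computes $T_{-z}'(\zeta)=R^2(R^2-|z|^2)/(R^2-\bar z\zeta)^2$, and using $R^2-\bar z Re^{is}=Re^{is}\,\overline{(Re^{is}-z)}$ the exponential factors cancel to give exactly the kernel. Both proofs handle the limiting issue (harmonic only on the open disc) in essentially the same way — the paper dilates via $u_r(z)=u(rz)$, you shrink the radius to $R'<R$; these are equivalent. What the paper's Cauchy-formula approach buys is that it makes the complex-analytic origin of the kernel transparent (it literally is the real part of a Cauchy-type kernel after a reflection trick). What your Möbius approach buys is that it explains the Poisson kernel as a conformal Jacobian, which generalizes cleanly to the conformal-invariance statements used later in the paper (e.g. Theorem \ref{thm:confinv}), so it is arguably the more natural derivation given where the paper is headed.
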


\begin{proof}
Assume first that $u$ is also harmonic on $\overline{\Delta}_R$. Since $u$ is harmonic and $\overline{\Delta}_R$ is simply-connected by lemma $\ref{lem:ref}$ there exists a holomorphic function $g$ on $\overline{\Delta}_R$ such that $Re(g)=u$. Now extend $g$ analytically to a domain containing $\overline{\Delta}_R$. Then Cauchy's integral formula implies for $z\in\Delta_R$ 
\begin{eqnarray}
\label{eq:pois1}
g(z)=\frac{1}{2\pi i}\int_{|w|=R}\frac{g(w)}{w-z}dw=\frac{1}{2\pi}\int_0^{2\pi} \frac{g(w) w}{w-z}d\phi \quad (w=Re^{i\phi})
\end{eqnarray}
But by Cauchy's theorem with $|w_0|>R$ we obtain
\begin{eqnarray}
\label{eq:pois2}
\frac{1}{2\pi}\int_0^{2\pi} \frac{w g(w)}{w-w_0}d\phi=0
\end{eqnarray}
Now pick $w_0=R^2/\overline{z}$ and observe that $|w_0|>R$ since $z\in \Delta_R$ so combining the result of (\ref{eq:pois1}) and (\ref{eq:pois2}) it follows that
\begin{eqnarray}
\label{eq:pois3}
g(z)=\frac{1}{2\pi}\int_0^{2\pi} g(w) \left(\frac{w}{w-z}-\frac{w}{w-R^2/\overline{z}}\right)d\phi=\frac{1}{2\pi} \int_0^{2\pi} g(w)\frac{R^2-|z|^2}{|Re^{i\phi}-z|^2}d\phi
\end{eqnarray}
where again $w=Re^{i\phi}$. Then on taking the real part of both sides of (\ref{eq:pois3}) we have
\begin{eqnarray}
u(z)=\frac{1}{2\pi} \int_0^{2\pi} u\left(Re^{i\phi}\right)\frac{R^2-|z|^2}{|Re^{i\phi}-z|^2}d\phi\notag
\end{eqnarray}
Hence we have shown the case when $u$ is harmonic on $\overline{\Delta}_R$. If it is only harmonic on $\Delta_R$ apply the same argument to $u_r(z)=u(rz)$ for $r<1$. Since $u$ is uniformly continuous on $\overline{\Delta_R}$ interchanging the limit $r\rightarrow 1$ and integration is justified to obtain the claimed result.
\end{proof}

\begin{defn} 
The function $P(w,z)=\frac{|w|^2-|z|^2}{|w-z|^2}$ is called the \textbf{\index{Poisson kernel}Poisson kernel}.
\end{defn}

Note that we have an even simpler form of the Poisson kernel if we set $z=re^{i\theta}$ and $w=Re^{i\phi}$ to obtain
\begin{eqnarray}
P(w,z)=Re\left( \frac{w+z}{w-z} \right)=\frac{R^2-r^2}{R^2+r^2-2Rr\cos(\phi-\theta)}
\end{eqnarray} 

\begin{thm}
\label{thm:posintold}
If $g$ is continuous on $|w|=R$ and the Poisson kernel $P(w,z)$ is given as above then
\begin{eqnarray}
\label{eq:cand}
\frac{1}{2\pi}\int_0^{2\pi} P(w,z)g(w)d\phi \qquad (w=Re^{i\phi})
\end{eqnarray}
is harmonic on $\Delta_R$. 
\end{thm}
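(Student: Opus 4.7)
The plan is to exploit the identity
\begin{eqnarray}
P(w,z)=\mathrm{Re}\!\left(\frac{w+z}{w-z}\right) \notag
\end{eqnarray}
already noted above the statement of the theorem, and thereby reduce harmonicity of the Poisson integral to the fact that the real part of a holomorphic function is harmonic (Theorem \ref{thm:reh}).

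First I would define the complex-valued function
\begin{eqnarray}
F(z)=\frac{1}{2\pi}\int_0^{2\pi}\frac{w+z}{w-z}\,g(w)\,d\phi, \qquad w=Re^{i\phi}, \notag
\end{eqnarray}
on the disc $\Delta_R$. The integrand is, for each fixed $\phi$, a rational function of $z$ with the only pole at $z=w$, hence holomorphic in $z$ on $\Delta_R$. Continuity of $g$ on the circle $|w|=R$ gives that $g$ is bounded there; on any compact subdisc $\overline{\Delta_r}\subset\Delta_R$ (with $r<R$) the denominator $|w-z|\geq R-r$ is bounded away from zero, so the integrand and its $z$-derivative $\frac{2w}{(w-z)^2}g(w)$ are uniformly bounded in $\phi$. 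This legitimizes differentiation under the integral sign and shows $F$ is holomorphic on $\Delta_R$.

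Next I would take real parts. By linearity of the real part and of the integral,
\begin{eqnarray}
\mathrm{Re}\,F(z)=\frac{1}{2\pi}\int_0^{2\pi}\mathrm{Re}\!\left(\frac{w+z}{w-z}\right)g(w)\,d\phi=\frac{1}{2\pi}\int_0^{2\pi}P(w,z)g(w)\,d\phi, \notag
\end{eqnarray}
which is exactly the candidate (\ref{eq:cand}). Since $F$ is holomorphic on the simply connected domain $\Delta_R$, Theorem \ref{thm:reh} yields that $\mathrm{Re}\,F$ is harmonic there, completing the proof.

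The only genuine technical point is the justification of differentiating under the integral sign; there is no real obstacle because $g$ is bounded (continuous on a compact circle) and $z$ stays at positive distance from the circle $|w|=R$ on every compact subdisc, so the dominated convergence theorem applies to the difference quotient uniformly on compacta. Apart from this routine check, the proof is essentially a single observation plus an invocation of Theorem \ref{thm:reh}.
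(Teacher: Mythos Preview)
Your proof is correct and follows essentially the same approach as the paper: write the Poisson integral as the real part of the holomorphic function $F(z)=\frac{1}{2\pi}\int_0^{2\pi}\frac{w+z}{w-z}\,g(w)\,d\phi$ and invoke Theorem~\ref{thm:reh}. You supply more detail than the paper does in justifying that $F$ is holomorphic (via differentiation under the integral sign), but the underlying idea is identical.
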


\begin{proof}
The only step required to prove the result is to observe that 
\begin{eqnarray}
\frac{1}{2\pi}\int_0^{2\pi} P(w,z)g(w)d\phi=Re\left(\frac{1}{2\pi}\int_0^{2\pi} \frac{w+z}{w-z}g(w)d\phi\right)\notag
\end{eqnarray}
and as real parts of holopmorphic functions are harmonic (see Theorem \ref{thm:reh}) the result follows.
\end{proof}

Hence we a constructed the candidate function in equation (\ref{eq:cand}), which is supposed to solve the Dirichlet Problem. It turns out that it indeed extends continuously to the boundary and hence is the required solution. The proof of this fact is technical and since we shall return to the problem of extending a solution to the boundary later, we omit it here. \\

Clearly we can shift the disc to some new center, say $z_0$, and all our previous calculations apply almost directly; the work can now be summarized.

\begin{thm} [\index{Poisson integral formula}Poisson integral formula]
\label{thm:posint}
If $D=\Delta(z_0,R)$ and $f:\partial D\rightarrow \mathbb{R}$ is continuous then the \textbf{\index{Poisson integral}Poisson integral} $P_D^f$ defined for $r<R$ and $\theta\in[0,2\pi)$ and given by 
\begin{eqnarray}
P_D^f(z_0+re^{i\theta})=\frac{1}{2\pi}\int_0^{2\pi} f(z_0+Re^{i\phi})\frac{R^2-r^2}{R^2+r^2-2Rr\cos(\phi-\theta)}d\phi
\end{eqnarray}
solves the Dirichlet Problem on $D$, i.e. $P_D^f$ is harmonic on $D$ and $\lim_{z\rightarrow \zeta } P_D^f(z)=f(\zeta)$ for all $\zeta\in \partial D$.
\end{thm}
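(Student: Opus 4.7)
The plan is to reduce everything to the unit disc by translation and scaling so that, without loss of generality, we may take $z_0 = 0$ and $R = 1$; neither harmonicity nor the boundary limit property is affected by an affine change of variables. Under this reduction the harmonicity of $P_D^f$ on $D$ is already handed to us by Theorem \ref{thm:posintold}, so the entire content of the statement lies in verifying continuous extension to the boundary.

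For the boundary behaviour I would first establish three standard properties of the Poisson kernel $P(w,z) = \frac{R^2-|z|^2}{|w-z|^2}$ on $\Delta_R \times \partial\Delta_R$. First, positivity: this is immediate from the definition. Second, the normalization
\begin{eqnarray}
\frac{1}{2\pi}\int_0^{2\pi} P(Re^{i\phi},z)\, d\phi = 1 \qquad \text{for all } z\in\Delta_R, \notag
\end{eqnarray}
which I would obtain by applying Theorem \ref{thm:pifdisc} to the constant harmonic function $u\equiv 1$. Third, a uniform-decay statement away from a fixed boundary point: for any $\zeta\in\partial\Delta_R$ and any $\delta>0$,
\begin{eqnarray}
\sup_{|w-\zeta|\geq\delta} P(w,z) \longrightarrow 0 \qquad \text{as } z\to\zeta, \notag
\end{eqnarray}
which follows directly from $|w-z|\geq \delta - |z-\zeta|$ together with $R^2-|z|^2\to 0$. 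These three properties are the only ingredients needed; everything else is a routine $\epsilon/3$ argument.

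Given $\zeta\in\partial D$ and $\epsilon>0$, the key identity is obtained by multiplying the normalization by the constant $f(\zeta)$ and subtracting to write
\begin{eqnarray}
P_D^f(z) - f(\zeta) = \frac{1}{2\pi}\int_0^{2\pi} P(Re^{i\phi},z)\bigl(f(Re^{i\phi})-f(\zeta)\bigr)\, d\phi. \notag
\end{eqnarray}
I would then split the circle of integration into the arc $A_\delta = \{Re^{i\phi} : |Re^{i\phi}-\zeta|<\delta\}$ and its complement. On $A_\delta$, continuity of $f$ at $\zeta$ makes the integrand pointwise small (choose $\delta$ so that $|f(w)-f(\zeta)|<\epsilon/2$ there), and positivity of $P$ together with the normalization bounds this part of the integral by $\epsilon/2$. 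On the complement, $f$ is merely bounded by some constant $M$ (continuity on the compact circle $\partial D$), but the uniform-decay property forces $P(w,z)$ to be smaller than $\epsilon/(4M)$ uniformly once $z$ is sufficiently close to $\zeta$, making that part of the integral smaller than $\epsilon/2$ as well.

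The main obstacle is purely technical rather than conceptual: making the uniform-decay estimate sharp enough to survive being multiplied by $\|f\|_\infty$, while simultaneously handling the fact that the arc $A_\delta$ depends on $\zeta$. This is routinely managed by fixing $\delta$ first (using continuity of $f$) and then shrinking the neighbourhood of $\zeta$ in which $z$ lies. A small subtlety is ensuring continuity up to the boundary for $\zeta$ varying over $\partial D$, not just for a fixed $\zeta$; but since the bound depends continuously on $\zeta$ through the modulus of continuity of $f$ on the compact set $\partial D$, no additional work is needed beyond invoking uniform continuity of $f$ on $\partial D$ to make $\delta$ independent of $\zeta$.
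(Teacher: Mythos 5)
Your proposal is correct, and in fact it is more complete than the paper's own treatment. The paper derives harmonicity exactly as you do, by appealing to Theorem \ref{thm:posintold}, but then explicitly declines to prove the boundary behaviour: it states that the candidate function ``indeed extends continuously to the boundary'' but that ``the proof of this fact is technical'' and is omitted, with a vague pointer to the later barrier discussion in the Perron-method section. You supply the missing ingredient with the standard Poisson-kernel argument: positivity of $P(w,z)$, the normalization $\frac{1}{2\pi}\int_0^{2\pi}P(Re^{i\phi},z)\,d\phi=1$ obtained by feeding the constant harmonic function $u\equiv 1$ into Theorem \ref{thm:pifdisc}, and the uniform decay of the kernel on the arc $\{|w-\zeta|\geq\delta\}$ via $|w-z|\geq\delta-|z-\zeta|$ together with $R^2-|z|^2\to 0$. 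Combined with the identity $P_D^f(z)-f(\zeta)=\frac{1}{2\pi}\int_0^{2\pi}P(Re^{i\phi},z)\bigl(f(Re^{i\phi})-f(\zeta)\bigr)\,d\phi$ these give the correct two-piece estimate. One small remark: the conclusion $\lim_{z\to\zeta}P_D^f(z)=f(\zeta)$ is a pointwise statement for each fixed $\zeta\in\partial D$, so your closing paragraph on uniformity in $\zeta$ is strictly more than the theorem asks for; but your observation that uniform continuity of $f$ on the compact circle $\partial D$ makes $\delta$ independent of $\zeta$ is also correct and yields uniform boundary convergence at no extra cost.
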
 

Although the clever use of some elementary complex analysis gave an explicit formula for the disc, even for plane domains one cannot expect this for arbitrary geometry. In the following we are going to present the main fundamental method of demonstrating existence of solutions for the Dirchlet Problem, the method can be adapted to work for $\mathbb{R}^n$ (see e.g. \cite{3} or \cite{7}), but we content ourselves with $\mathbb{C}$.\\

The first goal is to construct a harmonic function on a given domain D. 

\begin{defn}
\label{defn:perfam}
Let $D$ be a proper subdomain of $\mathbb{C}$ and let $f:\partial D\rightarrow \mathbb{R}$, we define the \textbf{\index{Perron family of subsolutions}Perron family of subsolutions} corresponding to $f$, denoted by $\cal{H}_D^{\text{$f$}}$ to be all subharmonic functions $u$ such that for $z\in D$
\begin{eqnarray}
\limsup_{z\rightarrow \zeta}u(z)\leq f(\zeta)\qquad \forall\zeta\in\partial D
\end{eqnarray}
\end{defn}

\begin{defn} 
\label{defn:persol}
We define the \textbf{\index{Perron solution}Perron solution} $H_D^f$ as the upper envelope of the Perron family of subsolutions. Namely we set $H_D^f(z)=\sup\{ u(z):u \in \cal{H}_D^{\text{$f$}} \}$.
\end{defn}

One motivation for the two definitions is that if there is a solution $u(z)$ harmonic on $D$ and $\lim_{z\rightarrow \zeta} u(z)=f(\zeta)$ then $u$ must be equal to the Perron solution. Indeed, if $u$ is solution it is certainly subharmonic and by definition we have $u(z)\leq H_D^f$. Let $v\in \cal{H}_D^{\text{$f$}}$; since for $w\in \partial D$ we have $\limsup_{z\rightarrow w} (v(z)-u(z))\leq 0$ the maximum principle allows us to conclude that $v(z)\leq u(z)$ on $D$, which implies $u(z)\geq H_D^f$.\\

Although we have the right definition we must show that a harmonic function actually exists (!) on a given domain. The following result is the major step.

\begin{thm}
\label{thm:perha}
Let $D$ be a proper subdomain of $\mathbb{C}$ and let $f:\partial D\rightarrow \mathbb{R}$ be continuous and bounded then the Perron solution $H_D^f$ is harmonic on $D$.
\end{thm}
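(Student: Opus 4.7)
The plan is to localize to a small closed disc $\overline{\Delta}(z_0,r)\subset D$ and exhibit $H_D^f$ as the pointwise monotone limit of harmonic functions on that disc, then use the harmonic maximum principle to rule out that the limit is a strict underestimate of $H_D^f$. First I would record two preparatory observations. Because $|f|\le M$ for some $M$, the constant function $-M$ lies in $\mathcal{H}_D^f$, while the maximum principle for subharmonic functions (Theorem \ref{thm:maxsub}, part 2) applied to $u-M$ shows that every $u\in\mathcal{H}_D^f$ satisfies $u\le M$, so $H_D^f$ is finite and bounded by $M$. I would also note the two stability properties of $\mathcal{H}_D^f$ that power the construction: (a) if $u_1,\ldots,u_k\in\mathcal{H}_D^f$ then $\max(u_1,\ldots,u_k)\in\mathcal{H}_D^f$ (maxima of finitely many subharmonic functions are subharmonic via the submean inequality, and the $\limsup$ condition is preserved); and (b) the \emph{Poisson modification} $\widetilde{u}$ obtained by replacing $u$ on an open disc $\Delta(z_0,r)$ with the Poisson integral (Theorem \ref{thm:posint}) of its boundary values on $\partial\Delta(z_0,r)$ lies in $\mathcal{H}_D^f$, is harmonic on $\Delta(z_0,r)$, and dominates $u$ there. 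This last point uses that $u$, being subharmonic, is dominated by the harmonic extension of its boundary values on the closed disc, and one checks upper semicontinuity across $\partial\Delta(z_0,r)$ plus the submean inequality at those points using the original subharmonicity of $u$.

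Next I would fix $z_0\in D$ and $r>0$ with $\overline{\Delta}(z_0,r)\subset D$, pick a sequence $u_n\in\mathcal{H}_D^f$ with $u_n(z_0)\to H_D^f(z_0)$, and replace it by the increasing sequence $\max(-M,u_1,\ldots,u_n)$ followed by the Poisson modification on $\Delta(z_0,r)$. The resulting sequence $\widetilde{u}_n\in\mathcal{H}_D^f$ is increasing, bounded above by $M$, harmonic on $\Delta(z_0,r)$, and still satisfies $\widetilde{u}_n(z_0)\to H_D^f(z_0)$. Using the Poisson integral representation of each $\widetilde{u}_n$ on a slightly smaller disc and monotone convergence, the pointwise limit $U:=\lim_n\widetilde{u}_n$ is itself a Poisson integral of a bounded function, hence harmonic on $\Delta(z_0,r)$ (this plays the role of Harnack's monotone convergence theorem for the planar case). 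By construction $U\le H_D^f$ and $U(z_0)=H_D^f(z_0)$.

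The main obstacle is to upgrade equality at $z_0$ to equality on the whole disc, since \emph{a priori} the maximizing sequence only controls the value at $z_0$. I would use the standard diagonal trick: suppose for contradiction that $U(w)<H_D^f(w)$ for some $w\in\Delta(z_0,r)$. Then pick $v\in\mathcal{H}_D^f$ with $v(w)>U(w)$ and repeat the construction with the enriched sequence $\max(\widetilde{u}_n,v)$, producing (after Poisson modification on $\Delta(z_0,r)$) an increasing sequence whose harmonic limit $V$ on $\Delta(z_0,r)$ satisfies $U\le V\le H_D^f$, $V(z_0)=H_D^f(z_0)=U(z_0)$, and $V(w)\ge v(w)>U(w)$. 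The function $V-U$ is then harmonic and non-negative on $\Delta(z_0,r)$ and vanishes at the interior point $z_0$; applying the harmonic maximum principle (Theorem \ref{thm:maxpri2}) to $U-V\le 0$ forces $V\equiv U$, contradicting $V(w)>U(w)$. Hence $H_D^f=U$ on $\Delta(z_0,r)$ and, since $z_0$ was arbitrary, $H_D^f$ is harmonic throughout $D$.

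The hardest and most delicate step is verifying that the Poisson modification lands back in $\mathcal{H}_D^f$, particularly the submean inequality at points of $\partial\Delta(z_0,r)$: this is where one genuinely uses that $f$ is bounded and continuous so that $P_{\Delta(z_0,r)}^{u|_{\partial\Delta(z_0,r)}}$ is well-behaved, together with the subharmonicity of the original $u$. Everything else is bookkeeping with monotone sequences and the two maximum principles already established.
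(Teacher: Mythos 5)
Your proposal is correct and follows essentially the same route as the paper: bound $H_D^f$ by $\pm M$, build an increasing maximizing sequence at a single point, Poisson-modify on a disc, pass to a harmonic limit via a Harnack/monotone-convergence argument, and use the maximum principle with a second enriched sequence to upgrade pointwise equality to equality on the disc. The only cosmetic difference is that you argue by contradiction with a single witness $v$ at $w$, while the paper runs a full second maximizing sequence $v_n$ at $w$; the two are functionally identical.
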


Before we can carry out the proof we state two results, which are important in their own right, but require proofs, which do not provide new additional insight.

\begin{fact}[\index{Poisson modification}Poisson modification, \cite{4}] 
\label{thm:posmod}
If $D$ is a domain in $\mathbb{C}$, $\Delta$ is an open disc such that $\overline{\Delta}\subset D$ and $u\not\equiv -\infty$ is subharmonic on $D$ we define
\begin{eqnarray}
\tilde{u}(z)= \begin{cases} P^u_D & \text{on $\Delta$}\\
                            u & \text{on $D\backslash \Delta$}\end{cases}
\end{eqnarray}
Then $\tilde{u}$ is subharmonic on $D$, harmonic on $\Delta$ and satisfies $\tilde{u}\geq u$ on $D$. 
\end{fact}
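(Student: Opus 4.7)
The plan is to reduce everything to objects for which the paper has already developed the machinery: the Poisson integral formula (Theorem \ref{thm:posint}), the maximum principle for subharmonic functions (Theorem \ref{thm:maxsub}), and the approximation of upper-semicontinuous functions from above by continuous ones (Lemma \ref{lem:uscapx}). The only nontrivial point is interpreting $P^u_\Delta$, since $u$ restricted to $\partial \Delta$ is only upper-semicontinuous rather than continuous.

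First I would make sense of the Poisson modification. Using Lemma \ref{lem:uscapx}, choose continuous $\phi_n:\partial\Delta\to\mathbb{R}$ with $\phi_n\downarrow u$ on $\partial\Delta$ (boundedness above of $u|_{\partial\Delta}$ follows from upper-semicontinuity on the compact set $\partial\Delta$ via Lemma \ref{lem:usc}). By Theorem \ref{thm:posint} each $P^{\phi_n}_\Delta$ is harmonic on $\Delta$ with continuous boundary values $\phi_n$. Setting $P^u_\Delta := \lim_{n\to\infty} P^{\phi_n}_\Delta$ on $\Delta$, monotone convergence in the Poisson kernel representation shows the definition is independent of the choice of sequence and yields a function which is harmonic on $\Delta$ (being a decreasing limit of harmonic functions bounded below on compacta, or directly because the Poisson integral of the measurable boundary function is harmonic via differentiation under the integral sign).

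Next I would establish $\tilde u \geq u$ on $D$. On $D\setminus \Delta$ this is immediate by definition. On $\Delta$ consider $w_n := u - P^{\phi_n}_\Delta$ on $\Delta$; this is subharmonic (difference of subharmonic and harmonic), and for every $\zeta\in\partial\Delta$ upper-semicontinuity of $u$ together with $\phi_n\geq u$ gives
\begin{eqnarray}
\limsup_{z\to\zeta} w_n(z) \;\leq\; u(\zeta) - \phi_n(\zeta) \;\leq\; 0.\notag
\end{eqnarray}
The second part of the maximum principle (Theorem \ref{thm:maxsub}) then yields $w_n\leq 0$ on $\Delta$, i.e.\ $u\leq P^{\phi_n}_\Delta$; letting $n\to\infty$ gives $u\leq \tilde u$ on $\Delta$.

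Finally I would verify subharmonicity of $\tilde u$ on all of $D$. Upper-semicontinuity is clear on the interior of $\Delta$ (where $\tilde u$ is harmonic hence smooth) and on the interior of $D\setminus\overline\Delta$ (where $\tilde u = u$). On $\partial\Delta$ at a point $\zeta$, approach from outside $\overline\Delta$ gives $\limsup \tilde u = \limsup u \leq u(\zeta) = \tilde u(\zeta)$ by upper-semicontinuity of $u$, and approach from inside $\Delta$ gives $\limsup P^u_\Delta(z) \leq \phi_n(\zeta)$ for every $n$ (since $P^u_\Delta\leq P^{\phi_n}_\Delta$ and the latter is continuous at $\zeta$ with value $\phi_n(\zeta)$), which tends to $u(\zeta)$. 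For the sub-mean inequality, the only interesting case is $w\in\partial\Delta$; for small $r>0$ the already-proved inequality $\tilde u\geq u$ combined with subharmonicity of $u$ at $w$ yields
\begin{eqnarray}
\tilde u(w) \;=\; u(w) \;\leq\; S^r_u(w) \;\leq\; S^r_{\tilde u}(w),\notag
\end{eqnarray}
exactly the submean inequality for $\tilde u$ at $w$. Points in the open interior of either region are trivial (harmonic, respectively subharmonic there). The main technical obstacle is this boundary behaviour of $P^u_\Delta$: handling the gap between upper-semicontinuity of $u|_{\partial\Delta}$ and the continuous-boundary hypothesis of Theorem \ref{thm:posint}, which is precisely what the approximation $\phi_n\downarrow u$ is designed to bridge.
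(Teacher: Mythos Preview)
The paper does not actually prove this statement: it is recorded as a Fact with a reference to Ransford, accompanied only by the remark that harmonicity of $\tilde u$ on $\Delta$ comes from the Poisson integral formula. So there is no proof in the paper to compare against; your proposal supplies the standard argument and is essentially correct.

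One imprecision is worth flagging. In arguing that the decreasing limit $P^u_\Delta=\lim_n P^{\phi_n}_\Delta$ is harmonic rather than identically $-\infty$, you invoke that the $P^{\phi_n}_\Delta$ are ``bounded below on compacta''. This is not automatic: the only lower bound you produce is $P^{\phi_n}_\Delta\geq u$, and a subharmonic $u$ need not be bounded below on compact sets. Your alternative justification (differentiation under the integral sign) likewise presupposes $u|_{\partial\Delta}\in L^1(\partial\Delta)$. Both routes need the same missing ingredient: finiteness of $S^R_u(w_0)$ at the centre $w_0$ of $\Delta$ (radius $R$). This follows because $r\mapsto S^r_u(w_0)$ is increasing (Lemma \ref{lem:tech}) and $u\in L^1_{\mathrm{loc}}(D)$ (Theorem \ref{thm:int}) forces $\int_0^R r\,S^r_u(w_0)\,dr>-\infty$, hence $S^r_u(w_0)>-\infty$ for some and therefore all $r\in(0,R]$. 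With $P^u_\Delta(w_0)=S^R_u(w_0)>-\infty$ in hand, Harnack applied to the increasing sequence $-P^{\phi_n}_\Delta$ gives harmonicity of the limit. The remainder of your argument --- $\tilde u\geq u$ via the maximum principle, upper-semicontinuity at $\partial\Delta$ via $P^u_\Delta\leq P^{\phi_n}_\Delta$ and continuity of the latter, and the submean inequality at boundary points from $\tilde u\geq u$ --- is correct as written.
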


One key idea is hidden in the proof of this fact, namely that $\tilde{u}$ is harmonic on $\Delta$. But this follows immediately from Theorem \ref{thm:posint} (respectively Theorem \ref{thm:posintold}), which is a consequence of the analysis of the case, when the domain is a disc.

\begin{fact}[\index{Harnack's Theorem}Harnack's Theorem, \cite{8}]
If $(u_n)_{n\geq 1}$ is a family of harmonic functions on $\mathbb{C}$ such that $h_1\leq h_2\leq \ldots$ on $D\subseteq \mathbb{C}$, where $D$ is a domain, then either $u_n$ converges locally uniformly to $\infty$ or $u_n\rightarrow u$ locally uniformly on $D$, where $u$ is harmonic. 
\end{fact}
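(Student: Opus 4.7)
The plan is to deduce Harnack's theorem from a pointwise estimate---Harnack's inequality---which I would extract from the Poisson integral formula already established in Theorem \ref{thm:posint}. The key observation is that monotonicity turns the differences $u_n-u_m$ (for $n\ge m$) into \emph{nonnegative} harmonic functions, and nonnegative harmonic functions on a closed disc are controlled pointwise by their value at the centre. This lets me transfer Cauchyness (or divergence) at one point to uniform Cauchyness (or divergence) on a neighbourhood.

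First I would establish Harnack's inequality: for any nonnegative harmonic function $h$ on an open set containing $\overline{\Delta(z_0,R)}$, and any $z=z_0+re^{i\theta}$ with $r<R$,
$$\frac{R-r}{R+r}\,h(z_0)\;\le\;h(z)\;\le\;\frac{R+r}{R-r}\,h(z_0).$$
This drops out of Theorem \ref{thm:posint} by the sharp kernel bounds $(R-r)^2\le R^2+r^2-2Rr\cos(\phi-\theta)\le (R+r)^2$, combined with the surface-mean identity $h(z_0)=\frac{1}{2\pi}\int_0^{2\pi}h(z_0+Re^{i\phi})\,d\phi$ to isolate $h(z_0)$.

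Next, set $u(z)=\lim_{n\to\infty}u_n(z)\in(-\infty,\infty]$, which exists by monotonicity, and split $D$ into $A=\{u<\infty\}$ and $B=\{u=\infty\}$. I would fix $z_0\in D$, choose $R>0$ with $\overline{\Delta(z_0,R)}\subset D$, and apply Harnack's inequality to the nonnegative harmonic function $u_n-u_m$ on $\overline{\Delta(z_0,R/2)}$. The upper bound yields
$$u_n(z)-u_m(z)\;\le\;3\bigl(u_n(z_0)-u_m(z_0)\bigr)\qquad\text{for all }z\in\overline{\Delta(z_0,R/2)},$$
so if $z_0\in A$ the sequence is uniformly Cauchy on this smaller disc, giving local uniform convergence to a finite limit. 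Symmetrically the lower bound $u_n(z)-u_m(z)\ge\tfrac{1}{3}(u_n(z_0)-u_m(z_0))$ forces $u_n\to\infty$ uniformly on the same disc whenever $z_0\in B$. Hence both $A$ and $B$ are open; since $D$ is connected, one is empty, producing the claimed dichotomy.

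Finally, in the case $A=D$, I would show the locally uniform limit $u$ is harmonic by passing to the limit inside the Poisson integral formula on any closed disc contained in $D$: uniform convergence of the integrands transfers the Poisson representation from each $u_n$ to $u$, so $u$ inherits the surface-mean identity and is therefore harmonic. The main obstacle is that the individual $u_n$ need not be nonnegative, which is exactly why Harnack's estimate must be applied to the \emph{increments} $u_n-u_m$ rather than to $u_n$ directly; the monotonicity hypothesis is what makes this substitution legitimate and is the engine of the whole argument.
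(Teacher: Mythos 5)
The paper records Harnack's Theorem only as a ``Fact'' with a citation to \cite{8} and supplies no proof, so there is no in-paper argument to compare against. Your proposal is a correct and complete proof; the derivation of Harnack's inequality from the sharp bounds $(R-r)^2\le R^2+r^2-2Rr\cos(\phi-\theta)\le(R+r)^2$ in Theorem \ref{thm:posint}, the application to the nonnegative increments $u_n-u_m$ on $\overline{\Delta(z_0,R/2)}$ (yielding the constants $3$ and $1/3$), the openness of $A=\{u<\infty\}$ and $B=\{u=\infty\}$ followed by connectedness, and the final passage to the limit in the Poisson representation are all sound and are exactly the standard route. One small point worth making explicit when you argue that $B$ is open: from $u_n(z)-u_1(z)\ge\frac{1}{3}(u_n(z_0)-u_1(z_0))$ you need to invoke that $u_1$ is bounded below on the compact disc $\overline{\Delta(z_0,R/2)}$ (true, since $u_1$ is continuous) before concluding that $u_n\to\infty$ uniformly there. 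You also correctly identify the key subtlety --- that Harnack's inequality requires nonnegativity, which the individual $u_n$ lack but the increments $u_n-u_m$ possess by monotonicity --- which is the heart of the argument.
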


Now we have all the necessary tools to start with the main existence result and prove that the Perron solution must be harmonic.

\begin{proof}(of Theorem \ref{thm:perha})
First we make an observation about the bounds on $H^f_D$. If we define $M=\sup_{\partial D}|f|$ then we have $-M\in\cal{H}_D^{\text{$f$}}$ and so $-M\leq H^f_D$. Also given $u\in \cal{H}_D^{\text{$f$}}$, the maximum principle (for subharmonic functions) gives that $u\leq M$ on $D$. Hence we must have $H^f_D\leq M$.\\

Note that if we show that $H^f_D$ is harmonic on each open disc $\Delta$ with $\overline{\Delta}\subset D$ then it follows that $H_D^f$ harmonic on $D$. So pick an arbitrary such disc $\Delta$ and any $w_0\in\Delta$. From the definition of Perron subsolutions we obtain a sequence $(u_n)_{n \geq 1}$ in $\cal{H}_D^{\text{$f$}}$ such that $u_n(w_0)\rightarrow H^f_D(w_0)$. Since we can replace $u_n$ with $\max(u_1,u_2,\ldots,u_n)$, where the maximum of two subharmonic functions is obviously subharmonic, we may assume without loss of generality that $u_1\leq u_2\leq \ldots$ ond $D$.\\

Now use the Poisson modification (see Fact \ref{thm:posmod}) of $u_n$, denoted by $\tilde{u}_n$ and note that the Poisson modifications still obey $\tilde{u}_1\leq \tilde{u}_2\leq \ldots$ on $D$. Now we intend to verify three properties for $\tilde{u}=\lim_{n\rightarrow \infty}\tilde{u}_n$, namely
\begin{enumerate}
	\item $\tilde{u}\leq H_D^f$ on $D$
	\item $\tilde{u}(w_0)=H_D^f(w_0)$
	\item $\tilde{u}$ is harmonic on $\Delta$ 
\end{enumerate}
We know that $\tilde{u}$ is subharmonic on $D$ from the Poisson modification, but also  
\begin{eqnarray}
\limsup_{z\rightarrow\zeta} \tilde{u}_n(z)=\limsup_{z\rightarrow\zeta} u_n(z)\leq f(\zeta)\notag
\end{eqnarray}  
for all $\zeta\in\partial D$; therefore $\tilde{u}_n\in\cal{H}_D^{\text{$f$}}$ for all $n$. Hence $\tilde{u}_n\leq H_D^f$ for all $n$ simply by definition of $H^f_D$ as an upper envelope and \textit{1.} follows. Since $\tilde{u}_n\geq u_n$ by the Poisson modification properties 
\begin{eqnarray}
\tilde{u}(w_0)=\lim_{n\rightarrow\infty}\tilde{u}_n(w_0)\leq \lim_{n\rightarrow\infty}{u}_n(w_0)=H^f_D(w_0)\notag
\end{eqnarray} 
Now combining this with \textit{1.}, part \textit{2.} is verified as well. Note that each $\tilde{u}_n$ is harmonic on $\Delta$ (the key insight from Poisson modification) and Harnack's theorem applies so that $\tilde{u}$ is harmonic on $\Delta$ since it cannot be identically equal to infinity as $\tilde{u}\leq H^f_D\leq M$ by the initial observation. Now if we would have $\tilde{u}=H^f_D$ on $\Delta$ the overall result would follow. So we proceed to show this.\\

Choose any $w\in\Delta$ and a sequence $(v_n)_{n\geq 1}$ in $\cal{H}_D^{\text{$f$}}$ such that $v_n(w)\rightarrow H^f_D(w)$. Again using the fact that we can replace $v_n$ by $\max(u_1,u_2,\ldots,u_n,v_1,\ldots,v_n)$ we may without loss of generality assume that $v_1\leq v_2\leq \ldots$ and $u_n\leq v_n$ for all $n$ on $D$. We use the notation $\tilde{v}_n$ for the Poisson modification of $v_n$. Clearly we have $v_n\uparrow v$ and the previously proven three properties imply
\begin{enumerate}
	\item $\tilde{v}\leq H_D^f$ on $D$
	\item $\tilde{v}(w)=H_D^f(w)$
	\item $\tilde{v}$ is harmonic on $\Delta$ 
\end{enumerate}
Using \textit{1.} we have $\tilde{v}(w_0)\leq H^f_D(w_0)=\tilde{u}(w_0)$. But since $\tilde{v}_n\geq \tilde{u}_n$ by construction, it follows that $\tilde{u}-\tilde{v}$ is harmonic on $\Delta$ and attains a maximum value of $0$ at $w_0$. The maximum principle (for harmonic functions) implies $\tilde{u}-\tilde{v}=0$ on $\Delta$. Therefore 
\begin{eqnarray}
H^f_D(w)=\tilde{v}(w)=\tilde{u} \quad \Rightarrow \quad \tilde{u}=H^f_D\notag
\end{eqnarray}
and the proof is finished.
\end{proof}

Being confident, how to construct in any situation the solution of the Dirichlet Problem, it remains to figure out, which conditions we must impose on the boundary of $D$ so that we can avoid problems as with the isolated point $0$ in the example of the punctured disc, where no solution exists.

\begin{defn}
Let $D$ be a proper subdomain of $\mathbb{C}$. A point $\zeta_0\in\partial D$ is called \textbf{\index{regular}regular} if for $z\in D$ we have $\lim_{z\rightarrow \zeta_0} H^f_D(z)=f(\zeta_0)$ for all bounded continuous functions $f:\partial D\rightarrow \mathbb{R}$. 
\end{defn}

Just defining points to admit a solution is not sufficient until we find a criterion to decide whether a point is regular or not. The following concept turns out to be correct.

\begin{defn} A \textbf{\index{subharmonic barrier}subharmonic barrier} at $\zeta_0\in \partial D$ is a subharmonic function $b(z)$ such that for some $\delta>0$ and $b:D\cap \Delta(\zeta_0,\delta)\rightarrow\mathbb{R}$ the following properties hold
\begin{description}
	\item[(a)] $b(z)<0$ for all $z\in D\cap \Delta(\zeta_0,\delta)$
	\item[(b)] $\lim_{z\rightarrow \zeta_0}b(z)= 0$ for $z\in D$
	\item[(c)] $\limsup_{z\rightarrow \zeta} b(z)<0$ as $z\in D$ and $\zeta\in\partial D$ with $0<|\zeta-\zeta_0|<\delta$  \end{description}
\end{defn}

Before we can prove the main relation between barriers and regular points we need two preliminary results. The first one allows us to relax conditions (a) and (c) of the previous definition and to extend the barrier to a larger domain.

\begin{lem}
\label{lem:regb1}
If $\zeta_0\in\partial D$ admits a barrier $b$, then $b$ can be extended to $D$ such that $\limsup_{z\rightarrow \zeta}b(z)<0$ for any $\zeta\in\partial D\backslash\{\zeta_0\}$ and $b(z)<0$ for all $z\in D$.
\end{lem}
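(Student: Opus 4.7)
The plan is to glue the given barrier $b$ together with a suitable negative constant on the complement of a slightly smaller concentric disc. Pick any $0<\delta'<\delta$ and define
\begin{equation*}
\tilde b(z)=\begin{cases} \max(b(z),-m) & \text{if } z\in D\cap \Delta(\zeta_0,\delta'),\\ -m & \text{if } z\in D\setminus \Delta(\zeta_0,\delta'),\end{cases}
\end{equation*}
where $m>0$ is a constant to be fixed below. The idea is that inside the smaller disc $\tilde b$ retains enough of the original barrier to force $\tilde b(z)\to 0$ as $z\to\zeta_0$, while outside it is a harmless negative constant; the delicate point is choosing $m$ so that the two pieces glue into a subharmonic function across the circle $|z-\zeta_0|=\delta'$.

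To fix $m$, consider the compact set $K=\overline D\cap\{|z-\zeta_0|=\delta'\}$ and extend $b$ upper-semicontinuously to the portion of $\partial D$ lying in $K$ by $b(\zeta):=\limsup_{z\to\zeta,\,z\in D}b(z)$. By (a), $b<0$ on $D\cap K$, and by (c) the extended $b$ is strictly negative on $\partial D\cap K$ (note $\zeta_0\notin K$ since $\delta'>0$). Since $b$ is upper-semicontinuous on the compact set $K$, Lemma \ref{lem:usc} gives $\sup_K b<0$, and I declare $-m:=\sup_K b$.

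Next I verify the asserted properties. Negativity $\tilde b<0$ on $D$ is immediate from (a) and $-m<0$. If $z\to\zeta_0$ then (b) gives $b(z)\to 0$, so eventually $b(z)>-m$ and hence $\tilde b(z)=b(z)\to 0$. For $\zeta\in\partial D\setminus\{\zeta_0\}$ there are three cases: if $|\zeta-\zeta_0|>\delta'$, then a neighbourhood of $\zeta$ is disjoint from $\overline{\Delta(\zeta_0,\delta')}$, so $\tilde b\equiv -m$ there; if $0<|\zeta-\zeta_0|<\delta'$, then (c) gives $\limsup_{z\to\zeta}b(z)<0$ and hence $\limsup_{z\to\zeta}\tilde b(z)\le\max(\limsup b(z),-m)<0$; and if $|\zeta-\zeta_0|=\delta'$ the same estimate applies, since such $\zeta$ still satisfies $|\zeta-\zeta_0|<\delta$. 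For subharmonicity, on the open set $D\cap\Delta(\zeta_0,\delta')$ the function $\tilde b$ is the pointwise maximum of two subharmonic functions, and on $D\setminus\overline{\Delta(\zeta_0,\delta')}$ it is constant, so it is subharmonic in the interior of both regions.

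The main obstacle is subharmonicity across the circle $|z-\zeta_0|=\delta'$, where the two formulas meet. Here the choice of $m$ pays off: for any $z_0\in D$ with $|z_0-\zeta_0|=\delta'$ one has $\tilde b(z_0)=-m$ by definition, while on any sufficiently small disc $\Delta(z_0,r)\subset D$ the values of $\tilde b$ are uniformly $\ge -m$ (the inner branch returns $\max(b,-m)\ge -m$ and the outer branch equals $-m$); consequently $S^r_{\tilde b}(z_0)\ge -m=\tilde b(z_0)$, which is exactly the required submean inequality. Upper-semicontinuity at such $z_0$ follows similarly from $b(z_0)\le\sup_K b=-m$ together with the upper-semicontinuity of $b$, which forces $\limsup_{z\to z_0}\tilde b(z)\le -m=\tilde b(z_0)$.
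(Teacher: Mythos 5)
Your proposal is correct and uses essentially the same construction as the paper: truncate the barrier by a negative constant on a smaller concentric disc and glue with that constant outside. The only minor difference is that the paper chooses its constant $-\epsilon$ strictly above $\sup b$ on the gluing circle (so the function is locally constant near the circle, making the gluing automatic), whereas you take $-m$ equal to the sup and then verify the submean inequality and upper-semicontinuity across the circle directly; both arguments are sound.
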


\begin{proof} 
By using (a) and (c) we can choose $\epsilon>0$ such that $b(z)\leq -2\epsilon$ if $z\in D$ and $|z-\zeta_0|=\delta/2$. Now define $u(z)$ on $D$ by 
\begin{eqnarray}
u(z)=\begin{cases} \max(b(z),-\epsilon) & \text{if $z\in D\cap \{|z-\zeta_0|<\delta/2\}$}\\
-\epsilon & \text{otherwise} \end{cases} \notag
\end{eqnarray} 
It is easy to check that $u(z)$ is indeed a subharmonic barrier at $\zeta_0$ satisfying the required conditions. 
\end{proof}

\begin{lem}
\label{lem:regb2}
Let $D$ be a proper subdomain of $\mathbb{C}$ and let $f_1,f_2:\partial D\rightarrow \mathbb{R}$ be continuous, then we have for the Perron solutions $H_D^{f_1}+H_D^{f_2}\leq H_D^{f_1+f_2}$.
\end{lem}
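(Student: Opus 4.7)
The plan is a direct "test-function" argument using the definition of the Perron solution as an upper envelope. The key observation is that the Perron family is closed under addition in the following sense: if $u_1 \in \mathcal{H}_D^{f_1}$ and $u_2 \in \mathcal{H}_D^{f_2}$, then $u_1 + u_2 \in \mathcal{H}_D^{f_1+f_2}$. From this, taking suprema will give the desired inequality.

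First I would verify the membership claim. The sum $u_1+u_2$ is subharmonic on $D$: upper-semicontinuity is preserved under addition (using the extended real arithmetic, noting neither summand takes $+\infty$), and the submean inequality is preserved by linearity of integration. For the boundary behavior, the elementary inequality $\limsup_{z\to\zeta}(u_1(z)+u_2(z)) \leq \limsup_{z\to\zeta}u_1(z) + \limsup_{z\to\zeta}u_2(z)$ combined with the defining inequalities for membership in $\mathcal{H}_D^{f_1}$ and $\mathcal{H}_D^{f_2}$ yields $\limsup_{z\to\zeta}(u_1+u_2)(z) \leq f_1(\zeta) + f_2(\zeta)$ for every $\zeta \in \partial D$, so indeed $u_1+u_2 \in \mathcal{H}_D^{f_1+f_2}$.

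Second, applying Definition \ref{defn:persol}, for every $z \in D$ we get
\begin{eqnarray}
u_1(z) + u_2(z) \leq H_D^{f_1+f_2}(z).\notag
\end{eqnarray}
Fixing $u_2$ and taking the supremum over $u_1 \in \mathcal{H}_D^{f_1}$ gives $H_D^{f_1}(z) + u_2(z) \leq H_D^{f_1+f_2}(z)$, and then taking the supremum over $u_2 \in \mathcal{H}_D^{f_2}$ yields the claim $H_D^{f_1}(z) + H_D^{f_2}(z) \leq H_D^{f_1+f_2}(z)$.

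I do not expect any serious obstacle here; the only mildly delicate point is justifying that the sum of two upper-semicontinuous, $[-\infty,\infty)$-valued subharmonic functions is again subharmonic (to rule out indeterminate $\infty - \infty$ forms), and that $\limsup$ is subadditive, both of which are routine. Note also that this lemma is one-sided: the reverse inequality generally fails because a single subsolution for $f_1+f_2$ need not split as a sum of subsolutions for $f_1$ and $f_2$ respectively.
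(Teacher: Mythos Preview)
Your proof is correct and follows exactly the same approach as the paper: show that $u_1+u_2\in\mathcal{H}_D^{f_1+f_2}$ whenever $u_1\in\mathcal{H}_D^{f_1}$ and $u_2\in\mathcal{H}_D^{f_2}$, then take suprema. The paper's proof is simply a two-sentence version of your argument, omitting the routine verifications (subharmonicity of the sum, subadditivity of $\limsup$) that you spelled out.
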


\begin{proof}
If $u_1\in \cal{H}_D^{\text{$f_1$}}$ and $u_2\in \cal{H}_D^{\text{$f_2$}}$ then by definition of Perron subsolutions $u_1+u_2\in \cal{H}_D^{\text{$f_1+f_2$}}$, thus we have $u_1+u_2\leq H_D^{f_1+f_2}$. Now taking suprema over $u_1$ and $u_2$ gives the result.  
\end{proof}

\begin{thm}
Let $D$ be a proper subdomain of $\mathbb{C}$. If there exists a subharmonic barrier at $\zeta_0$, then $\zeta_0$ is a regular boundary point of $D$.
\end{thm}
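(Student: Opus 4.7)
I would split the regularity statement $\lim_{z\to\zeta_0}H_D^f(z)=f(\zeta_0)$ into the two one-sided bounds $\liminf\ge f(\zeta_0)$ and $\limsup\le f(\zeta_0)$, and only prove the first directly. The second follows by applying the first to $-f$ together with Lemma \ref{lem:regb2}: since $H_D^f+H_D^{-f}\le H_D^{0}$ and $H_D^{0}=0$ (the zero function lies in $\mathcal{H}_D^{0}$, while any subsolution of $0$ satisfies $\limsup\le 0$ on $\partial D$ and so is $\le 0$ on $D$ by the maximum principle for subharmonic functions), one obtains $H_D^{f}\le -H_D^{-f}$, hence $\limsup H_D^f(z)\le -\liminf H_D^{-f}(z)\le f(\zeta_0)$.

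For the $\liminf$ bound, fix $\epsilon>0$. I would use Lemma \ref{lem:regb1} to replace $b$ by its globalized version, so $b<0$ on all of $D$, $b(z)\to 0$ as $z\to\zeta_0$, and $\limsup_{z\to\zeta}b(z)<0$ for every $\zeta\in\partial D\setminus\{\zeta_0\}$. The plan is to exhibit a Perron subsolution of the explicit form
\[
u(z)\;=\;f(\zeta_0)-\epsilon+Cb(z),
\]
for a suitable $C>0$. The function $u$ is subharmonic on $D$, and since $b(z)\to 0$ at $\zeta_0$, once $u\in\mathcal{H}_D^f$ we get $H_D^f(z)\ge u(z)\to f(\zeta_0)-\epsilon$, so $\liminf H_D^f(z)\ge f(\zeta_0)-\epsilon$, and letting $\epsilon\downarrow 0$ finishes the job.

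To calibrate $C$, I would split $\partial D$ near and far from $\zeta_0$. Using continuity of $f$, pick $\rho>0$ with $|f(\zeta)-f(\zeta_0)|<\epsilon$ for $\zeta\in\partial D\cap\Delta(\zeta_0,\rho)$. On this near-$\zeta_0$ piece the required inequality $\limsup_{z\to\zeta}u(z)\le f(\zeta)$ is immediate because $b<0$ forces $\limsup u\le f(\zeta_0)-\epsilon\le f(\zeta)$. On $K=\partial D\setminus\Delta(\zeta_0,\rho)$ I need a uniform bound. Define $\tilde b(\zeta)=\limsup_{z\to\zeta,\,z\in D}b(z)$; a short diagonal-sequence argument shows $\tilde b$ is upper-semicontinuous on $\partial D$, and the barrier property gives $\tilde b<0$ on $K$. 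Since the paper takes closures with respect to $\mathbb{C}_\infty$, the set $\partial D$, and hence $K$, is compact, so $\tilde b$ attains its supremum $-\eta<0$ on $K$. Writing $M=\sup_{\partial D}|f|$ and choosing $C\ge 2M/\eta$ then yields $\limsup_{z\to\zeta}u(z)\le f(\zeta_0)-\epsilon-C\eta\le -M\le f(\zeta)$ for $\zeta\in K$, so $u\in\mathcal{H}_D^f$.

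The main obstacle is precisely this uniform strict negativity $\tilde b\le -\eta$ on $K$: the definition of a barrier supplies only a pointwise $\tilde b(\zeta)<0$, and a single constant $C$ cannot dominate the oscillation of $f$ over all of $K$ without such a uniform bound. Compactness of $\partial D$ in $\mathbb{C}_\infty$, combined with upper semicontinuity of $\tilde b$, is the key ingredient that resolves this point; once it is in hand, the verification $u\in\mathcal{H}_D^f$ and the passage to the limit $z\to\zeta_0$ are routine.
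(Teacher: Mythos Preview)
Your proposal is correct and follows essentially the same approach as the paper: both use Lemma~\ref{lem:regb1} to globalize the barrier, construct a Perron subsolution of the form $A+Cb(z)$ by splitting $\partial D$ into a neighborhood of $\zeta_0$ (handled by continuity of $f$) and its complement (handled by a uniform negative upper bound on the barrier), and then invoke Lemma~\ref{lem:regb2} to pass from the lower bound to the upper bound. The only cosmetic differences are that the paper first normalizes to $f(\zeta_0)=0$, $|f|<1$, and that your compactness argument for $\tilde b\le -\eta$ on $K$ makes explicit a step the paper states without justification (``choose $p>0$ such that $b(z)\le -p$ for $|z-\zeta_0|\ge\delta$'').
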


\begin{proof}
The key idea is that the subharmonic barrier imposes a constraint on the convergence behaviour of the Perron solution at the boundary. Let $f:\partial D\rightarrow\mathbb{R}$ be continuous and bounded; notice that we can without loss of generality assume that $\zeta_0\in\mathbb{C}$, $f(\zeta_0)=0$ and $|f(\zeta)|<1$ for $\zeta\in\partial D$. Now fix $\epsilon >0$ and find $\delta>0$ such that $|f(\zeta)|<\epsilon$ for $\zeta \in \partial D$ and $|\zeta-\zeta_0|<\delta$.\\

By the assumption and the preliminary Lemma \ref{lem:regb1} there exists a subharmonic barrier $b(z)$ at $\zeta_0$ defined on $D$. Choose $p>0$ such that $b(z)\leq -p$ for $|z-\zeta_0|\geq \delta$, which is possible by the definition of a barrier. Then define $u(z)=\left(b(z)/p-\epsilon\right)$, which is again subharmonic since positive scaling and adding constants preserves subharmonicity. By the definition of $u$ we have that
\begin{eqnarray}
u(z)\leq -\epsilon \quad \text{on $D$} \qquad \text{and} \qquad \limsup_{z\rightarrow\zeta} u(z)\leq -1 \quad \text{for $\zeta\in\partial D$ and $|\zeta-\zeta_0|\geq\delta$}\notag
\end{eqnarray} 
Therefore $\limsup_{z\rightarrow \zeta}u(z)\leq f(\zeta)$ for $\zeta\in\partial D$ as $|f(\zeta)|<1$. Note that $u\in \cal{H}_D^{\text{$f$}}$ and thus $u(z)\leq H^f_D(z)$.\\

Furthermore since $\lim_{z\rightarrow\zeta_0}b(z)=0$ we have that $\lim_{z\rightarrow\zeta_0}u(z)=-\epsilon$ and it follows that $-2\epsilon \leq H^f_D(z)$ for $\zeta$ in some small disc around $\zeta_0$. Now apply the same argument to $-f$ instead of $f$ to get $-2\epsilon \leq H^{-f}_D(z)$ for $\zeta$ in some small disc around $\zeta_0$. Now by the second preliminary Lemma \ref{lem:regb2} we have that $H^f_D(z) + H^{-f}_D(z)\leq 0$ and therefore $H^f_D(z)\leq H^{-f}_D(z)\leq 2\epsilon$ again under the restriction that $\zeta$ is in some sufficiently small disc around $\zeta_0$. Still requiring that requiring $\zeta$ is close to $\zeta_0$ we therefore get $|H_D^f|\leq 2\epsilon$ by combining the two inequalities for the Perron solution. Since $\epsilon$ was arbitrary we conclude that $\lim_{z\rightarrow\zeta_0} H^f_D(z)=0=f(\zeta_0)$.  
\end{proof}

So if the boundary is sufficiently regular we can solve the Dirichlet problem. The next criterion turns out to verify in many situations that subharmonic barriers exist.

\begin{thm}
Let $D$ be a proper subdomain of $\mathbb{C}$ and let $\partial D$ consist of a finite number of smooth boundary curves such that if $\zeta_0\in \partial D$ there exists a line segment $I\subset\mathbb{C}_\infty \backslash D$ that has one endpoint at $\zeta_0$, then there exists a subharmonic barrier a $\zeta_0$.
\end{thm}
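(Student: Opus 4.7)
The plan is to build an explicit harmonic (hence subharmonic) barrier using a branch of the square root that becomes single-valued once we cut the plane along the ray carrying $I$.

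After a translation and a rotation I may assume $\zeta_0 = 0$ and that $I$ lies along the positive real axis, so $I = [0,L]$ with $0 < L \leq \infty$, and in particular $D \cap [0,L] = \emptyset$. I would then pick $\delta > 0$ small enough that (i) $\delta < L$, so that $I \cap \overline{\Delta}(0,\delta) = [0,\delta]$, and (ii) no smooth component of $\partial D$ meets the open segment $(0,\delta)$. Condition (ii) is where the smoothness of $\partial D$ is used: each boundary curve passing through $\zeta_0$ has a well-defined tangent at $\zeta_0$, and since only finitely many curves are involved, shrinking $\delta$ separates them from the interior of $I$ (a degenerate coincidence with $I$ would force a smooth boundary curve to contain a line segment, and can be ruled out by the standing smoothness hypothesis or absorbed into the choice of $\delta$).

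On the slit disc $\Delta(0,\delta) \setminus [0,\delta]$ I would define the holomorphic branch $\phi(z) = \sqrt{z}$ with $\arg z \in (0,2\pi)$, so that $\arg \phi(z) \in (0,\pi)$ and therefore $\mathrm{Im}\,\phi(z) > 0$ everywhere on the slit disc. Because $D \cap \Delta(0,\delta) \subset \Delta(0,\delta) \setminus [0,\delta]$ (since $D \cap I = \emptyset$), the function
\begin{eqnarray}
b(z) := -\mathrm{Im}\,\phi(z) \notag
\end{eqnarray}
is well-defined and harmonic on $D \cap \Delta(0,\delta)$, hence subharmonic.

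Finally I would verify the three barrier conditions. For (a), $b(z) < 0$ on $D \cap \Delta(0,\delta)$ because $\mathrm{Im}\,\phi > 0$ strictly on the slit disc. For (b), $|\phi(z)| = |z|^{1/2} \to 0$ as $z \to 0$, so $b(z) \to 0$. For (c), take $\zeta \in \partial D$ with $0 < |\zeta| < \delta$; by the choice of $\delta$ in step (ii), $\zeta \notin (0,\delta)$, so $\zeta$ lies in the interior of the slit disc where $\phi$ extends continuously and $\mathrm{Im}\,\phi(\zeta) > 0$ strictly. Continuity of $b$ at such $\zeta$ then yields $\limsup_{z \to \zeta} b(z) = -\mathrm{Im}\,\phi(\zeta) < 0$. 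The main technical obstacle is exactly condition (ii) in the choice of $\delta$ — guaranteeing that $\partial D$ cleanly separates from the axis on which the branch cut lies — and this is the step where the smoothness of the boundary curves is essential; the rest of the construction is then forced by the classical square-root map from a slit disc onto a half-plane.
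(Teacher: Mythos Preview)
Your square-root barrier is the classical elementary construction and is a genuinely different route from the paper's proof, which instead invokes the Riemann mapping theorem: the paper maps $\mathbb{C}_\infty\setminus I$ conformally onto $\Delta$ sending $\zeta_0$ to $1$, and takes $b(z)=\mathrm{Re}(\phi(z))-1$. Your approach is more explicit and avoids the heavy machinery of Riemann mapping and Carath\'eodory extension; the paper's approach is less explicit but sidesteps the branch-cut bookkeeping.

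There is, however, a real gap in your verification of condition~(c), centred on your claim~(ii). You assert that the degenerate case --- a smooth boundary arc coinciding with $(0,\delta)$ --- ``can be ruled out by the standing smoothness hypothesis or absorbed into the choice of $\delta$''. Neither is true. Take $D$ to be the upper half-plane, $\zeta_0=0$, and $I=[0,1]$ along the real axis: the boundary is the (perfectly smooth) real line, and it meets $(0,\delta)$ for every $\delta>0$. For $\zeta\in(0,\delta)\subset\partial D$ and $z\in D$ with $z\to\zeta$, one has $\arg z\to 0^+$, hence $\mathrm{Im}\sqrt{z}=\sqrt{|z|}\sin(\tfrac{1}{2}\arg z)\to 0$, so $\limsup_{z\to\zeta}b(z)=0$, violating~(c).

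The gap is repairable within your framework, but not by the devices you name. Because $\partial D$ is smooth at $\zeta_0$, the domain $D$ lies locally on one side of the tangent line, and the complement of $\overline{D}$ lies on the other; you may therefore replace the given segment $I$ by a short segment $I'$ from $\zeta_0$ pointing strictly into the complement of $\overline{D}$ (e.g.\ along the outward normal). After rotating so that $I'$ lies on the positive real axis, your condition~(ii) is then automatic for small $\delta$, and the rest of your argument goes through unchanged. The paper's conformal-map proof handles this case without modification because $\phi$ carries boundary points of $\mathbb{C}_\infty\setminus I$ (including points of $\partial D$ lying on $I$) to $\partial\Delta\setminus\{1\}$, where $\mathrm{Re}(\phi)<1$ strictly.
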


\begin{proof}
We are going to proceed in two steps. First consider the unit disc $D=\Delta$. Then we claim that $b(z)=Re(\bar{\zeta}_0 z)-1$ is a subharmonic barrier. The observation to make is that $b(z)$ is the real part of a holomorphic function and hence is subharmonic. The two other properties of a barrier follow immediately by checking their statements.\\

For the second step we have that for $\zeta_0\in \partial D$ there exists a line segment $I\subset\mathbb{C}_\infty \backslash D$ that has one endpoint at $\zeta_0$. Since $I\subset \mathbb{C}_\infty$ is obviously connected, it follows that $\mathbb{C}_\infty \backslash I$ is simply connected. It is an immediate consequence of the Riemann mapping theorem that there exists a conformal map between two simply-connected domains of $\mathbb{C}_\infty$, which are neither $\mathbb{C}_\infty$ nor $\mathbb{C}$. So pick such a conformal map $\phi(z):\mathbb{C}_\infty \backslash I \rightarrow \Delta$ extended to the boundary such that $\phi(\zeta_0)=1$ (refer to Fact \ref{fact:cara} to see that the extension to the boundary works as claimed). Now it immediately follows that $b(z)=Re(\phi(z))-1$ is a subharmonic barrier at $\zeta_0$.
\end{proof}

It is obvious that the punctured unit disc $\Delta\backslash \{0\}$ does not satisfy the conditions of the previous theorem, which should come at no surprise as we have seen that on this domain the Dirichlet problem is not solvable.\\

Sometimes the requirement to have a line segment based at a boundary point and contained in complement is referred to as \textbf{exterior cone condition}\index{exterior cone condition}. This terminology hints on the fact that the same construction works with truncated cones in higher dimensions to get barriers. For convenience we introduce another definition describing even 'nicer' domains.

\begin{defn}
A proper subdomain $D$ of $\mathbb{C}$ will be called a \textbf{\index{Jordan domain}Jordan domain} if it is simply connected and bounded by finitely many smooth boundary curves such that $\partial D$ is a simple closed curve.  
\end{defn}

Notice that a Jordan domain satisfies the exterior cone condition, but that there are domains with smooth boundary curves satisfying the exterior cone condition and are not Jordan domains, e.g. the union of two disjoint discs.  Also we remind the reader of our convention of taking the boundary of $D$ with respect to $\mathbb{C}_\infty$, which implies that we regard e.g. the upper-half plane $H=\{z=x+iy:y>0\}$ as a Jordan domain (being bounded by a circle in $\mathbb{C}_\infty$). Summing up we have a final result on the solvability of the Dirchlet Problem.

\begin{thm} 
\label{thm:dirsol}
If $D$ is a proper subdomain of $\mathbb{C}$ such that $\partial D$ consists only of regular points (e.g. if $D$ is a domain satisfying the exterior cone condition or a Jordan domain), then the Dirichlet Problem on $D$ for a continuous, bounded function $f:\partial D\rightarrow\mathbb{R}$ is uniquely solvable and the solution is given by the Perron solution $H^f_D(z)$.  
\end{thm}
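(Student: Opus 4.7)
The proof should be essentially an assembly argument: all the heavy analytic work has already been done in the preceding theorems (construction of the Perron solution, its harmonicity, the barrier criterion for regularity, and the quoted uniqueness fact), so the plan is simply to verify that the hypotheses line up and to combine the pieces correctly.

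First I would dispose of existence. Since $f:\partial D\to\mathbb{R}$ is continuous and bounded and $D$ is a proper subdomain of $\mathbb{C}$, Theorem \ref{thm:perha} applies directly and yields that the Perron solution $H_D^f$ is harmonic on $D$. By Theorem \ref{thm:hasmooth} this automatically gives $H_D^f\in C^\infty(D)\subset C^2(D)$, so the interior requirement $\Delta H_D^f=0$ on $D$ is satisfied. For the boundary behaviour, I would invoke the hypothesis that every $\zeta\in\partial D$ is regular: by the very definition of regularity this says $\lim_{z\to\zeta}H_D^f(z)=f(\zeta)$, so $H_D^f$ extends continuously to $\overline{D}$ (closure taken with respect to $\mathbb{C}_\infty$, as per the paper's convention) with boundary values equal to $f$. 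Together these two observations show that $H_D^f$ solves the Dirichlet problem in the sense of Definition \ref{defn:dirch}.

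Uniqueness is then immediate from the quoted Fact (from \cite{4}) that any solution of the Dirichlet problem on a proper subdomain of $\mathbb{C}$ is unique; once we know a solution exists, this fact identifies it with $H_D^f$.

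Finally, to justify the parenthetical sufficient conditions, I would note that the preceding barrier-existence theorem proves that every boundary point of a domain satisfying the exterior cone condition, and a fortiori every boundary point of a Jordan domain, admits a subharmonic barrier; and the theorem relating barriers to regular points then converts this into the regularity assumption used above. Hence these geometric hypotheses feed directly into the main hypothesis of the theorem. There is no real obstacle here: the only points requiring care are checking that the hypotheses of Theorem \ref{thm:perha} are met (which they are, since $f$ is bounded and continuous), and that the uniqueness fact's hypothesis of $D$ being a proper subdomain is in force (which is stated in the theorem).
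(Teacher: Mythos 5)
Your proposal is correct and matches the paper's (implicit) proof: Theorem \ref{thm:dirsol} is stated as a summary with no separate argument given, and the assembly you describe --- Theorem \ref{thm:perha} for harmonicity of $H_D^f$, the definition of a regular boundary point for the continuous attainment of the boundary values, the quoted uniqueness Fact for proper subdomains of $\mathbb{C}$, and the two barrier theorems to justify the parenthetical geometric hypotheses --- is exactly the intended reasoning. No gaps.
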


We are going to proceed to take up the Dirichlet problem from a different viewpoint in the next section. 

\subsection{Harmonic Measure and Brownian Motion} 

The next application we consider is drawn from probability theory and illustrates just one of the large number of interconnections between the material developed so far and probability.\\

We are mainly going to follow for the part on Brownian motion \cite{11} and \cite{12}. Notation and standard terminology for probabilistic terms follows \cite{12} and \cite{13}; if there exist differences between analysts' and probabilists' terms we remark those in brackets if we use them for the first time. First we recall the definition of Brownian motion.\\ 

\begin{defn} 
A sequence of random variables (=measurable functions) $(B_t)_{t\geq 0}$ on a probability measure space $(\Sigma,M,\mu)$ with values in $\mathbb{R}^n$ is called a standard \textbf{\index{Brownian motion}Brownian motion} if $t\mapsto B_t$ is continuous and 
\begin{description}
	\item[(a)] $B_0=0$ almost surely (=almost everywhere)
	\item[(b)] for every $0\leq t_1\leq \ldots \leq t_k$ the increments $(B_{t_1}-B_{t_0},\ldots,B_{t_k}-B_{t_{k-1}})$ are independent
	\item[(c)] for all $t,s\geq 0$ and $t<s$, $B_{t+s}-B_{t}$ is a Gaussian random variable with mean $0$ and covariance matrix $s\cdot Id$.
\end{description}
\end{defn}

We are interested in the case $n=2$, so we use Brownian motion taking values in $\mathbb{R}^2$. If $D$ is a proper subdomain of $\mathbb{C}$ such that $0\in D$ we define a stopping time $T=\inf\{t\geq 0:B_t\not\in D\}$. We ask the question\\

\textbf{What is the distribution of a Brownian motion $B_t$ leaving a domain $D$ at time $T$?}\\

More precisely, we want to find the distribution of the random variable $B_T$. For this purpose we first establish a connection between the Dirchlet problem and Brownian motion. Let $\mathbb{E}[X|Y]$ denote the conditional expectation of a random variable $X$ with respect to $Y$.

\begin{thm}
\label{thm:probdi}
If $D\subset \mathbb{C}$ is a domain such that $\partial D$ consists only of regular points and $f:\partial D\rightarrow \mathbb{R}$ is continuous and bounded then
\begin{eqnarray}
u(z)=\mathbb{E}\left[f(B_T)| B_0=z\right]
\end{eqnarray}
solves the Dirichlet problem.
\end{thm}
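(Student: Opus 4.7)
The plan is to verify the two defining properties of a Dirichlet solution (Definition~\ref{defn:dirch}) for $u$ — harmonicity on $D$ and continuous attainment of the boundary values $f$ — and then to identify $u$ with the Perron solution $H_D^f$ so Theorem~\ref{thm:dirsol} finishes everything.

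For harmonicity, first note that $|u|\leq\|f\|_\infty$. Fix $z\in D$ and pick $r>0$ with $\overline{\Delta(z,r)}\subset D$, and let $\tau=\inf\{t\geq 0:B_t\notin\Delta(z,r)\}$. The strong Markov property of Brownian motion says that, conditioned on the path up to $\tau$, the process $(B_{\tau+s})_{s\geq 0}$ is a Brownian motion starting at $B_\tau$, so the exit time $T$ of $D$ equals $\tau$ plus the exit time of the restarted path, and
$$u(z)=\mathbb{E}[f(B_T)\mid B_0=z]=\mathbb{E}[u(B_\tau)\mid B_0=z].$$
Rotational invariance of planar Brownian motion forces $B_\tau$ to be uniformly distributed on $\partial\Delta(z,r)$, so $u(z)=S_u^r(z)$ for every admissible $r$. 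Together with boundedness and measurability of $u$, this surface-mean identity makes $u$ harmonic on $D$ (and automatically smooth by Theorem~\ref{thm:hasmooth}).

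To pin down the boundary behaviour I would sandwich $u$ between $H_D^f$ and $-H_D^{-f}$. Given any $v\in\mathcal{H}_D^{f}$, the subharmonic maximum principle (Theorem~\ref{thm:maxsub}) yields $v\leq\|f\|_\infty$, and the submean inequality combined with the strong Markov property makes $t\mapsto v(B_{t\wedge T})$ a submartingale that is bounded above. Regularity of $\partial D$ implies $T<\infty$ almost surely; upper semicontinuity of $v$ together with $\limsup_{w\to\zeta}v(w)\leq f(\zeta)$ on $\partial D$ then justifies reverse Fatou as $t\to\infty$, giving $v(z)\leq\mathbb{E}[f(B_T)]=u(z)$. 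Taking the supremum over $v$ yields $u\geq H_D^f$. The identical argument applied to $-f$, using $-u(z)=\mathbb{E}[(-f)(B_T)\mid B_0=z]$, produces $u\leq -H_D^{-f}$; Theorem~\ref{thm:dirsol} then forces $-H_D^{-f}=H_D^f$ by uniqueness, so $u=H_D^f$ and $u$ solves the Dirichlet problem.

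The main obstacle is the probabilistic machinery I have taken for granted — that $T<\infty$ almost surely on a domain whose boundary is entirely regular, and the interchange of limit with expectation in the submartingale step. The first is essentially the equivalence between analytic regularity (existence of a subharmonic barrier) and probabilistic regularity, proved by applying optional stopping to the submartingale $b(B_{t\wedge T})$ built from a barrier $b$ at $\zeta_0$, which yields $\mathbb{P}[T>t\mid B_0=z]\to 0$ as $z\to\zeta_0$. Once this quantitative exit-time control is available, reverse Fatou goes through because $v$ is bounded above and the paths converge almost surely to $B_T\in\partial D$.
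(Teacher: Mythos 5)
Your proof is correct but takes a genuinely different route from the paper's. The paper's proof assumes a bounded solution $v$ is already known to exist (from Perron's method plus regularity, Theorem~\ref{thm:dirsol}), constructs smooth truncations $\tilde v_n$ agreeing with $v$ on a compact exhaustion $D_n$, applies the It\^o/Dynkin-type martingale of Fact~\ref{fact:heatmg} to $\tilde v_n$, stops at $T_n$, invokes optional stopping to get $v(z)=\mathbb{E}_z[v(B_{T_n})]$, and lets $n\to\infty$ by dominated convergence to conclude $v(z)=\mathbb{E}_z[f(B_T)]$. You instead establish harmonicity of $u(z)=\mathbb{E}_z[f(B_T)]$ from first principles via the strong Markov property and rotational symmetry (producing the exact surface-mean identity $u(z)=S^r_u(z)$), and then pin down the boundary values by sandwiching $u$ between $H^f_D$ and $-H^{-f}_D$ using the fact that for $v\in\mathcal{H}_D^f$ the process $v(B_{t\wedge T})$ is a bounded-above submartingale. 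What the paper's route buys is that it needs only one martingale argument and leans entirely on the analytic existence theory; it never proves anything intrinsic about $u$. What yours buys is a direct verification that the probabilistic candidate is itself harmonic and caught between the Perron sub- and supersolutions, which is conceptually closer to an independent probabilistic construction of the solution. Both approaches ultimately defer the hard probabilistic points to the same place: $T<\infty$ a.s.\ on a domain with wholly regular boundary (which you correctly relate to barriers via optional stopping on $b(B_{t\wedge T})$; the paper handles this by assuming $T<\infty$ and citing \cite{10} for the general case), and the limit interchange as $t\to T$, which you treat via reverse Fatou and the paper via dominated convergence on a compact exhaustion. Your argument is sound modulo these flagged technicalities, and they match the ones the paper itself leaves to the references.
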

 
This might be quite surprising if one sees the result for the first time, therefore we are going to proceed to outline a sketch of the proof. For notational convenience we set $\mathbb{E}\left[f(B_T)| B_0=z\right]=\mathbb{E}_z\left[f(B_T)\right]$. We need two elementary results from probability theory.

\begin{fact}[\index{Optional Stopping Theorem}Optional Stopping Theorem, \cite{13}] 
\label{fact:ost}
Let $(X_t)_{t\geq 0}$ be a martingale with values in $\mathbb{R}$, which is  uniformly integrable (recall: for any $\epsilon>0$ $\exists \delta>0$ such that $\mathbb |\mathbb{E}[ X_t 1_S]|<\epsilon$ whenever $\mu(S)<\delta$ for any $t\geq 0$). Then it follows that $\mathbb{E}[X_t|F_s]=X_s$ where $F_s$ is the $\sigma$-algebra generated by $X_s$.   
\end{fact}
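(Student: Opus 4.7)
The plan is to read the statement in its intended sense: as the optional sampling theorem, with $s\leq t$ interpreted as stopping times (for deterministic times the conclusion is just the definition of a martingale). Under this reading, I would proceed in three stages, reducing general stopping times to bounded ones and finally to discrete, finitely valued ones, then climbing back up using uniform integrability.

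First I would treat the base case of bounded stopping times taking only finitely many values, say $s\leq t\leq N$ with values in $\{t_0<t_1<\cdots<t_k\}$. For $A\in F_s$ I would compute $\mathbb{E}[(X_t-X_s)\,1_A]$ by decomposing over the joint level sets $\{s=t_i, t=t_j\}$ and writing $X_{t_j}-X_{t_i}$ as a telescoping sum $\sum_{\ell=i}^{j-1}(X_{t_{\ell+1}}-X_{t_\ell})$. Since $A\cap\{s=t_i\}\in F_{t_i}\subseteq F_{t_\ell}$ for $\ell\geq i$, each telescoped term vanishes by the martingale identity $\mathbb{E}[X_{t_{\ell+1}}\mid F_{t_\ell}]=X_{t_\ell}$. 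This yields $\mathbb{E}[X_t\,1_A]=\mathbb{E}[X_s\,1_A]$ for every $A\in F_s$, which is the conditional expectation identity.

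Next I would extend to general bounded stopping times $s\leq t\leq N$ by dyadic approximation: set $s_n=\lceil 2^n s\rceil/2^n$ and $t_n=\lceil 2^n t\rceil/2^n$, both finitely valued bounded stopping times with $s_n\downarrow s$ and $t_n\downarrow t$. For each $n$ the previous step gives $\mathbb{E}[X_{t_n}\,1_A]=\mathbb{E}[X_{s_n}\,1_A]$ for $A\in F_s\subseteq F_{s_n}$. Right-continuity of paths yields $X_{s_n}\to X_s$ and $X_{t_n}\to X_t$ a.s., and uniform integrability (which is preserved under the family $\{X_{s_n}\}$, since these are conditional expectations of $X_N$) upgrades this to $L^1$ convergence, justifying passage to the limit. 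Finally, for general possibly unbounded stopping times $s\leq t$, truncate as $s_N=s\wedge N$, $t_N=t\wedge N$. The bounded case gives $\mathbb{E}[X_{t_N}\mid F_{s_N}]=X_{s_N}$. Uniform integrability of the martingale implies the existence of an $L^1$-limit $X_\infty=\lim_{N\to\infty}X_N$, and the closure property $X_\tau=\mathbb{E}[X_\infty\mid F_\tau]$ for any stopping time $\tau$ then gives $X_{t_N}\to X_t$ and $X_{s_N}\to X_s$ in $L^1$, allowing us to take $N\to\infty$.

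The main obstacle will be the final unbounded step: uniform integrability alone must carry the weight of both the $L^1$-convergence $X_{t_N}\to X_t$ and the compatibility of conditional expectations across the increasing filtration $F_{s_N}\uparrow F_s$. This is precisely the point where the hypothesis is indispensable; without uniform integrability the result holds only for bounded stopping times, as illustrated by classical examples such as simple random walk stopped at first hitting of a level. A secondary technical point, which I would invoke as standard, is the right-continuity of paths needed to pass from dyadic to general bounded stopping times; in the continuous-time setting one typically also assumes the usual conditions on the filtration to ensure $F_\tau$ is well-defined for a stopping time $\tau$.
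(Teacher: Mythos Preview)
The paper does not prove this statement: it is recorded as a \emph{Fact} with a citation to Durrett's textbook, and is invoked later without argument. So there is no in-paper proof to compare against.

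That said, your reading of the statement is the right one. As literally written (with deterministic $s\leq t$ and $F_s=\sigma(X_s)$) the conclusion is essentially the martingale property and uniform integrability is irrelevant; the paper's subsequent use of the fact --- applying it at the stopping time $T_n$ to obtain $\mathbb{E}_z[M_{T_n}]=\mathbb{E}_z[M_0]$ --- confirms that optional sampling for stopping times is what is meant. Your three-stage reduction (finitely-valued bounded stopping times via telescoping, then general bounded stopping times via dyadic discretisation and $L^1$ convergence, then unbounded stopping times via closure $X_\tau=\mathbb{E}[X_\infty\mid F_\tau]$) is the standard textbook route and is correct in outline. The one caveat you already flag --- that right-continuity of paths and the usual conditions on the filtration are needed for the second stage --- is genuine and would need to be stated as a hypothesis in a self-contained proof; the paper's informal statement suppresses it.
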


\begin{fact} 
\label{fact:heatmg}
Let $(B_t)_{t\geq 0}$ be a Brownian motion and $g:\mathbb{R}^2\rightarrow \mathbb{R}$ in $C^2(\mathbb{R}^2)$ with bounded derivatives then
\begin{eqnarray}
M_t=g(B_t)-g(B_0)-\int_0^t\Delta g(B_s)ds \qquad t\geq 0
\end{eqnarray}
is a martingale.
\end{fact}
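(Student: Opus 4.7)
The plan is to derive the martingale property from It\^o's formula, which is the natural tool for questions of this shape. For $g \in C^2(\mathbb{R}^2)$ and $B_t$ a standard planar Brownian motion, It\^o's formula gives
\begin{eqnarray}
g(B_t) - g(B_0) = \int_0^t \nabla g(B_s) \cdot dB_s + \tfrac{1}{2}\int_0^t \Delta g(B_s)\, ds, \notag
\end{eqnarray}
so the quantity $M_t$ (up to the factor $\tfrac{1}{2}$ absorbed by whichever normalization of the Laplacian is in force) is precisely the stochastic integral $\int_0^t \nabla g(B_s) \cdot dB_s$. Stochastic integrals against Brownian motion are always local martingales, so the entire problem reduces to promoting the local martingale property to a true martingale.

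This promotion is where the hypothesis that the derivatives of $g$ are bounded is used. If $|\nabla g|\le C$ on $\mathbb{R}^2$, then the integrand $\nabla g(B_s)$ is uniformly bounded, hence for every $t\ge 0$ the It\^o isometry yields
\begin{eqnarray}
\mathbb{E}\!\left[\left(\int_0^t \nabla g(B_s)\cdot dB_s\right)^{\!2}\right] = \mathbb{E}\!\left[\int_0^t |\nabla g(B_s)|^2\, ds\right] \le C^2 t < \infty, \notag
\end{eqnarray}
so the stochastic integral is an $L^2$-bounded, and in particular a true, martingale. Combined with the boundedness of $\Delta g$ (which ensures $\int_0^t \Delta g(B_s)\,ds$ is integrable and adapted), this gives that $M_t$ is integrable and adapted to the natural filtration $(F_t)$ of $B$, and that $\mathbb{E}[M_t \mid F_s] = M_s$ for $0\le s \le t$.

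As a fallback that avoids invoking the full machinery of It\^o calculus, one may argue directly via the Markov property of Brownian motion together with the heat semigroup $P_r g(x) = \mathbb{E}_x[g(B_r)]$, which satisfies $\partial_r P_r g = \tfrac{1}{2}\Delta P_r g$ and $P_0 g = g$. From this identity one verifies by differentiation under the expectation (justified by the bounded-derivative hypothesis) that $\mathbb{E}_x[g(B_t)] - g(x) = \int_0^t \mathbb{E}_x[\tfrac{1}{2}\Delta g(B_s)]\,ds$, and then the Markov property promotes this to the conditional-expectation identity $\mathbb{E}[M_t \mid F_s] = M_s$ needed for the martingale property.

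The main obstacle is really just invoking the right background: It\^o's formula and the criterion that bounded integrands produce genuine (not merely local) martingales. Neither is elementary, but both are entirely standard in stochastic analysis, and since the statement is presented as a cited Fact the proof plan is to quote It\^o and carry out the boundedness check above.
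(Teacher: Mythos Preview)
The paper does not actually prove this statement: it is stated as a \emph{Fact} without proof or even a reference, and is simply quoted as background from stochastic analysis. So there is no ``paper's own proof'' to compare against; your task was really to supply what the paper omits.

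Your approach via It\^o's formula is the correct and standard one, and the boundedness check you carry out (bounded $\nabla g$ $\Rightarrow$ square-integrable integrand $\Rightarrow$ true martingale rather than merely local) is exactly the right justification. You also correctly flag the missing factor $\tfrac{1}{2}$: with the paper's own normalization of Brownian motion (covariance $s\cdot Id$), the generator is $\tfrac{1}{2}\Delta$, so the statement as printed is off by that factor. This is a genuine slip in the paper, not in your argument, and your parenthetical handles it appropriately. The alternative semigroup argument you sketch is also valid and is in fact closer in spirit to how one would prove this without the It\^o machinery.
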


Note that we do not have to show the existence of a solution to the Dirichlet problem again, hence we concentrate on the following partial result showing that if a solution exists then we can write it terms of Brownian motion.

\begin{thm} Let $D$ be a proper subdomain of $\mathbb{C}$ with boundary consisting only of regular points and let $f:\partial D\rightarrow \mathbb{R}$ be continuous and bounded. If ${T<\infty}$ almost surely,
\begin{eqnarray}
u(z)=\mathbb{E}_z\left[f(B_T)1_{T<\infty}\right]
\end{eqnarray}
and $v$ is any bounded solution to the Dirichlet problem then $u=v$ 
\end{thm}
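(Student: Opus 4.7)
The plan is to apply the optional stopping theorem to a martingale built from $v$ and a Brownian path, using the fact that $v$ is harmonic inside $D$ so the Laplacian term in Fact \ref{fact:heatmg} vanishes along the path before exit. Once we conclude $\mathbb{E}_z[v(B_T)] = v(z)$, the boundary condition $v = f$ on $\partial D$ and the hypothesis $T < \infty$ a.s.\ immediately give $v(z) = u(z)$.

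First I would localize. Fact \ref{fact:heatmg} is stated for $g \in C^2(\mathbb{R}^2)$ with bounded derivatives, whereas $v$ is only harmonic on $D$ (and continuous up to $\partial D$), with no global smoothness or global bounds on its derivatives. To get around this, choose an exhaustion of $D$ by compact sets $K_1 \subset K_2 \subset \cdots$ with $\bigcup_n K_n = D$, and set
\begin{eqnarray}
T_n = T \wedge \inf\{t \geq 0 : B_t \notin K_n\}. \notag
\end{eqnarray}
For each $n$, use a smooth cut-off to extend $v|_{K_n}$ to a function $\tilde v_n \in C^2(\mathbb{R}^2)$ with bounded derivatives, agreeing with $v$ on $K_n$. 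By Fact \ref{fact:heatmg}, $M^n_t = \tilde v_n(B_t) - \tilde v_n(B_0) - \int_0^t \Delta \tilde v_n(B_s)\, ds$ is a martingale. For $s < T_n$ the path lies in $K_n$, so $\Delta \tilde v_n(B_s) = 0$, and hence the stopped process $M^n_{t \wedge T_n} = v(B_{t \wedge T_n}) - v(z)$ is again a martingale under $\mathbb{P}_z$.

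Next I would apply optional stopping. Since $v$ is bounded on $\overline{D}$, the stopped martingale $v(B_{t \wedge T_n}) - v(z)$ is uniformly bounded, hence uniformly integrable in $t$, so Fact \ref{fact:ost} gives $\mathbb{E}_z[v(B_{t \wedge T_n})] = v(z)$ for every $t \geq 0$. Letting $t \to \infty$ with dominated convergence (using boundedness of $v$ and the fact that $T_n \leq T < \infty$ a.s.) yields $\mathbb{E}_z[v(B_{T_n})] = v(z)$. Then let $n \to \infty$: the exhaustion forces $T_n \uparrow T$ a.s., so by path continuity $B_{T_n} \to B_T$ a.s., and another application of dominated convergence gives $\mathbb{E}_z[v(B_T)] = v(z)$.

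Finally, since $B_T \in \partial D$ a.s.\ (by continuity of the path and the definition of $T$) and $v$ extends continuously to $\partial D$ with $v = f$ there, we get $v(B_T) = f(B_T)$ a.s., and because $T < \infty$ a.s.\ we have $f(B_T) = f(B_T) \mathbf{1}_{T < \infty}$ a.s. Therefore $v(z) = \mathbb{E}_z[f(B_T) \mathbf{1}_{T < \infty}] = u(z)$. The main obstacle is the first step: reconciling the global smoothness hypothesis of Fact \ref{fact:heatmg} with the fact that $v$ is only harmonic on $D$. The localization via $K_n$ and $T_n$ is what makes everything else routine; the rest is just dominated convergence.
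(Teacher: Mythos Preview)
Your proposal is correct and follows essentially the same route as the paper's own proof: localize via an exhaustion (the paper uses the specific sets $D_r=\{z\in D:|z|<r,\ d(z,\partial D)>1/r\}$), smoothly extend $v$ to apply Fact~\ref{fact:heatmg}, stop the resulting martingale at the exit time $T_n$ so the Laplacian term vanishes, invoke optional stopping via boundedness of $v$, and then pass to the limit $n\to\infty$ by dominated convergence using $B_{T_n}\to B_T$. The only differences are cosmetic (compact $K_n$ versus open $D_n$, and your extra intermediate $t\to\infty$ step).
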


\begin{proof}(Sketch) Define $D_r=\{z\in D: |z|<r \text{ and } d(z,\partial D)>1/r\}$ for $r\geq 1$. Let $T_n$ be the first exit time for $D_n$. Now modify $v$ to obtain $\tilde{v}_n$ such that $\tilde{v}_n\in C^2$ with bounded derivatives and $\tilde{v}_n=v$ on $D_n$ and use Fact \ref{fact:heatmg} to conclude that  
\begin{eqnarray}
M_t=\tilde{v}_n(B_t)-\tilde{v}_n(B_0)-\int_0^t \Delta \tilde{v}_n(B_s) ds\notag
\end{eqnarray}  
is a martingale. Consider stopping the martingale $M_t$ at $T_n$, then $M_{\min(t,T_n)}=v(B_{\min(t,T_n)})-v(B_0)$ since $v=\tilde{v}_n$ on $D_n$ and $\Delta v=0$ on $D$ as $v$ solves the Dirichlet problem. Observe that $M_{\min(t,T_n)}$ is bounded since $v$ is bounded and therefore it is uniformly integrable, so we use optional stopping (Fact \ref{fact:ost}) at $T_n$ to obtain for all $z\in D$
\begin{eqnarray}
\mathbb{E}_z \left[M_{T_n}|F_0\right]=\mathbb{E}_z\left[M_{T_n}\right]=\mathbb{E}_z\left[M_0\right]=0\notag
\end{eqnarray} 
Hence it follows that $0=\mathbb{E}_z\left[v(B_{T_n})\right]-v(x)$. By continuity of Brownian motion $B_{T_n}\rightarrow B_T$ as $n\rightarrow\infty$ and since $v$ is bounded, we can use the dominated convergence theorem to conclude
\begin{eqnarray}
v(z)=\lim_{n\rightarrow\infty} \mathbb{E}_z\left[v(B_{T_n})\right]=\mathbb{E}_z\left[v(B_T)\right]=\mathbb{E}_z\left[f(B_{T_n})\right]\notag
\end{eqnarray}
where the last equality follows as we have stopped the process at $T$ on the boundary and $v$ is a solution so $v=f$ on $\partial D$. 
\end{proof}

Notice the crucial role that the martingales $M_{\min(t,T_n)}$ played in the proof for 'approximating' the solution $v$ similar to subharmonic functions for the Perron solution. Although this is a rather vague similarity, the technique is no coincidence and it turns out that one can develop basically all of potential theory from a probabilistic viewpoint using Brownian motion and martingales - for details see \cite{10}.\\

A refined analysis (see also \cite{10}) shows that the hypothesis $\{T<\infty\}$ can be removed to yield Theorem \ref{thm:probdi}. Notice that we have then $H^f_D(z)=\mathbb{E}_z\left[f(B_T)\right]$, where $H^f_D$ is the Perron solution, suppose further that we can find the probability density function $p$ of the random variable $B_T$ then we have

\begin{eqnarray}
H^f_D(z)=\mathbb{E}_z\left[f(B_T)\right]=\int_{\partial D}f(z)p(z)dm(z)
\end{eqnarray}

This shows that we can obtain a 'nice' formula for the solution in terms of a measure $p(z)dm(z)$ and the question is if the same concept exists also in the language of 'classical' potential theory.

\begin{defn} Let $D$ be proper subdomain of $\mathbb{C}$ such that $\partial D$ consists only of regular points and let $\cal{B}$$(\partial D)$ denote the Borel $\sigma$-algebra on $\partial D$. A \textbf{\index{harmonic measure}harmonic measure} for $D$ is a function $\omega_D:D \times \cal{D}$$(\partial D)\rightarrow [0,1]$ such that for all $z\in D$, the map $B\mapsto \omega_D(z,B)$ is a probability measure on $\cal{B}$$(\partial D)$ and if $f:\partial D\rightarrow \mathbb{R}$ is continuous and bounded then $H^f_D=P^f_D$ where $P^f_D$ is the \textbf{\index{generalized Poisson integral}generalized Poisson integral} given by
\begin{eqnarray}
P^f_D(z)=\int_{\partial D} f(s) d\omega_D(z,s) \qquad (z\in D)
\end{eqnarray}
\end{defn}

Harmonic measure is exactly the concept one would expect after looking at the probabilistic solution of the Dirichlet problem. First we have to show that such an object actually exists. For the development of the main ideas about harmonic measure we follow \cite{14} and modify the proof of an existence result from \cite{4}. 

\begin{thm} Let $D$ be a proper subdomain of $\mathbb{C}$ such that $\partial D$ is regular, then there exists a unique harmonic measure $\omega_D$ for $D$. 
\end{thm}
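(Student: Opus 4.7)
The plan is to construct $\omega_D(z,\cdot)$ for each fixed $z\in D$ by applying the Riesz representation theorem to the functional $L_z: C(\partial D)\to\mathbb{R}$ defined by $L_z(f)=H^f_D(z)$, where $\partial D$ is viewed as a compact subset of the Riemann sphere. The key observation is that $L_z$ is a positive, bounded, linear functional on $C(\partial D)$, so Riesz delivers a unique regular Borel probability measure on $\partial D$ representing it.

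First I would verify the three properties of $L_z$. Positivity: if $f\geq 0$ on $\partial D$, then the constant function $0$ is subharmonic and lies in $\mathcal{H}_D^{f}$, so $H^f_D\geq 0$. Boundedness: by the maximum principle argument used in the proof of Theorem \ref{thm:perha}, $\|H^f_D\|_\infty \leq \|f\|_\infty$, giving $|L_z(f)|\leq\|f\|_\infty$. The main obstacle, and the step I would expect to require most care, is linearity, since Lemma \ref{lem:regb2} only supplies the inequality $H^{f_1}_D+H^{f_2}_D\leq H^{f_1+f_2}_D$. To upgrade this to equality I would exploit the regularity of $\partial D$: by Theorem \ref{thm:dirsol} the Perron solutions $H^f_D$, $H^{-f}_D$, and $H^{f_1+f_2}_D$ are all honest solutions of the Dirichlet problem, and the uniqueness fact quoted just after Definition \ref{defn:dirch} forces $H^{-f}_D=-H^f_D$. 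Applying Lemma \ref{lem:regb2} to $-f_1$ and $-f_2$ and negating then yields the reverse inequality, giving additivity; homogeneity under positive scalars is immediate from Definition \ref{defn:persol}, and combined with $H^{-f}_D=-H^f_D$ this gives full $\mathbb{R}$-linearity.

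With $L_z$ a positive bounded linear functional, the Riesz representation theorem on the compact Hausdorff space $\partial D$ produces a unique regular Borel measure $\omega_D(z,\cdot)$ with
\begin{equation}
H^f_D(z)=\int_{\partial D} f(s)\, d\omega_D(z,s) \qquad \text{for all } f\in C(\partial D). \notag
\end{equation}
Setting $f\equiv 1$ and using that the constant $1$ is its own Perron solution gives $\omega_D(z,\partial D)=1$, so $\omega_D(z,\cdot)$ is a probability measure. Repeating the construction for every $z\in D$ defines the function $\omega_D$ on $D\times\mathcal{B}(\partial D)$; for the (implicit) regularity in $z$, I would note that for continuous $f$ the map $z\mapsto \int f\,d\omega_D(z,\cdot)=H^f_D(z)$ is harmonic by Theorem \ref{thm:perha}, and the class of bounded Borel $f$ for which $z\mapsto\int f\,d\omega_D(z,\cdot)$ is Borel is closed under monotone limits, so a Dynkin-class argument extends the representation identity from $C(\partial D)$ to indicators of Borel sets.

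Uniqueness is essentially free from Riesz: if $\omega_D$ and $\omega'_D$ are two harmonic measures for $D$, then for every $z\in D$ and every continuous bounded $f$,
\begin{equation}
\int_{\partial D} f\, d\omega_D(z,\cdot) \;=\; H^f_D(z) \;=\; \int_{\partial D} f\, d\omega'_D(z,\cdot), \notag
\end{equation}
so the uniqueness clause in Riesz representation forces $\omega_D(z,\cdot)=\omega'_D(z,\cdot)$ as Borel measures on $\partial D$ for every $z$, concluding the proof.
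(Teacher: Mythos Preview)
Your proof is correct and follows the same overall architecture as the paper: show that $f\mapsto H^f_D(z)$ is a positive linear functional on $C(\partial D)$ and invoke the Riesz representation theorem. The difference lies in how linearity is obtained. The paper appeals to the probabilistic identity $H^f_D(z)=\mathbb{E}_z[f(B_T)]$ established just before, so linearity and positivity are inherited from linearity and positivity of the integral; the paper even remarks afterwards that ``with a little bit more work we could have shown directly that $H^f_D$ is a positive linear functional.'' Your argument is exactly that direct route: you use regularity of $\partial D$ to conclude that each $H^f_D$ is a genuine Dirichlet solution, then uniqueness forces $H^{-f}_D=-H^f_D$, which combined with Lemma~\ref{lem:regb2} upgrades the subadditivity inequality to equality. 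Your approach has the advantage of being self-contained within the potential-theoretic framework and not relying on Theorem~\ref{thm:probdi}; the paper's approach is shorter once the Brownian-motion machinery is in place and illustrates the probabilistic viewpoint it has just developed. Your Dynkin-class remark about extending measurability in $z$ to indicators is a nice addition but not required by the definition of harmonic measure as stated in the paper.
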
 

\begin{proof} 
The first step is to show that $f\mapsto H^f_D$ defines a positive linear functional. Note that we have $H^f_D=\mathbb{E}_z[f(B_T)]$ for all continuous bounded functions $f$. So consider two continuous bounded functions $f_1,f_2$ on $\partial D$. By linearity of expectations (=linearity of integrals) we have
\begin{eqnarray}
\mathbb{E}_z[f_1(B_T)]+\mathbb{E}_z[f_2(B_T)]=\mathbb{E}_z[f_1(B_T)+f_2(B_T)]=\mathbb{E}_z[(f_1+f_2)(B_T)]\notag
\end{eqnarray}
Also notice that if $f=1$, then $\mathbb{E}_z[1]=1$ and if $f\geq 0$ then $\mathbb{E}_z[f(B_T)]\geq 0$. Hence we have the positive linear functional $f\mapsto H^f_D$. By the Riesz representation theorem, there exists a unique Borel probability measure $\mu_z$ such that 
\begin{eqnarray}
H^f_D(z)=\int_{\partial D} f(z) d\mu_z\notag
\end{eqnarray}   
Now we simply define for $z\in D$ and $B\in \cal{B}$$(\partial D)$ the harmonic measure by $\omega_D(z,B)=\mu_z(B)$ to obtain the result.
\end{proof}

Notice that with a little bit more work we could have shown directly that $H^f_D$ is a positive linear functional, but it seems reasonable to gain some confidence in working with the probabilistic formula here.

\begin{ex} In one case we have already calculated the harmonic measure, namely if $D=\Delta(0,R)$ we have the Poisson integral formula (see Theorem \ref{thm:pifdisc}) for $w=Re^{is}$
\begin{eqnarray}
H^f_{\Delta(0,R)}(z)=P^f_{\Delta(0,R)}(z)= \frac{1}{2\pi} \int_0^{2\pi} \frac{|w|^2-|z|^2}{|w-z|^2}f(Re^{is})ds\notag
\end{eqnarray}
Now one can immediately spot the harmonic measure for $B\in \cal{B}$$(\partial \Delta(0,R))$, namely
\begin{eqnarray}
\omega_{\Delta(0,R)}(z,B)=\int_B P(z,Re^{is})\frac{ds}{2\pi}
\end{eqnarray} 
where $P(z,w)$ is the Poisson kernel, which we have thereby (re-)interpreted - disregarding the normalizing factor $2\pi$ - as a Radon-Nikodym derivative of harmonic measure.
\end{ex}

Our next goal is calculate harmonic measure for more general domains than just a disc. For this we need a remarkable result from complex analysis.

\begin{fact}[\index{Caratheodory's Theorem}Caratheodory's Theorem, \cite{14}]
\label{fact:cara} 
Let $D$ be a Jordan domain and let $\phi$ be a conformal map from $\Delta(0,1)=\Delta$ to $D$, then $\phi$ has a continuous extension to the boundary $\overline{\Delta}$. Furthermore this extension is one-to-one from $\overline{\Delta}$ to $\overline{D}$.
\end{fact}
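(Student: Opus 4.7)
The plan is to establish two claims in succession: that $\phi$ extends continuously to a map $\overline{\phi}\colon\overline{\Delta}\to\overline{D}$, and that this extension is injective on $\partial\Delta$. Both steps will rely on the hypothesis that $\partial D$ is a Jordan curve; the first via the classical length-area estimate (Gross's lemma), the second via the Jordan curve theorem applied inside $\mathbb{C}_\infty$.

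For the continuous extension I fix $z_0=e^{i\theta_0}\in\partial\Delta$ and for small $r>0$ set $C_r=\{|z-z_0|=r\}\cap\Delta$. After composing with a M\"obius transformation of $\mathbb{C}_\infty$ we may assume $D$ is bounded, so $\mathrm{Area}(D)=\int_\Delta|\phi'|^2\,dA<\infty$. Let $\ell(r)$ denote the length of $\phi(C_r)$; the Cauchy--Schwarz inequality gives $\ell(r)^2\le 2\pi r\int_{C_r}|\phi'|^2\,|dz|$, and integrating in polar coordinates around $z_0$ yields
\begin{eqnarray}
\int_0^{r_0}\frac{\ell(r)^2}{r}\,dr\;\le\;2\pi\,\mathrm{Area}(D)\;<\;\infty.\notag
\end{eqnarray}
Consequently there is a sequence $r_n\downarrow 0$ with $\ell(r_n)\to 0$. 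Each arc $\phi(C_{r_n})$ is a crosscut whose endpoints cluster at $\partial D$; since $\partial D$ is a simple closed curve and the diameter of $\phi(C_{r_n})$ shrinks to zero, both endpoints must converge to a common point $w_0\in\partial D$. Combining $\phi(C_{r_n})$ with the short sub-arc of $\partial D$ joining its endpoints and invoking the Jordan curve theorem forces the small ``cap'' of $D$ bounded by this closed loop -- which contains the set $\phi(\{|z-z_0|<r_n\}\cap\Delta)$ -- to have diameter tending to zero. Hence $\lim_{z\to z_0}\phi(z)=w_0$, defining the continuous extension $\overline{\phi}$.

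For injectivity of $\overline{\phi}$ on $\partial\Delta$, I would argue by contradiction: suppose $\overline{\phi}(e^{i\theta_1})=\overline{\phi}(e^{i\theta_2})=w_0$ for $\theta_1\neq\theta_2$. The images under $\overline{\phi}$ of the two radii from $0$ to $e^{i\theta_j}$, joined at $w_0$, form a Jordan curve $\Gamma\subset\overline{D}$ passing through $\phi(0)$ and $w_0$; by the Jordan curve theorem $\Gamma$ splits $D$ into two components. The two open arcs of $\partial\Delta$ determined by $\{e^{i\theta_1},e^{i\theta_2}\}$ are mapped continuously by $\overline{\phi}$ into $\partial D$. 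Examining which side of $\Gamma$ receives each arc forces one of the two arcs of $\partial\Delta$ to collapse under $\overline{\phi}$ to the single point $w_0$, contradicting the surjectivity of $\overline{\phi}\colon\overline{\Delta}\to\overline{D}$ (which follows from the first step together with compactness of $\overline{\Delta}$ and the fact that $\phi(\Delta)=D$).

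The main obstacle is the topological injectivity step: making precise how a chord in $\Delta$ and its image partition $D$, and then extracting the contradiction from the Jordan curve theorem. The length-area estimate itself is technically routine, but its force depends entirely on $\partial D$ being a simple closed curve -- without this hypothesis the endpoints of $\phi(C_{r_n})$ could converge to distinct points of $\partial D$ and the argument for a well-defined limit breaks down. This is precisely where the Jordan-domain assumption enters in an essential way, and it is also what distinguishes Carath\'eodory's theorem from the weaker statement that a bounded univalent map has radial boundary values almost everywhere.
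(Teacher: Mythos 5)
The paper does not prove this statement at all: it is recorded as a \emph{Fact} with a citation to Garnett--Marshall, and is invoked as a black box (e.g.\ in Theorem~\ref{thm:confinv}). So there is no "paper's proof" to compare against; what you have written is the classical Carath\'eodory length--area argument, and the question is whether it stands on its own. Note also that the paper's definition of Jordan domain demands a \emph{smooth} boundary, which would actually let you take a shortcut via Schwarz reflection across the boundary; your argument is the more general one for arbitrary Jordan curves, which of course covers the paper's case.

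Your first half (continuous extension via the length--area inequality $\int_0^{r_0}\ell(r)^2\,dr/r\le 2\pi\,\mathrm{Area}(D)$ and shrinking crosscuts) is the right approach, and the topological points you gloss over -- that the endpoints of $\phi(C_{r_n})$ land on $\partial D$ because $\phi$ is proper, and that the "small cap" is bounded by the short subarc of $\partial D$ because the parametrization of the Jordan curve and its inverse are uniformly continuous -- are routine to fill in. The genuine gap is in the injectivity step, and specifically in the last sentence. If an entire arc $I\subset\partial\Delta$ collapses to a single point $w_0$ under $\overline{\phi}$, this does \emph{not} contradict surjectivity of $\overline{\phi}\colon\overline{\Delta}\to\overline{D}$: the complementary arc $J$ is still free to wrap continuously onto all of $\partial D$ (a closed interval can map onto a circle), so $\overline{\phi}(\overline{\Delta})=\overline{D}$ survives. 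What you actually need at this point is the boundary uniqueness lemma: a nonconstant holomorphic function on $\Delta$ that extends continuously to $\overline{\Delta}$ cannot be constant on any arc of $\partial\Delta$. One standard proof picks rotations $\lambda_1,\dots,\lambda_n$ of $\partial\Delta$ with $\bigcup_k\lambda_k I=\partial\Delta$ and applies the maximum principle to $F(z)=\prod_{k=1}^n\bigl(\phi(\lambda_k z)-w_0\bigr)$, which is continuous on $\overline{\Delta}$ and vanishes identically on $\partial\Delta$, forcing $F\equiv 0$ and hence $\phi\equiv w_0$; alternatively one can quote the F.~and~M.~Riesz theorem or Schwarz reflection. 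Replacing "contradicting surjectivity" by this lemma closes the argument.
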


Note that the conformal map obviously exists by the Riemann mapping theorem. A continuous extension to the boundary will obviously turn out to be useful in the context of the Dirchlet problem.

\begin{thm}
\label{thm:confinv}
If $D$ is a Jordan and $f:\partial D\rightarrow \mathbb{R}$ is continuous and bounded, then for a conformal map $\phi:D\rightarrow \Delta$ (extended by Caratheodory's theorem) we have that for $w=\phi(z)$
\begin{eqnarray}
g(z)=\frac{1}{2\pi}\int_0^{2\pi} f\circ \phi^{-1} (e^{is})\frac{1-|w|^2}{|e^{is}-w|^2}ds
\end{eqnarray}
solves the Dirichlet problem.
\end{thm}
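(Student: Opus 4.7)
The plan is to recognize $g$ as a composition that reduces the problem on $D$ to the already-solved Dirichlet problem on the unit disc $\Delta$. Setting $F = f \circ \phi^{-1} : \partial \Delta \to \mathbb{R}$, Caratheodory's theorem (Fact \ref{fact:cara}) guarantees that $F$ is well-defined and continuous on $\partial \Delta$, and bounded since $f$ is. Thus by Theorem \ref{thm:posint}, $u(w) := P^F_{\Delta}(w)$ is harmonic on $\Delta$ and satisfies $\lim_{w \to \eta} u(w) = F(\eta)$ for every $\eta \in \partial \Delta$. The given function $g$ is precisely $g(z) = u(\phi(z))$.

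First I would verify harmonicity of $g$ on $D$. Since $\phi : D \to \Delta$ is conformal, in particular holomorphic. For any $z_0 \in D$ pick a small simply connected neighbourhood $V \subset \Delta$ of $\phi(z_0)$. By Lemma \ref{lem:ref}, $u = \mathrm{Re}(F)$ on $V$ for some holomorphic $F$. Then $F \circ \phi$ is holomorphic on $\phi^{-1}(V)$, so $g = \mathrm{Re}(F \circ \phi)$ there, and Theorem \ref{thm:reh} yields that $g$ is harmonic in a neighbourhood of $z_0$. Since $z_0$ was arbitrary, $g$ is harmonic on $D$.

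Next I would check the boundary behaviour. Fix $\zeta \in \partial D$; by Caratheodory's theorem, the extension $\overline{\phi} : \overline{D} \to \overline{\Delta}$ is continuous and bijective, so $\phi(z) \to \overline{\phi}(\zeta) \in \partial \Delta$ as $z \to \zeta$ with $z \in D$. Using the fact that $u$ solves the Dirichlet problem on $\Delta$,
\begin{eqnarray}
\lim_{z \to \zeta}\, g(z) = \lim_{z \to \zeta}\, u(\phi(z)) = F(\overline{\phi}(\zeta)) = f(\phi^{-1}(\overline{\phi}(\zeta))) = f(\zeta),\notag
\end{eqnarray}
where the last equality uses that $\overline{\phi}$ is a bijection of $\overline{D}$ onto $\overline{\Delta}$. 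Combined with the harmonicity above, this shows $g$ solves the Dirichlet problem on $D$ with boundary data $f$.

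The only subtle point is the appeal to Caratheodory's theorem: without continuous and injective extension of $\phi$ to the boundary, neither the definition of $F = f \circ \phi^{-1}$ on $\partial \Delta$ nor the passage $\phi(z) \to \overline{\phi}(\zeta)$ at boundary points would be legitimate. That is why the hypothesis ``$D$ is a Jordan domain'' is essential, rather than a mere Dirichlet-regular proper subdomain of $\mathbb{C}$; everything else is a routine transport of the disc result by the conformal map.
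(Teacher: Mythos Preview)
Your proof is correct and follows the same overall strategy as the paper: write $g = u \circ \phi$ where $u$ is the Poisson integral of $f\circ\phi^{-1}$ on $\Delta$, then verify harmonicity of the composition and use Caratheodory's theorem for the boundary behaviour. The only difference is in the harmonicity step: the paper sketches a direct computation of $\Delta(u\circ\phi)$ via the chain rule and the Cauchy--Riemann equations, whereas you invoke Lemma~\ref{lem:ref} to write $u$ locally as the real part of a holomorphic function and then compose. Your route is cleaner and fits the paper's toolkit better; just watch the notation---you use the symbol $F$ both for $f\circ\phi^{-1}$ and for the local holomorphic primitive of $u$, which is a harmless but avoidable clash.
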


\begin{proof}
To show that $g(\phi^{-1}(w))$ is harmonic simply calculate the derivatives by the chain rule, use the fact that $g$ and $Re(\phi^{-1})$ are harmonic and use the Cauchy-Riemann equations for the remaining terms as $\phi^{-1}$ is holomorphic. Also $g(z)$ extends continuously to the boundary by combining Caratheodory's theorem with the Poisson integral formula for the disc.  
\end{proof}
  
Regarding harmonic measure we thus have obtained a rather remarkable result about conformal invariance.

\begin{thm}[\index{conformal invariance}Conformal Invariance]
If $D$ is a Jordan domain, $\phi$ is a conformal map from $D$ onto the unit disc $\Delta$ (extended by Caratheodory's theorem) and $B$ is a Borel measurable subset of $\partial D$, then 
\begin{eqnarray}
\omega_D(z,B)=\omega_\Delta (\phi(z),\phi(B))=\int_{\phi(B)}\frac{1-|\phi(z)|^2}{|e^{is}-\phi(z)|^2}\frac{ds}{2\pi}
\end{eqnarray}
\end{thm}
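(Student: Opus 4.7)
The plan is to exploit the defining property of harmonic measure (that the Poisson integral against it solves the Dirichlet problem) together with Theorem \ref{thm:confinv} and uniqueness of harmonic measure. First I would note that the second equality in the claim is merely the explicit Poisson-integral formula for $\omega_\Delta$ established in the previous example, so the entire content lies in the first equality $\omega_D(z,B) = \omega_\Delta(\phi(z), \phi(B))$.

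To prove this, I would define a candidate measure $\nu_z$ on $\partial D$ by $\nu_z(B) = \omega_\Delta(\phi(z), \phi(B))$. Here $\phi$ is extended to a homeomorphism $\overline{D} \to \overline{\Delta}$ by Fact \ref{fact:cara}, so $\phi$ and $\phi^{-1}$ are continuous between the boundary circles, hence Borel measurable, and $\nu_z$ is well-defined and is a Borel probability measure. Concretely, $\nu_z$ is the pushforward of $\omega_\Delta(\phi(z), \cdot)$ under $\phi^{-1}$.

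Next I would show that $\nu_z$ satisfies the defining property of harmonic measure for $D$. Given any continuous bounded $f:\partial D \to \mathbb{R}$, the standard change of variables for pushforward measures yields
\begin{eqnarray}
\int_{\partial D} f(\zeta)\, d\nu_z(\zeta) = \int_{\partial \Delta} (f \circ \phi^{-1})(w)\, d\omega_\Delta(\phi(z), w) = \frac{1}{2\pi}\int_0^{2\pi} (f\circ \phi^{-1})(e^{is}) \frac{1-|\phi(z)|^2}{|e^{is}-\phi(z)|^2}\, ds,\notag
\end{eqnarray}
where the second equality uses the explicit formula for $\omega_\Delta$. By Theorem \ref{thm:confinv}, this last expression equals the (unique) solution $g(z)$ to the Dirichlet problem on $D$ with boundary data $f$, which by Theorem \ref{thm:dirsol} is precisely the Perron solution $H_D^f(z)$. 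Thus $\int_{\partial D} f\, d\nu_z = H_D^f(z) = \int_{\partial D} f\, d\omega_D(z, \cdot)$ for every continuous bounded $f$.

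Finally, since two Borel probability measures on the compact set $\partial D$ that agree when integrated against every continuous bounded function must coincide (uniqueness in the Riesz representation theorem, applied via the same positive-functional argument used to construct $\omega_D$ in the first place), we conclude $\nu_z = \omega_D(z, \cdot)$, which is exactly the claimed identity. The step I expect to require the most care is the pushforward change of variables in the first display: it relies on $\phi$ restricted to $\partial D$ being a homeomorphism onto $\partial \Delta$, for which Caratheodory's theorem is essential; without it, $\phi(B)$ need not even be Borel for $B$ Borel in $\partial D$, and the entire statement could fail to make sense.
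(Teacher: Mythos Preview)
Your proof is correct and follows exactly the approach the paper intends: the paper's own proof is the single line ``Immediate from Theorem \ref{thm:confinv},'' and what you have written is precisely the unpacking of that immediacy --- Theorem \ref{thm:confinv} shows the Poisson integral against the pushforward measure solves the Dirichlet problem, and uniqueness of harmonic measure (via Riesz representation) forces it to equal $\omega_D$. Your version simply makes explicit the pushforward construction and the uniqueness step that the paper leaves to the reader.
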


\begin{proof} 
Immediate from Theorem \ref{thm:confinv}.
\end{proof}

Finally we return to the initial question of how to find the probability density function of a Brownian motion $B_T$ stopped a boundary of a domain $D$. Clearly for the disc we know the answer from the Poisson integral formula, now we are focusing on the upper half-plane. 

\begin{thm} 
Let $H=\{z=x+iy \in \mathbb{C}: y>0\}$ and let $B$ be a Borel subset of $\mathbb{R}$, then 
\begin{eqnarray}
\omega_H(x+iy,B)=\frac{1}{\pi}\int_B \frac{y}{(x-t)^2+y^2}dt \qquad x+iy=z\in H
\end{eqnarray}
\end{thm}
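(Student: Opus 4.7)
(Plan)
The plan is to exploit conformal invariance of harmonic measure, transporting the known formula on the unit disc to $H$ via the Cayley transform. Recall that by convention the boundary of $H$ is taken in $\mathbb{C}_\infty$, so $\partial H = \mathbb{R}\cup\{\infty\}$ is a Jordan curve and $H$ qualifies as a Jordan domain; in particular the preceding conformal invariance theorem applies.

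First I would choose the conformal isomorphism
\begin{eqnarray}
\phi:H\longrightarrow\Delta,\qquad \phi(z)=\frac{z-i}{z+i},\notag
\end{eqnarray}
which extends homeomorphically (by Caratheodory's theorem, Fact \ref{fact:cara}) to a bijection $\overline{H}\to\overline{\Delta}$ sending the real axis onto $\partial\Delta\setminus\{1\}$ and $\infty\mapsto 1$. Applying the conformal invariance theorem yields, for any Borel set $B\subset\mathbb{R}$,
\begin{eqnarray}
\omega_H(z,B)=\omega_\Delta\bigl(\phi(z),\phi(B)\bigr)=\int_{\phi(B)}\frac{1-|\phi(z)|^2}{|e^{is}-\phi(z)|^2}\frac{ds}{2\pi}.\notag
\end{eqnarray}

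Next I would pull this integral back to the real line by parametrising $\partial\Delta\setminus\{1\}$ through $e^{is}=\phi(t)$, $t\in\mathbb{R}$. Three routine computations drive everything. Writing $z=x+iy$ with $y>0$, a direct calculation gives
\begin{eqnarray}
1-|\phi(z)|^2=\frac{4y}{|z+i|^2}.\notag
\end{eqnarray}
Second, the algebraic identity $\phi(t)-\phi(z)=\dfrac{2i(t-z)}{(t+i)(z+i)}$ yields
\begin{eqnarray}
|e^{is}-\phi(z)|^2=\frac{4|t-z|^2}{|t+i|^2\,|z+i|^2}.\notag
\end{eqnarray}
Third, differentiating $is'(t)=\phi'(t)/\phi(t)=2i/(t^2+1)$ gives the Jacobian $ds=\dfrac{2}{t^2+1}dt$.

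Finally I would substitute these three expressions into the disc formula. The factors $|z+i|^2$ cancel, $|t+i|^2=t^2+1$ cancels against the Jacobian, and one is left with
\begin{eqnarray}
\omega_H(z,B)=\int_B \frac{y}{|t-z|^2}\,\frac{dt}{\pi}=\frac{1}{\pi}\int_B \frac{y}{(x-t)^2+y^2}\,dt,\notag
\end{eqnarray}
which is the claimed formula. There is no serious obstacle here beyond the bookkeeping; the only thing worth double-checking is that the parametrisation $t\mapsto\phi(t)$ traverses $\partial\Delta$ in the positive sense (confirmed by $s'(t)=2/(t^2+1)>0$), so that the substitution preserves orientation and no sign is lost.
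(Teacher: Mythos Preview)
Your proof is correct and follows exactly the same route as the paper: apply conformal invariance via the Cayley map $\phi(z)=\frac{z-i}{z+i}$ and change variables in the Poisson integral on $\partial\Delta$. You have simply spelled out the ``straightforward calculation'' in more detail than the paper does, computing $1-|\phi(z)|^2$, $|\phi(t)-\phi(z)|^2$, and the Jacobian $ds/dt$ separately before assembling them.
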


\begin{proof}
Let $\phi:H\rightarrow\Delta$ be the conformal map from the upper half-plane to the unit disc given by
\begin{eqnarray}
\phi(z)=\frac{z-i}{z+i}\notag
\end{eqnarray}
By conformal invariance of harmonic measure we have a straightforward calculation 
\begin{eqnarray}
\omega_H(z,B)&=& \omega_\Delta(\phi(z),\phi(B))=\int_{B}\frac{1-|\phi(z)|^2}{|\phi(s)-\phi(z)|^2}|\phi'(s)|\frac{ds}{2\pi}\notag\\
&=&\int_{B}\frac{Im(z)}{|z-s|^2}\frac{ds}{\pi}=\int_{B}\frac{y}{(x-s)^2+y^2}\frac{ds}{\pi}\notag
\end{eqnarray}
\end{proof}

Getting a final result for upper half-plane is simply obtained by putting the previous results together.

\begin{thm} 
The probability distribution of $B_T$ for the upper half-plane is given by a \index{Cauchy distribution}Cauchy distribution.
\end{thm}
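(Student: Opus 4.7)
My plan is to combine the probabilistic representation of harmonic measure with the explicit density computed in the preceding theorem. First, I would observe that harmonic measure $\omega_H(z,\cdot)$ coincides with the distribution of the Brownian exit position $B_T$ under $P_z$. By the definition of harmonic measure together with Theorem \ref{thm:probdi}, we have $\int_{\partial H} f(s)\,d\omega_H(z,s) = H^f_H(z) = \mathbb{E}_z[f(B_T)]$ for every continuous bounded $f:\partial H \to \mathbb{R}$. A standard approximation of Borel indicators by continuous bounded functions then upgrades this identity to $P_z(B_T \in B) = \omega_H(z,B)$ for every Borel set $B \subseteq \partial H$.

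The second step is to substitute the explicit formula for $\omega_H$ from the previous theorem: for $z = x+iy \in H$ and any Borel set $B \subseteq \mathbb{R}$, we obtain
\[
P_z(B_T \in B) = \omega_H(x+iy,B) = \int_B \frac{1}{\pi}\cdot\frac{y}{(t-x)^2+y^2}\,dt.
\]
The integrand is precisely the density of a Cauchy distribution with location parameter $x$ and scale parameter $y$, which is exactly the claim of the theorem.

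The only mildly technical point is ensuring that the hypotheses required for Theorem \ref{thm:probdi} actually apply to $H$. Regularity of $\partial H$ is immediate since $H$, viewed in $\mathbb{C}_\infty$, is a Jordan domain and therefore satisfies the exterior cone condition established earlier. Finiteness of $T$ almost surely follows because the imaginary component of $B_t$ is itself a one-dimensional Brownian motion started at $y>0$, which almost surely hits zero in finite time by recurrence of one-dimensional Brownian motion. I do not anticipate any essential obstacle: once these hypotheses are verified, the argument reduces to the two identifications above, and the Cauchy distribution emerges without any substantive further calculation.
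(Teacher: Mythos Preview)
Your proposal is correct and follows essentially the same route as the paper: equate the probabilistic solution $\mathbb{E}_z[f(B_T)]$ with the generalized Poisson integral coming from harmonic measure, insert the explicit half-plane formula from the preceding theorem, and read off the Cauchy density. You are somewhat more careful than the paper in verifying the hypotheses (regularity of $\partial H$, $T<\infty$ a.s.) and in passing from continuous test functions to Borel sets, but the underlying argument is the same.
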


\begin{proof}
We can apply the fact that we have a probabilistic solution formula $\mathbb{E}_z[f(B_T)]$, which must be equal to the generalized Poisson integral obtained via the harmonic measure to derive the probability density function $p$ of $B_T$, namely for $f$ bounded and continuous on $\mathbb{R}$
\begin{eqnarray}
P_D^f(x,y)=\int_{\mathbb{R}}f(s)\frac{1}{\pi}\frac{y}{(x-s)^2+y^2}ds=\mathbb{E}_{(x,y)}[f(B_T)]=\int_\mathbb{R}f(s)p(x,y,s)ds
\end{eqnarray} 
where we can read off p(x,y,s) as the probability density function of a Cauchy distribution (sometimes called a Cauchy distribution with 'location parameter' $s$ and 'scale parameter' $y$).
\end{proof}

Notice that the only data we need to find the harmonic measure for a simply connected subdomain are the conformal map and the Poisson formula for the unit disc. It is possible to use characteristic functions of distributions (= Fourier transforms of measures) to obtain the Cauchy distribution for the upper half-plane (see \cite{9}), but this approach is clearly not as flexible as conformal maps.

\subsection{Green's Function and Growth of Polynomials}

Probability theory is not the only field, where the ideas we developed so far turn out to be useful. To demonstrate this we now shift our attention to polynomials. Consider the unit disc $\Delta=\Delta(0,1)$ and let $p_n(z):\mathbb{C}\rightarrow\mathbb{C}$ be a polynomial of degree $n\in\mathbb{N}$. We ask the question\\

\textbf{What is the growth rate of $p_n$ on $\mathbb{C}$ given its values on $\Delta$?}\\

Again subharmonic functions and many other concepts developed in the last sections will turn out to be very useful answering this question, but first we need another concept. In the following we have used \cite{15} and \cite{8} as our main sources.

\begin{defn}
Let $D$ be a bounded Jordan domain. Fix $\zeta\in D$ and let $f:\partial D\rightarrow \mathbb{R}$ be given by $f(z)=\log |z-\zeta|$. Then define \textbf{\index{Green's function}Green's function} $g_D(z,\zeta)$ for $D$ with a pole at $\zeta$ by 
\begin{eqnarray}
g_D(z,\zeta)=H^f_D(z)-\log |z-\zeta|
\end{eqnarray}  
where $H^f_D(z)$ denotes the Perron solution to the Dirichlet problem on $D$ as previously.
\end{defn}

Notice that the way we decided to introduce Green's function is unusual, but has the advantage of a concrete formula, showing directly that Green's function can viewed as a harmonic function on $D\backslash \{\zeta\}$, which is zero on the boundary. Also our definition turns out be consistent with the axiomatic one.

\begin{thm}[\index{Axioms for Green's function}Axioms for Green's function]
If $D$ is a bounded Jordan domain then a Green's function with pole at $\zeta$ is uniquely characterized by the following three properties
\begin{description}
	\item[(a)] $g_D(z,\zeta)$ is non-negative on $D$ and harmonic on $D\backslash \{\zeta\}$
	\item[(b)] $g_D(z,\zeta)+\log|z-\zeta|$ is harmonic at $z=\zeta$
	\item[(c)] $g_D(z,\zeta)\rightarrow 0$ as $z\rightarrow \partial D$
\end{description}
\end{thm}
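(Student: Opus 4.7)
The plan is to split the proof into an existence part (verifying the function $g_D(z,\zeta) = H_D^f(z) - \log|z-\zeta|$ satisfies (a), (b), (c)) and a uniqueness part (any function satisfying these three axioms must equal this one). Since $D$ is a bounded Jordan domain, Theorem \ref{thm:dirsol} guarantees $\partial D$ consists only of regular points, and because $\zeta \in D$ the boundary data $f(w) = \log|w-\zeta|$ is continuous and bounded on $\partial D$. Hence $H_D^f$ is harmonic on $D$ and extends continuously to $\partial D$ with boundary value $f$.

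For existence, I would handle the three axioms in turn. The identity $g_D(z,\zeta) + \log|z-\zeta| = H_D^f(z)$ makes (b) immediate. Harmonicity of $g_D$ on $D\setminus\{\zeta\}$ in (a) follows from the same identity together with harmonicity of $\log|\cdot-\zeta|$ off of $\zeta$. Axiom (c) follows from continuity of $H_D^f$ at the boundary and continuity of $\log|\cdot-\zeta|$ on $\partial D$ (which is away from the pole), since both terms tend to $\log|\zeta_0-\zeta|$ as $z\to\zeta_0\in\partial D$. The only nontrivial part of (a) is non-negativity; here the key move is to observe that by Theorem \ref{thm:logsub} the function $z\mapsto \log|z-\zeta|$ is subharmonic on $D$, is upper-semicontinuous on $\overline{D}$, and satisfies $\limsup_{z\to w}\log|z-\zeta|\leq f(w)$ for $w\in\partial D$. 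Hence $\log|\cdot-\zeta|\in \mathcal{H}_D^{f}$ in the sense of Definition \ref{defn:perfam}, and the upper-envelope definition of the Perron solution (Definition \ref{defn:persol}) gives $\log|z-\zeta|\leq H_D^f(z)$, which is exactly $g_D(z,\zeta)\geq 0$.

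For uniqueness, suppose $g_1$ and $g_2$ both satisfy (a), (b), (c), and set $h=g_1-g_2$. On $D\setminus\{\zeta\}$ the function $h$ is harmonic by (a). Writing $h = (g_1+\log|z-\zeta|) - (g_2+\log|z-\zeta|)$, axiom (b) says each summand is harmonic at $\zeta$, so $h$ extends to a harmonic function on all of $D$. By (c), $\limsup_{z\to w}h(z)=0$ for every $w\in\partial D$, and the same holds for $-h$. Applying the second part of the maximum principle (Theorem \ref{thm:maxsub}) to both $h$ and $-h$ yields $h\equiv 0$ on $D$, proving uniqueness.

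The main obstacle is not really a technical hurdle but a conceptual pivot: recognizing that $\log|z-\zeta|$ itself lies in the Perron family $\mathcal{H}_D^f$ is what makes non-negativity essentially free, and using axiom (b) to remove the singularity in the uniqueness argument reduces everything to a standard maximum-principle comparison. Everything else is bookkeeping with results already established.
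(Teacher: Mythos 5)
Your proof is correct, and the uniqueness half takes essentially the same route as the paper: form $h = g_1 - g_2$, observe it is harmonic on all of $D$, and apply the maximum principle. You actually fill in a step the paper glosses over --- namely, that axiom (b) is what lets the logarithmic singularities cancel so that $h$ extends harmonically across $\zeta$ --- and your application of the boundary form of the maximum principle to both $h$ and $-h$ is cleaner than the paper's appeal to ``$h$ must be constant.'' Where you genuinely go beyond the paper is the existence half: the paper states only uniqueness in its proof and relegates the verification of (a)--(c) to a remark afterward with a list of references. Your argument that $\log|z-\zeta|$ itself lies in the Perron family $\mathcal{H}_D^f$, so that $\log|z-\zeta| \leq H_D^f$ and hence $g_D \geq 0$, is exactly the right observation and is the one nontrivial point in that verification; the rest of (a), (b), (c) follows by inspection as you say. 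In short, same method, more complete write-up.
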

   
\begin{proof} 
Fix any $\zeta$ in $D$. Suppose $g_D(z,\zeta)$ and $\tilde{g}_D(z,\zeta)$ are two functions satisfying (a)-(c). Then $h(z,\zeta)=\tilde{g}_D(z,\zeta)-g_D(z,\zeta)$ is harmonic. Hence the maximum principle for harmonic functions forces $h$ to be constant on $D$, but $h(z,\zeta)\rightarrow 0$ for $z\rightarrow \partial D$ so we have $h\equiv 0$, which means $\tilde{g}_D(z,\zeta)=g_D(z,\zeta)$ since $\zeta$ was arbitrary.   
\end{proof}

Notice that our definition of Green's function above trivially satisfies all three conditions (a)-(c) (if in doubt, the reader is advised to consult \ref{thm:logsub}, \ref{defn:perfam}, \ref{defn:persol}, \ref{thm:reh} and \ref{thm:dirsol}). It is interesting to connect Green's function to harmonic measure. For this purpose we need another version of Green's formula.

\begin{fact}[\index{Green's formula - another version}Green's formula - another version, \cite{8}]
Let $D$ be a bounded Jordan domain and let $u$ be a smooth real-valued function on $\overline{D}$. Furthermore fix $\zeta\in D$ and consider $v:\overline{D}\rightarrow \mathbb{R}$ such that $v$ is harmonic on $D\backslash \{\zeta\}$, extends smoothly to be zero on $\partial D$ and $v(z)+\log|z-\zeta|$ is harmonic at $\zeta$ then 
\begin{eqnarray}
u(\zeta)=-\frac{1}{2\pi}\int_D v \Delta u \text{ }dm -\frac{1}{2\pi}\int_{\partial D}u\frac{\partial v}{\partial n}ds
\end{eqnarray} 
where $\frac{\partial}{\partial n}$ denotes the derivative in the direction of the inner normal vector. 
\end{fact}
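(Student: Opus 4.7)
The plan is to reduce the identity to the version of Green's formula already recorded as Fact \ref{fact:gr1} by excising a small disc around the pole $\zeta$ and then letting its radius shrink to zero. Fix $\epsilon>0$ small enough that $\overline{\Delta(\zeta,\epsilon)}\subset D$ and set $D_\epsilon:=D\setminus \overline{\Delta(\zeta,\epsilon)}$; this is a bounded domain whose boundary decomposes as $\partial D_\epsilon=\partial D\cup C_\epsilon$ with $C_\epsilon:=\partial\Delta(\zeta,\epsilon)$. Since $v$ is harmonic on $D\setminus\{\zeta\}$, it is automatically $C^\infty$ there by Theorem \ref{thm:hasmooth}, and together with the smooth extension hypotheses both $u$ and $v$ belong to $C^2(\overline{D_\epsilon})$. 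Applying Fact \ref{fact:gr1} to $f=v$, $g=u$ on $D_\epsilon$ and using $\Delta v\equiv 0$ on this set gives
\[\int_{D_\epsilon} v\,\Delta u\, dm = -\int_{\partial D_\epsilon}\!\left(v\tfrac{\partial u}{\partial n}-u\tfrac{\partial v}{\partial n}\right)d\sigma.\]

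Next I would split the surface integral into its parts on $\partial D$ and on $C_\epsilon$. On $\partial D$ the hypothesis $v\equiv 0$ kills the term with $\partial u/\partial n$, so the $\partial D$ contribution is (up to sign) a constant multiple of the boundary integral appearing in the stated formula. The remaining work is to analyse the $C_\epsilon$ integrals as $\epsilon\downarrow 0$. Here I would use the third hypothesis to write $v(z)=-\log|z-\zeta|+h(z)$ with $h$ harmonic, hence bounded, in a neighbourhood of $\zeta$. Parameterising $C_\epsilon$ by $z=\zeta+\epsilon e^{it}$ and noting that the inner normal of $D_\epsilon$ on $C_\epsilon$ points outward from $\zeta$, a direct computation gives $\partial v/\partial n = -1/\epsilon + O(1)$ on $C_\epsilon$. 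Smoothness of $u$ on $\overline D$ then yields
\[\left|\int_{C_\epsilon} v\tfrac{\partial u}{\partial n}\, d\sigma\right|=O(\epsilon|\log\epsilon|)\longrightarrow 0,\]
\[\int_{C_\epsilon} u\tfrac{\partial v}{\partial n}\, d\sigma = -\int_0^{2\pi} u(\zeta+\epsilon e^{it})\,dt + O(\epsilon)\longrightarrow -2\pi u(\zeta),\]
where the last limit uses continuity of $u$ at $\zeta$.

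For the left-hand side I would invoke dominated convergence: the subharmonic function $\log|\cdot-\zeta|$ lies in $L^1_{loc}$ by Theorem \ref{thm:int}, and $\Delta u$ is bounded on $\overline D$, so $v\Delta u$ is dominated on $D$ by an integrable function. Hence $\int_{D_\epsilon} v\Delta u\,dm\to\int_D v\Delta u\,dm$. Collecting all of these limits in the displayed identity above and solving the resulting single linear equation for $u(\zeta)$ delivers the stated formula.

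The main obstacle, in my view, is the careful bookkeeping of signs. Fact \ref{fact:gr1} uses the inner-normal convention; on $C_\epsilon$ the inner normal of $D_\epsilon$ points \emph{away} from $\zeta$ (and not toward $\zeta$, as one might reflexively write); and the radial derivative of $-\log|z-\zeta|$ contributes its own sign. Tracking these three signs alongside the factor $2\pi$ coming from the circumference of $C_\epsilon$ is essentially the entire content of the argument once the excision idea is in place; everything else reduces to elementary estimates or dominated convergence.
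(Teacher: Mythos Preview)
Your approach is correct and is exactly the one the paper indicates: the paper does not give a detailed proof of this Fact but only remarks that it ``can be proven by removing a small disc around the pole $\zeta$, using the version of Green's formula we gave earlier (see Fact \ref{fact:gr1}) and a few direct estimates.'' You have supplied precisely those details---the excision of $\overline{\Delta(\zeta,\epsilon)}$, the application of Fact \ref{fact:gr1} on $D_\epsilon$, the decomposition $v=-\log|z-\zeta|+h$ near $\zeta$, the estimates on $C_\epsilon$, and dominated convergence for the area integral---and your caution about the sign bookkeeping with the inner-normal convention is well placed.
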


Note that this version of Green's formula can be proven by removing a small disc around the pole $\zeta$, using the version of Green's formula we gave earlier (see Fact \ref{fact:gr1}) and a few direct estimates.
 
\begin{thm} 
Let $D$ be a bounded Jordan domain. Fix $\zeta\in D$ and let $g_D(z,\zeta)$ be the Green's function for $D$ (with pole at $\zeta$), then for any Borel measurable set $B\in \cal{B}$$(\partial D)$ we have
\begin{eqnarray}
-\frac{1}{2\pi}\int_B \frac{\partial g_D}{\partial n}(z,\zeta)ds=\omega_D(\zeta,B)
\end{eqnarray}
where $ds$ denotes the integral of $z$ along $B$.
\end{thm}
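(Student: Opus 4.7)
The plan is to apply the stated second version of Green's formula with $v=g_D(\cdot,\zeta)$ and with $u$ taken to be the solution of a Dirichlet problem on $D$, and then to recognize both sides as integrals against measures on $\partial D$ that must agree. Observe that $v=g_D(\cdot,\zeta)$ satisfies exactly the hypotheses demanded of $v$ in the Green's formula fact: by the axioms of Green's function it is harmonic on $D\setminus\{\zeta\}$, the function $g_D(z,\zeta)+\log|z-\zeta|$ is harmonic at $\zeta$, and it vanishes on $\partial D$ (smoothness up to $\partial D$ is available because $D$ is a Jordan domain with smooth boundary).

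First I would fix a continuous $f:\partial D\to\mathbb{R}$ that is smooth (say $C^2$) and let $u=H_D^f$ be the Perron solution. By Theorem \ref{thm:dirsol}, $u$ is harmonic on $D$ and extends continuously to $\overline{D}$ with $u|_{\partial D}=f$; standard boundary regularity for Jordan domains together with the Poisson integral representation (via a conformal map as in Theorem \ref{thm:confinv}) gives that $u\in C^2(\overline{D})$. Since $\Delta u=0$, the cited Green's formula collapses to
\begin{eqnarray}
u(\zeta)=-\frac{1}{2\pi}\int_{\partial D} u\,\frac{\partial g_D}{\partial n}\,ds=-\frac{1}{2\pi}\int_{\partial D} f(z)\,\frac{\partial g_D}{\partial n}(z,\zeta)\,ds.\notag
\end{eqnarray}
On the other hand, by the defining property of harmonic measure,
\begin{eqnarray}
u(\zeta)=H_D^f(\zeta)=P_D^f(\zeta)=\int_{\partial D} f(z)\,d\omega_D(\zeta,z).\notag
\end{eqnarray}
Equating the two expressions, for every smooth $f$ on $\partial D$ one obtains
\begin{eqnarray}
\int_{\partial D} f(z)\,d\omega_D(\zeta,z)=\int_{\partial D} f(z)\,d\nu(z),\qquad d\nu(z):=-\frac{1}{2\pi}\frac{\partial g_D}{\partial n}(z,\zeta)\,ds(z).\notag
\end{eqnarray}

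To conclude, I would upgrade this equality from smooth $f$ to all continuous $f$ on $\partial D$ by uniform approximation (smooth functions are dense in $C(\partial D)$ on the smooth curve $\partial D$, e.g.\ by convolving with a mollifier along arc length), and then invoke the Riesz representation theorem to conclude that the two Borel measures $\omega_D(\zeta,\cdot)$ and $\nu$ coincide on $\mathcal{B}(\partial D)$. Evaluating on an arbitrary Borel set $B$ yields exactly
\begin{eqnarray}
\omega_D(\zeta,B)=-\frac{1}{2\pi}\int_B \frac{\partial g_D}{\partial n}(z,\zeta)\,ds,\notag
\end{eqnarray}
as required. The main obstacle is verifying enough regularity of $u=H_D^f$ up to $\partial D$ to legitimately apply Green's formula (which requires $u\in C^2(\overline{D})$); once this is in place, both the smoothness of $g_D(\cdot,\zeta)$ away from $\zeta$ up to the boundary (guaranteed because $D$ is a Jordan domain with smooth $\partial D$, so a conformal map smoothly extends via Caratheodory and boundary Schwarz-type reflection arguments) and the density step are essentially routine.
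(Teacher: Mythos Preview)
Your argument is essentially the same as the paper's: apply the cited Green's formula with $v=g_D(\cdot,\zeta)$ and a harmonic $u$ on $\overline{D}$, then identify the resulting boundary integral with the harmonic-measure representation of $u(\zeta)$. The paper simply takes ``$u$ any harmonic function on $\overline{D}$'' and concludes immediately, whereas you spell out the regularity needed for $u$ and the density/Riesz step to pass from test functions to arbitrary Borel sets; this extra care is appropriate but does not constitute a different method.
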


\begin{proof}
Obviously the idea is to apply Green's formula, so let $u(\zeta)$ be any harmonic function on $\overline{D}$. Then we use Green's formula for $v=g_D(z,\zeta)$ to get
\begin{eqnarray}
u(\zeta)=-\frac{1}{2\pi}\int_D g_D \Delta u \text{ }dm -\frac{1}{2\pi}\int_{\partial D}u\frac{\partial g_D}{\partial n}ds= -\frac{1}{2\pi}\int_{\partial D}u\frac{\partial g_D}{\partial n}ds\notag
\end{eqnarray}
since $u$ is a solution to the Dirichlet problem and by uniqueness of the solution we have
\begin{eqnarray}
u(\zeta)=H^{u|_{\partial D}}_D(\zeta)=\int_{\partial D}u(z)d\omega(\zeta,z)\notag
\end{eqnarray}
which immediately yields the result.
\end{proof}

It is straightforward to extend the concept of Green's function if we are working with unbounded simply-connected domains as the following result about conformal mappings shows.

\begin{thm} 
Let $D$ be a simply-connected proper subdomain of $\mathbb{C}$ and fix $\zeta \in D$ then there exists a Green's function $g_D(z,\zeta)$ on $D$, i.e. $g_D(z,\zeta)$ satisfies the axioms for Green's function. 
\begin{description}
	\item[(a)] $g_D(z,\zeta)$ is non-negative on $D$ and harmonic on $D\backslash \{\zeta\}$
	\item[(b)] $g_D(z,\zeta)+\log|z-\zeta|$ is harmonic at $z=\zeta$
	\item[(c)] $g_D(z,\zeta)\rightarrow 0$ as $z\rightarrow \partial D$
\end{description}
\end{thm}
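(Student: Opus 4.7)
My plan is to construct $g_D(z,\zeta)$ explicitly via the Riemann mapping theorem and then check each of the three axioms in turn. Since $D$ is a simply-connected proper subdomain of $\mathbb{C}$, the Riemann mapping theorem supplies a conformal bijection $\phi:D\to\Delta$ with $\phi(\zeta)=0$ (normalize so $\phi$ sends the prescribed pole to the origin). I would then define
\begin{eqnarray}
g_D(z,\zeta):=-\log|\phi(z)|.\notag
\end{eqnarray}
The rest of the proof is verification.

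For (a), since $\phi$ maps into $\Delta$ we have $|\phi(z)|<1$ for all $z\in D$, so $-\log|\phi(z)|>0$ on $D\setminus\{\zeta\}$. For harmonicity on $D\setminus\{\zeta\}$, note that $\phi$ is one-to-one and vanishes only at $\zeta$, so on any simply-connected neighborhood of a point $z_0\in D\setminus\{\zeta\}$ the function $\phi$ is holomorphic and nonvanishing, hence admits an analytic logarithm; then $\log|\phi|=\mathrm{Re}(\log\phi)$ is harmonic by Theorem \ref{thm:reh}, and so is its negative.

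For (b), since $\phi$ is conformal at $\zeta$, its derivative $\phi'(\zeta)\neq 0$, so locally near $\zeta$ we may factor
\begin{eqnarray}
\phi(z)=(z-\zeta)\,h(z),\notag
\end{eqnarray}
where $h$ is holomorphic and nonvanishing in a neighborhood of $\zeta$. Then
\begin{eqnarray}
g_D(z,\zeta)+\log|z-\zeta|=-\log|\phi(z)|+\log|z-\zeta|=-\log|h(z)|,\notag
\end{eqnarray}
and the right-hand side is harmonic at $\zeta$ by the same argument as in (a) applied to $1/h$.

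The main obstacle is (c), because for a general simply-connected $D$ we do not have Caratheodory-type boundary extension. The idea I would use is that the conformal map $\phi$ is a proper map $D\to\Delta$. Concretely, suppose a sequence $z_n\in D$ satisfies $z_n\to\partial D$ (in $\mathbb{C}_\infty$), and assume for contradiction that $|\phi(z_n)|\not\to 1$. Passing to a subsequence, $\phi(z_n)\to w$ for some $w\in\Delta$, so $z_n=\phi^{-1}(\phi(z_n))\to \phi^{-1}(w)\in D$ by continuity of $\phi^{-1}$, contradicting $z_n\to\partial D$. Hence $|\phi(z_n)|\to 1$ and $g_D(z_n,\zeta)=-\log|\phi(z_n)|\to 0$. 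This verifies (c) and completes the construction.
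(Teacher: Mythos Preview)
Your proof is correct and follows essentially the same approach as the paper: define $g_D(z,\zeta)=-\log|\phi(z)|$ via the Riemann map $\phi:D\to\Delta$ with $\phi(\zeta)=0$, then verify (a)--(c). In fact you supply more detail than the paper for (c), giving the proper-map/sequential argument where the paper simply asserts $|\phi(z)|\to 1$ as $z\to\partial D$.
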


\begin{proof} 
Let $\phi:D\rightarrow \Delta$ be the conformal map obtained from the Riemann mapping theorem onto the unit disc. Note that we can construct $\phi$ such that $\phi(\zeta)=0$ and $\phi'(\zeta)>0$ as we can post-compose conformal mappings with self-maps of the disc. Then define
\begin{eqnarray}
g_D(z,\zeta)=-\log|\phi(z)|\notag
\end{eqnarray}
Now we have to check properties (a)-(c) to confirm that this construction works. Since $|\phi(z)|<1$ we obtain $g_D\geq 0$. Also $\lim_{z\rightarrow \partial D} g_D(z,\zeta)=0$ since $|\phi(z)|\rightarrow 1$ as $z\rightarrow\partial D$. By the construction of $\phi$ we have that $z\mapsto \frac{\phi(z)}{z-\zeta}$ is holomorphic and non-zero at $z=\zeta$ by considering limits. So we can take the logarithm to get 
\begin{eqnarray}
\log \left|\frac{\phi(z)}{z-\zeta}\right|=-(g_D(z,\zeta)+\log|z-\zeta|)\notag
\end{eqnarray}
Hence $g_D(z,\zeta)+\log|z-\zeta|$ is harmonic at $\zeta$ and since $g_D(z,\zeta)$ is obviously harmonic on $D\backslash \{\zeta\}$ we have the result.
\end{proof}

Notice that so far we have only dealt with Green's functions for bounded Jordan domains and simply-connected proper subdomains of $\mathbb{C}$. It turns out that the notion of Green's function can be extended even further, but we cannot expect that we always obtain a bounded Green's function.

\begin{ex}
To give a motivation for the last statement, consider $D=\mathbb{C}$ and $\zeta=0$. Define $D_n=\{z\in\mathbb{C}:|z|<n\}$. Then on $D_n$ we obviously have as a Green's function
\begin{eqnarray}
g_{D_n}(z,0)=\log n-\log|z|\notag
\end{eqnarray}
But if we try to deduce from this a Green's function for $\mathbb{C}$ we fail in the sense that $g_{D_n}\rightarrow\infty$ for $n\rightarrow \infty$.
\end{ex}

But using the ideas from the conformal mapping above we can extend the notion of a Green's function to include the case, when we have a pole at $\zeta=\infty$ and $D$ is the complement of the closed unit disc, i.e. $D=\mathbb{C}\backslash \overline{\Delta}$. We set
\begin{eqnarray}
g_D(z,\infty)=g_\Delta(\frac{1}{z},0)
\end{eqnarray}
It is easy to check that this function satisfies all the axioms for Green's function. Now we are ready to get back to the growth of polynomials.

\begin{thm}
Let $p_n(z)$ be a polynomial of degree $n\in\mathbb{N}$ on $\mathbb{C}$ and let $D= \mathbb{C} \backslash \overline{\Delta}$. If
\begin{eqnarray}
\|p_n\|_{\overline{\Delta}} := \max_{z\in\overline{\Delta}} |p_n(z)| \leq M
\end{eqnarray}
then we obtain a growth condition on $\mathbb{C}$ given by
\begin{eqnarray}
|p_n(z)|\leq M e^{ng_D(z,\infty)} \qquad \forall z\in\mathbb{C}
\end{eqnarray}
\end{thm}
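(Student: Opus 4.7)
The plan is to introduce the auxiliary function
\[
u(z) = \log |p_n(z)| - n\, g_D(z,\infty),
\]
verify that $u$ is subharmonic on $D$ (after suitable extension to $\infty$ on the Riemann sphere), and then apply the maximum principle for subharmonic functions (Theorem \ref{thm:maxsub}, part 2). From the conformal-map construction just given in the text, combined with the explicit Green's function $g_\Delta(w,0)=-\log|w|$ for the unit disc, one obtains the explicit formula $g_D(z,\infty) = \log|z|$ for $|z|>1$. Consequently the inequality to be proved reads $|p_n(z)|\le M\,|z|^n$ on $D$, and $u$ takes the concrete form $u(z)=\log|p_n(z)/z^n|$.

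Subharmonicity of $u$ on $D$ is immediate: $\log|p_n|$ is subharmonic by Theorem \ref{thm:logsub}, and $-n\log|z|$ is harmonic on $D$ since $0\notin D$. The more delicate point is the behaviour at $\infty$. I would handle this via the substitution $w=1/z$, which maps $D$ to the punctured disc and sends $\infty$ to $0$. A direct calculation yields
\[
u(1/w) = \log\bigl|\,w^{n} p_n(1/w)\,\bigr|,
\]
and $w\mapsto w^{n}p_n(1/w)$ is itself a polynomial in $w$ (with constant term equal to the leading coefficient $a_n$ of $p_n$). Hence $u\circ(1/\cdot)$ is subharmonic on the open unit disc by Theorem \ref{thm:logsub}, which is exactly what is needed to extend $u$ to a subharmonic function on $\mathbb{C}_\infty\setminus\overline{\Delta}$, consistent with our convention of taking closures in $\mathbb{C}_\infty$.

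With subharmonicity in hand, the rest is boundary analysis. On $\partial D$, which in $\mathbb{C}_\infty$ is just the unit circle, we have $g_D(z,\infty)\to 0$ and $|p_n(z)|\le M$ by hypothesis, so
\[
\limsup_{z\to \zeta}\bigl(u(z)-\log M\bigr) \le 0 \qquad \text{for every } \zeta\in\partial D.
\]
Applying part 2 of Theorem \ref{thm:maxsub} to the subharmonic function $u-\log M$ on the domain $\mathbb{C}_\infty\setminus\overline{\Delta}$ yields $u\le \log M$ throughout $D$, which is precisely $|p_n(z)|\le M\,e^{n g_D(z,\infty)}$ for all $z\in D$. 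For $z\in\overline{\Delta}$ the inequality reduces to the hypothesis $|p_n(z)|\le M$ under the natural boundary convention $g_D(\cdot,\infty)\equiv 0$ on $\overline{\Delta}$.

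The main obstacle I expect is the point at infinity: the maximum principle in the text is stated for subharmonic functions on (possibly unbounded) plane domains, but to use it here one really needs $\infty$ to be an interior point rather than a boundary point, which is why the degree-exactly-$n$ hypothesis on $p_n$ is essential --- it is precisely what makes $w^{n}p_n(1/w)$ a polynomial and thus $u$ subharmonic at $\infty$. Without this, the naive argument would stumble on an unbounded $\limsup$ at $\infty$, and one would have to invoke Cauchy estimates on $|a_n|$ as a separate ingredient.
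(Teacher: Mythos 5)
Your proposal is correct and follows essentially the same route as the paper: define $v=\log|p_n|-n\,g_D(\cdot,\infty)$, verify subharmonicity, and invoke the subharmonic maximum principle to conclude $v\le\log M$ on $D$. The paper's version is terser and glosses over the point at infinity by merely remarking that $v$ is harmonic in a neighbourhood of $\infty$; you fill that gap explicitly by noting $g_D(z,\infty)=\log|z|$, passing to $w=1/z$, and observing that $w^{n}p_n(1/w)$ is a polynomial with constant term $a_n\neq0$, which makes $u$ extend subharmonically (in fact harmonically) across $\infty$ and lets $\infty$ be an interior point rather than a boundary point requiring a separate Cauchy-estimate step. That treatment is more careful than what is printed, and correctly isolates why the degree-exactly-$n$ hypothesis is needed.
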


\begin{proof}
Define $v(z)=\log|p_n(z)|-ng_D(z,\infty)$ and note that $v$ is subharmonic on $D$ since the logarithm of a holomorphic function is subharmonic. Also $v$ is harmonic in a neighborhood of $\infty$ since $p_n$ has finitely many zeros and $g_D(z,\infty)$ is harmonic on $D$. Now we need to establish an estimate for $v$ and for this observe that
\begin{eqnarray}
\lim_{w\rightarrow\partial D}v(w)\leq \log M\notag
\end{eqnarray}
and therefore the maximum principle for subharmonic functions implies that $v(z)\leq \log M$ on D. Upon taking exponentials of $\log|p_n|-ng_D(z,\infty)\leq \log M$ we can use the maximum principle for holomorphic functions (!) to conclude that $|p_n(z)|\leq M e^{ng_D(z,\infty)}$ . 
\end{proof}

It remains to remark that the result can be extended if one does use a less explicit construction of Green's function, but this approach does not lead to any gain in pointing out that the concept of a subharmonic function has been essential in the preceeding proof.

\begin{fact}[\index{Bernstein-Walsh}Bernstein-Walsh, \cite{15}]
Let $p_n(z)$ be a polynomial of degree $n\in\mathbb{N}$ on $\mathbb{C}$ and let $K$ be a compact subset of $\mathbb{C}$. Then we obtain the following implication
\begin{eqnarray}
\|p_n\|_K := \max_{z\in K} |p_n(z)| \leq M \quad \Longrightarrow \quad |p_n(z)|\leq M e^{ng_{\mathbb{C}\backslash K}(z,\infty)} \quad \forall z\in\mathbb{C} 
\end{eqnarray}
\end{fact}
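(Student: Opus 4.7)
The plan is to mimic the proof of the special case $K=\overline{\Delta}$ given just above, with $D$ replaced by the unbounded component of $\mathbb{C}_\infty\setminus K$ (call it $\Omega$), and using the Green's function $g_\Omega(z,\infty)$ of $\Omega$ with pole at $\infty$. The only real work lies in (i) making sense of $g_\Omega(z,\infty)$ for a general compact $K$ and (ii) controlling the behavior of the auxiliary subharmonic function at $\infty$. For (i) one appeals to standard potential-theoretic facts: if $K$ has positive logarithmic capacity, then $\Omega$ admits a Green's function with pole at $\infty$ characterized by the analogues of axioms (a)--(c), namely $g_\Omega(\cdot,\infty)$ is non-negative and harmonic on $\Omega\setminus\{\infty\}$, $g_\Omega(z,\infty)-\log|z|$ is harmonic at $\infty$, and $g_\Omega(z,\infty)\to 0$ as $z$ approaches a regular boundary point; if $K$ has capacity zero the right-hand side of the stated inequality should be interpreted as $+\infty$ and there is nothing to prove.

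The main argument. Define
\begin{eqnarray}
v(z)=\log|p_n(z)|-n\,g_\Omega(z,\infty) \qquad (z\in\Omega\setminus\{\infty\}).\notag
\end{eqnarray}
By Theorem \ref{thm:logsub}, $\log|p_n|$ is subharmonic on $\mathbb{C}$; since $g_\Omega(\cdot,\infty)$ is harmonic on $\Omega\setminus\{\infty\}$, $v$ is subharmonic on $\Omega\setminus\{\infty\}$. At the pole, use the second axiom: $g_\Omega(z,\infty)-\log|z|$ is harmonic near $\infty$, so
\begin{eqnarray}
v(z)=\log\left|\frac{p_n(z)}{z^n}\right|-n\bigl(g_\Omega(z,\infty)-\log|z|\bigr)\notag
\end{eqnarray}
is bounded near $\infty$ because $p_n$ has degree exactly $n$; hence $v$ extends to a subharmonic function on all of $\Omega$ (including $\infty$).

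Now apply the maximum principle for subharmonic functions (Theorem \ref{thm:maxsub}, part 2) on $\Omega\subset\mathbb{C}_\infty$. For every regular boundary point $\zeta\in\partial\Omega\subset K$ we have $g_\Omega(z,\infty)\to 0$ and $\limsup_{z\to\zeta}\log|p_n(z)|\leq\log M$, hence $\limsup_{z\to\zeta}\bigl(v(z)-\log M\bigr)\leq 0$. Irregular boundary points form a polar set, so they can be ignored (equivalently, use the classical formulation of the maximum principle that allows neglecting polar exceptional sets, or a direct argument comparing $v-\log M$ with $-\varepsilon g_\Omega(\cdot,\infty)$ and letting $\varepsilon\downarrow 0$). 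We conclude $v(z)\leq\log M$ on $\Omega$, which exponentiates to the desired estimate $|p_n(z)|\leq M\,e^{n\,g_\Omega(z,\infty)}$ for every $z\in\Omega$. For $z\in K$ the inequality is immediate: $|p_n(z)|\leq M\leq M\,e^{n\,g_\Omega(z,\infty)}$ since we may set $g_\Omega(z,\infty)=0$ on $K$ (it is non-negative and vanishes on the regular part of the boundary), giving the full statement on $\mathbb{C}$.

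The main obstacle is technical rather than conceptual: it is the treatment of irregular boundary points of $\Omega$ (a genuinely new issue compared to the previous proof, where $\partial\overline{\Delta}$ was smooth and everywhere regular). This is handled by either invoking the fact that the exceptional set is polar and a bounded subharmonic function may extend across polar sets, or equivalently by the $\varepsilon$-perturbation trick indicated above. Verifying that $g_\Omega(z,\infty)$ exists and has the required behaviour at $\infty$ for arbitrary compact $K$ of positive capacity is the second technical ingredient; this is where the connection with capacity and equilibrium measures (alluded to in the abstract) naturally enters.
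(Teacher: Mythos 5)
The paper states this as a Fact attributed to \cite{15} and gives no proof; it only remarks afterwards that the Green's function for the complement of a general compact set can be constructed under slightly weakened versions of the axioms. So there is no internal proof to compare against, and your plan---mirroring the preceding theorem for $K=\overline{\Delta}$ with $\Omega$ the unbounded complementary component and $g_\Omega(\cdot,\infty)$ replacing $\log|z|$, then applying the maximum principle to the subharmonic function $v=\log|p_n|-n\,g_\Omega(\cdot,\infty)$---is the right one. You also correctly identify existence of $g_\Omega(\cdot,\infty)$, its behavior at $\infty$, and regularity of $\partial\Omega$ as the three technical obstacles.

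Two places where the write-up has real gaps. The main one is the treatment of irregular boundary points. The extended maximum principle you invoke (a subharmonic function bounded above on a domain, whose boundary $\limsup$'s are $\leq 0$ off a polar set, is $\leq 0$) is a genuine piece of potential theory well beyond Theorem \ref{thm:maxsub}; it is standard but would have to be stated and cited as an extra ingredient. More seriously, the alternative you offer---``comparing $v-\log M$ with $-\varepsilon g_\Omega(\cdot,\infty)$ and letting $\varepsilon\downarrow 0$''---does not do the job: near a finite boundary point $g_\Omega(\cdot,\infty)$ stays \emph{bounded} (at an irregular point it merely fails to tend to $0$, it does not blow up), so subtracting $\varepsilon g_\Omega$ does not force the $\limsup$ at an irregular $\zeta$ down to $\leq 0$. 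You really do need the extended maximum principle (equivalently, that polar boundary sets carry zero harmonic measure). A minor second gap: for $z$ in a bounded component $U$ of $\mathbb{C}\setminus K$, $z$ is neither in $K$ nor in $\Omega$, so your case split misses it; one finishes by the ordinary maximum modulus principle for the holomorphic function $p_n$ on $U$ (since $\partial U\subset K$), together with the convention $g_{\mathbb{C}\setminus K}(\cdot,\infty)\equiv 0$ off $\Omega$. With these two points repaired the argument is sound.
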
 

Clearly this supposes that we can construct a Green's function for the complement of a compact set with a pole at $\infty$, but this turns out to be possible without problems under slightly weakened assumptions of the Green's function axioms (see \cite{5}).

\subsection{Equilibrium Measures and Physics}

Subharmonic functions and potential theory grew largely out of considerations from physics and although the influence of the initial ideas is more of historic value today, it provides interesting insight, why certain concepts we have met so far, have been developed. We are going to give a brief - mostly heuristic - introduction to the initial ideas from electrostatics underlying potential theory mainly based on \cite{19} and partly on \cite{20}.\\

Let $c$ be a (positive) point charge in the plane at $w=(x_0,y_0)$. Suppose that we found by experiment that the electric field at $z=(x,y)$ has direction from $w$ to $z$ and magnitude $c/|z-w|$. In classical vector notation this means that the electric field $\vec{F}$ is given by  
\begin{eqnarray}
\vec{F}(z)=\frac{c}{|z-w|}\frac{z-w}{|z-w|}
\end{eqnarray}  
then we observe that for a so-called potential function $\varphi(z)$ given by 
\begin{eqnarray}
\varphi(z)=-c\log|z-w|
\end{eqnarray}
we can express the force field as a negative gradient of the potential (i.e. $\vec{F}$ is 'conservative')
\begin{eqnarray}
-\nabla \varphi(z)=c\left(\frac{\partial \log|z-w|}{\partial x},\frac{\partial \log|z-w|}{\partial y}\right)=\vec{F}(z)\notag
\end{eqnarray}
Notice carefully that the force field and the potential function possess 'singularities' at $z=w$. Suppose now that we do not only have one, but several charges $\{c_i\}_{i=1}^n$ placed at $w_i$, then experiments show that the electric fields indeed simply add up to give
\begin{eqnarray}
\vec{F}(z)=\sum_{i=1}^n\frac{c_i}{|z-w_i|^2}(z-w_i) \qquad \text{and} \qquad \varphi(z)=-\sum_{i=1}^n c_i\log|z-w_i|\notag
\end{eqnarray} 
Taking this idea one step further we can suppose that charge is continuously distributed in some region $K$ given by a function $\rho:\mathbb{R}^2\rightarrow\mathbb{R}$ such that the total charge in a subset $B$ of $K$ is now given by
\begin{eqnarray}
\int_B p(z)dm(z)\notag
\end{eqnarray}
Now if we divide up $K$ into $n$ sufficiently small pieces $B_i$, the charge in $B_i$ can be approximated by $\rho(w_i)m(B_i)$, where $w_i$ is a point in $B_i$. Hence we get an approximation $\vec{F}^n$ converging to an integral
\begin{eqnarray}
\vec{F}^n(z)=\sum_{i=1}^n\frac{m(B_i)\rho(w_i)}{|z-w_i|^2}(z-w_i)\stackrel{n\rightarrow \infty}{\rightarrow} \int_K \frac{\rho(w)}{|z-w|^2}(z-w)dm(w) 
\end{eqnarray} 
The potential function is obviously given by
\begin{eqnarray}
\varphi(z)=-\int_K \rho(w)\log|z-w|dm(w) 
\end{eqnarray}
We can interpret $\rho(w)dm(w):=\mu$ as a Borel measure to obtain the potential $p_\mu$ as defined in \ref{defn:logpot} except for a negative sign, which we already argued is just introduced for mathematical convenience to make the potential subharmonic. We remark that the same considerations can be also employed for the three-dimensional case by noticing that in this case the force is given $c/|z-w|^2$ to derive the Newtonian potential (see Definition \ref{defn:newton}).\\

We can proceed and integrate the potential to get
\begin{eqnarray}
\int_K \varphi(z) \rho(z)dm(z)=-\int_K \int_K \rho(w)\log|z-w|dm(w) \rho(z)dm(z)=-\int p_\mu(z)d\mu(z)
\end{eqnarray} 
which is the energy contained in $K$ due to the charge distribution $\rho$ (or respectively the measure $\mu$). Since we could suspect from considerations based on principles of physics that the charge distribution changes as long as there is a force present and the principle of minimized energy for the region $K$ is not satisfied, we ask the question\\

\textbf{Is there an equilibrium charge distribution (measure) for the region $K$?}\\

Ending the aside from electrostatics we try to answer this question from a mathematical viewpoint following \cite{4} and \cite{15}.

\begin{defn}
If $\mu$ is a finite and compactly supported Borel measure, then we define its \textbf{\index{energy}energy} $I(\mu)$ by
\begin{eqnarray}
I(\mu)=-\int_\mathbb{C}\int_\mathbb{C} \log|z-w|d\mu(w)d\mu(z)=-\int_\mathbb{C}p_\mu(z)d\mu(z)
\end{eqnarray} 
\end{defn} 

The definition is fairly obvious from previous considerations and we look at a special case first to understand the role of the support of the measure $\mu$ a little bit better.

\begin{ex} 
Suppose that $supp(\mu)=\{z_0\}$ and $\mu(z_0)=1$ so that we deal with a measure of a single atom then we have
\begin{eqnarray}
I(\mu)=-\int_\mathbb{C}\int_\mathbb{C} \log|z-w|d\mu(w)d\mu(z)=-\int_\mathbb{C} \log|z-z_0|d\mu(w)d\mu(z)=\infty\notag
\end{eqnarray}  
so a point does not support any finite measure of finite energy, which is reminiscent of the situation of a point charge.
\end{ex}

Borel sets, which only admit measures of infinite energy are extremely important since they can be viewed as potential theoretic equivalent to sets of measure zero.

\begin{defn} 
Let $B$ be a Borel subset of $\mathbb{C}$, then $B$ is called \textbf{\index{polar}polar} if $I(\mu)=\infty$ for all finite and compactly supported, non-zero measures $\mu$ with $supp(\mu)\subseteq B$. Furthermore we say that a property holds quasi-everywhere for a Borel set $S\subseteq \mathbb{C}$ if it holds for $S\backslash B$, where $B$ is a Borel polar set.  
\end{defn}

The natural question to investigate first is the relation between polar sets and sets of Lebesgue measure zero. To do this we need a preliminary result.

\begin{lem} 
Let $\mu$ be a finite Borel measure on $\mathbb{C}$ with compact support and suppose $I(\mu)<\infty$, then $\mu(E)=0$ for any Borel polar subset $E$ of $\mathbb{C}$. 
\end{lem}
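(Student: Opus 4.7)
The plan is to argue by contradiction. Suppose $\mu(E) > 0$. I will manufacture a non-zero, compactly supported Borel measure $\nu$ with $\mathrm{supp}(\nu) \subseteq E$ and $I(\nu) < \infty$, contradicting the assumption that $E$ is polar.

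First I would extract a compact subset of $E$ carrying positive mass. Since $\mu$ is a finite Borel measure on the Polish space $\mathbb{C}$, it is inner regular, so $\mu(E) = \sup\{\mu(K) : K \subseteq E,\ K \text{ compact}\}$. Hence there exists a compact $K \subseteq E$ with $\mu(K) > 0$. Define $\nu = \mu|_K$, i.e.\ $\nu(B) = \mu(B \cap K)$ for Borel $B$. Then $\nu$ is a non-zero finite Borel measure, and $\mathrm{supp}(\nu) \subseteq K \subseteq E$ (any point outside the compact set $K$ has a neighborhood disjoint from $K$, hence of $\nu$-measure zero).

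The heart of the argument is showing $I(\nu) < \infty$. Since $\mathrm{supp}(\mu)$ is compact, let $R = \mathrm{diam}(\mathrm{supp}\,\mu)$ and set $M = \max(0, \log R)$. Then for all $z,w \in \mathrm{supp}(\mu)$ we have $-\log|z - w| \geq -M$, so the integrand $-\log|z-w| + M$ is non-negative on $\mathrm{supp}(\mu) \times \mathrm{supp}(\mu)$. Since $\nu \leq \mu$ as measures and is supported inside $\mathrm{supp}(\mu)$,
\begin{eqnarray}
I(\nu) + M\,\nu(\mathbb{C})^2 &=& \int_{\mathbb{C}}\int_{\mathbb{C}} \bigl(-\log|z-w| + M\bigr)\, d\nu(w)\, d\nu(z) \notag \\
&\leq& \int_{\mathbb{C}}\int_{\mathbb{C}} \bigl(-\log|z-w| + M\bigr)\, d\mu(w)\, d\mu(z) \notag \\
&=& I(\mu) + M\,\mu(\mathbb{C})^2 < \infty. \notag
\end{eqnarray}
Therefore $I(\nu) < \infty$. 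But $\nu$ is a non-zero, compactly supported Borel measure with $\mathrm{supp}(\nu) \subseteq E$, so the polarity of $E$ forces $I(\nu) = \infty$ — a contradiction. Hence $\mu(E) = 0$.

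The main thing to be careful about is the sign/unboundedness of $\log|z-w|$: the integrand can blow up along the diagonal, so one cannot naively bound $I(\nu) \leq I(\mu)$. The trick is the additive shift by $M$, which makes the integrand non-negative on the supports and lets monotonicity of the double integral under $\nu \leq \mu$ kick in. The other minor subtlety is the distinction between a measure being \emph{concentrated} on $E$ and having \emph{topological support} in $E$, which is handled by passing first to a compact $K \subseteq E$ via inner regularity.
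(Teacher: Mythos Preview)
Your proof is correct and follows essentially the same approach as the paper's: argue by contraposition/contradiction, pass to a compact $K\subseteq E$ of positive $\mu$-mass via inner regularity, restrict $\mu$ to $K$, and shift the kernel by an additive constant (the paper uses $\log d$ with $d=\mathrm{diam}(\mathrm{supp}\,\mu)$, you use $M=\max(0,\log R)$) to make the integrand non-negative so that $\nu\leq\mu$ yields $I(\nu)<\infty$. The arguments are identical in substance; your write-up is in fact a bit more careful about why the additive shift is needed.
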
   

\begin{proof} 
We argue by contraposition, i.e. let $B$ be a Borel set and $\mu(B)>0$ and we want to show that $B$ cannot be polar, i.e. there exists a measure compactly supported in $B$ with finite energy. To construct this measure notice first that any finite Borel measure is automatically regular, so there exists a compact set $K\subseteq B$ such that $\mu(K)>0$. Restricting $\mu$ to $K$, we have a measure $\mu|_K$ with $supp(\mu|_K)\subseteq B$. Now set $d=diam(supp(\mu))$ and notice that $|z-w|/d<1$ if $z,w\in supp(\mu)$, so that
\begin{eqnarray}
I(\mu|_K)&=&-\int_K \int_K \log|z-w|d\mu|_K(z)d\mu|_K(w)=-\int_K \int_K \log\frac{|z-w|}{d}d\mu(z)d\mu(w)-\mu(K)^2 \log d\notag\\
&\leq&-\int_\mathbb{C} \int_\mathbb{C} \log\frac{|z-w|}{d} d\mu(z)d\mu(w)-\mu(K)^2 \log d=I(\mu)+\mu(\mathbb{C})^2\log d -\mu(K)^2\log d <\infty\notag
\end{eqnarray} 
and we can indeed conclude that $B$ is non-polar.
\end{proof}

The relation between polar sets and Lebesgue measure can now be answered for the case, when we have a Borel polar set.

\begin{thm}
If $E$ is a Borel polar set in $\mathbb{C}$ then $m(E)=0$ (i.e. $E$ has Lebesgue measure 0). 
\end{thm}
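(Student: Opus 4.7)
The plan is to argue by contradiction: assume $m(E)>0$ and construct a finite, compactly supported Borel measure $\mu$ with $\mathrm{supp}(\mu)\subseteq E$ and $I(\mu)<\infty$, which directly contradicts the defining property of a polar set (alternatively, apply the preceding lemma, which forces $\mu(E)=0$ and thus $m(K)=0$, a contradiction).

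First I would extract a nice compact piece of $E$. By the inner regularity of Lebesgue measure (every Borel set of positive Lebesgue measure contains a compact subset of positive measure), there exists a compact $K\subseteq E$ with $m(K)>0$. Then define $\mu=m|_{K}$, the restriction of planar Lebesgue measure to $K$. By construction $\mu$ is a non-zero, finite Borel measure with $\mathrm{supp}(\mu)\subseteq K\subseteq E$, so the candidate measure is in place; the only remaining task is to bound its energy.

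The key step, and the main (though routine) technical point, is to show $I(\mu)<\infty$. Fix $R>0$ so that $K\subseteq \overline{\Delta}(0,R)$; then for all $z,w\in K$ we have $|z-w|\leq 2R$. For each fixed $w\in K$, split
\begin{eqnarray}
\int_K -\log|z-w|\,dm(z) = \int_{K\cap\{|z-w|<1\}} -\log|z-w|\,dm(z) + \int_{K\cap\{|z-w|\geq 1\}} -\log|z-w|\,dm(z).\notag
\end{eqnarray}
The second integrand is non-positive, so that term is $\leq 0$. For the first, extend the domain to $\Delta(w,1)$ (the integrand there is non-negative) and compute in polar coordinates centered at $w$:
\begin{eqnarray}
\int_{\Delta(w,1)} -\log|z-w|\,dm(z) = 2\pi\int_0^1 (-\log r)\,r\,dr = \frac{\pi}{2},\notag
\end{eqnarray}
a finite bound that is independent of $w$. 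Hence the inner integral is bounded above by $\pi/2$ uniformly in $w\in K$, and integrating over $w$ yields $I(\mu)\leq (\pi/2)\,m(K)<\infty$.

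Finally, $\mu$ is a finite, compactly supported, non-zero Borel measure with $\mathrm{supp}(\mu)\subseteq E$ and $I(\mu)<\infty$, which directly contradicts the definition of $E$ being polar. (Equivalently, the preceding lemma applied to this $\mu$ would force $\mu(E)=m(K)=0$, contradicting our choice of $K$.) Therefore the assumption $m(E)>0$ is untenable, and $m(E)=0$. The entire argument rests on the local $L^1$ integrability of $\log|z-w|$ against planar Lebesgue measure, which is precisely the elementary polar-coordinate computation above; no new potential-theoretic machinery beyond the definition of energy and inner regularity is required.
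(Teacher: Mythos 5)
Your proof is correct and uses essentially the same idea as the paper: both rest on showing that planar Lebesgue measure restricted to a bounded set has finite logarithmic energy, which follows from the local $L^1$-integrability of $\log|z-w|$ computed in polar coordinates. The paper restricts $m$ to discs $\Delta(0,r)$ and invokes the preceding lemma directly (then lets $r\to\infty$), while you argue by contradiction via inner regularity to extract a compact $K\subseteq E$ of positive measure and bound $I(m|_K)$ cleanly by $\frac{\pi}{2}m(K)$ — a small organizational variant, not a genuinely different route, and you note the lemma-based alternative yourself.
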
 

\begin{proof}
The idea is to restrict the Lebesgue measure to a disc and show that $I(m|_{\Delta(0,r)})<\infty$ for all $r>0$. Then we can apply the previous lemma to deduce that $m|_{\Delta(0,r)}(E\cap \Delta(0,r))=0$ for any Borel polar set $E$ and on letting $r\rightarrow\infty$ we get the result. To estimate the energy we start with the potential and apply a similar procedure as for the Lemma, so let $\mu=m|_{\Delta(0,r)}$, then for $z\in \Delta(0,r)$ 
\begin{eqnarray}
-p_\mu(z)&=&-\int_{\Delta(0,r)} \log\left|\frac{z-w}{2r}\right|dm|_{\Delta(0,r)}-\pi r^2\log(2r)\notag\\
&\leq& -\int_0^{2\pi}\int_0^{2r} \log\left(\frac{s}{2r}\right)sdsdt -\pi r^2\log(2r)\notag\\
&=& 2\pi r^2-\pi r^2\log(2r)\notag
\end{eqnarray} 
Now this result can be used to estimate the energy 
\begin{eqnarray}
I(\mu)=-\int_{\Delta(0,r)}p_\mu(z)dm(z)\leq \left(2\pi r^2+\pi r^2 \log (2r) \right) \pi r^2< \infty\notag
\end{eqnarray}
\end{proof}

This result is clearly in accordance with the fact that a point is polar. We can continue to formalize the concept of an equilibrium distribution. From the viewpoint of physics this equilibrium distribution should minimize the total energy and this turns out to be mathematically reasonable as well.

\begin{defn}
Let $K$ be a compact subset of $\mathbb{C}$ and let $\cal{P}$$(K)$ denote the Borel probability measures on $K$. If there exists $\nu\in \cal{P}$$(K)$ such that
\begin{eqnarray}
I(\nu)=\inf_{\mu\in\text{$\cal{P}$$(K)$}} I(\mu)
\end{eqnarray}
then $\nu$ is called an \textbf{equilibrium measure}\index{equilibrium measure} for $K$.
\end{defn}

For polar sets the definition is basically void as any measure has infinite energy, but for non-polar sets the search for an equilibrium measure is non-trivial. First we need a preliminary result, which has an interesting proof by means of elementary functional analysis.

\begin{lem}
If the sequence of measures $\mu_n$ converges in the $weak^{*}$-topology of $\cal{P}$$(K)$ to a measure $\mu$, then $\liminf_{n\rightarrow \infty} I(\mu_n) \geq I(\mu)$.
\end{lem}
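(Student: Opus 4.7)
The plan is to exploit the lower semicontinuity of the kernel $-\log|z-w|$ by approximating it from below by continuous truncations, reducing the problem to the standard continuity of $\mu \mapsto \int f\, d\mu$ for $f$ continuous and bounded.

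The main obstacle is that $-\log|z-w|$ has a singularity on the diagonal and so is only lower semicontinuous (taking values in $(-\infty,\infty]$), not continuous. Weak-$*$ convergence of probability measures gives convergence of integrals only against continuous bounded functions, so the kernel must first be tamed. For $M>0$ define
\begin{eqnarray}
\phi_M(z,w) = \min\bigl(-\log|z-w|,\, M\bigr),\notag
\end{eqnarray}
which is continuous and bounded on $K\times K$ (note that $-\log|z-w|$ is bounded below by $-\log(\mathrm{diam}(K))$ on $K\times K$, so $\phi_M$ is also uniformly bounded below independent of $M$).

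The next step is to pass from $\mu_n\to\mu$ in the weak-$*$ topology on $\mathcal{P}(K)$ to $\mu_n\otimes\mu_n\to\mu\otimes\mu$ weakly on $K\times K$. For a product $f(z,w)=g(z)h(w)$ with $g,h\in C(K)$, Fubini gives $\int\int f\,d\mu_n\,d\mu_n = (\int g\,d\mu_n)(\int h\,d\mu_n) \to (\int g\,d\mu)(\int h\,d\mu)$; by Stone--Weierstrass, finite linear combinations of such products are dense in $C(K\times K)$, so the same holds for every continuous $f$, in particular for $\phi_M$. Hence
\begin{eqnarray}
\lim_{n\to\infty} \int\int \phi_M(z,w)\, d\mu_n(z)\, d\mu_n(w) = \int\int \phi_M(z,w)\, d\mu(z)\, d\mu(w).\notag
\end{eqnarray}

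Now since $\phi_M(z,w)\leq -\log|z-w|$ pointwise, we obtain the inequality $I(\mu_n)\geq \int\int \phi_M\, d\mu_n\, d\mu_n$ for every $n$, so taking $\liminf$ yields
\begin{eqnarray}
\liminf_{n\to\infty} I(\mu_n) \geq \int\int \phi_M(z,w)\, d\mu(z)\, d\mu(w).\notag
\end{eqnarray}
Finally let $M\to\infty$: the functions $\phi_M$ increase monotonically to $-\log|z-w|$ and are uniformly bounded below on $K\times K$, so the monotone convergence theorem applies and gives $\int\int \phi_M\, d\mu\, d\mu \to I(\mu)$. Combining with the previous inequality yields $\liminf_{n\to\infty} I(\mu_n)\geq I(\mu)$, which is the claim.
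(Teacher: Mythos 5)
Your proof is correct and follows the same route as the paper: truncate the kernel to $\phi_M=\min(-\log|z-w|,M)$, use Stone--Weierstrass to upgrade weak-$*$ convergence of $\mu_n$ to convergence of $\iint\phi_M\,d\mu_n\,d\mu_n$, drop to an inequality via $\phi_M\leq -\log|z-w|$, and finish with monotone convergence as $M\to\infty$. The only addition is your explicit remark that the kernel is bounded below on $K\times K$, which is a useful clarification but not a different argument.
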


\begin{proof}
Given two continuous functions $f,g$ on $K$ we obtain from the definition of $\text{weak}^*$-convergence that 
\begin{eqnarray}
\int_K \int_K f(z)g(w)d\mu_n(z)d\mu_n(w)\stackrel{n\rightarrow \infty}{\rightarrow}\int_K \int_K f(z)g(w)d\mu(z)d\mu(w) \notag
\end{eqnarray}
But the Stone-Weierstra$\text{\ss}$ theorem implies that any continuous function of two complex variables, say $\phi(z,w)$ can be approximated by sums of products of single-variable functions of the form $\bar{\phi}(z,w)=\sum_{i} f_i(z) g_i(w)$, where $f$ and $g$ are continuous.
Indeed, the family of functions $\{\bar{\phi}\}$ is immediately seen to be an algebra with identity and seperation of points is provided by single-(complex)-variable polynomials, so that the Stone-Weierstra$\text{\ss}$ theorem applies. Hence we can apply the initial observation about $\text{weak}^*$-convergence to obtain
\begin{eqnarray}
\int_K \int_K \phi(z,w)d\mu_n(z)d\mu_n(w)\stackrel{n\rightarrow \infty}{\rightarrow}\int_K \int_K \phi(z,w)d\mu(z)d\mu(w)\notag
\end{eqnarray}
Now define $\phi (z,w)=\min(-\log|z-w|,m)$ for $m\geq 1$ and we have an estimate
\begin{eqnarray}
\liminf_{n\rightarrow\infty} I(\mu_n)&=& \liminf_{n\rightarrow\infty}\int_K\int_K -\log|z-w| d\mu_n(z) d\mu_n(w)\notag\\
&\geq &\liminf_{n\rightarrow\infty}\int_K\int_K \min(-\log|z-w|,m) d\mu_n(z) d\mu_n(w)\notag\\
&=& \int_K\int_K \min(-\log|z-w|,m) d\mu(z) d\mu(w)\notag
\end{eqnarray}
The last step is to apply the montone convergence theorem to the inequality just obtained with $m\rightarrow \infty$ to conclude the result.
\end{proof}

Having a guess, how we could construct the equilibrium measure provided by the previous lemma, the next result gives the answer for our initial question in the case of compact sets.

\begin{thm}
If $K$ is a compact subset of $\mathbb{C}$ then $K$ has an equlibrium measure.
\end{thm}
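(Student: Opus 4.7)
The plan is to set up this as a direct application of the ``direct method of the calculus of variations'': take a minimizing sequence in $\mathcal{P}(K)$, extract a weak$^{*}$-convergent subsequence using compactness, then use the lower semicontinuity lemma just proved to conclude that the weak$^{*}$-limit attains the infimum.

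First I would dispose of the trivial case. If $K$ is polar, every $\mu \in \mathcal{P}(K)$ has $I(\mu) = \infty$, so any $\nu \in \mathcal{P}(K)$ is an equilibrium measure (and $\mathcal{P}(K)$ is non-empty because, $K$ being compact and non-empty, it supports at least a Dirac mass). So assume henceforth that $K$ is non-polar, which means there exists $\mu_0 \in \mathcal{P}(K)$ with $I(\mu_0) < \infty$. Set
\begin{eqnarray}
V := \inf_{\mu \in \mathcal{P}(K)} I(\mu),
\end{eqnarray}
so $V < \infty$. I would also observe that $V > -\infty$: letting $d = \mathrm{diam}(K)$, for any $z,w \in K$ we have $-\log|z-w| \geq -\log d$, so $I(\mu) \geq -\log d$ for every $\mu \in \mathcal{P}(K)$.

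Next I would choose a minimizing sequence $\{\mu_n\}_{n\geq 1} \subset \mathcal{P}(K)$ with $I(\mu_n) \to V$. Since $K \subset \mathbb{C}$ is compact, the set $\mathcal{P}(K)$ is compact in the weak$^{*}$-topology (this is a standard consequence of the Banach--Alaoglu theorem applied to $C(K)^{*}$, since probability measures form a weak$^{*}$-closed subset of the unit ball). Hence there is a subsequence $\{\mu_{n_k}\}$ and some $\nu \in \mathcal{P}(K)$ such that $\mu_{n_k} \to \nu$ in the weak$^{*}$-topology.

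Finally, I would apply the preceding lemma on lower semicontinuity of the energy: since $\mu_{n_k} \stackrel{w^{*}}{\to} \nu$, we have
\begin{eqnarray}
I(\nu) \leq \liminf_{k \to \infty} I(\mu_{n_k}) = V. \notag
\end{eqnarray}
Combined with the reverse inequality $I(\nu) \geq V$, which holds because $\nu \in \mathcal{P}(K)$, this gives $I(\nu) = V$, so $\nu$ is an equilibrium measure. The main technical point in this argument is already absorbed into the preceding lemma --- namely, that $-\log|z-w|$ is not continuous but only lower semicontinuous, so we cannot simply pass to the limit under the double integral; the truncation $\min(-\log|z-w|, m)$ followed by monotone convergence in the lemma is what overcomes this. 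Everything else --- the existence of a finite-energy measure in the non-polar case, weak$^{*}$-compactness of $\mathcal{P}(K)$, and the lower bound on $I$ --- is essentially built in.
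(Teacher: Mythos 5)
Your proof is correct and follows essentially the same route as the paper's: minimizing sequence, weak\(^{*}\)-compactness of \(\mathcal{P}(K)\), and the lower-semicontinuity lemma. The extra observations you add (the polar case, the bound \(V > -\infty\), and the Banach--Alaoglu justification) are sound tidying-up of points the paper leaves implicit, but they do not change the argument.
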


\begin{proof}
Observe that there exists a sequence of measures in $\cal{P}$$(K)$ such that 
\begin{eqnarray}
I(\mu_n)\rightarrow \inf_{\mu\in\text{$\cal{P}$$(K)$}}I(\mu)\notag
\end{eqnarray}
so by compactness of $K$ (and some elementary measure theory) there also exists a $\text{weak}^*$-convergent subsequence $\mu_{n_k}$ such that $\mu_{n_k}\rightarrow \nu$ for $k\rightarrow\infty$ and some probability measure $\nu$ on $K$, but now we can apply the previous lemma to deduce that 
\begin{eqnarray}
\liminf_{k\rightarrow \infty} I(\mu_{n_k}) \geq I(\nu)\notag
\end{eqnarray}  
So $\nu$ is an equilibrium measure.
\end{proof}

The result is relatively obvious from the viewpoint of electrostatics, but as we have seen, mathematically not immediate. The question of uniqueness for the equilibrium measures can answered as well, but requires far more tools than we have developed, so we content ourselves by stating the result.

\begin{fact}[\cite{4}]
\label{fact:emb}
Let $K$ be a compact non-polar subset of $\mathbb{C}$, then it has a unique equilibrium measure. Furthermore this measure $\nu$ is supported on the exterior boundary, i.e. $supp(\nu)\subset \partial_e K$.
\end{fact}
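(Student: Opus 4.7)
The plan is to prove the two assertions separately: first uniqueness, via a strict convexity (parallelogram) argument for the energy functional, and then the support statement via a Frostman-type application of the maximum principle. The core analytical tool underlying both parts is the fact that for any compactly supported signed measure $\sigma$ with $\sigma(\mathbb{C})=0$ and finite energy, one has $I(\sigma) \geq 0$, with equality forcing $\sigma=0$. I would prove this positivity first, then deduce uniqueness, then deduce Frostman's regularity relations for $p_\nu$, and finally deduce the support claim.

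For uniqueness, suppose $\nu_1, \nu_2 \in \cal{P}(K)$ both achieve the infimum $V = I(\nu_1) = I(\nu_2)$. Writing $\mu = \tfrac{1}{2}(\nu_1+\nu_2) \in \cal{P}(K)$, $\sigma = \nu_1-\nu_2$, and using the bilinear form $I(\alpha,\beta) = -\iint \log|z-w|\,d\alpha(z)\,d\beta(w)$, a direct expansion of $I(\mu)$ gives the parallelogram identity
\[
I(\mu) = \tfrac{1}{2}\bigl[I(\nu_1)+I(\nu_2)\bigr] - \tfrac{1}{4}I(\sigma).
\]
Minimality of $V$ gives $I(\mu)\geq V$, hence $I(\sigma)\leq 0$. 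Combined with the positivity claim, $I(\sigma)=0$ and therefore $\sigma=0$, so $\nu_1=\nu_2$.

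The hard part is the positivity of $I(\sigma)$. When $\sigma$ is smooth with $C^\infty_c$ density, $p_\sigma \in C^\infty(\mathbb{C})$, decays like $O(|z|^{-1})$ at infinity (since $\sigma(\mathbb{C})=0$), and satisfies $\Delta p_\sigma = 2\pi\sigma$ by Theorem \ref{thm:prm}. Applying Green's formula (Fact \ref{fact:gr1}) on a large disc $\Delta(0,R)$ and letting $R\to\infty$, the boundary integrals vanish and one gets
\[
I(\sigma) = -\int p_\sigma\,d\sigma = -\frac{1}{2\pi}\int p_\sigma\,\Delta p_\sigma\,dm = \frac{1}{2\pi}\int |\nabla p_\sigma|^2\,dm \geq 0,
\]
with equality only if $p_\sigma\equiv 0$, forcing $\sigma=0$ by Theorem \ref{thm:prm}. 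For general finite-energy $\sigma$, I would convolve with an approximate identity (Theorem \ref{thm:smoothing} adapted to signed measures, applied to both parts of the Jordan decomposition), obtain smoothed signed measures $\sigma_r$ of total mass zero, apply the above, and pass to the limit $r\downarrow 0$. The delicate step — and the main technical obstacle — is verifying the convergence $I(\sigma_r)\to I(\sigma)$, since the log kernel is sign-changing and one cannot simply invoke monotone convergence; this usually needs splitting the kernel near and away from the diagonal and using the hypothesis $I(|\sigma|)<\infty$.

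For the support statement, the variational characterization of $\nu$ yields Frostman's relations: $p_\nu(z)\leq V$ for all $z\in\mathbb{C}$ and $p_\nu(z)=V$ for quasi-every $z\in \text{supp}(\nu)$, both obtained by perturbing $\nu$ by signed probability measures and differentiating the quadratic functional $I$. Let $\Omega_\infty$ denote the unbounded component of $\mathbb{C}\setminus K$, so $\partial_e K = \partial \Omega_\infty$. Now $p_\nu$ is harmonic on $\mathbb{C}\setminus \text{supp}(\nu)$, and if $\nu$ charged any compact set $E\subset K\setminus\partial_e K$ (i.e.\ mass in the interior of $K$ or around a bounded complementary component), I would apply the balayage construction — well-posed here thanks to the Dirichlet solvability established in Theorem \ref{thm:dirsol} — to sweep this mass onto $\partial_e K$, producing a probability measure $\nu^\ast$ with $p_{\nu^\ast}\leq p_\nu$ everywhere and $p_{\nu^\ast}=p_\nu$ on $\Omega_\infty$. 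Integrating $p_\nu$ against $d(\nu-\nu^\ast)$ and invoking the Frostman bound gives $I(\nu^\ast)\leq I(\nu)$, with strict inequality unless the swept mass was already on $\partial_e K$, contradicting minimality. Thus $\text{supp}(\nu)\subset \partial_e K$.
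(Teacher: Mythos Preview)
The paper does not prove this statement: it is presented as a \emph{Fact} with citation to Ransford \cite{4}, and the text immediately preceding it says explicitly that uniqueness ``requires far more tools than we have developed, so we content ourselves by stating the result.'' There is therefore no in-paper proof to compare against; your proposal is being measured against the standard argument in the cited reference.

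Your outline is essentially that standard argument and is correct in its architecture. Uniqueness via the parallelogram identity reduces exactly to the positive-definiteness lemma $I(\sigma)\geq 0$ for compactly supported signed $\sigma$ with $\sigma(\mathbb{C})=0$, and your smoothing-plus-Green's-identity route to that lemma is the usual one. You are right that the limit passage $I(\sigma_r)\to I(\sigma)$ is the genuine technical crux; in Ransford this is handled by truncating the kernel and using $I(|\sigma|)<\infty$, which is precisely the manoeuvre you sketch. (A minor point: with the paper's sign convention $p_\mu(z)=\int\log|z-w|\,d\mu$, Frostman reads $-p_\nu\leq I(\nu)$, i.e.\ $p_\nu\geq -I(\nu)$; your inequality $p_\nu\leq V$ has the sign flipped.)

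One caution on the support argument. Your balayage step appeals to Theorem \ref{thm:dirsol}, but that theorem requires the boundary to consist of regular points, which $\partial_e K$ need not satisfy for an arbitrary compact $K$; so as written the sweep is not justified by the paper's toolkit. The route taken in \cite{4} avoids balayage: once Frostman gives $-p_\nu=I(\nu)$ quasi-everywhere on $\mathrm{supp}(\nu)$ and $-p_\nu\leq I(\nu)$ globally, one notes that $p_\nu$ is harmonic off $\mathrm{supp}(\nu)$ and applies the maximum principle on each bounded component of the complement to force $-p_\nu\equiv I(\nu)$ there as well, from which the support claim follows directly. Your balayage approach can be repaired (via the generalized Perron--Wiener--Brelot solution), but it demands more machinery than either the paper or your write-up currently supplies.
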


The preceeding fact allows us to at least deduce explicitly the equilibrium measure for the disc.

\begin{ex} 
\label{ex:lebb}
Let $\bar{\Delta (0,1)}=\bar{\Delta}$ denote the closed unit disc. Then since the equlibium measure $\nu$ is supported on $\partial \bar{\Delta}$ and unique, it must be rotation invariant. This implies that $\nu$ is the normalized Lebesgue measure on the boundary. 
\end{ex}

The only question, which remains from our initial heuristic derivation of the potential function, is the question whether in a state of equilibrium the potential is constant as it is predicted by experiment since non-constant potentials would introduce a force and hence no equilibrium could persist. It turns out that our intuition is also mathematically correct.

\begin{fact}[\index{Frostman's Theorem}Frostman's Theorem, \cite{4} or \cite{6}] Let $K$ be a compact subset of $\mathbb{C}$ and let $\nu$ denote its equilibrium measure, then $-p_\nu\leq I(\nu)$ on $\mathbb{C}$ and
\begin{eqnarray}
-p_\nu=I(\nu) \qquad \text{on $K\backslash E$, where $E$ is a polar subset of $\partial K$} 
\end{eqnarray}
\end{fact}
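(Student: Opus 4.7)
My plan is to split the proof of Frostman's theorem into a variational argument yielding the lower bound $-p_\nu \geq I(\nu)$ quasi-everywhere on $K$, a maximum-principle argument yielding the upper bound $-p_\nu \leq I(\nu)$ throughout $\mathbb{C}$, and a short step locating the exceptional set inside $\partial K$.

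For the variational step, fix $\epsilon > 0$ and consider $A_\epsilon = \{z \in K : -p_\nu(z) \leq I(\nu) - \epsilon\}$. If $A_\epsilon$ were non-polar, by definition it would support a compactly supported probability measure $\sigma$ with $supp(\sigma) \subseteq A_\epsilon$ and $I(\sigma)<\infty$. I would form the perturbation $\nu_t = (1-t)\nu + t\sigma \in \mathcal{P}(K)$ and expand bilinearly:
\begin{eqnarray}
I(\nu_t) = (1-t)^2 I(\nu) + 2t(1-t) \int (-p_\nu)\, d\sigma + t^2 I(\sigma). \notag
\end{eqnarray}
Because $-p_\nu \leq I(\nu) - \epsilon$ on $supp(\sigma)$, the right derivative at $t=0$ equals $2\bigl(\int(-p_\nu)\,d\sigma - I(\nu)\bigr) \leq -2\epsilon < 0$, so $I(\nu_t) < I(\nu)$ for small $t>0$, contradicting the equilibrium property of $\nu$. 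Hence every $A_\epsilon$ is polar and $\{z \in K: -p_\nu(z) < I(\nu)\} = \bigcup_n A_{1/n}$ is polar as a countable union of polar sets.

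For the global upper bound, the preceding lemma asserts that $\nu$ charges no polar set (since $I(\nu)<\infty$), so the variational step yields $-p_\nu \geq I(\nu)$ $\nu$-almost everywhere; combined with the defining identity $\int(-p_\nu)\,d\nu = I(\nu)$ this forces $-p_\nu = I(\nu)$ $\nu$-a.e. Lower semicontinuity of $-p_\nu$ (from $p_\nu$ being subharmonic, hence upper semicontinuous) upgrades this to a pointwise bound $-p_\nu \leq I(\nu)$ on $supp(\nu)$, because otherwise the open set $\{-p_\nu > I(\nu)\}$ would meet $supp(\nu)$ in a set of positive $\nu$-measure. To propagate the inequality to all of $\mathbb{C}$, I would use that $p_\nu$ is subharmonic on $\mathbb{C}$, harmonic on $\mathbb{C}\setminus supp(\nu)$ (Theorem \ref{thm:prm} gives $\Delta p_\nu = 2\pi\nu$), and satisfies $p_\nu(z) = \log|z| + o(1) \to +\infty$ as $z \to \infty$. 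Applying Theorem \ref{thm:maxsub} to $u := -p_\nu - I(\nu)$ on each component of $\mathbb{C}\setminus supp(\nu)$ — bounded components have boundary in $supp(\nu)$ where $u \leq 0$, and the unbounded component is handled by a large-disc exhaustion using $u \to -\infty$ at infinity — yields $u \leq 0$ globally.

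Finally, Fact \ref{fact:emb} gives $supp(\nu) \subseteq \partial K$, so $p_\nu$ is harmonic on $\mathrm{int}(K)$; the variational exceptional set is polar and hence Lebesgue-negligible, so $-p_\nu = I(\nu)$ holds on a full-measure subset of $\mathrm{int}(K)$, and continuity of the harmonic function $p_\nu$ extends the equality to all of $\mathrm{int}(K)$, placing $E \subseteq \partial K$. The main technical difficulty lies in the extension step: $p_\nu$ can fail to be continuous across $\partial supp(\nu)$ at irregular boundary points, so the maximum principle must be applied with care to the upper-semicontinuous boundary trace of $u$ — this is the standard ``principle of domination'' for subharmonic functions, but here the explicit logarithmic asymptotics at infinity and the pointwise lower bound already obtained on $supp(\nu)$ suffice to make the argument go through.
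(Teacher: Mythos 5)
Your variational argument is correct and cleanly executed: the bilinear expansion of $I(\nu_t)$ is right, the coefficients are all finite (the cross term is finite because $-p_\nu$ is bounded below on the compact set $A_\epsilon$ and bounded above there by $I(\nu)-\epsilon$ by construction), the negative right-derivative contradicts minimality, and the countable union $\bigcup_n A_{1/n}$ is polar because each $A_{1/n}$ is a closed Borel set and the paper's lemma that finite-energy measures charge no Borel polar set gives countable subadditivity of polarity. The subsequent deduction that $-p_\nu=I(\nu)$ $\nu$-a.e.\ and then, by lower semicontinuity of $-p_\nu$, that $-p_\nu\leq I(\nu)$ pointwise on $supp(\nu)$, is also correct. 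The placement of $E$ inside $\partial K$ at the end is fine, given the global bound.

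The genuine gap is precisely in the global bound $-p_\nu\leq I(\nu)$ on $\mathbb{C}$. You set $u=-p_\nu-I(\nu)$, which is superharmonic and hence \emph{lower} semicontinuous on $\mathbb{C}$; knowing $u\leq 0$ pointwise on $supp(\nu)$ therefore does \emph{not} give the boundary control $\limsup_{z\to\zeta,\,z\in\Omega}u(z)\leq 0$ for $\zeta\in\partial\Omega\subset supp(\nu)$ that Theorem~\ref{thm:maxsub} requires on a component $\Omega$ of $\mathbb{C}\setminus supp(\nu)$. Lower semicontinuity controls the $\liminf$, not the $\limsup$ from inside the component. You acknowledge this, but your closing claim --- that the logarithmic asymptotics at $\infty$ together with the pointwise bound on $supp(\nu)$ ``suffice to make the argument go through'' --- is not correct: the asymptotics only dispose of the point at infinity and say nothing about the finite part of $\partial\Omega$, which is where the failure of continuity actually lives, for both bounded components and the unbounded one. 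What you actually need is Maria's maximum principle for logarithmic potentials (the ``principle of domination'' you name): if $p_\nu\geq -M$ on $supp(\nu)$ then $p_\nu\geq -M$ on $\mathbb{C}$. That is \emph{not} a corollary of Theorem~\ref{thm:maxsub}; its standard proof needs the continuity principle for potentials (continuity of $p_\nu|_{supp(\nu)}$ at a point forces continuity of $p_\nu$ there) together with an Egoroff/Lusin-type restriction of $\nu$ to a large compact piece on which the restricted potential is continuous, or else a careful smoothing/approximation of $\nu$. As written, your proof invokes the hardest ingredient of the theorem rather than establishing it. (Minor: the phrase ``upper-semicontinuous boundary trace of $u$'' has the semicontinuity reversed --- $u$ is lsc --- which is in fact exactly why the naive maximum principle does not apply.)
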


So except for polar sets on the boundary we have constant potentials on a set in equilibrium. Frostman's Theorem turns out to be of fundamental importance in potential theory and provides the tool for many advanced results especially in the context of harmonic measure (see \cite{14}).

\subsection{Capacity and Hausdorff Measure}

Measuring the size of sets by using the notion of being polar or non-polar turns out to be not sufficient in many circumstances and we shall proceed to another concept, which is highly important in potential theory. To give an application as motivation, we briefly recall the definition of Hausdorff measure. 

\begin{defn} 
Let $A$ be a bounded susbset of $\mathbb{C}$ and let $h(t)=t^p$ for $0<p<\infty$. Further let $B_i$ be (solid) squares in the complex plane parallel to the coordinate axes, then we define
\begin{eqnarray}
H_{p,\delta}(A)=\inf \{ \sum_{i=1}^\infty h(d_i): A\subset \bigcup_{i=1}^\infty B_i,\text{ } diam(B_i)\leq d_i \}
\end{eqnarray}
And then we set
\begin{eqnarray}
H_p=\lim_{\delta\rightarrow 0} H_{p,\delta}(A)
\end{eqnarray} 
which is called the \textbf{p-dimensional (outer) \index{Hausdorff measure}Hausdorff measure} of A.
\end{defn}

Notice that $H_{p,\delta}$ increases if $\delta$ decreases as we take an infimum over smaller and smaller families of sets. It is a classical fact that Hausdorff measure is not a 'measure'.

\begin{fact}[\cite{1}] 
$H_p$ is (only) an outer measure.
\end{fact}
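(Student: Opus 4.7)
The plan is to verify the three defining properties of an outer measure, namely $H_p(\emptyset)=0$, monotonicity, and countable subadditivity, by first establishing each property at the level of the pre-measure $H_{p,\delta}$ and then passing to the limit $\delta \downarrow 0$. The final remark that $H_p$ is \emph{only} an outer measure (rather than a genuine measure on the full power set $\mathcal{P}(\mathbb{C})$) will be justified by appealing to the existence of Lebesgue non-measurable subsets of $\mathbb{C}$, noting that $H_2$ agrees up to a constant with planar Lebesgue outer measure.

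First I would check the three properties for $H_{p,\delta}$. The empty set is covered by squares of arbitrarily small diameter, so $H_{p,\delta}(\emptyset)=0$. Monotonicity is immediate: if $A\subseteq B$, every admissible cover of $B$ also covers $A$, hence $H_{p,\delta}(A)\leq H_{p,\delta}(B)$. For countable subadditivity, given $\{A_i\}_{i\geq 1}$ and $\epsilon>0$, I would, for each $i$, select a cover $\{B_{i,j}\}_{j\geq 1}$ of $A_i$ by squares with $\mathrm{diam}(B_{i,j})\leq \delta$ and
\begin{equation}
\sum_{j=1}^\infty h(\mathrm{diam}(B_{i,j}))\leq H_{p,\delta}(A_i)+\epsilon\, 2^{-i}.\notag
\end{equation}
The family $\{B_{i,j}\}_{i,j}$ is then an admissible cover of $\bigcup_i A_i$, giving
\begin{equation}
H_{p,\delta}\!\left(\bigcup_{i=1}^\infty A_i\right)\leq \sum_{i=1}^\infty H_{p,\delta}(A_i)+\epsilon,\notag
\end{equation}
and letting $\epsilon\downarrow 0$ yields countable subadditivity for $H_{p,\delta}$.

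Next I pass to the limit. Since $\delta\mapsto H_{p,\delta}(A)$ is monotonically non-increasing in $\delta$, the limit $H_p(A)=\lim_{\delta\downarrow 0}H_{p,\delta}(A)=\sup_{\delta>0}H_{p,\delta}(A)$ exists in $[0,\infty]$. The equality $H_p(\emptyset)=0$ and monotonicity transfer trivially. For subadditivity, fix $\delta>0$ and use the inequality above together with $H_{p,\delta}(A_i)\leq H_p(A_i)$ to get
\begin{equation}
H_{p,\delta}\!\left(\bigcup_{i=1}^\infty A_i\right)\leq \sum_{i=1}^\infty H_p(A_i).\notag
\end{equation}
Taking the supremum over $\delta>0$ on the left produces the desired bound on $H_p(\bigcup_i A_i)$.

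The step I expect to be slightly delicate is the interchange of $\sup_\delta$ with the infinite sum in the subadditivity argument; this is handled cleanly by using the upper bound $H_p(A_i)$ (independent of $\delta$) in place of $H_{p,\delta}(A_i)$ before taking the supremum, which avoids any issue. Finally, to justify the word \emph{only}, I would note that $H_2$ is (up to a constant factor) planar Lebesgue outer measure, and classical constructions of Vitali-type non-measurable sets show that Lebesgue outer measure fails to be countably additive on the full power set; hence the same holds for $H_2$, and analogous failures occur for general $p$, so $H_p$ cannot be promoted to a measure on all of $\mathcal{P}(\mathbb{C})$ (though, by Carathéodory's construction, it restricts to a genuine measure on the Borel $\sigma$-algebra).
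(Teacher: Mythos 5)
Your verification that $H_p$ is an outer measure is correct and is the standard textbook argument (establish the three axioms for the premeasure $H_{p,\delta}$, then pass to the limit as $\delta \downarrow 0$, using the device of replacing $H_{p,\delta}(A_i)$ by the $\delta$-independent upper bound $H_p(A_i)$ before taking the supremum over $\delta$). The paper does not supply its own proof — it cites the result to Folland — and Folland's proof is essentially what you wrote, so on that point there is nothing to compare.

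One caution concerning your justification of the word \emph{only}. Your appeal to Vitali-type sets for $p=2$, where $H_2$ is a constant multiple of planar Lebesgue outer measure, is fine. But the subsequent assertion that \emph{analogous failures occur for general $p$} is not correct as stated: for $p>2$, every bounded subset of $\mathbb{C}$ satisfies $H_p(A)=0$ (cover $A$ by $N^2$ squares of side $\sim 1/N$ and note $N^2\cdot N^{-p}\to 0$), so $H_p$ is the trivially countably additive zero set function in that range. The intended reading of \emph{only an outer measure} is simply that the Carathéodory construction produces an outer measure on all of $\mathcal{P}(\mathbb{C})$ without any guarantee of countable additivity there — one must then restrict to the $\sigma$-algebra of Carathéodory-measurable sets to obtain a genuine measure. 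Your $p=2$ example suffices to show this restriction is in general proper; the blanket claim for all $p$ should be dropped or qualified.
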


Hausdorff measure is not countably additive in general. Nevertheless, one might ask, what are the sets of Hausdorff measure zero and how to distinguish them from sets of positive Hausdorff measure. So we ask\\

\textbf{Is there a (potential-theoretic) criterion to determine sets of Hausdorff measure zero?} \\

We shall develop another standard concept in potential theory to answer this question for a special case. We mainly follow \cite{6} and partly \cite{15}. 

\begin{defn} 
Let $E$ be a compact subset of $\mathbb{C}$ and let $\nu$ be the equilibrium measure for $E$, then we define the \textbf{\index{capacity}capacity} of $E$, denoted $c(E)$ by
\begin{eqnarray}
c(E)=e^{-I(\nu)}
\end{eqnarray} 
\end{defn}

It is clear from the definition that if $E_1\subseteq E_2$ are compact sets, then $c(E_1)\leq c(E_2)$. As a simple example we determine the capacity of the unit disc.

\begin{ex} 
We know from Example \ref{ex:lebb} that the equilibrium measure $\nu$ for $\bar{\Delta(0,1)}$ is given by the normalized Lebesgue measure on the boundary. Therefore we can calculate the energy directly, namely
\begin{eqnarray}
I(\nu)&=&-\int_{\partial \Delta} \int_{\partial \Delta} \log|z-w| d\nu(z) d\nu(w)=\frac{1}{4\pi}\int_0^{2\pi} \int_0^{2\pi} \log|e^{is}-e^{it}|ds dt\notag\\
&=& \frac{1}{4\pi}\int_0^{2\pi} \int_0^{2\pi} \log\sqrt{2-2\cos (s-t)}ds dt=\frac{i}{4}\int_0^{2\pi}\pi -t+2i\log(1-e^{it})-2i\log\sqrt{2-2\cos (t)}dt\notag\\
&=&\left[\frac{\pi i t}{4}\left(\pi-2i\left(-\frac{it}{2}+\log\sqrt{2-e^{it}-e^{-it}}-\log (1-e^{-it})   \right)\right)\right]_0^{2\pi}=0\notag
\end{eqnarray}
where we have taken appropiate limits in the last expression. Hence we obtain that $c(\bar{\Delta})=e^0=1$.
\end{ex} 

The preceeding example indicates that it is in general impossible to calculate the capacity of a set exactly, but there exist several good general theorems on estimates for capacity. Now we extend the definition of capacity to arbitrary subsets of $\mathbb{C}$, therefore making it comparable with Hausdorff measure in terms of domains.

\begin{defn} 
The \textbf{capacity} of any subset $E$ of $\mathbb{C}$ is defined by
\begin{eqnarray}
c(E)=\sup_{F\subset E} c(F) \qquad \text{$F$ compact} 
\end{eqnarray}
\end{defn} 

We have basically defined the notion of \index{inner logarithmic capacity}'inner logarithmic capacity' in the previous definition; a definition of \index{outer logarithmic capacity}'outer logarithmic capacity' obviously approximates a given set $E$ by open sets covering it. We also drop 'logarithmic' as we did for potentials and shall only use our defintion given above, which seems to be more concrete in any case.\\

We remark in passing that it is immediate that any polar set has capacity $0$ if we agree on the convention $e^{-\infty}=0$. The three most basic result about capacity are summarized in the next result.

\begin{thm}
\label{thm:capbasic}
Let $E_1$ and $E_2$ be subsets of $\mathbb{C}$, then
\begin{description}
	\item[(a)] if $E_1\subseteq E_2$ it follows that $c(E_1)\leq c(E_2)$
	\item[(b)] if $T:z\mapsto az+b$ for $a,b\in \mathbb{C}$ and $T(E_1)=E_2$ $\Rightarrow$ $c(E_2)=|a|c(E_1)$
	\item[(c)] if $E_1$ is compact, then $c(E_1)=c(\partial_e E_1)$  
\end{description}
\end{thm}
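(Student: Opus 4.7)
The plan is to treat each of the three parts separately and lean on the definition of $c(E)$ as a supremum of $e^{-I(\nu_F)}$ over compact $F \subseteq E$, together with Fact \ref{fact:emb} about the support of equilibrium measures.

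For part \textbf{(a)}, the statement is almost immediate: any compact $F \subseteq E_1$ is also a compact subset of $E_2$, so the supremum defining $c(E_2)$ is taken over a larger collection of competitors than the one defining $c(E_1)$. Writing out this one line gives the monotonicity.

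For part \textbf{(b)}, I would first handle the case of compact $E_1$. The key computation is the effect of $T(z) = az + b$ on the logarithmic energy: if $\nu \in \mathcal{P}(E_1)$ and $T_\ast \nu$ denotes the push-forward probability measure on $E_2 = T(E_1)$, then a direct change of variable gives
\begin{eqnarray}
I(T_\ast\nu) = -\int\int \log|T(z)-T(w)|\,d\nu(z)d\nu(w) = I(\nu) - \log|a|. \notag
\end{eqnarray}
Since $T$ is a bijection $\mathcal{P}(E_1)\to \mathcal{P}(E_2)$ (with inverse $T^{-1}_\ast$, and a symmetric identity shifting energies by $+\log|a|$), taking infima yields $I(\nu_{E_2}) = I(\nu_{E_1}) - \log|a|$, hence $c(E_2) = e^{-I(\nu_{E_2})} = |a|\,c(E_1)$. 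To extend to arbitrary subsets, use that $T$ is a homeomorphism on $\mathbb{C}$, so $F \mapsto T(F)$ is a bijection on the compact subsets; the supremum definition of $c$ then transports the scaling factor $|a|$ through.

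For part \textbf{(c)}, monotonicity from (a) gives $c(\partial_e E_1) \leq c(E_1)$ since $\partial_e E_1 \subseteq E_1$. For the reverse inequality I invoke Fact \ref{fact:emb}: the equilibrium measure $\nu$ for the compact non-polar set $E_1$ is supported on $\partial_e E_1$, so $\nu \in \mathcal{P}(\partial_e E_1)$ is a valid competitor in the infimum defining $I(\nu_{\partial_e E_1})$. Therefore $I(\nu_{\partial_e E_1}) \leq I(\nu) = I(\nu_{E_1})$, giving $c(\partial_e E_1) \geq c(E_1)$. (If $E_1$ is polar, both sides equal $0$ and the statement is trivial.) The main subtlety is only the appeal to Fact \ref{fact:emb}, which is stated without proof in the paper; everything else is a routine verification. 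No serious obstacle is expected in any of the three parts.
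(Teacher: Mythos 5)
Your proposal is correct and follows essentially the same route as the paper's proof: part (a) by monotonicity of the supremum over compact subsets, part (b) via the energy computation $I(T_\ast\nu) = I(\nu) - \log|a|$ and the bijection on measures induced by the affine map $T$, and part (c) by combining monotonicity with Fact \ref{fact:emb} on the support of the equilibrium measure. You fill in a couple of details the paper leaves implicit — the passage from compact to arbitrary sets in (b) via the homeomorphism $T$, and the two-inequality argument in (c) — but the logical content is identical.
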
 

\begin{proof}
For (a) we simply apply the definition and property (c) is immediate from Fact \ref{fact:emb}. For (b) note that $\mu$ is a measure on $E_1$ - i.e. $supp(\mu)\subseteq E_1$ - if and only if $supp(\mu T^{-1})\subseteq E_2=aE_1+b$. Hence we have 
\begin{eqnarray}
I(\mu T^{-1})=-\int \int \log|az+b-aw-b|d\mu(z)d\mu(w)=I(\mu)-\log|a| 
\end{eqnarray} 
and the result follows by taking the exponential to get the capacity.
\end{proof}

The following example shows that although capacity is a positive set function, it is not (!) a measure.

\begin{ex} Consider the closed unit disc $\bar{\Delta}$ and let $J_i$ be the closed line segments contained in $\bar{\Delta}$ and obtained from $[-1,1]$ by rotation around the origin with rational angles, so $\{J_i\}$ is a countable set. Note that since each $J_i$ is compact there exists an equilibrium measure with finite energy, so that $c(J_i)>0$. Rotation-invariance from the last theorem (see \ref{thm:capbasic} (b)) shows that $c(J_i)=c(J_k)$ for all $i,k$. But
\begin{eqnarray}
\bigcup_{i=1}^\infty J_i\subset \bar{\Delta} \qquad \text{and} \qquad 1=c(\bar{\Delta})<\sum_{i=1}^\infty c(J_i)=\infty
\end{eqnarray} 
so that $c$ is not countably additive. 
\end{ex}

Confident with the basics of capacity and a guess that it might be reasonable to compare it with Hausdorff measure we find a striking result.

\begin{thm} 
\label{thm:chc}
Suppose $E$ is a compact subset of $\mathbb{C}$. If $H_p(E)>0$ for a fixed $p\in(0,\infty)$, then $c(E)>0$. In particular it follows that if $c(E)=0$ then $H_p(E)=0$.
\end{thm}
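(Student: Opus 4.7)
The plan is to show that $H_p(E) > 0$ forces the existence of a probability measure supported on $E$ with finite logarithmic energy; then by definition of capacity $c(E) \geq e^{-I(\mu)} > 0$. The main challenge is producing a measure whose local behavior is controlled well enough to tame the logarithmic singularity of the kernel.

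First I would invoke the classical mass distribution principle (Frostman's lemma in geometric measure theory): since $H_p(E) > 0$ with $E$ compact, there exists a nonzero finite Borel measure $\mu$ with $\operatorname{supp}(\mu) \subseteq E$ and a constant $C > 0$ such that
\begin{eqnarray}
\mu(B) \leq C \cdot \operatorname{diam}(B)^p \quad \text{for every Borel set $B \subset \mathbb{C}$}. \notag
\end{eqnarray}
After normalizing by $\mu(E)$ we may assume $\mu$ is a probability measure (the constant $C$ will change but stays finite). This step is the main obstacle since it is not established in the text; I would cite it as a standard fact from geometric measure theory (e.g.\ from a reference like Mattila).

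Next I would estimate the single-variable potential $-p_\mu(z) = \int -\log|z-w|\,d\mu(w)$ uniformly in $z$. Since $E$ is compact it has finite diameter $d$, so $-\log|z-w|$ is bounded below on $E \times E$ and the only concern is the region where $|z-w|$ is small. Split the integral at $|z-w| = 1$: the part with $|z-w| > 1$ is bounded by $|\log d|\,\mu(E)$. For the remaining part, apply the layer-cake (distribution function) formula:
\begin{eqnarray}
\int_{|z-w|\leq 1} -\log|z-w|\,d\mu(w) = \int_0^\infty \mu\bigl(\{w: |z-w|<e^{-t}\}\bigr)\,dt \leq \int_0^\infty C\, e^{-pt}\,dt = \frac{C}{p}. \notag
\end{eqnarray}
Here I used the Frostman-type bound on balls. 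So $-p_\mu(z) \leq C/p + |\log d|\,\mu(E)$ uniformly in $z$.

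Finally, integrating this uniform bound against $\mu$ gives
\begin{eqnarray}
I(\mu) = -\int_\mathbb{C} p_\mu(z)\,d\mu(z) \leq \left(\frac{C}{p} + |\log d|\,\mu(E)\right)\mu(E) < \infty. \notag
\end{eqnarray}
Thus $\mu \in \mathcal{P}(E)$ has finite energy, so the infimum defining the equilibrium energy of $E$ is finite, giving $c(E) = e^{-I(\nu)} \geq e^{-I(\mu)} > 0$. The contrapositive "$c(E) = 0 \Rightarrow H_p(E) = 0$" follows immediately.
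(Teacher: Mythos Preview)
Your proof is correct and follows essentially the same route as the paper. The paper's Fact~\ref{fact:hme} is exactly the Frostman-type mass distribution lemma you invoke, so your worry that ``it is not established in the text'' is unnecessary---the paper states it (without proof) for subsets of the unit square with the explicit constant $36$. Your layer-cake computation and the paper's Stieltjes integration by parts are really the same calculation: substituting $r=e^{-t}$ turns your integral $\int_0^\infty \mu(\Delta(z,e^{-t}))\,dt$ into the paper's $\int_0^1 \Omega(r)/r\,dr$, and both are then bounded using $\Omega(r)\leq Cr^p$.
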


The proof requires a basic lemma estimating Hausdorff measure on the unit square, which we only state as the proof is a simple subdivision and counting argument.

\begin{fact}[\cite{6}] 
\label{fact:hme}
Let $E$ be a compact subset of the unit square (side length 1) centered at $0$ in $\mathbb{C}$. Suppose $H_p(E)>0$ for a fixed $p\in(0,\infty)$ then there exists a finite Borel measure $\mu$ on $E$ such that for any $a\in E$
\begin{eqnarray}
\mu(\Delta(a,r))\leq 36r^p \qquad (0<r\leq1)
\end{eqnarray} 
\end{fact}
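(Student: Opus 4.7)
The plan is a Frostman-type dyadic mass distribution followed by a weak-$*$ compactness argument. First I would fix a large integer $n$ and subdivide the unit square into the nested dyadic grids $G_0,G_1,\ldots,G_n$, where $G_k$ consists of the $4^k$ closed squares of side $2^{-k}$. For every bottom square $Q\in G_n$ meeting $E$, I assign an initial mass $(2^{-n})^p$ (uniformly distributed on $Q\cap E$). Then I sweep top-down from $k=n-1$ down to $k=0$: on every $Q\in G_k$ whose currently accumulated mass exceeds $(2^{-k})^p$, I rescale the measure inside $Q$ multiplicatively so the total equals exactly $(2^{-k})^p$. Call the result $\mu_n$. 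By construction, $\mu_n(Q)\le(\mathrm{side}(Q))^p$ for every dyadic square $Q$ at every level.

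To convert this to a disc bound, note that any disc $\Delta(a,r)$ with $0<r\le1$ meets at most $9$ squares of $G_k$ whenever $2^{-k-1}<r\le 2^{-k}$, so
\[
\mu_n(\Delta(a,r))\le 9\cdot(2^{-k})^p\le 9\cdot(2r)^p\le 36\,r^p
\]
(in the relevant regime; larger $p$ only improves matters since $r\le 1$). The total masses $\mu_n(\mathbb{C})\le 36$ are thus uniformly bounded, so Banach--Alaoglu produces a subsequence converging weak-$*$ to a finite Borel measure $\mu$, and upper-semicontinuity of mass on closed discs preserves the bound $\mu(\Delta(a,r))\le 36r^p$. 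Because each $\mu_n$ is supported in the $\sqrt{2}\cdot 2^{-n}$-neighborhood of $E$, the limit $\mu$ is supported in $E$ as required.

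The remaining and more delicate ingredient is nontriviality of $\mu$ (otherwise the statement is vacuous). Call a dyadic square \emph{saturated} if it is rescaled during the sweep, so that $\mu_n(Q)=(\mathrm{side}(Q))^p$ exactly. An induction along the sweep should show that every $x\in E$ lies in some saturated ancestor, whence the maximal saturated squares $\{Q_i\}$ form an almost-disjoint cover of $E$ with $\sum_i(\mathrm{side}(Q_i))^p=\mu_n(E)$. This is an admissible axis-parallel-square cover, and comparing it to the definition of $H_p$ (which uses arbitrary sets of bounded diameter) gives $\mu_n(E)\ge c_p H_p(E)$ for a constant $c_p>0$ independent of $n$. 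Since $H_p(E)>0$ by hypothesis, the limit satisfies $\mu(E)>0$, so $\mu\neq 0$.

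The main obstacle I expect is precisely this combinatorial claim that the top-down sweep never strips mass away from every ancestor of some point of $E$. It requires maintaining the invariant that, after processing level $k$, every surviving level-$k$ square carries mass equal to $\min\{(\mathrm{side})^p,\text{sum of children masses}\}$, and then arguing by tracing a chain of unsaturated ancestors from $x\in E$ back to level $n$ to derive a contradiction with the initial mass assignment. Once that bookkeeping is in place, the disc bound and the weak-$*$ limit are essentially routine packing/covering estimates.
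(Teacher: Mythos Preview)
The paper does not prove this statement: it is recorded as a Fact with a citation to Tsuji and the remark that ``the proof is a simple subdivision and counting argument,'' and is then used as a black box in the proof of Theorem~\ref{thm:chc}. Your proposal is exactly the standard Frostman mass-distribution construction that this remark alludes to, and the outline is sound.

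Two small corrections are worth noting. First, your inequality $9\cdot(2r)^p\le 36\,r^p$ needs $2^p\le 4$, i.e.\ $p\le 2$; the parenthetical ``larger $p$ only improves matters since $r\le 1$'' is not the right reason. The statement is still fine because for $p>2$ every compact planar set has $H_p(E)=0$, so the hypothesis $H_p(E)>0$ is vacuous there and the constant $36=9\cdot 2^2$ is tuned to the range $p\le 2$. Second, the nontriviality step is cleaner than you fear: each bottom-level square meeting $E$ begins with mass exactly equal to its cap $(2^{-n})^p$, so either it is never rescaled (and remains full) or some ancestor is capped during the sweep and is therefore full afterwards. Hence the maximal full squares automatically cover $E$, with no contradiction argument needed. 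Their total mass is $\mu_n(E)$, and since they form an admissible cover of $E$ by sets of diameter at most $\sqrt{2}$, you get $\mu_n(E)\ge 2^{-p/2}\,H_{p,\sqrt{2}}(E)$; the quantity on the right is the Hausdorff \emph{content} at scale $\sqrt{2}$, which is positive exactly when $H_p(E)>0$, giving a uniform lower bound independent of $n$.
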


Using this fact we can proceed to the proof of the theorem.

\begin{proof}(of Theorem \ref{thm:chc}) First of all we can assume without loss of generality that $E$ is contained in the unit square centered at $0$. Let $\mu$ be the finite Borel measure provided by Fact \ref{fact:hme}. The idea is to obtain an estimate for the potential of $\mu$. Define $\Omega(r)=\mu(\Delta(z,r))$ for $z\in E$, then by the previous fact (see \ref{fact:hme} again) we have $\Omega(r)\leq 36 r^p$. Now we can estimate the potential
\begin{eqnarray}
-p_\mu(z)&=&\int_E -\log|z-w|d\mu(w)=\int_0^1 \log\left(\frac{1}{r}\right)d\Omega(r)\notag\\
&=&\left[-\Omega(r)\log r\right]_0^1+\int_0^1 \frac{\Omega(r)}{r} dr\leq \left[-\Omega(r)\log r\right]_0^1+36\int_0^1 \frac{r^p}{r} dr\notag
\end{eqnarray}
Since we notice the following two facts
\begin{eqnarray}
\int_0^1 \frac{r^p}{r} dr <\infty \qquad \text{and} \qquad \lim_{r\rightarrow 0} r^p\log \frac{1}{r}=0 \qquad \quad \text{as $0<p<\infty$} \notag
\end{eqnarray}
we deduce that $\lim_{r\rightarrow 0}\frac{\Omega(r)}{r}=0$ and we get a bound on the potential given by
\begin{eqnarray}
-p_\mu(z)\leq 36\int_0^1 \frac{r^p}{r}dr <\infty\notag
\end{eqnarray}
Therefore $I(\mu)<\infty$ and we conclude that $c(E)>0$.
\end{proof}

Notice that a criterion to decide when $H^p=0$ can turn out to be more useful than it seems at first since the sets of Hausdorff measure zero are important for Hausdorff dimension, which can simply be defined as
\begin{eqnarray}
\text{\index{Hausdorff dimension}Hausdorff dimension of a set $A$}=\inf\{p>0:H_p(A)=0\}
\end{eqnarray}
and Hausdorff dimension is an important tool in the analysis of fractals. We now close our discussion of the basic concepts.

\subsection{What remains...?}

Certainly many important applications and some basic concepts (most notably here: thin sets, fine topologies) are missing from the previous sections. Those topics had to be left out either due to space constraints or the necessity to introduce lengthy background knowledge from an application area. Nevertheless we list here some of the important applications with a few references.\\

An important area for recent applications of subharmonic funtions and potential theory is clearly complex dynamics. Especially measure theoretic tools like capacity turn out to be useful for investigating questions regarding dimensions of sets arising under iterations of complex-valued maps. A good starting point with several references can be found in \cite{4}.\\

Another area is approximation theory, more specifically orthogonal polynomials and more modern developments using 'weighted' potential theory. We have briefly seen that one can obtain estimates of for the growth of polynomials, when discussing Green's function and it turns out that far-reaching generalizations of these techniques exist. For the classical case of orthogonal polynomials \cite{21} provides a complete account, while for 'weighted' potentials and more recent developments \cite{5} is an excellent source.\\

It turns out that questions regarding polynomials, which are solvable using potential theory, also have consequences in numerical analysis. For example the convergence factor for certain types of conjugate gradient methods for the iterative solution of linear systems is directly related to Green's function. A survery article is provided by \cite{22}.\\

Also regrettably missing from our discussion are the relations to geometry. For example the uniformization theorem for Riemann surfaces admits a proof using approximation techniques involving subharmonic functions (see e.g. \cite{8} or \cite{24}). Also one can generalize subharmonic functions to several (complex) variables and obtain 'plurisubharmonic' functions, which play an important role in complex geometry. For pluripotential theory the only coherent source seems to be the monograph \cite{23}, whereas the applications to geometry are scattered over the literature.\\

The importance of the relations between probability theory and potential theory already have been mentioned. It is clearly a good starting point to see how the complete theory is developed from a probabilistic viewpoint using martingales and Brownian motion in \cite{10}. Basically all important links are covered by the monumental monograph \cite{17}, which is for the particularly brave.\\

Since we have focused on potential theory in the complex plane, it should be mentioned that there exist many applications to complex analysis using the the tools developed so far. For example, the Riemann mapping theorem has a potential-theoretic proof. Even more relevant is the Koebe one-quarter theorem as it seems there is no proof circumventing potential theory. The standard source here is \cite{6}, but one should also be aware of the fact that the monograph \cite{6} is older than 30 years and partly results are outdated (compare e.g. with \cite{14}).\\ 

And as a last subject, one has to list functional analysis and in particular spectral theory, where seemingly potential theory is used although the general impact seems to be small. Some basics are covered in \cite{4}.\\ 

\begin{appendix}
\section{Appendix}

\subsection{Literature Overview}

The first obstacle in using subharmonic functions and potential theory as a tool in other areas of mathematics is the required background knowledge. One needs to be acquainted with real analysis and measure theory and for the case of potential theory in the two-dimensional case also with the main tools of complex analysis. The following two books turned out to be very helpful in the process of writing this paper:

\begin{itemize}
	\item Gerald B. Folland, \textit{Real Analysis - Modern Techniques and Their Applications} (see \cite{1})
	\item Theodore W. Gamelin, \textit{Complex Analysis} (see \cite{8})
\end{itemize}

Folland's book is indeed useful 'cover-to-cover', i.e. every topic is somehow relevant to potential theory, whereas Gamelin covers more than needed, but it is easy to figure out, which topics to omit. Except for some parts from probability theory, we have the following convention on background knowledge:\\

\textit{Results and definitions assumed in this paper without reference can be found in \cite{1} and/or \cite{8}!}\\

Furthermore one should be able to benefit from elementary knowledge in partial differential equations and a sound understanding of probability and Brownian motion. The following two books cover more than needed:

\begin{itemize}
	\item Lawrence C. Evans, \textit{Partial Differential Equations} (see \cite{2})
	\item Richard Durrett, \textit{Probability: Theory and Examples} (see \cite{13})
\end{itemize}

Durrett covers most of the background knowledge from probability necessary for this paper.\\

For the core of the paper we have used Ransford's book \textit{"Potential Theory in the Complex Plane"} (see \cite{4}), which is highly recommended. Three other good sources for the case of two-dimensions are

\begin{itemize}
	\item Edward B. Saff and Vilmos Totik, \textit{Logarithmic Potentials with External Fields} (see \cite{5})
	\item M. Tsuji, \textit{Potential Theory in Modern Function Theory} (see \cite{6})
	\item W.K. Hayman and P. Kennedy, \textit{Subharmonic functions. Vol. 1} (see \cite{16})
\end{itemize}

All three monographs cover either more specialized material or cover much more than needed for certain applications. For a very brief sketch of the basics one might also consult the lecture notes by Martinez-Finkelshtein (see \cite{15}).\\ 

For potential theory of three or more dimensions there is a variety of textbooks available; we mention only four major sources here

\begin{itemize}
	\item L. L. Helms, \textit{Introduction to Potential Theory} (see \cite{7})
	\item Naum S. Landkof, \textit{Foundations of Modern Potential Theory} (see \cite{18})
	\item W.K. Hayman and P. Kennedy, \textit{Subharmonic functions. Vol. 1} (see \cite{16})
	\item John Wermer, \textit{Potential Theory} (see \cite{19}) 
\end{itemize}

All four texts are classical and the author of this paper found many parts, especially on Green's functions and the Dirichlet problem very helpful. A more modern treatment can be found in the lecture notes of Papadimitrakis (see \cite{3}). If one wants to persue the relations with probability theory further one could consult 

\begin{itemize}
	\item Olav Kallenberg, \textit{Foundations of Modern Probability} (see \cite{12})
	\item Sidney C. Port and Charles C. Stone, \textit{Brownian Motion and Classical Potential Theory} (see \cite{10})
	\item Joseph L. Doob, \textit{Classical Potential Theory and Its Probabilistic Counterpart} (see \cite{17})
\end{itemize}

where Kallenberg's book is a background reference. Port and Stone's exposition is quite dense, but highly recommended, whereas Doob's monumental monograph should be used as a reference. For a historical perspective on potential theory the main reference is

\begin{itemize}
	\item Oliver D. Kellogg, \textit{Foundations of Potential Theory} (see \cite{20})
\end{itemize}

which can be regarded as the major work of modern potential theory in the first half of the $20^{th}$-century. 

\end{appendix}

\printindex

\end{document}